\newcommand{\I}{{(1+I)^*}}
\newcommand{\Xfloc}{X_F^{\textup{loc}}}
\newcommand{\loc}{{\textup{loc}}}
\def\Lmod #1 {\textup{L}_{\textup{mod},#1}}
\newcommand{\Ltwo}{\textup{L}_2}
\def\dirlim_#1{\mathchoice{\underset{#1}{\varinjlim}}{\varinjlim_{#1}}{}{}}
\newcommand{\ppair}[1]{\pair{#1}^\prime}
\newcommand{\pint}{\int^\prime}
\newcommand{\someterm}{\mathcal{T}}
\newcommand{\fbar}{{\bar{f}}}
\newcommand{\gbar}{{\bar{g}}}
\newcommand{\End}{\operatorname{\mathsf{End}}}
\newcommand{\Ccomp}[2]{\mathcal{C}^{#1}(#2)}
\newcommand{\sgn}{\operatorname{sgn}}
\newcommand{\C}{{\mathbb{C}}}
\newcommand{\Cp}{{\mathbb{C}_p}}
\newcommand{\Fpb}{{\overline{\mathbb F}_p}}
\newcommand{\Hdr}{H_{\dR}}
\def\K #1 #2 #3 {K_{#1}^{(#2)}(#3)}
\def\iso{\simeq}
\newcommand{\Ker}{\operatorname{Ker}}
\newcommand{\kres}{{\mathbb{F}}}
\newcommand{\PP}{\mathbb{P}}
\newcommand{\Q}{\mathbb{Q}}
\newcommand{\R}{\mathbb{R}}
\newcommand{\Sym}{\operatorname{Sym}}
\newcommand{\Z}{\mathbb{Z}}
\newcommand{\dpair}[1]{{\ll #1 \gg}}
\newcommand{\canproj}{\mathbf{p}}
\newcommand{\dd}{{\textup d}}
\newcommand{\dR}{{\textup{dR}}}
\newcommand{\dlogf}{\dlog \phi^\ast}
\newcommand{\dlog}{\operatorname{d\!\log}}
\newcommand{\fdiv}[1]{\frac{\phi^\ast}{q^{#1}}}
\newcommand{\FQ}{F_\Q^*}
\newcommand{\FpQ}{F_\Q^{\prime*}}
\newcommand{\hr}{H_{\rig}}
\newcommand{\hsyn}{H_{\syn}}
\newcommand{\htms}{\tilde{H}_{\ms}}
\newcommand{\id}{\operatorname{id}}
\newcommand{\inject}{\hookrightarrow}
\newcommand{\isom}{\cong}
\newcommand{\logf}{\log \phi^\ast}
\newcommand{\ms}{{\textup{ms}}}
\newcommand{\ord}{\operatorname{ord}}
\newcommand{\ovqs}{\frac{1}{q^2}}
\newcommand{\ovq}{\frac{1}{q}}
\newcommand{\reg}{\operatorname{reg}_p}
\newcommand{\res}{\operatorname{Res}}
\newcommand{\rig}{{\textup{rig}}}
\def\scriptM #1 #2 {{\mathcal M}_{(#1)}(#2)}
\newcommand{\spec}{\operatorname{Spec}}
\newcommand{\syn}{{\textup{syn}}}
\newcommand{\tensor}{\otimes}
\def\tildescriptM #1 #2 {\widetilde{\mathcal M}_{(#1)}(#2)}
\newcommand{\tM}{\widetilde M}
\renewcommand{\O}{\mathcal{O}}
\newcommand{\alog}{A_{\textup{log}}}
\newcommand{\acol}{A_{\textup{col}}}
\newcommand{\aloga}{A_{\textup{log},1}}
\newcommand{\acola}{A_{\textup{col},1}}
\newcommand{\ocol}{\Omega_{\textup{col}}}
\newcommand{\Ocola}{\Omega_{\textup{col},1}^1}
\newcommand{\mer}{\mathsf{Mer}}
\newcommand{\gl}{{\textup{gl}}}
\newcommand{\pair}[1]{{\left\langle #1 \right\rangle}}
\newcommand{\bbox}{\openbox}
\newcommand{\Li}{\operatorname{Li}}
\newcommand{\symb}[2]{[#1]_2\cup (#2)}
\newcommand{\kbb}{K_2^{(2)}}
\newcommand{\CC}{{\mathscr{C}}}
\newcommand{\Csp}{\CC_{\kappa}}
\newcommand{\Cspff}{{\kappa(\CC_\kappa)}}
\newcommand{\Cspnf}{\CC_{\kres}'}
\newcommand{\Cspnfff}{\kres(\CC_{\kres}')}
\newcommand{\tmg}{\frac{t-g}{t-1}}
\newcommand{\omg}{\dlog (\tmg)\wedge \dlog (1-g)}
\newcommand{\ovqt}{\frac{1}{q^3}}
\newcommand{\epsg}{\ovq \log(1-g)_0 \dlog\left(\tmg\right) - \ovqs \log \left(\tmg\right)_0
  \dlog \phi^\ast (1-g)}
\newcommand{\epsgi}{\ovq \log(1-g)_0 \dlog g - \ovqs \log g_0 \dlog \phi^\ast (1-g)} 
\newcommand{\regmapa}{\Psi_{p,\omega}}
\newcommand{\regmapb}{\Psi_{p,\omega}^\prime}
\newcommand{\regmapc}{\Psi_{p,\omega}^{\prime\prime}}
\newcommand{\regmapd}{\Psi_{p,\omega}^{\prime\prime\prime}}
\newtheorem{theorem}[equation]{Theorem}
\newtheorem{conjecture}[equation]{Conjecture}
\newtheorem{notation}[equation]{Notation}
\newtheorem{assumption}[equation]{Assumption}
\newtheorem{proposition}[equation]{Proposition}
\newtheorem{prop}[equation]{Proposition}
\newtheorem{lemma}[equation]{Lemma}
\newtheorem{corollary}[equation]{Corollary}
\theoremstyle{definition}
\newtheorem{definition}[equation]{Definition}
\newtheorem{remark}[equation]{Remark}
\newtheorem{wish}[equation]{Wish}
\theoremstyle{plain}
\numberwithin{equation}{section}
\numberwithin{figure}{section}
\def\Pmod #1 {P_{#1,\textup{mod}}}
\def\Pzag #1 {P_{#1,\textup{Zag}}}
\def\Can{{C_{\textup{an}}}}
\def\myphi{\Psi_{\infty,\omega}^{\prime\prime}}
\def\tmyphi{\Psi_{\infty,\omega}^{\prime\prime\prime}}
\newcommand{\alt}{{\textup{alt}}}
\newcommand{\IO}{(1+I)_\O^*}
\newcommand{\kk}{\kappa}
\def\Kab #1 #2 #3 #4 {K_{#1, #4}^{(#2),#3}}
\newcommand{\leftiso}{\,{\overset \simeq \leftarrow}\,}
\newcommand{\SPkk}{{\kk^\flat}}
\newcommand{\OQ}{{\O_{\Q}^*}}
\newcommand{\OpQ}{{\O_{\Q}^{\prime*}}}
\def\regC{\operatorname{reg}_{\C}}
\newcommand{\rightiso}{\,{\overset \simeq \rightarrow}\,}
\newcommand{\SPF}{{F^\flat}}
\newcommand{\SPO}{{\O^\flat}}
\newcommand{\Symb}{{\textup {Symb}}}
\newcommand{\tcup}{\tilde\cup}
\newcommand{\XFLOC}{X_F^{'\loc}}
\newcommand{\Xoloc}{X_\O^{\loc}}
\def\reliso{T_0^\infty}
\long\def\comment#1\endcomment{}
\begin{document}

\title{The syntomic regulator for $K_4$ of curves}

\author{Amnon Besser}
\address{
Department of Mathematics\\
Ben-Gurion University of the Negev\\
P.O.B. 653\\
Be'er-Sheva 84105\\
Israel
}

\author{Rob de Jeu}
\address{Faculteit der Exacte Wetenschappen\\Afdeling Wiskunde\\VU University Amsterdam\\De Boelelaan 1081a\\1081 HV Amsterdam\\The Netherlands}

\begin{abstract}
Let $ C $ be a  curve defined over a discrete valuation field of characteristic zero where
the residue field has positive characteristic.
Assuming that $ C $ has good reduction over the residue field, 
we compute the syntomic regulator on a certain part
of $ \K 4 3 C $.
The result can be expressed in terms of $ p $-adic polylogarithms and Coleman integration.
We also compute the syntomic regulator on a certain part of $ \K 4 3 F $
for the function field $ F $ of $ C $. The result can be expressed
in terms of $ p $-adic polylogarithms and Coleman integration,
or by using a trilinear map (``triple index'') on certain functions.
\end{abstract}

\keywords{algebraic $ K $-theory, curve, syntomic regulator, $ p $-adic polylogarithm}

\subjclass[2010]{Primary: 11G55, 14F42, 19E08, 19F27; secondary: 11G20, 14F30, 19E20}

\maketitle

\dedicatory{Dedicated to the Memory of Jon Rogawski}

\section{Introduction}
\label{sec:intro}

Let $ K $ be a complete discrete valuation field of
characteristic zero, $ R $ its valuation ring,
and $ \kappa $ its residue field. Assume $ \kappa $ is of positive
characteristic $ p $.
If $ \mathscr{X}/R $ is a scheme, smooth and of finite type, 
then, after tensoring with $ \Q $, one can decompose the $ K $-theory 
of $ \mathscr{X} $ according to the Adams weight eigenspaces, i.e.,
\[ 
K_n(\mathscr{X}) \tensor_\Z \Q = \sum_j \K n j {\mathscr{X}}
\,,
\]
where $ \K n j {\mathscr{X}} $  consists of those $ \alpha $ in $ K_n(\mathscr{X}) \tensor_\Z \Q $
such that $ \psi^k(\alpha) = k^j \alpha $ for all Adams operators $ \psi^k $;
see~\cite[Proposition~5]{Sou85}.
There is a regulator map
\[
\K n j \mathscr{X} \to \hsyn^{2j-n}(\mathscr{X}, j)
\]
(see~\cite{Bes98a}). In many interesting cases
the target group of the regulator is isomorphic to the
rigid cohomology group of the special fiber $\mathscr{X}_\kappa$, in
the sense of Berthelot, $\hr^{2j-n-1}(\mathscr{X}_\kappa/K)$.
We will be interested in the situation where $ \mathscr{X} $
is a proper, irreducible, smooth curve $ \CC $
over $ R $ with a geometrically irreducible generic fiber $ C $, 
and the $ K $-group is $ \K 4 3 {\CC} $.  $ \K 4 3 C $ is known
to be isomorphic to $ \K 4 3 {\CC} $ under localization; see
Section~\ref{prelimk}.
The target group 
for the regulator in this case is $ \hr^1(\CC_\kappa/K)  \isom
\Hdr^1(C/K)$. The cup product gives a pairing
$$
\Hdr^1(C/K) \times \Hdr^1(C/K) \to \Hdr^2(C/K) \isom K
$$
where the last isomorphism is given by the trace map. We will denote
this pairing by $\cup$ as well.
If $ \alpha $ is an element of $ \K 4 3 C $ and $ \omega \in
\Hdr^1(C/K) $, we want to compute $ \reg(\alpha)\cup \omega \in  K $.

To achieve this goal, we first of all need to be able to write
elements in the above mentioned $ K $-group. We do this using an integral
version of the motivic complexes introduced by the second named
author. The complex $ \scriptM 3 F $ was defined in \cite[Section~3]{Jeu95}
for any field $ F $ of characteristic zero.  It consists
of three terms in cohomological degrees 1, 2 and 3:
\begin{equation}
M_3(F) \to M_2(F) \tensor \FQ \to \FQ \tensor \bigwedge^2 \FQ 
\,,
\end{equation}
with $ \FQ = F^* \tensor_\Z \Q $, and
 $ M_n(F) $ a $ \Q $-vector space on symbols $ [x]_n $
for $ x $ in $ F \setminus \{ 0,1\} $, modulo non-explicit relations depending
on $ n $.  The maps in the complex are given by
\begin{equation}
  \label{complex-maps}
  \begin{split}
    \dd [x]_3 &= [x]_2 \tensor x \\
     \dd [x]_2 \tensor y &= (1-x) \tensor (x \wedge y)
  \end{split}
\end{equation}
There are maps
\begin{equation*}
     H^i(\scriptM 3 F ) \to \K 6-i 3 F 
\end{equation*}
for $ i=2,3 $, and for $ i=3 $ this is an isomorphism.
Quotienting out by a suitable subcomplex (see Section~\ref{M3Fcomplexes})
one obtains the complex
\begin{equation} \label{eq:tildedF}
  \tildescriptM 3 {F} :\; \tM_3(F) \to \tM_2(F) \tensor \FQ \to  \bigwedge^3 \FQ 
\,,
\end{equation}
which is quasi-isomorphic to $ \scriptM 3 F $ in degrees~2 and~3.
Its shape is more in line with conjectures (see, e.g., \cite[Conjecture~2.1]{gonXpam})
and it is easier to work with for explicit examples.

We can apply this with $ F $ the function field $ K(C) $ of $ C $, but
as the syntomic regulator needs some information over the residue
field, we have to use an analogous complex.

\begin{notation}\label{case1}
  For the curve $ \CC $ as above with generic fiber $C/K$, we let
$ \O \subset F $ be the local ring consisting of functions that are generically defined on the special fiber $ \CC_\kappa $.
\end{notation}

In Section~\ref{M3O} we shall construct a complex
\begin{equation}\label{integralcomplex}
\scriptM 3 {\O} :\; M_3(\O) \to M_2(\O) \tensor \OQ \to \OQ \tensor \bigwedge^2 \OQ 
\,,
\end{equation}
with in this case 
$ M_n(\O) $ a $ \Q $-vector space on symbols $ [u]_n $
for $ u $ in $ \SPO $, the {\it special units \/} of $ \O $, namely
those $ u $ in $ \O^* $ for which $ 1-u $ is also in $ \O^* $,
again modulo non-explicit relations depending
on $ n $, and $ \OQ = \O^* \otimes_\Z \Q $.
The maps in the complex are given by~\eqref{complex-maps} as before.
In fact, one may view $ M_2(\O) \subseteq M_2(F) $; see Remark~\ref{m2includes}.
The complex comes with maps
\begin{equation}\label{kmapsi}
  H^i( \scriptM 3 {\O} ) \to \K 6-i 3 {\O} 
\end{equation}
for $ i=2 $ and 3.

Similar constructions, satisfying in particular~\eqref{kmapsi} can be
made in the following situation.

\begin{notation}\label{case2}
  Suppose $ k \subset K $ is a number field and let $ R' $ be the local ring $ R \cap k $.
  For $ \CC' $ a smooth, proper, geometrically irreducible
  curve over
  $ R' $, let $ \O' $ denote the local ring of rational functions on $ \CC' $ that
  are generically defined on the special fiber above the maximal ideal of $ R' $.
\end{notation}

In this case one has an additional map
\begin{equation*}
   M_2 (\O') \tensor_\Q \OpQ \xrightarrow{\partial_1}  \coprod_x \tM_2 (k(x))
\end{equation*}
where $ M_2(\O') $ is now a $ \Q $-vector space on symbols $ [u]_2 $
with $ u $ in $ \O'^* $ such that $ 1-u $ is also in $ \O'^* $,
the coproduct is over 
all closed points of the generic fibre $ C' = \CC'\otimes_{R'}k $, given by
$$ \partial_{1,x} ([g]_2 \tensor f) = \ord_x(f) \cdot [g(x)]_2 \,, $$
with the convention that $ [0]_2 = [1]_2 = [\infty]_2 = 0 $.

To explain the terms in which the formula for the regulator will be
expressed, we need to introduce Coleman integration theory (see Section~\ref{sec:coleman}).
Coleman~\cite{Col82,Col-de88} defined an integration theory on curves
over $\Cp$ with good reduction and on certain rigid analytic subdomains of these, which
he termed ``Wide open spaces''. One
first needs to choose a branch of the $p$-adic logarithm, i.e., a group
homomorphism 
$ \log : \C_p^* \to \Cp $, such that around $ z=1 $, it is given
by the usual power series expansions for $ \log(1+y) $ (this
amounts to specifying $ \log(p) $ in $ \Cp $). Once this is done, the
theory includes single valued iterated integrals on the appropriate
domain, called ``Coleman functions''. In particular, we have the
following functions
\begin{equation} \label{dilogfuncs}
\begin{split}
   \Li_2(z) &= - \int_0^z \log(1-x) \dd \log x\\
   \Ltwo(z)&=\Li_2(z)+\log(z)\log(1-z)\\
    \Lmod 2 (z) &= \Li_2(z)+ \frac12 \log(z)\log(1-z)
\end{split}
\end{equation}

The function $ \Li_2(z) $ is defined on $ \Cp\setminus\{1\} $;
see the beginning of Section~VI in~\cite{Col82}. Consequently, all 3
functions are defined everywhere except possibly $0,1,\infty$. They,
and other Coleman functions, can be assigned a value at these points
as follows.

For every point $y\in \PP^1(\Cp) $, the residue disc $U_y$ is the
collection of points reducing to the same point as $y$. For each such
$y$, and in terms of a local parameter $ z = z_y $ on $U_y$, a Coleman
function $G$ can be expanded as $G(z) = \sum_i f_i(z) \log^i(z) $, 
where all $ f_i(z) $ are in $ \Cp[[z,z^{-1}]] $. 
We define the \emph{constant term} $ c_z(G) $
at $ y $ with respect to the parameter $ z $ as the constant term of $ f_0 $;
see Definition~\ref{constanttermdef}.   In general the constant
term will depend on the choice of the local parameter $ z $, but 
there are many Coleman functions for which the constant term is
independent of this choice. In such a case we will write $G(y)$ for
this constant term. 
In particular, this is the case at all points for $ \Lmod 2 (z) $
and $\int \Ltwo(g) \omega $ for any rational function $g$, as
well as for, for $\Li_2(z) $ and $\Ltwo(z)$ at all points except
$\infty$ 
(see Lemmas~\ref{L2const} and~\ref{czindependence} as well as Corollary~\ref{lmodindep}).
We further define all three
functions from~\eqref{dilogfuncs} to be $0$ at $0$ and $\infty$ (this is
the constant term with respect to the standard parameter).
For any Coleman function $G$, which is the integral of a form $\eta$,
and divisor $D=\sum n_i y_i$ we will
define
\begin{equation*}
  G(D) = \int_D \eta := \sum n_i G(y_i)
\end{equation*}
where we will be assuming that either $G$ is defined at each $y_i$, or
its constant term there is independent of the parameter.

We note that  $ \Lmod 2 (z)+ \Lmod 2 (z^{-1}) = 0 $ for $ z $ in 
$ \Cp\setminus\{0,1\} $~\cite[Proposition~6.4(ii)]{Col82}, and that this
extends to all values using constant terms. Similarly we have
$ \Ltwo(z)+\Ltwo(z^{-1}) = \frac12 \log^2(z) $.

We shall state the theorems in the introduction
in a way that allows comparison with similar results in the classical case over $ \C $.
The formulae in that case can be easily
transformed by using Stokes theorem, whereas it seems the formulae
in the syntomic case are not as flexible. Consequently, in the
syntomic case we have to state a larger number of theorems.
In order to enable a comparison in Remark~\ref{compremark}
of the syntomic formulae below
(especially those in Theorems~\ref{main-thm3} and~\ref{main-thm4})
with those in the classical case, we recall and reformulate some
of the classical results in Section~\ref{sec:classical}.

We are now ready to state the first main theorem.
In it, and the remaining theorems in the introduction, we evaluate
Coleman functions at closed points of $ C $ by working over 
a finite extension of $ K $ over which all such points are rational.
The result will be in $ K $ by Galois equivariance of Coleman integration.

\begin{theorem} \label{main-thm1}
Suppose, in the situation of Notation~\ref{case1}, that $\omega$ is a
holomorphic form on $C$.
\begin{enumerate}
\item The assignment
  \begin{equation*}
    [g]_2 \tensor f \mapsto 2 \int_{(f)} \Ltwo(g)\omega
  \end{equation*}
  gives a well-defined map 
 $ \regmapa: M_2(\O) \tensor \OQ \to K$, and this induces a map
   $\regmapa: H^2( \scriptM 3 {\O} )\to K $.
\item
  Suppose $ k \subset K $ is a number field, and 
$ \CC' $ is a smooth, proper, geometrically connected
curve over the local ring $ R' = R \cap k $. Let $\O'$ be as in Notation~\ref{case2}
and put $\CC = \CC' \otimes_{R'} R$.
 Let $ \alpha'$ in $ H^2(\scriptM 3 {\O'} ) $ be such that
 $\partial_1(\alpha')  =0$. Then there exists a unique $\beta'$ in
$ \K 4 3 {\CC'} $ whose image in $ \K 4 3 {\O'} $ under localization
  equals the image of
 $\alpha'$ under~\eqref{kmapsi} modulo $ \K 3 2 k \cup \O_{\Q}^{\prime\ast} $.
 If $\beta$ is the image of $\beta'$ under $ \K 4
 3 {\CC'} \to  \K 4 3 {\CC} $, then we have
 \begin{equation*}
   \reg(\beta)\cup \omega = \regmapa(\alpha)
 \end{equation*}
where $\alpha$ is the image of $\alpha'$ in $  H^2(\scriptM 3 {\O} ) $.
\end{enumerate}
\end{theorem}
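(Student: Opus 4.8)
The proof splits according to the two parts. Part~(1) is purely a matter of well-definedness, and I would establish it using only the Coleman integration theory of Section~\ref{sec:coleman}. The assignment $[g]_2\tensor f\mapsto 2\int_{(f)}\Ltwo(g)\omega$ is visibly additive in $f$, since $f\mapsto (f)$ is a homomorphism to divisors of degree zero and $D\mapsto\int_D\Ltwo(g)\omega$ is linear; the point is therefore the relations on the symbols $[g]_2$. These are generated by the functional-equation relations of the dilogarithm, chiefly the five-term relation, so it suffices to show that for rational functions $g,h$ on $C$ and any $f$ the corresponding five-term combination of the values $\int_{(f)}\Ltwo(\,\cdot\,)\omega$ vanishes. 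Now the $p$-adic dilogarithm $\Li_2$, hence also $\Ltwo$, satisfies the five-term relation modulo a sum of terms of the shape $\log(a)\log(b)$ with $a,b$ rational functions; the key input is that any such product, integrated over the divisor $(f)$ against the holomorphic form $\omega$, gives $0$. This vanishing --- together with the vanishing $2\int_{(g)}\Ltwo(g)\omega=0$ needed to kill the image of $\dd\colon M_3(\O)\to M_2(\O)\tensor\OQ$ and so descend from $M_2(\O)\tensor\OQ$ to $H^2$ --- is an instance of the reciprocity laws for Coleman integrals over divisors of rational functions developed in Sections~\ref{sec:coleman} and~\ref{sec:recip}, the $p$-adic counterparts of the classical Stokes-theorem manipulations.

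For part~(2) I would first dispose of the $K$-theoretic bookkeeping. The hypothesis $\partial_1(\alpha')=0$ says that the residues of (the image of) $\alpha'$ at the closed points of the generic fibre $C'$ vanish, so by the localization sequence and the identification $K_4^{(3)}(\CC')\isom K_4^{(3)}(C')$ (Section~\ref{prelimk}) the image of $\alpha'$ in $K_4^{(3)}(\O')$ lifts to a class $\beta'$ in $K_4^{(3)}(\CC')$, unique modulo $K_3^{(2)}(k)\cup\OpQ$; one then takes $\beta$ to be the base change of $\beta'$ along $R'\to R$. Since the motivic complexes, the maps~\eqref{kmapsi}, the syntomic regulator and the cup product with $\omega$ are all compatible with this base change, it remains to compute $\reg(\beta)\cup\omega\in\Hdr^2(C/K)\isom K$ for the class coming from $\alpha$ over $\CC$, and to check that the ambiguity contributes nothing.

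The heart of the matter is the identification of $\reg(\beta)\cup\omega$ with $\regmapa(\alpha)$. Writing $\alpha$ as a combination $\sum_i[g_i]_2\tensor f_i$, I would: (i) produce an explicit cocycle in the rigid syntomic complex representing the image of $\alpha$ under $K_4^{(3)}(\CC)\to\hsyn^1(\CC,3)$, built from Coleman's $p$-adic polylogarithms (this is how $\Li_2$, $\Ltwo$ and $\textup{L}_{\textup{mod},2}$ enter the formula) together with a chosen lift of Frobenius; (ii) compute, at the level of explicit complexes, the cup product of this cocycle with the de Rham class of $\omega$ and its image under the trace $\Hdr^2(C/K)\isom K$; and (iii) invoke the reciprocity results of Section~\ref{sec:recip} to rewrite the resulting expression, which a priori is a combination of Coleman integrals over $\CC_\kappa$ and of residue terms at the zeros and poles of the $f_i$, as $2\sum_i\int_{(f_i)}\Ltwo(g_i)\omega=\regmapa(\alpha)$. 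The ambiguity is then harmless: on a class of the form $\xi\cup f$ with $\xi$ in $K_3^{(2)}(k)$ and $f$ a unit, the contribution to $\reg(\beta)\cup\omega$ is governed by the cup product of $\dlog f$ with the holomorphic $\omega$, which vanishes by the $p$-adic Abel theorem $\int_{(f)}\omega=0$; see Section~\ref{sec:down}.

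The main obstacle is steps~(i)--(iii): reconciling Besser's abstract syntomic regulator with the concrete Coleman-integral formula. One needs a workable representative for the syntomic realization of $[g]_2\tensor f$ --- in effect, that the motivic dilogarithm symbol realizes to Coleman's $p$-adic dilogarithm --- then a careful chain-level cup-product computation in rigid syntomic cohomology, and, crucially, a precise accounting of the normalizations in $\hsyn^1(\CC,3)\isom\hr^1(\CC_\kappa/K)\isom\Hdr^1(C/K)$; it is exactly here that the coefficient $2$ and the choice of $\Ltwo$, rather than $\Li_2$ or $\textup{L}_{\textup{mod},2}$, get pinned down. A second, more technical, difficulty is the reciprocity conversion in~(iii), turning integrals over the curve and residues at the $f_i$ into Coleman integrals over the divisors $(f_i)$; this is precisely what the results of Section~\ref{sec:recip} are designed to do, so the proof of part~(2) will largely consist in feeding the explicit cocycle of step~(i) into that machinery. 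Finally one records that the answer is independent of the chosen Frobenius lift, which follows from the uniqueness built into Coleman integration.
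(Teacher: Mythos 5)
Your proposal has two genuine gaps, one in each part.

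For part (1), your well-definedness argument assumes that the relations defining $M_2(\O)$ are generated by explicit dilogarithm functional equations (the five-term relation), so that it would suffice to check those for $\Ltwo$. But $M_2(\O)$ is constructed in Section~\ref{M3O} as a quotient of a subspace of a relative $K$-group, and its relations are expressly \emph{non-explicit}; no presentation by the five-term relation is available (that would be a Zagier-type statement, not something the paper has). The paper instead proves well-definedness (Lemma~\ref{threecompat}) by factoring the assignment through the boundary $[g]_2\tensor f\mapsto (1-g)\tensor g\tensor f$ and a trilinear functional $(h,g,f)\mapsto \int_{(f)}\bigl(\omega\cdot\int\log(g)\dlog(h)\bigr)$ on $\OQ^{\tensor 3}$, the ambiguity in the constant of integration being killed by $\int_{(f)}\omega=0$ ($p$-adic Abel--Jacobi). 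Moreover, the descent to $H^2(\scriptM 3 {\O} )$ is not obtained by any elementary ``reciprocity'': the needed vanishing on $[g]_2\tensor g$ (equivalently $\int_{(g)}\Ltwo(g)\omega=0$ on such classes) is deduced in the paper from Theorem~\ref{main-thm2} — i.e.\ from the full syntomic/triple-index computation — combined with the vanishing of $\Lmod 2 $ at $0$ and $\infty$; your appeal to ``reciprocity laws'' here is unsubstantiated (and note that the reciprocity section you cite is a cross-reference to another paper, not machinery developed here).

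For part (2), the $K$-theoretic bookkeeping (existence and uniqueness of $\beta'$, base change, killing the $\K 3 2 k \cup \OpQ$ ambiguity) matches the paper. The gap is in your steps (i)--(iii): the regulator of $\sum_i[g_i]_2\tensor f_i$ cupped with $\omega$ is \emph{not} $2\sum_i\int_{(f_i)}\Ltwo(g_i)\omega$ in general; by Theorem~\ref{main-thm2} it carries additional local terms $-2\sum_y\ord_y(f_i)F_\omega(y)\Lmod 2 (g_i(y))$, and no reciprocity argument absorbs them into the divisor integrals. The entire point of the hypotheses ``$k$ a number field'' and $\partial_1(\alpha')=0$ — which your plan never uses beyond lifting to $\K 4 3 {\CC'}$ — is to make these correction terms vanish. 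The paper does this by showing that $[a]_2\mapsto \Lmod 2 (a)$ is well defined on $\tM_2(L)$ for a number field $L$, using Coleman's inversion, reflection and five-term relations for $\Lmod 2 $ together with de Jeu's identification $\tM_2(L)\iso B_2'(L)$, which is known for number fields but not in general. Relatedly, the authors state that they could not determine the boundary constant in $\Theta(g)$ directly, which is exactly the obstruction your proposed chain-level computation (i)--(ii) would run into; the number-field trick is how the paper circumvents it, and without it your argument cannot close.
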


\begin{remark}
The reader should compare
the above formula for the regulator with the
formula obtained by Coleman and de Shalit~\cite{Col-de88}, which
is known to be the syntomic regulator by~\cite{Bes98b}. There, the
regulator is obtained by sending the symbol $\{ f,g\} $ in $ K_2(F) $
to $\int_{(f)}\log(g) \omega $. The similarity with the present
formula should be clear.
\end{remark}

The rest of our results concern the $ K $-theory of open curves over $R$ and
not over a number field. Thus, they are more general on the one hand,
but progressively harder to state. Indeed, the first theorem is
special because we are able to simplify matters by taking account of
boundary terms over number fields.

As we are now computing on an open scheme, we no longer have a
non-trivial cup product pairing, so we first need to explain what it
is that we are computing.
Under the regulator, each element of $ \K 4 3 {\O} $  maps to $
\Hdr^1(U/K) $ for
some wide open space $ U $ in $ C $ in the terminology of Coleman.
There exists a canonical projection $ \Hdr^1(U/K) \to \Hdr^1(C/K) $,
compatible with restriction to a smaller $ U $;
see~\cite[Proposition~4.8]{Bes98b} and~\eqref{eq:projform} below.  We still denote the
composition $$ \K 4 3 {\O} \to \Hdr^1(U/K) \to \Hdr^1(C/k) $$
by $ \reg $.

\begin{theorem} \label{main-thm2}
Suppose $\omega$ is a holomorphic form on $C$.
The assignment
  \begin{equation*}
    [g]_2 \tensor f \mapsto 2 \int_{(f)} \Ltwo(g) \omega - 2 \sum_y \ord_y (f) F_\omega(y) \Lmod 2  (g(y)) 
  \,,
  \end{equation*}
  where in the sum $ y $ runs through the closed points of $ C $,
  gives a well-defined map $ \regmapb: M_2(\O) \tensor \OQ \to K$.
  It induces a map   $\regmapb: H^2( \scriptM 3  {\O} )\to K $,
  which coincides with the composition
$$
\xymatrix{
H^2(\scriptM 3 {\O} ) \ar[r] & \K 4 3 {\O} \ar[r]^-{\reg} & \Hdr^1(C/K) \ar[r]^-{\cup \omega} &  K.
}
$$
\end{theorem}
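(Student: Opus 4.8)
The plan is to compute the composite $\cup\,\omega\circ\reg$ on the image under~\eqref{kmapsi} of a cocycle $\xi=\sum_i[g_i]_2\tensor f_i$ representing a class in $H^2(\scriptM 3 {\O})$, using Besser's description of the syntomic regulator from~\cite{Bes98a,Bes98b} together with Coleman integration theory, and to treat the well-definedness of $\regmapb$ on $M_2(\O)\tensor\OQ$ separately. Throughout one works over a finite extension of $K$ over which the relevant closed points of $C$ become rational, descending at the end by Galois equivariance of Coleman integration. The first step is to trace $[g]_2\tensor f$ through~\eqref{kmapsi}: by the construction of the complex $\scriptM 3 {\O}$ and its maps to $K$-theory, the symbol $[g]_2$ gives rise to the $K$-theory class attached to $g$ via the $p$-adic dilogarithm, defined on the open subcurve where $g$ and $1-g$ are invertible, and cupping with the class of $f$ yields an element of $\K 4 3 {U}$ for the wide open space $U$ obtained from $\CC$ by deleting small discs around the zeros and poles of $f$, $g$ and $1-g$. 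Its regulator, composed with the realization in rigid, equivalently de Rham, cohomology, lies in $\Hdr^1(U/K)$, and Besser's formula for the syntomic regulator in this range expresses its cup product with $\omega$ as an explicit doubly iterated Coleman integral; after the normalization forced by the Frobenius structure this is the $p$-adic dilogarithm in the shape $\Ltwo$, which produces the main term $2\int_{(f)}\Ltwo(g)\omega$. This runs parallel to the Coleman--de Shalit computation for $K_2$ recalled after Theorem~\ref{main-thm1}, the single integral $\int_{(f)}\log(g)\omega$ being replaced by the extra layer of integration that raises the polylogarithmic weight by one.

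The second step produces the correction term. The class just computed naturally lives in $\Hdr^1(U/K)$, whereas $\reg$ takes values in $\Hdr^1(C/K)$ via the canonical projection of~\cite[Proposition~4.8]{Bes98b}, which subtracts the contributions of the residues, or principal parts, of the class at the punctures $y$ of $U$. Paired with $\omega$, each such contribution has the form (residue datum at $y$) times $F_\omega(y)$, where $F_\omega$ is a Coleman primitive of $\omega$; computing the residue datum of our cocycle at $y$ from the local expansions of $\log g$, $\log(1-g)$ and $\Li_2(g)$ there yields $\ord_y(f)\cdot\Lmod 2 (g(y))$, the symmetric normalization $\Lmod 2 $ occurring because only the part of the dilogarithm invariant under $g\mapsto g^{-1}$ survives in the residue. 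Altogether this accounts for the term $-2\sum_y\ord_y(f)\,F_\omega(y)\,\Lmod 2 (g(y))$; the sum is finite, and its only nonzero contributions come from points at which $f$ has a zero or a pole and at which $g$ takes a value other than $0$, $1$ and $\infty$, since $\Lmod 2 $ vanishes at those three points.

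For the well-definedness of $\regmapb$ on $M_2(\O)\tensor\OQ$, the $\Q$-bilinearity and the additivity in $f$ are immediate from linearity of $\int_{(\cdot)}$ and of $\ord_y(\cdot)$. The relations defining $M_2(\O)$ can be handled by passing through the inclusion $M_2(\O)\subseteq M_2(F)$ of Remark~\ref{m2includes} and using the $p$-adic functional equations of the dilogarithm, due to Coleman and developed further in the earlier sections; this is closely related to the first part of Theorem~\ref{main-thm1}, and the modified functions $\Ltwo$ and $\Lmod 2 $ are tailored so that the elementary correction terms, products of logarithms, in those identities either vanish when integrated against $\omega$ over a divisor of degree zero or cancel against the $F_\omega$-terms. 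It then suffices to verify that $\regmapb$ annihilates the image of $M_3(\O)$, i.e. that $\regmapb([u]_2\tensor u)=0$ for every special unit $u$: at each puncture $y$ one has $u(y)\in\{0,\infty\}$, where the constant term of $\Lmod 2 (u)$ is $0$, so the correction term vanishes, and $\int_{(u)}\Ltwo(u)\omega=0$ follows by a Stokes and reciprocity argument, using $\dd\Ltwo(z)=\log(z)\,\dlog(1-z)$, the functional equation $\Ltwo(z)+\Ltwo(z^{-1})=\frac12\log^2(z)$, and Coleman's reciprocity for residues of products of Coleman functions. This vanishing is in any case forced, since the descended map has to agree with $\cup\,\omega\circ\reg$, which kills coboundaries.

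I expect the main obstacle to lie in the bookkeeping of the second step. The conceptual content of the first step is close to the $K_2$ case, but pinning down the residue datum of the syntomic cocycle at each puncture precisely enough to see that it equals exactly $\ord_y(f)\,\Lmod 2 (g(y))$ --- with the symmetric normalization $\Lmod 2 $ rather than $\Li_2$ or $\Ltwo$, with the correct integer multiplicity, and with the overall constant $2$ --- while keeping all constant-term conventions at $0$, $1$ and $\infty$ consistent with those fixed for the Coleman functions in~\eqref{dilogfuncs}, is delicate. A related technical point is to check that the iterated integrals appearing in Besser's formula really are the Coleman functions with the correct Frobenius structure, so that the functional equations and reciprocity of Coleman integration may be applied verbatim in the well-definedness argument; this is where most of the computational work is concentrated.
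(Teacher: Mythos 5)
Your outline assumes away the two hardest parts of the theorem. First, in your opening step you appeal to ``Besser's formula for the syntomic regulator'' as if it directly expressed $\reg(\cdot)\cup[\omega]$ on $\K 4 3 {\O}$ as a doubly iterated Coleman integral giving $2\int_{(f)}\Ltwo(g)\omega$. No such formula is available; producing one is the bulk of this paper. The explicit syntomic cocycle for $[g]_2\cup f$ (Section~\ref{sec:regulators}) involves Frobenius-twisted terms such as $\log f_0$ and $\phi^\ast\dlog f$, and pairing it with $\omega$ requires the triple index of Sections~\ref{sec:trip-loc}--\ref{sec:trip-glob}, the notion of a good functional, and the reciprocity statements (Propositions~\ref{trip-recip} and~\ref{needfortild}) used to ``realize the wishes'' of Section~\ref{sec:wishes}; only then does one reach the annuli-end formula of Theorem~\ref{main-thm3}/Corollary~\ref{companioncor}. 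Your second step is likewise asserted rather than derived: the $\Lmod 2 $-term does not arise point-by-point as a ``residue datum'' subtracted by $\canproj$ (which is already encoded in the global index via~\eqref{eq:projform}); in the paper the passage from the sum of local indices to $2\int_{(f_i)}\Ltwo(g_i)\omega-2\sum_y\ord_y(f_i)F_\omega(y)\Lmod 2 (g_i(y))$ uses the constant-term evaluation (Proposition~\ref{const-term-expr}) together with the cocycle condition $\sum_i(1-g_i)\tensor(g_i\wedge f_i)=0$ in two essential places (Lemma~\ref{twotermseq}, and the $\beta_y$-manipulation at points with $g_i(y)=\infty$). A per-symbol, per-point computation of the kind you sketch cannot yield the stated identity, since it holds only after summing over a cocycle. (Minor point: $\Lmod 2 $ is the part of the dilogarithm \emph{anti}-invariant under $g\mapsto g^{-1}$, not invariant.)

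Second, your treatment of well-definedness on $M_2(\O)\tensor\OQ$ would fail. Passing to $M_2(F)$ via Remark~\ref{m2includes} and ``using the $p$-adic functional equations of the dilogarithm'' does not address the relations: $M_2(\O)$ and $M_2(F)$ are quotients of the symbol space by non-explicit, $K$-theoretically defined relations, which are not known to be generated by functional equations, and the map $B_2'(L)\to\tM_2(L)$ invoked at the end of the proof of Theorem~\ref{main-thm1} goes in the wrong direction and is only known to be an isomorphism for number fields. The paper's argument (Lemma~\ref{threecompat}) sidesteps this entirely: each assignment is exhibited as a trilinear function of $(1-g,g,f)$ precomposed with $\dd\otimes\id$, where $\dd[g]_2=(1-g)\tensor g$ is the already well-defined differential of $\scriptM 2 {\O}$, and the ambiguity in the constant of Coleman integration is harmless because $\int_{(f)}\omega=0$ for a principal divisor. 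Your final observation that descent to $H^2(\scriptM 3 {\O})$ is forced by agreement with the regulator composite on cocycles is consistent with the paper, but it only becomes available once the preceding two gaps are filled.
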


Over the
complex numbers it is known that computing the cup product
of the regulator with holomorphic forms suffices to describe it
completely in the case we are considering because those linear
maps surject onto the dual of the target space of the regulator
(see the beginning of Section~4 of \cite{Jeu96}, especially
Proposition~4.1). This is not true over the $p$-adics.
It is therefore important to have 
formulas for the cup product of the regulator with general cohomology
class (such a class can be represented by a form of the second kind on $
C $, i.e., a 
meromorphic form all of whose residues are $0$).
This can be done at the cost of introducing further machinary -
the notion of the triple
index. It is a generalization of the ``local index'' which was
introduced in~\cite[Section~4]{Bes98b}.

Informally speaking, working on an annulus 
$ e $ over $\C_p$, $e\isom \{r< |z|< 1\}$,
the triple index associates to the integrals $F$, $G$ and $H$ of
three rigid analytic 1-forms on $e$ (in this case these forms are simply
Laurent series converging on $e$ multiplied by $dz$)
together
with choices of integrals for $ F \dd G $, $ F \dd H $ and $ G \dd F $,
a number $\pair{F,G;H}_e $ in $ \C_p$ that is supposed to be a
generalization of $\res_e FGdH$. Note that the integrals appearing in
the data for the triple index make perfect sense once one admits a log
function to correspond to the integral of $dz/z$, and are determined
up to a constant by the form they integrate. Suppose now that $C/\C_p$
is a curve with good reduction and that $C$ contains discs $D_i\isom
\{|z|< 1\}$. The rigid analytic domain $U=C \setminus \cup_i
(D_i-e_i)$ where $e_i\subset D_i$ is the annulus 
corresponding to $\{r< |z|< 1\}$ is called a wide open space by
Coleman. The $e_i\subset U$ are called the annuli ends of $U$.
Suppose that $F$, $G$ and $H$ are Coleman functions defined on $U$
such that restricted to the $e_i$'s they are of the type allowing us
to compute the triple indices
$\pair{F|_{e_i},G|_{e_i};H|_{e_i}}_{e_i}$. We may use auxiliary
data composed of 
Coleman integrals restricted to $e_i$ for computing these. It
sometimes turns out that the
sum of triple indices over all the $e_i$ depends only on $F$, $G$, and
$H$ and not on the auxiliary data. This applies in particular to the
sum of triple indices in the two theorems below. It is further known that
this sum of triple indices behaves well with respect to shrinking the
wide open space $U$. Finally, if everything is defined over a complete
subfield $K$ of $\C_p$ then this sum of triple indices is in $K$.

\begin{theorem} \label{main-thm3}
  Let $ \omega $ be a form of the second kind on $ C $.
  The assignment
  \begin{equation*}
    [g]_2 \tensor f \mapsto \sum_e
    \pair{\log(f),\log(g);\int F_\omega \dlog(1-g)}_e
  \end{equation*}
  where $F_\omega$ is any Coleman integral of $\omega$ and the sum of
  triple indices is over all annuli ends $e$ of a wide open space $U$ on which
  all $f$, $g$ and $ 1-g $ are invertible, and $\omega$ is holomorphic,
  gives a well-defined map $
  \regmapc: M_2(\O) \tensor \OQ \to K$. It induces a map
  $\regmapc: H^2( \scriptM 3  {\O} )\to K $, which coincides with
  the composition
$$
\xymatrix{
H^2(\scriptM 3 {\O} ) \ar[r] & \K 4 3 {\O} \ar[r]^-{\reg} & \Hdr^1(C/K) \ar[r]^-{\cup \omega} &  K
\,.
}
$$
\end{theorem}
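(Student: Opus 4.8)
The plan is to deduce the statement from the explicit computation of the syntomic regulator carried out in Sections~\ref{sec:recip}--\ref{sec:down} and then to recognize the local terms occurring there as triple indices, treating well-definedness separately by means of the formal properties of the triple index recalled before the statement. Concretely, for a cocycle $\sum_i[g_i]_2\tensor f_i$ representing a class $\alpha\in H^2(\scriptM 3 {\O} )$, and a wide open space $U\subset C$ on which all the $f_i$, $g_i$, $1-g_i$ are invertible and $\omega$ is holomorphic, I would first describe the image of $\alpha$ under $H^2(\scriptM 3 {\O} )\to\K 4 3 {\O} \xrightarrow{\reg}\Hdr^1(U/K)$ by an explicit Coleman--de Rham cocycle built from $\Li_2(g_i)$, $\Ltwo(g_i)$, $\log(g_i)$, $\log(1-g_i)$, $\log(f_i)$ and Coleman integrals of their products; this is what the construction of Section~\ref{sec:down}, together with the reciprocity law~\ref{reciprocity}, provides. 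Composing with the canonical projection $\Hdr^1(U/K)\to\Hdr^1(C/K)$ of~\eqref{eq:projform} and with $\cup\,\omega$, and realizing the resulting pairing through Poincar\'e duality on $C$ (the trace map) as a sum of residues, one separates $\reg(\alpha)\cup\omega$ into a ``global'' Coleman integral over the divisor $(f_i)$ and ``local'' contributions supported on the residue discs and annuli removed from $C$ to form $U$.

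The heart of the argument is the identification, at each annulus end $e$ of $U$, of the local contribution to $\reg(\alpha)\cup\omega$ with the triple index $\pair{\log(f),\log(g);\int F_\omega \dlog(1-g)}_e$. Here $F_\omega$ is a Coleman integral of $\omega$, which exists on $U$ because $\omega$, being of the second kind, has no residues; and the auxiliary Coleman integrals feeding into the triple index are the restrictions to $e$ of the integrals produced in the previous step. One restricts the regulator cocycle to $e$, expands everything in a local parameter $z=z_e$ in the form $\sum_j a_j(z)\log^j(z)$, extracts the constant term of the relevant product of Coleman functions against $\omega$, and matches it term by term, with the correct normalization and sign, against the definition of $\pair{\,\cdot\,,\cdot\,;\cdot\,}_e$ and of its auxiliary data; this is precisely the local computation~\ref{local-comp}, and it is what the triple index was introduced to encode. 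Summing over all $e$ then gives $\regmapc(\alpha)=\reg(\alpha)\cup\omega$ on cocycles.

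It remains to check that $\sum_e\pair{\log(f),\log(g);\int F_\omega \dlog(1-g)}_e$ is well posed. Independence of the auxiliary Coleman integrals and of the wide open space $U$ is the reciprocity property of the sum of triple indices over annuli ends recalled before the statement. The sum does not depend on the choice of $F_\omega$: replacing $F_\omega$ by $F_\omega+c$ (the only freedom, since $U$ is connected) changes the third argument by $c\log(1-g)$, and $\sum_e\pair{\log(f),\log(g);\log(1-g)}_e$ vanishes by the Steinberg relation $\{g,1-g\}=0$ together with that reciprocity. Bilinearity in $[g]_2\tensor f$ is immediate, while the defining relations of $M_2(\O)$ and the vanishing on $\dd M_3(\O)$ needed to factor the map through $H^2(\scriptM 3 {\O} )$ follow from the antisymmetry of the triple index in its first two arguments and the five-term relation, modulo further residue-theorem identities on $C$; alternatively, once the identification in the previous step is available, factorization through $H^2$ and well-definedness on it are automatic from the fact that $\K 4 3 {\O} $ and the regulator are themselves well defined. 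I expect the middle step --- extracting from the auxiliary-data-laden definition of the triple index exactly the local term of the syntomic cup product, with the correct constants and signs and with the auxiliary data at the various annuli ends fitting together consistently --- to be the main obstacle, requiring a delicate local analysis of the regulator cocycle on an annulus; all the independence statements that make the formula well posed ultimately rest on the residue theorem on the proper curve $C$.
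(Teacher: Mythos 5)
Your outline shares the paper's broad strategy (compute the syntomic regulator Coleman--theoretically, pair with $\omega$ through the canonical projection, identify local contributions at annuli ends with triple indices), but two of your concrete steps would not go through, and the central identity is left unproved. First, decomposing $\reg(\alpha)\cup\omega$ via Poincar\'e duality into a ``global'' Coleman integral over the divisor $(f_i)$ plus local contributions at the removed discs is the mechanism behind Theorems~\ref{main-thm2} and~\ref{main-thm1}, and it requires $\omega$ holomorphic on all of $C$: in Theorem~\ref{main-thm3} the form $\omega$ is only of the second kind, its poles lie in the removed residue discs, which is exactly where the divisor of $f_i$ is supported, so the ``global'' term is not defined in this generality (this is precisely why the theorem is stated purely in terms of annuli ends). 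The paper never evaluates at points here: by~\eqref{eq:projform} the pairing $(\canproj\eta)\cup[\omega]$ \emph{is} the global double index $\pair{F_\eta,F_\omega}_\gl$, a sum over annuli ends, and the passage from the explicit relative syntomic cocycle $\int_0^\infty\epsilon(g,f)$ to the triple-index formula is carried out in Proposition~\ref{7.6}, Proposition~\ref{G-expr} and Theorem~\ref{aux-thm}/Corollary~\ref{companioncor}, resting on the good-functional formalism and on the new global reciprocity Proposition~\ref{trip-recip}; the $K_2$ results you invoke do not supply these identities. This is exactly the ``main obstacle'' you flag, and it is the heart of the proof rather than a routine local expansion. (Also, $F_\omega$ exists on $U$ for any $\omega$; the second-kind hypothesis is used so that $F_\omega$ is analytic on the ends, which is what makes the triple indices defined.)

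Second, your well-definedness argument on $M_2(\O)\tensor\OQ$ does not work as stated: the relations defining $M_2(\O)$ are non-explicit ($K$-theoretic), so they cannot be verified against the five-term relation, and the triple index is symmetric, not antisymmetric, in its first two arguments; the fallback via well-definedness of the regulator only sees cocycles, hence cannot produce a map on all of $M_2(\O)\tensor\OQ$. The paper's Lemma~\ref{factb} settles this with the one observation missing from your proposal: the assignment is trilinear in $(1-g,g,f)$, hence defines a map $\OQ^{\otimes 3}\to K$, and $\regmapc$ is that map precomposed with $\dd\tensor\id$, where $\dd[g]_2=(1-g)\tensor g$ is the already well-defined differential of $\scriptM 2 {\O} $; the factorization through $H^2(\scriptM 3 {\O} )$ and the comparison with the regulator then follow as you indicate (Lemma~\ref{facta}, using diagram~\eqref{MCO} and Corollary~\ref{companioncor}). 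Finally, your independence-of-$F_\omega$ check is misattributed: $\sum_e\pair{\log f,\log g;\log(1-g)}_e=0$ follows from Proposition~\ref{trip-recip}, not from the Steinberg relation.
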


The complex $\tildescriptM {3} {F} $ defined in~\eqref{eq:tildedF} is easier to work with in
explicit computations then the complex $\scriptM {3} {F} $. Therefore, just as in \cite[Remark~4.5]{Jeu96}
it is desirable to
have a formula for the regulator using this complex. With that in
mind, we define in Section~\ref{tM2O} a complex
\begin{equation*}
 \tildescriptM 3 {\O} : \tM_3(\O) \to \tM_2(\O) \otimes \OQ \to \bigwedge^3\OQ
\end{equation*}
such that its cohomology in degrees 2 and 3 is isomorphic to that of the
complex $ \scriptM 3 {\O} $ in~\eqref{integralcomplex}.
Corresponding to the statements in Theorems~\ref{main-thm2} and~\ref{main-thm3}
for $ \scriptM 3 {\O} $,
we have the following two expressions for the regulator in this case.

\begin{theorem} \label{main-thm4}
1.  Let $ \omega $ be a form of the second kind on $ C $. The assignment
  \begin{align*}
    [g]_2 \tensor f & \mapsto 
    \frac{2}{3} \sum_e \pair{\log(f),\log(g);\int F_\omega  \dlog(1-g)}_e
\\ 
&\qquad
  - \frac{2}{3} \sum_e \pair{\log(f),\log(1-g);\int F_\omega \dlog(g)}_e 
  \end{align*}
  gives a well-defined map $ \regmapd: \tM_2(\O)\otimes  \OQ \to K$. 
  It induces a map
  $ \regmapd : H^2(\tildescriptM 3 {\O} ) \to K$, which coincides with the 
  composition of maps
  \begin{equation*}
  \xymatrix{
   H^2(\tildescriptM 3 {\O} ) \ar[r]^-{\simeq} &  H^2(\scriptM 3 {\O} ) \ar[r] & \K 4 3 {\O} \ar[r]^-{\reg} & \Hdr^1(C/K)  \ar[r]^-{\cup \omega} &  K
  }
  \end{equation*}
  with the leftmost map being the isomorphism alluded to before.\\

2.  If $\omega$ is a holomorphic form on $ C $, then the same holds
  for the assignment
   \begin{equation*}
      [g]_2 \tensor f \mapsto 
      \frac{2}{3}  \left(\int_{(1-g)} \log(g) F_\omega \dlog
        (f)
      -\int_{(g)} \log(1-g) F_\omega \dlog (f)\right)
\,.
   \end{equation*}
\end{theorem}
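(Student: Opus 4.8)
The plan is to deduce both parts of the theorem from Theorems~\ref{main-thm3} and~\ref{main-thm2}, by pushing their formulas through the quasi-isomorphism $\pi\colon\scriptM 3 {\O} \to\tildescriptM 3 {\O} $ constructed in Section~\ref{tM2O}. Recall that $\pi$ is surjective in each degree, sends the symbol $[g]_2\tensor f$ in the degree-$2$ term of $\scriptM 3 {\O} $ to the symbol $[g]_2\tensor f$ in $\tM_2(\O)\tensor\OQ$, and is the canonical surjection $\OQ\tensor\bigwedge^2\OQ\to\bigwedge^3\OQ$ in degree~$3$; in particular the degree-$2$ differential of $\scriptM 3 {\O} $ followed by this surjection equals the degree-$2$ differential of $\tildescriptM 3 {\O} $ pre-composed with $\pi$. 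Since $\pi$ induces the isomorphism $H^2(\scriptM 3 {\O} )\xrightarrow{\,\simeq\,}H^2(\tildescriptM 3 {\O} )$ of the statement, and since $\regmapc$ (respectively $\regmapb$, when $\omega$ is holomorphic) computes the composite of $\reg$ with $\cup\,\omega$ on $H^2(\scriptM 3 {\O} )$ by Theorem~\ref{main-thm3} (respectively~\ref{main-thm2}), it will be enough to prove: (a) the assignment of part~1 (respectively part~2) is well defined on $\tM_2(\O)\tensor\OQ$ and kills the image of $\tM_3(\O)$, hence induces the asserted map $\regmapd$ on $H^2(\tildescriptM 3 {\O} )$; and (b) $\regmapd(\pi(\alpha))=\regmapc(\alpha)$ (respectively $=\regmapb(\alpha)$) for every cocycle $\alpha=\sum_i[g_i]_2\tensor f_i$ of $\scriptM 3 {\O} $.

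For (a) in part~1, I would first observe that, pulled back along $M_2(\O)\tensor\OQ\to\tM_2(\O)\tensor\OQ$, the assignment there is
\[
  [g]_2\tensor f\ \longmapsto\ \tfrac23\,\regmapc\bigl([g]_2\tensor f\bigr)-\tfrac23\,\regmapc\bigl([1-g]_2\tensor f\bigr),
\]
because $\dlog\bigl(1-(1-g)\bigr)=\dlog g$, because $1-g$ is a special unit of $\O$ whenever $g$ is, and because a wide open space on which $f,g,1-g$ are invertible and $\omega$ is holomorphic serves for both symbols. Writing $\iota$ for the operator $[g]_2\tensor f\mapsto[1-g]_2\tensor f$ on $M_2(\O)\tensor\OQ$ (well defined, the defining relations of $M_2(\O)$ being invariant under $g\mapsto 1-g$), this pulled-back map is $\tfrac23(\regmapc-\regmapc\circ\iota)$; it is well defined on $M_2(\O)\tensor\OQ$ because $\regmapc$ is (Theorem~\ref{main-thm3}), descends to $\tM_2(\O)\tensor\OQ$ once checked to kill $\ker\bigl(M_2(\O)\to\tM_2(\O)\bigr)\tensor\OQ$, and kills the image of $\tM_3(\O)$ because $\regmapc$ kills the image of $M_3(\O)$. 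These last two checks amount to identities among the triple indices $\pair{\log f,\log u;\int F_\omega\dlog(1-u)}_e$ attached to the elementary functional-equation combinations of special units generating $\ker\bigl(M_2(\O)\to\tM_2(\O)\bigr)$, and to the relations of $\tM_3(\O)$; I would prove them from the Leibniz-type relation $\pair{F,G;H}_e+\pair{F,H;G}_e+\pair{G,H;F}_e=0$ (reflecting $\res_e\dd(FGH)=0$), the symmetry in the first two arguments, the effect of the transformations $G\mapsto-\log u$ and $H\mapsto-H$, and the invariance of the total sum of triple indices over the annuli ends of $U$ under changes of the auxiliary integrals and of $U$, all recalled before Theorem~\ref{main-thm3}.

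The heart of the matter is (b). For a cocycle $\alpha=\sum_i[g_i]_2\tensor f_i$ of $\scriptM 3 {\O} $, i.e.\ with $\sum_i(1-g_i)\tensor(g_i\wedge f_i)=0$ in $\OQ\tensor\bigwedge^2\OQ$, the formula just displayed turns $\regmapd(\pi(\alpha))=\regmapc(\alpha)$ into
\[
  \regmapc(\alpha)+2\,\regmapc(\iota\alpha)=0,
\]
that is,
\begin{multline*}
  \sum_{i,e}\pair{\log f_i,\log g_i;\int F_\omega\dlog(1-g_i)}_e\\
  {}+2\sum_{i,e}\pair{\log f_i,\log(1-g_i);\int F_\omega\dlog g_i}_e=0.
\end{multline*}
Here $\iota\alpha$ is in general not a cocycle, so this is not a formal functional equation: the cocycle relation $\sum_i(1-g_i)\tensor(g_i\wedge f_i)=0$ must be used in an essential way. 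I would feed it in --- it controls, on each annulus end $e$, the sum over $i$ of the residues of the one-forms $(\log f_i)(\log g_i)F_\omega\dlog(1-g_i)$ and $(\log f_i)(\log(1-g_i))F_\omega\dlog g_i$ --- and combine it with the triple-index identities above together with the reciprocity statement that the total sum over all annuli ends vanishes for coherent auxiliary data. Equivalently, one may run the chain-level comparison of Section~\ref{tM2O}: lift the $\tildescriptM 3 {\O} $-cocycle $\tilde\alpha$ to a cocycle $\tilde\alpha+\gamma$ of $\scriptM 3 {\O} $ with $\pi(\gamma)=0$ (possible since $\pi$ is a quasi-isomorphism in degrees~$2$ and~$3$), apply Theorem~\ref{main-thm3} to $\tilde\alpha+\gamma$, and identify $\regmapc(\gamma)$ with the required combination of triple indices by the same identities. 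I expect this to be the main obstacle: the triple index depends on choices of integrals for $F\dd G$, $F\dd H$ and $G\dd F$, so these have to be fixed coherently across all $i$ and all ends $e$ (using the integrals Coleman's theory supplies on the fixed wide open space) before the cocycle relation can be applied, the constant-term contributions at the points in the residue discs removed to form $U$ must be tracked throughout, and the genuine content is the reconciliation of the $\bigwedge^3$-valued cocycle condition of $\tildescriptM 3 {\O} $ with the $\OQ\tensor\bigwedge^2\OQ$-valued one of $\scriptM 3 {\O} $ across these manipulations.

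Part~2 will then follow from part~1 by specialising to a holomorphic $\omega$, for which $F_\omega$ is a genuine Coleman function on all of $C$, and rewriting each of the two sums of triple indices as an integral over a divisor. This is the same Stokes-theorem and Coleman-reciprocity computation that converts the triple-index formula of Theorem~\ref{main-thm3} into the explicit formula of Theorem~\ref{main-thm2} in the holomorphic case (carried out in Section~\ref{sec:down}): applied to $\sum_e\pair{\log f,\log g;\int F_\omega\dlog(1-g)}_e$ it yields, up to terms that integrate to $0$ over the divisor $(f)$, the integral $\int_{(1-g)}\log(g)\,F_\omega\dlog(f)$, and applied to $\sum_e\pair{\log f,\log(1-g);\int F_\omega\dlog g}_e$ it yields $\int_{(g)}\log(1-g)\,F_\omega\dlog(f)$; inserting these with the coefficient $\tfrac23$ gives the assignment of part~2. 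The only delicate point here is the compatibility of the constant-term conventions at $0$, $1$ and $\infty$ for $\Li_2$, $\Ltwo$ and $\Lmod 2 $ used in Section~\ref{sec:down} with evaluating $\int F_\omega\dlog(\,\cdot\,)$ at those boundary points, which is guaranteed by Lemmas~\ref{L2const} and~\ref{czindependence} and Corollary~\ref{lmodindep}.
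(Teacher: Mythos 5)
Your overall plan follows the paper's route (define the map on $\tM_2(\O)\tensor\OQ$, compare with $\regmapc$ on cocycles of $\scriptM 3 {\O} $, and obtain part~2 by a constant-term computation as in the proof of Theorem~\ref{main-thm2}), and you correctly isolate the identity that has to be proved for a cocycle $\sum_i[g_i]_2\tensor f_i$, namely
\begin{equation*}
\sum_{i,e}\pair{\log f_i,\log g_i;\int F_\omega\dlog(1-g_i)}_e
+2\sum_{i,e}\pair{\log f_i,\log(1-g_i);\int F_\omega\dlog g_i}_e=0\,.
\end{equation*}
But precisely at this point your argument has a genuine gap, which you yourself flag as ``the main obstacle'' without resolving it: the tools you invoke (trilinearity, symmetry, the local triple identity $\pair{F,G;H}+\pair{F,H;G}+\pair{G,H;F}=0$, independence of auxiliary data, residue reciprocity) are not enough. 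The paper closes this step with Proposition~\ref{needfortild}: for Coleman functions $F,G,H$ whose differentials are holomorphic and represent Frobenius eigenvectors of eigenvalue $q$, and $\omega$ with trivial residues on the ends, $\sum_e \pair{F,G;\int F_\omega \dd H}_e+\sum_e\pair{F,H;\int F_\omega \dd G}_e+\sum_e\pair{G,H;\int F_\omega \dd F}_e=0$. This is \emph{not} an instance of the local triple identity (there $\int F_\omega\dd H$ would have to occupy the third slot of all three terms), and it is not formal: its proof, like that of Proposition~\ref{trip-recip}, uses Frobenius equivariance and a Weil-number weight argument on $H^1(U)$. Applying it with $(F,G,H)=(\log f_i,\log g_i,\log(1-g_i))$ and then using the cocycle condition $\sum_i(1-g_i)\tensor(g_i\wedge f_i)=0$ of~\eqref{Ocond} --- which makes the last two cyclic sums equal, since their difference is a trilinear functional alternating in the pair $(f,g)$ --- gives exactly the displayed identity, and the difference $\regmapc-\regmapd$ on the cocycle is $1/3$ of it. Without some global Frobenius-equivariant input of this kind, feeding the cocycle relation into end-by-end residues plus the local identities cannot succeed.

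Two further points. For well-definedness the paper argues as in Lemma~\ref{factb}: the part~1 expression is trilinear in $(1-g,g,f)$ and antisymmetric in $g\leftrightarrow 1-g$, hence factors through $\dd\tensor\id$ with $\dd[g]_2=(1-g)\wedge g$ from~\eqref{tildescriptM2O}; this is immediate and avoids your operator $\iota\colon[g]_2\tensor f\mapsto[1-g]_2\tensor f$, whose well-definedness on $M_2(\O)$ you assert via invariance of the \emph{non-explicit} relations under $g\mapsto 1-g$, which is unjustified (though your use of it can be rephrased through the trilinear map~\eqref{auxmapa}). For part~2, your per-sum claim that $\sum_e\pair{\log f,\log g;\int F_\omega\dlog(1-g)}_e$ equals $\int_{(1-g)}\log(g)F_\omega\dlog(f)$ up to terms vanishing on $(f)$ is not correct symbol by symbol: by Proposition~\ref{const-term-expr} it also contains the terms $-\ord_y(g)\,c_z(\int\log(f)F_\omega\dlog(1-g))$ and, at poles of $g$, products of constant terms. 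In the paper the passage to the divisor integrals again uses the cocycle condition, now in the form $\sum_i(1-g_i)\wedge g_i\wedge f_i=0$ fed into the vanishing of the signed sum over permutations of $\ord_y(g)\,c_z(\int\log(f)F_\omega\dlog(h))$, so part~2 is likewise not a termwise consequence of part~1.
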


\begin{remark}
The careful reader will notice that the last formula above does not
make sense as written, because when $g(y)=\infty$ we also have
$1-g(y)=\infty$ so the integral is singular at the point of the
divisor where it is evaluated. This can be resolved either by using
constant terms or by evaluating at such a point the
difference
$\int \log(g) F_\omega \dlog(f) - \int \log(1-g) F_\omega \dlog(f) $, which does make sense.
\end{remark}

A key complex for doing computations is the complex
\begin{equation*}
\Ccomp \bullet \O : \Ccomp 1 \O \to \Ccomp 2 \O
\end{equation*}
in cohomological degrees 1 and 2,
which we will construct in Section~\ref{CcompOconstruction}. The
theorems in this introduction admit analogous results expressed in
terms of this complex. We avoided these results for clarity in the
introduction. 
 However, they are very useful in
applications since it is easier to find explicit examples to
which these results apply, e.g., for
certain elliptic curves; see \cite[Section~6]{Jeu96}.

We end the introduction with a conjecture. The regulator formulae
that we obtain do not depend on any integrality assumptions. This is
only required because the syntomic regulator is a map from the
$ K $-theory of an integral model. Thus we conjecture the following.

\begin{conjecture}
Theorems~\ref{main-thm1}, \ref{main-thm2}, \ref{main-thm3} and~\ref{main-thm4} hold, with the same formulas, with $\O$
replaced by $F$ and $\CC$ replaced by $C$.
\end{conjecture}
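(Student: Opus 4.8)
The plan is to deduce the conjecture from Theorems~\ref{main-thm1}, \ref{main-thm2}, \ref{main-thm3} and~\ref{main-thm4} by removing the integrality hypotheses by hand. A preliminary point is to fix the target: one needs a regulator $ \K 4 3 F \to \Hdr^1(C/K) $, presumably the rigid--syntomic regulator for smooth $ K $-varieties, extended to $ F $ via the colimit over the open $ U\subset C $, and compatible under the good-reduction identification with the regulator used in the theorems. Granting this, the genuinely new content is twofold: first, that the assignments defining $ \regmapa $, $ \regmapb $, $ \regmapc $ and $ \regmapd $ still give maps out of $ \scriptM 3 {F} $ and $ \tildescriptM 3 {F} $ when $ f $, $ g $ and $ 1-g $ are allowed to range over all of $ F^* $ rather than only over special units of $ \O $; and second, that the induced maps still compute the cup product of the regulator with $ \omega $.

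The first point should reduce to the computations already carried out over $ \O $ in the body of the paper, the only new feature being that $ f $, $ g $ or $ 1-g $ may now acquire zeros or poles along the special fibre $ \CC_\kappa $. This is harmless for the formulae themselves: $ \Li_2 $, $ \Ltwo $, $ \Lmod 2 $ and the triple index have been assigned values at all points via constant terms, on a residue disc where, say, $ g $ has a zero or pole the relevant constant terms remain independent of the parameter, and the cocycle/coboundary bookkeeping that proves well-definedness (vanishing on the image of $ \dd[u]_3 $ and compatibility with the $ \bigwedge^3 $-term) goes through formally unchanged. For the second point one exploits the decomposition $ F^*=\pi^{\Z}\times\O^* $, where $ \pi $ is a uniformizer and hence a \emph{constant} on $ C $, and splits a given cocycle in $ \scriptM 3 {F} $ accordingly: its integral part is covered by Theorems~\ref{main-thm2}--\ref{main-thm4}, so one is reduced to evaluating both the $ \Psi $-formula and the syntomic regulator on the ``mixed'' cocycles built from the extra generators, such as $ [\pi]_2\tensor f $, $ [u]_2\tensor\pi $, or $ [u]_2\tensor f $ with $ u\in\O^* $ but $ 1-u\notin\O^* $. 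Equivalently, the difference of the two sides, a map on $ H^2(\scriptM 3 {F} ) $ which vanishes on the image of $ H^2(\scriptM 3 {\O} ) $, is to be shown to vanish identically.

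The main obstacle is that one cannot reduce a general class to the integral case by shrinking the model: removing a horizontal divisor from $ \CC $ does not alter the vertical behaviour of a function such as $ \pi $ along $ \CC_\kappa $, while inverting $ \pi $ takes one outside the finite-type $ R $-schemes on which the syntomic regulator is naturally defined; and, unlike the archimedean situation, the integral locus is not dense, so no naive analytic-continuation argument is available. Everything therefore hinges on a direct evaluation of the syntomic regulator on the extra generators. The most promising routes are either a precise description of the effect of tensoring with $ \pi $ --- one expects a $ \log(\pi) $-linear correction matching the discrepancy $ \Ltwo(\pi)-\Lmod 2 (\pi)=\frac12\log(\pi)\log(1-\pi) $ already visible on the $ \Psi $-side --- or the construction of a semistable or logarithmic refinement of the syntomic formalism in which such classes can be treated intrinsically; in either case one would then hope to close the argument using the reciprocity and functoriality statements~\ref{reciprocity} and~\ref{functoriality} together with the integral case. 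Producing this evaluation, or this refinement, is precisely the step we do not know how to carry out, and is the reason the result is offered only as a conjecture.
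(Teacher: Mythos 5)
The statement you were asked to prove is stated in the paper as a \emph{conjecture}, and the paper offers no proof of it: its only justification is the remark preceding it, namely that the regulator formulae themselves involve no integrality, the hypothesis $\O$ (rather than $F$) being forced solely by the fact that the syntomic regulator is constructed on the $K$-theory of an integral model. Your write-up is consistent with this: it is a strategy sketch, not a proof, and you say so explicitly at the end. So there is nothing in the paper to compare your argument against, and your honest conclusion (that the evaluation of the regulator on the non-integral classes is exactly the missing step) matches the reason the authors themselves leave the statement as a conjecture.

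Two remarks on your framing. First, the preliminary issue you raise about defining a regulator on $\K 4 3 F$ is not really the obstruction: the paper shows $\K 4 3 {\O}\iso\K 4 3 F$ (Section~2.2), so the regulator of Definition~5.6 is already a map out of $\K 4 3 F$. The genuine difficulty is that the \emph{proofs} — the relative $K$-theory constructions over $X_{\O}^{2,\loc}$, the syntomic cohomology models on dagger spaces with their Frobenius lifts, and the passage through $\Ccomp \bullet \O$ — all require $f$, $g$, $1-g$ to be invertible along the special fibre, and a class in $H^2(\scriptM 3 F )$ need not be representable by symbols with entries in $\SPO$. Second, your proposed reduction via $F^*=\pi^{\Z}\cdot\O^*$ (which is the content of Remark~2.7) splits the $\tensor\FQ$ factor but not the symbols $[g]_2$ themselves: a generator $[g]_2\tensor f$ with $g$ or $1-g$ not a unit of $\O$ admits no decomposition into an ``integral part'' plus correction terms, and the cocycle condition in $\FQ\tensor\bigwedge^2\FQ$ does not respect such a splitting; so even the list of ``extra generators'' to be handled is larger and less structured than your sketch suggests. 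None of this affects the verdict — you have correctly located the open problem and not claimed to solve it — but it does mean that even the reduction step of your plan, not only the final evaluation, is currently out of reach.
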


The authors would like to thank the European
Community for support through the RTN network \emph{Arithmetic Algebraic
Geometry}, which enabled them to meet on various occasions
during the long gestation period of this paper.
Rob de Jeu would like to thank the Newton Institute in Cambridge,
where part of this paper was written, for a conducive
atmosphere.

This paper is dedicated to the memory of Jon Rogawski.
One of us (AB) still remembers Jon's help and advise as a young postdoc 
at UCLA. He was a great mentor with his calm and assured guidance. He will be greatly missed.

\begin{notation} \label{intronotation}
Unless stated otherwise, throughout the paper, we will be working with the following notation.

$ K $ will be a discrete valuation field of characteristic zero,
with valuation ring $ R $, and residue field $ \kk $ of positive
characteristic $ p $.  We shall assume that $ \kk $ is a subfield
of $ \Fpb $. In various places, $ k $ will be a number field inside $ K $.
In that case we denote by $ \kres \subseteq \kappa $ the residue field of
the local ring $ R'= k \cap R $.

$ \CC $ will be a smooth, proper, geometrically irreducible curve
over $ R $.
The generic fiber is denoted $ C $, the special fiber is denoted $ \Csp $.
We let $ F = K(C) $, and $ \O \subset F $ will be the valuation
ring for the valuation on $ F $ corresponding to the generic point of $ \Csp $,
which consists of those elements in $ F $ that are generically
defined on $ \Csp $.

If $ k \subset K $ is a number field, and
$ \CC' $ is a smooth, proper, geometrically irreducible curve
over $ R' = R \cap k $, then the generic fiber is denoted $ C' $,
the special fiber is denoted $ \Cspnf $.
We let $ F' = k(C') $, and $ \O' \subset F' $ will be the valuation
ring for the valuation on $ F' $ corresponding to the generic
point of $ \Cspnf $. In particular, if $ \CC = \CC'\otimes_{R'}R $,
then $ \O' = \O \cap F' $.

If $ S $ is a subset of a group, then we denote by $ {<} S {>} $
the subgroup generated by $ S $, and if $ S $ is a subset of
a $ \Q $-vector space, we denote by $ {<} S {>}_\Q $ the $ \Q $-vector
subspace generated by $ S $.

All tensor products will be over $ \Q $, unless specified otherwise.

\end{notation}

For the convenience of the reader, we give a commutative diagram,
which plays the role of \lq\lq{\it Leitteppich\/}\rq\rq\ for the proofs in this
paper. In the left lower square we may also use $ \O' $ instead
of $ \O $, in which case $ C = C' \otimes_{R'} K $.

\begin{equation} \label{MAP}
\begin{split}
\xymatrix{
 H^2(\scriptM 3 {\CC'} ) \ar[r]\ar[d]
&
 \K 4 3 {\CC'} \oplus \K 3 2 k \cup \O_\Q^{\prime*} \ar[d]
&
\\
 H^2(\scriptM 3 {\O} ) \ar[r]\ar[d]
&
\K 4 3 {\O} \ar[r]^-{\reg} \ar[d]
&
 \Hdr^1(C)\ar[d]^-{ \cdot \cup\omega }
\\
 H^1(\Ccomp \bullet {\O} ) \ar[r]
&
\K 4 3 {\O} / \K 3 2 {\O} \cup \OQ \ar@{.>}[r]
&
 K
}
\end{split}
\end{equation}

The constructions in algebraic $ K $-theory will be carried out
in Section~\ref{k-theory}.
The top left square comes from the natural map
$ \scriptM 3 {\CC'} \to \scriptM 3 {\O'} $ (see Section~\ref{M3CCconstruction}),
and is justified by~\eqref{FOCcd},
whereas the bottom left square is~\eqref{MCO}.
For $ \omega $ in $ \Hdr^1(C) $ the map
\[
\xymatrix{
\K 4 3 {\O} \ar[r]^-{\reg} & \Hdr^1(C)
}
\]
factorizes through the quotient map
$ \K 4 3 {\O} \to \K 4 3 {\O} / \K 3 2 {\O} \cup \OQ $
(see Corollary~\ref{factorcor}).
The resulting composition in the bottom line of~\eqref{MAP}
is then computed in Section~\ref{sec:begin}, using the techniques developed in
the preceeding sections. 
In Section~\ref{sec:end} we then finish the proofs of the theorems above,
based on this calculation.

\section{$ K $-theory} \label{k-theory}

\subsection{Introduction}

Consider a proper, smooth, geometrically irreducible curve $ \CC $
over $ R $ as in Notation~\ref{case1},
or $ \CC' $ over $ R' $ as in Notation~\ref{case2}.
We shall construct various cohomological complexes whose cohomologies
are related to that of $ F $, $ \O $, $ F' $ or $ \O' $.
The main idea is the same as in \cite{Jeu96}, but the fact that
we shall be working with a discrete valuation ring rather
than a field gives rise to some complications.  In order to highlight
the idea we start with a more gentle exposition.  For the proofs of
the statements that are used in the construction, we refer the
reader to \cite{Jeu95}, especially Sections~2.1 through~2.3, and~3.
There most of the work was done over $ \Q $,
but in fact the proofs hold over our base $ \O $ , a discrete
valuation ring of characteristic zero, 
without any change.

It will be clear from the constructions that the complexes are
natural in terms of $ F $, $ F' $, $ \O $ and $ \O' $, which
we shall use later in this paper. In particular,
if we start with $ \CC' $ over $ R' $ and let $ \CC  = \CC' \otimes_{R'} R $,
then there are natural
maps from the complexes for $ F' $ to those for $ F $, and from
those for $ \O' $ to those for $ \O $.

If $ B $ is a Noetherian scheme of finite Krull dimension $ d $, then
according to~\cite[Proposition~5]{Sou85},
one can write 
\begin{equation}
\label{weightdecomposition}
K_n(B) \tensor_\Z \Q = \oplus_{j=\min{2,n}}^{n+d} \K n j B
\end{equation}
where $ \K n j B $ consists of all $ \alpha $ in $ K_n(B) \tensor_\Z \Q $
such that $ \psi^k(\alpha) = k^j \alpha $ for all Adams operators $ \psi^k $.
(The regularity assumption at the beginning of Section~4 of loc.~cit.\
is not necessary, see~\cite[Proposition~8]{Gil-Sou99}.)
If in addition $ B $ is separated and regular, then the pullback
$ K_*(B) \to K_*({\mathbb A}_B^1) $ 
is an isomorphism, see \cite[\S7]{Qui67}.  
The weight behaves naturally
with respect to pullback, also giving us
$ \K m j B \iso \K m j {\mathbb A}_B^1  $ under pullback. 
And under suitable hypotheses for a
closed embedding, there
is a pushforward Gysin map with a shift in weights corresponding
to the codimension (see, e.g., \cite[Proposition~2.3]{Jeu95}).

Let $ X_B = \PP_B^1\setminus\{t=1\} $ with $ t $ the standard
affine coordinate on $ \PP_B^1 $.  Write $ \bbox_B^1 $ for the
closed subset $ \{t=0,\infty\} $ in $ \PP_B^1 $.  Then the relative
exact sequence for the couple $ (X_B; \bbox_B^1) $
gives us
$$
\dots\to
K_{n+1}(X_B) \to K_{n+1}(\bbox_B^1) \to K_{n}(X_B;\bbox_B^1) \to K_{n}(X_B) \to K_{n}(\bbox_B^1)
\to\cdots
$$
for $ n \geq 0 $.  Because the map pullback $ K_{n+1}(B) \to K_{n+1}(X_B) $
is an isomorphism, combining it with the pullback 
$  K_{n+1}(X_B) \to K_{n+1}(\bbox_B^1) = K_{n+1}(B)^2 $
shows that the map $ K_{n+1}(X_B) \to K_{n+1}(\bbox_B^1) $ corresponds
to the diagonal embedding $ K_{n+1}(B) \to K_{n+1}(B)^2 $.  As this holds
for all $ n \geq 0 $, we get that we have an isomorphism
$ K_{n}(X_B;\bbox_B^1) \iso K_{n+1}(B) $ for $ n \geq 0 $.  Note
that we have a choice of sign here in the isomorphism of the
cokernel of $ K_n(B) \to K_n(B)^2 $ with $ K_n(B) $.  This results
in similar choices of signs in the maps $ H^i(\scriptM n \O ) \to \K 2n-i n {\O} $
(resp. $ H^i(\tildescriptM n \O) \to \K 2n-i n {\O} $)
later on in this section.

We will have to go up one level in the relativity.
If we let $ \bbox_B^2 $ be shorthand for
$$
 \{t_1=0,\infty\};\{t_2=0,\infty\}
\,,
$$
then we can get a long exact sequence
\begin{align*}
& \cdots \to
K_{n+1}(X_B^2;\{t_1=0,\infty\}) \to  K_{n+1}(\{t_2=0,\infty\};   \{t_1=0,\infty\} )
\to
\cr
& 
\to K_{n}(X_B^2;\bbox_B^2) 
\to
K_{n}(X_B^2;\{t_1=0,\infty\})
\to K_{n}(\{t_2=0,\infty\} ;  \{t_1=0,\infty\} )
\to\cdots
.
\end{align*}
The composition
\begin{align*}
&
K_{n+1}(X_B;\{t_1=0,\infty\}) \rightiso K_{n+1}(X_B^2;\{t_1=0,\infty\})
\to 
\cr
& \qquad\qquad \to
K_{n+1}(\{t_2=0,\infty\};   \{t_1=0,\infty\} ) \iso K_{n+1}(X_B;\{t_1=0,\infty\})^2
\end{align*}
(with the first the pullback along the projection $ (t_1,t_2) \mapsto t_2 $)
is the diagonal embedding, hence we obtain an isomorphism
$ K_{n}(X_B^2;\bbox_B^2) \iso K_{n+1}(X_B;\bbox_B^1) $
for $ n \geq 0 $.
Therefore we get
$ K_n(X_B^2;\bbox_B^2) \iso K_{n+1}(X_B;\bbox_B)  \iso K_{n+2}(B)  $ 
for $ n \geq 0 $.
A similar argument with weights gives us an isomorphism
$ \K n j X_B^2;\bbox_B^2  \iso \K n+2 j B  $ for $ n \geq 0 $.

In order to get elements in $ K_{n+2}(X_B^2;\bbox_B^2) $,
we use localization sequences.  We first explain the idea for
$ K_{n+1}(X_B;\bbox_B) $, because for $ K_{n+2}(X_B^2;\bbox_B^2) $
the process involves a spectral sequence.
If $ u $ is an element in our discrete valuation ring
$ \O $ such that both $ u $ and $ 1-u $
are units, then we get an exact localization sequence
$$
\dots\to
K_{m}(\O) \to
K_m(X_\O;\bbox_{\O}^1)
\to K_m(X_{\O,\loc} ;\bbox_\O^1 ) \to K_{m-1}(\O) 
\to\cdots
$$
where $ X_{\O,\loc}
 = X_\O \setminus \{ t=u \} $ and we identified
$ \{ t=u \} \subset X_\O $ with $ \O $ (or rather $ \spec(\O) $).
We used here that $ u $ and $ 1-u $ are units in $ \O $ so that
$ \{ t = u \} $ does not meet $ \bbox_\O^1 $ or $ \{ t = 1 \} $,
and that $ \O $ is regular in order to identify $ K_m(\O) $ with
$ K_m'(\O) $.
(If we want to leave out $ \{ t = u \} $ and $ \{ t = v \} $ simultaneously
for two distinct elements $ u $ and $ v $ in $ \O $ such that all of
$ u $, $ v $, $ 1-u $ and $ 1-v $ are units, which we shall do
below, this already becomes far more complicated and one is force
to use a spectral sequence.)
The image of $ K_2(\O) \to K_2(X_{\O};\bbox_{\O}^1) $ can be controlled by
looking at the weights, which for the bit that we are interested in
gives us
$$
\dots\to
\K 2 1 {\O} \to
\K 2 2 {X_\O;\bbox_\O^1}
 \to \K 2 2 {X_{\O,\loc};\bbox_\O^1}  \to \K 1 1 {\O} 
\to\cdots
\,,
$$
so that $ \K 3 2 {\O} \iso \Ker\left(  \K 2 2 {X_{\O,\loc};\bbox_\O^1}  \to \K 1 1 {\O} \right) $.
Because of weights in $ K $-theory, one knows that $ \K 2 1 {\O} = 0 $,
so we can analyze $ \K 2 2 {X_\O;\bbox_\O^1} $
 as subgroup of
$ \K 2 2 {X_{\O,\loc};\bbox_\O^1} $.  In \cite[Section~3.2]{Jeu95}
universal elements $ [S]_n $ were constructed, of which we want
to use $ [S]_2 $ here.  It gives rise to an element $ [u]_2 $
in $  \K 2 2 {X_{\O,\loc};\bbox_\O^1} $ with boundary $ (1-u)^{-1} $
in $ \K 1 1 {\O} $.  If we use this for various $ u $ (suitably
modifiying the localization sequence above into a spectral sequence)
and also consider elements coming from the cup product 
$$
\K 1 1 {X_{\O,\loc};\bbox_\O^1} \times \K 1 1 {\O} \to \K 2 2 {X_{\O,\loc};\bbox_\O^1} 
$$
we can get part of $ \K 2 2 {X_{\O};\bbox_\O^1} \iso \K 3 2 {\O} $
by intersecting the kernel of the map corresponding to
$ \K 2 2 {X_{\O,\loc};\bbox_\O^1} \to \K 1 1 {\O} $
with the space generated by the symbols $ [u]_2 $ and the image
$  \K 1 1 {X_{\O,\loc};\bbox_\O^1} \cup \K 1 1 {\O}  $ of the cup product.

\subsection{Preliminary material.}
\label{prelimk}
We describe some basic facts about
the various $ K $-groups of $ F $, $ \O $, $ C $ and $ \CC $,
or $ F' $, $ \O' $, $ C' $ and $ \CC' $,
including those mentioned in the introduction. The two cases
are very similar so we shall treat them together.

We shall first consider the case where
$ F = k(C') $ for a smooth, projective curve $ \CC' $ over $ R' $ with
geometrically irreducible generic fiber $ C' $.
Let $ \Cspnf $ be the special fibre of $ \CC' $, which is a smooth,
projective curve over the finite field $ \kres $.
Because $ \Cspnf $ is regular, there is an exact localization sequence
\begin{equation}
\label{kloc}
\xymatrix{
\dots \ar[r] 
&
\K 4 2 {\Cspnfff} \ar[r]
&
\K 4 3 \O' \ar[r]
&
\K 4 3 F' \ar[r]
&
\K 3 2 {\Cspnfff} \ar[r]
&
\dots
}
\,.
\end{equation}
By \cite[Korollar~2.3.2]{Har77}),  $ K_n(L) $ is torsion for $ n \geq 2 $ 
for all function fields $ L $ of curves over finite fields, so in particular,
$ \K 4 3 {\O'} \rightiso \K 4 3 F' $.
If $ F=K(C) $, then we get
\[
\xymatrix{
\dots \ar[r] &
\K 4 2 {\Cspff} \ar[r] &
\K 4 3 {\O} \ar[r] &
\K 3 2 {F} \ar[r] &
\K 3 2 {\Cspff} \ar[r]
&
\dots
}
\]
By our assumptions (see Notation~\ref{intronotation}),
$ \kk \subseteq \Fpb $. 
According to \cite[Proposition~2.2]{Qui67} or \cite[Lemma~5.9]{Sri96}),
$ K_n(\kappa(\CC_\kappa)) $ is the direct limit of $ K_n $ of function fields of curves
over finite fields, hence is torsion as well, and we find $ \K 4 3 {\O} \iso \K 4 3 F $.

From the exact localization sequence
\[
\xymatrix{
\dots \ar[r]
&
\coprod_{x \in \Cspnf^{(1)}} \K n 1 \kres(x) \ar[r]
&
\K n 2 {\Cspnf} \ar[r]
&
\K n 2 {\Cspnfff} \ar[r]
&
\dots
}
\]
and the fact that $ \K n 1 L $ is zero for any field $ L $
for $ n \geq 2 $, we see that $ \K n 2 {\Cspnf} $ is trivial for $ n \geq 2 $.
From the exact localization sequence
\begin{equation*}
\xymatrix{
\dots \ar[r] 
&
\K 4 2 {\Cspnf} \ar[r]
&
\K 4 3 {\CC'} \ar[r]
&
\K 4 3 C' \ar[r]
&
\K 3 2 {\Cspnf} \ar[r]
&
\dots
}
\end{equation*}
we see that $ \K n 2 {\Cspnf} $ is trivial for $ n \geq 2 $,
hence $ \K 4 3 \CC' \simeq \K 4 3 C' $.
Using a direct limit argument as before,  we then see that
$ \K 4 3 {\CC} \iso \K 4 3 C $ as well.

\begin{remark}
\label{K4CtoK4Finjective}
We now have two identifications fitting into a commutative diagram
\begin{equation*}
\xymatrix
{
\K 4 3 \CC' \ar[r]\ar@{=}[d]
&
\K 4 3 {\O'} \ar@{=}[d]
\\
\K 4 3 C' \ar[r]
&
\K 4 3 F'
\,,
}
\end{equation*}
and similarly for $ F $, $ \O $, $ \CC $ and $ C $.
From the exact localization sequence
\begin{equation*}
\xymatrix
@C=11pt{
\dots \ar[r]
&
\coprod_{x \in C'^{(1)}} \K 4 2 k(x) \ar[r]
&
\K 4 3 C' \ar[r]
&
\K 4 3 F' \ar[r]^-{\partial}
&
\coprod_{x \in C'^{(1)}} \K 3 2 k(x) \ar[r]
&
\dots
}
\end{equation*}
we see that the map $ \K 4 3 F' \to \K 4 3 C' $ is injective
because $ \K 4 2 L = 0 $ for any number field $ L $.
Hence the map $ \K 4 3 {\CC'} \to \K 4 3 \O' $ is also injective.
\end{remark}

\begin{remark} \label{directsum}
We have $ \K 4 3 C' \oplus \K 3 2 k \cup \FpQ $ inside $ \K 4 3 F' $.
(This makes sense because $ \FpQ = \K 1 1 F' $.)
Namely, $ \K 4 3 C = \Ker(\partial) $ in the localization
sequence in Remark~\ref{K4CtoK4Finjective}.
On the other hand, for $ f $ in $ \FQ $ and $ \alpha $ in $ \K 3 2 k $,
$ \partial (\alpha \cup f) = \alpha \cup \textup{div}(f) $
in $ \coprod_{x \in C^{(1)}} k(x)_\Q^* $, hence this is trivial
only if $ f $ is in $ k_\Q^* $. But
$ \K 3 2 k \cup k_\Q^* \subseteq \K 4 3 k $, which
is zero as $ k $ is a number field.
Therefore $ \K 3 2 F \cup \FQ $
injects into $ \coprod_{x \in C^{(1)}} k(x)_\Q^* $ under $ \partial $.
\end{remark}

\begin{remark} \label{sameproducts}
Note that a local parameter of $ R' $ is also a local parameter
for $ \O' $, so $ F'^* $ is generated by $ \O'^* $ and that local
parameter. This implies that $ \K 3 2 k \cup \OpQ = \K 3 2 k \cup \FpQ $,
again because $ \K 3 2 k \cup k_\Q^* $ is trivial.
\end{remark}

We shall need the following result at several places later on.

\begin{prop} \label{milnork}
For a discrete valuation ring $ \O $, with residue field $ \kk $ and field of
fractions $ F $,  for all $ n \geq 1 $, the sequence
\[
\xymatrix{
\OQ^{\otimes n} \ar[r]
&
\K n n F \ar[r]
&
\K n-1 n-1 {\kk} \ar[r]
& 
0
}
\]
is exact.
\end{prop}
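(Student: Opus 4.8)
The statement is about the $n$-th Milnor $K$-group (the weight-$n$, degree-$n$ part of Quillen $K$-theory) of a discrete valuation ring $\O$, its fraction field $F$, and residue field $\kk$. The plan is to recognize the middle and right map as coming from the tame symbol / boundary in the localization sequence, and the left map as the product map realizing $\K n n F$ as a quotient of $(F^*)^{\otimes n}\otimes\Q$, then argue exactness purely on the level of these explicit presentations. First I would recall that for a field $L$ there is a surjection $L_\Q^{*\otimes n}\to \K n n L$ whose kernel is generated by Steinberg relations, so that $\K n n L$ is Milnor $K$-theory $K_n^M(L)\otimes\Q$; this is standard (the product map $K_1^{\otimes n}\to K_n$ lands in the top weight piece, and is onto there after $\otimes\Q$ by the computations of Soulé/the structure of the weight filtration, or one may cite it as in \cite{Jeu95}). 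So the claim reduces to the well-known exactness of
\[
\xymatrix{
K_n^M(F)\otimes\Q \ar[r]^-{\partial} & K_{n-1}^M(\kk)\otimes\Q \ar[r] & 0}
\]
together with the identification of the ``unramified part'' $\ker\partial$ with the image of $\OQ^{\otimes n}$.

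The key steps, in order: (1) Reduce to Milnor $K$-theory by the surjection $\OQ^{\otimes n}\twoheadrightarrow$ (image in $\K n n F$) and $F_\Q^{*\otimes n}\twoheadrightarrow \K n n F$, and invoke that the weight-$n$ degree-$n$ part is Milnor $K$-theory tensored with $\Q$ (this is exactly what is used implicitly when one writes $\K 1 1 F = \FQ$ and forms cup products as in Remark~\ref{directsum}). (2) Identify the map $\K n n F\to \K n-1 n-1 \kk$ with the tame symbol $\partial$ on Milnor $K$-theory coming from the localization sequence for $\spec\O$; surjectivity of $\partial$ is classical — given a local parameter $\pi$, the symbol $\{\pi,\bar u_2,\dots,\bar u_{n}\}$ (lifting units $\bar u_i\in\kk^*$ arbitrarily to $\O^*$) maps to $\{\bar u_2,\dots,\bar u_n\}$, and these generate $K_{n-1}^M(\kk)$. (3) Exactness in the middle: an element of $\ker\partial$ can be moved, modulo symbols involving only units of $\O$, into the subgroup generated by such unit-symbols, using the standard filtration argument on $K_n^M(F)$ by powers of the local parameter (Milnor's computation of $K_*^M$ of a DVR field, cf.\ Bass–Tate / Milnor's original paper): every element of $K_n^M(F)\otimes\Q$ is congruent modulo $\langle\O^*\rangle$-symbols to $\{\pi\}\cdot(\text{something pulled back from }\kk)$, and $\partial$ of that remainder is precisely that something, so it vanishes. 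Since $\langle\O^*\rangle$-symbols are visibly in the image of $\OQ^{\otimes n}$, exactness at $\K n n F$ follows. (4) Finally note the composite $\OQ^{\otimes n}\to \K n n F\to \K n-1 n-1\kk$ is zero because the tame symbol of a product of units is trivial (units have valuation zero), so the image of $\OQ^{\otimes n}$ is contained in $\ker\partial$; combined with (3), the two coincide and the sequence is exact.

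The main obstacle is step (1)/(3): making precise, with the right references, that $\K n n{-}$ is Milnor $K$-theory rationally and that the filtration argument of Milnor/Bass–Tate goes through verbatim after $\otimes\Q$ over a DVR of characteristic zero with our residue field. The filtration computation is purely formal in symbols and does not see the coefficients, so tensoring with $\Q$ is harmless; the identification with Milnor $K$-theory is where I would lean on \cite{Jeu95} (Sections~2.1--2.3) — indeed that paper's construction of the universal elements $[S]_n$ and the weight analysis already encodes exactly this, so one could alternatively phrase the whole proof internally to the $[S]_n$-formalism and the localization spectral sequence sketched above, bypassing any external Milnor $K$-theory citation. Either way, the content beyond bookkeeping is the surjectivity of the tame symbol and the unit-symbol filtration, both classical.
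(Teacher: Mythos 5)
Your overall route is the same as the paper's: reduce to Milnor $K$-theory via Soul\'e's identification $\K n n L \iso K_n^M(L)_\Q$, identify the second map with the tame symbol, get surjectivity from the generators $\{u_1,\dots,u_{n-1},\pi\}$ with the $u_i$ units, and prove exactness in the middle by pushing everything into symbols in units of $\O$ plus symbols ending in a fixed uniformizer $\pi$.

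The one place where your sketch is not yet a proof is exactly where the paper spends its effort. In your step (3) you write that an element of $\ker\partial$ is, modulo unit symbols, of the form $\{\pi\}\cdot\beta$ with $\beta$ ``pulled back from $\kk$'', and that since $\partial$ of this remainder is the corresponding class, ``it vanishes''. What vanishes is only the residue class $\bar\beta$ in $K_{n-1}^M(\kk)$; this does not by itself place $\{\pi\}\cdot\beta$ in the image of $\OQ^{\otimes n}$, because ``pulling back from $\kk$'' is not a well-defined operation on $K$-groups: one must show that relations in $K_{n-1}^M(\kk)$ (Steinberg relations among the reductions, and changes of the chosen unit lifts by elements of $1+\pi\O$) lift to relations modulo unit symbols in $K_n^M(F)$. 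That lifting is precisely the content of the paper's argument: it notes that Steinberg elements in $(\O^*)^{\otimes (n-1)}$ surject onto those in $(\kk^*)^{\otimes (n-1)}$, reduces to $\alpha$ in the kernel of $(\O^*)^{\otimes (n-1)}\to(\kk^*)^{\otimes (n-1)}$, which is generated by tensors having a factor in $1+\pi\O$, and then checks by the explicit manipulation with $1-\pi^d u$ that a symbol $\{u_1,\dots,u_{n-1},\pi\}$ with some $u_i\in 1+\pi\O$ is a sum of unit symbols. Your appeal to Milnor/Bass--Tate does cover this (the classical statement is that $\partial$ identifies $K_n^M(F)$ modulo the unit-symbol subgroup with $K_{n-1}^M(\kk)$, proved by exactly this computation, and it holds integrally, so the $\otimes\,\Q$ point you single out as the main obstacle is not where the difficulty lies); but as written, ``so it vanishes'' conflates vanishing of the residue with vanishing of the element modulo unit symbols. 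Make the lifting-of-relations step explicit (or cite it precisely) and your proof is complete, and is then essentially identical to the paper's.
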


\begin{proof}
Since $ \K n n L \iso K_n^M(L)_\Q $
for any field $ L $ by~\cite[Th\'eor\`eme~2]{Sou85}, with
$ K_n^M(L) $ the Milnor $ K $-theory of $ L $,
it suffices to show that 
$ \O^{\otimes_\Z n} \to K_n^M(F) \to K_{n-1}^M(\kk) \to 0 $
is exact.
If $ \pi $ is a uniformizer of $ \O $, then $ K_n^M(F) $ is generated
by symbols 
$ \{ u_1,\dots,u_n \} $ and 
$ \{ u_1,\dots,u_{n-1}, \pi \} $, with all $ u_j $ in $ \O^*  $.
The map $ K_n^M(F) \to K_{n-1}^M(\kk) $ is the tame symbol, which
is trivial on the first type of generator, and maps the second
to $ \{ \bar u_1,\dots, \bar u_{n-1} \} $. It is clearly surjective.
So we only have to show that if $ \alpha $
in $ (\O^*)^{\otimes_\Z(n-1)} $ maps to the trivial element
under the composition $ (\O^*)^{\otimes_\Z (n-1)} \to (\kappa^*)^{\otimes_\Z (n-1)} \to K_{n-1}^M(\kappa) $,
then the image of $ \alpha\otimes\pi $ in $ K_n^M(F) $ 
is in the image of $ (\O^*)^{\otimes_\Z n} $.
Noticing that the Steinberg relations
$ \cdots \tensor x \tensor \cdots \tensor (1-x) \tensor \cdots $
in $ (\O^*)^{\tensor_\Z (n-1)} $ surject
onto those in $ (\kappa^*)^{\tensor_\Z (n-1)} $,
we see that we may assume that $ \alpha $ is in the kernel 
of the map $ (\O^*)^{\otimes_\Z (n-1)} \to (\kappa^*)^{\otimes_\Z (n-1)} $.
From the exact sequence
\[
 1 \to 1 + \O \pi \to \O^* \to \kappa \to 1
\]
and the fact that, if we have exact sequences
$  0 \to A_i \to B_i \to C_i \to 0 $ ($ i=1,\dots,m $) of Abelian groups, then
the kernel of $ B_1 \tensor_\Z \dots \tensor_\Z B_m \to C_1 \tensor_\Z \dots \tensor_\Z C_m $
is the image of
$ A_1 \tensor_\Z B_2 \tensor_\Z \dots \tensor_\Z B_m + B_1 \tensor_\Z A_2 \tensor_\Z B_3 \tensor_\Z \dots \tensor_\Z B_m + \dots $,
we see $ \alpha $ lies in the image of
$ (1+ \O \pi) \tensor_\Z \O^* \tensor_\Z \cdots \tensor_\Z \O^* + \O^* \tensor_\Z (1+\O \pi) \tensor_\Z \cdots \tensor_\Z \O^* + \cdots $.
But each element $ \{u_1,\dots,u_{n-1}, \pi\} $ with all
$ u_i $ in $ \O^* $ and at least one of them in $ 1 + \O \pi$
lies in the image of $ \O^{* \tensor_\Z n} $.  Namely, an element
in $ 1 + \O \pi $ is of the form $ 1-\pi^d u $ for some $ u $ in $ \O^* $,
$ d > 0 $. If $ d=1 $ we can rewrite $ \{\dots, 1-\pi u,\dots, \pi \} = -\{ \dots,1-\pi u,\dots, u \} $.
If $ d > 1 $, then using that $ \frac{1-\pi^d u}{1 - \pi} = 1 - \pi \frac{\pi^{d-1}u-1}{1-\pi} $,
we find that
$ \{\dots, 1-\pi^d u,\dots, \pi \} = \{ \dots, 1 - \pi \frac{\pi^{d-1}u-1}{1-\pi},\dots , \pi \} $,
which reduces to the case $ d = 1 $ as $ \frac{\pi^{d-1}u-1}{1-\pi} $
is in $ \O^* $.
\end{proof}

\begin{assumption} \label{sit}
Throughout the construction of the complexes in the various subsections below, we let $ F $
be a field of characteristic zero.  In the constructions for
complexes for $ \O $, $ \O $ will be a discrete valuation
ring $ \O $, with residue field $ \kk $ and field of fractions
$ F $, which we assume to be of characteristic zero.  We shall
always assume that $  | \kk | > 2 $, so that $ \SPO $ is non-empty and $ \langle \SPO \rangle = \O^* $.
\end{assumption}

\subsection{A few more preliminaries}

It will be convenient to introduce the notation $ \SPF = F^*\setminus\{1\} $,
as well as $ \SPO  = \{u \text{ in } \O^* \text{ such that } 1-u \text{ is in } \O^* \} $,
and $ \SPkk = \kk^*\setminus\{1\} $.

Throughout the remainder of Section~\ref{k-theory}, we shall let $ \Xfloc $
be the scheme obtained from $ X_F = \PP_F^1 \setminus \{ t = 1 \} $
by removing all points $ t = u $ with $ u $ in $ \SPF $.
We write $ X_{F}^{2,\loc} $ for $ (\Xfloc)^2 $.
Similarly, we let 
$ X_{\O} = \PP_{\O}^1 \setminus\{t=1\} $,
we write
$ \Xoloc $ for the scheme obtained from $ X_{\O} $ by removing
all subschemes $ t = u $ with $ u $ in $ \SPO $, and we write
$ X_{\O}^{2,\loc} $ for $ (\Xoloc)^2 $.
Finally, for $ \kk $, we let $ K_\kk = \PP_\kk^1\setminus\{t=1\} $,
we write $ X_{\kk}^\loc $ for the scheme obtained from $ X_{\kk} $ by removing
all subschemes $ t = u $ with $ u $ in $ \SPkk $, and we write
$ X_{\kk}^{2,\loc} $ for $ (X_\kk^\loc)^2 $.
(Of course, we would have to remove such a closed subscheme for
only a finite set of $ u $'s first, and then take a direct limit.
But by~\cite[Proposition~2.4]{Qui67} and some exact sequences
in relative $ K $-theory
this will give us the $ K $-theory of $ X_\kk^\loc $ anyway.
Moreover, as such a direct limit over finite subsets of $ \SPO $
or $ \SPF $ is clearly filtered, hence exact, this procedure
will commute with taking spectral sequences etc.\ below, so that
we work directly in the direct limit.)

Since writing $ \{t=0,\infty\} $ or $ \{t_1=0,\infty\}; \{t_2=0,\infty\} $
can be rather too long in places, we often abbreviate the first
by writing $ \bbox $, and the second by writing $ \bbox^2 $.

Let $ \I = \K 1 1 {\Xfloc;\bbox} $.
From the exact sequence
\[
\dots \to \K 2 1 {\bbox} \to \K 1 1 {\Xfloc;\bbox} \to \K 1 1 {\Xfloc} \to \K 1 1 {\bbox}
\to \dots
\]
we see that $ \I \subset \K 1 1 {\Xfloc} $ as $ \K 2 1 {\bbox} \iso \K 2 1 F ^ {\oplus 2} = 0 $.
So we can describe $ \I $ explicitly as those elements in $ \K 1 1 {\Xfloc} $
that restrict to 1 at $ t = 0 $ and $ t = \infty $.  
Because $ K_1(\Xfloc) $ is given by the units in the ring corresponding
to a localization of the affine line, we find that
\begin{equation*}
\I =
  \left\{
  \prod_j \left(\frac{t-u_j}{t-1}\right)^{n_j} \text{ with $ u_j $ in $ \SPF $ , $ n_j $ in $ \Z $, such that }
  \prod_j u_j^{n_j} =1
  \right\} \tensor_\Z \Q
\,.
\end{equation*}
Note that in particular the divisor map
\begin{equation}
\label{Iinjects}
\I \to \coprod_{t \in \SPF} \K 0 0 F
\end{equation}
is an injection.

Note that, if $ A $ is any $ \Q $-subspace of $ \K n l {\Xfloc;\bbox} $,
and we use the cup product $ \I \cup A \to \K n+1 l+1 {X_{F}^{2,\loc};\bbox^2} $
by pulling $ \I $ back along the first projection, and $ A $
along the second, then $ \dd (\I \cup A ) = (\dd \I) \cup A - \I \cup (\dd A) $,
and  $ \coprod_{t_1 \in \SPF} A / (\dd \I) \cup A  \iso A \tensor \FQ $
because $ \SPF $ generates $ F^* $, and the functions in $ \I $
(without $ \dots \tensor_\Z \Q $) give exactly the multiplicative
relations among the elements in $ \SPF $.
Of course, by reversing the role of the projections  we can do
this with $ t_2 $ instead of $ t_1 $ instead.
This will be used in order to change $ \coprod_{t \in \SPF} \dots $
into $ \dots \tensor_\Q \FQ $ in localization sequences or spectral sequences
below.

Under Assumption~\ref{sit}, 
we can do the same for $ \O $.
Namely, define $ \IO = \K 1 1 {\Xoloc;\bbox} $.
Because $ \K 2 1 {\O} = 0 $, and $ \K 1 1 {\O} = \OQ $, one see by exactly the same argument as for $ \I $
that
\begin{equation}
\label{IOdef}
\IO =
  \left\{
  \prod_j \left(\frac{t-u_j}{t-1}\right)^{n_j} \text{ with $ u_j $ in $ \SPO $ , $ n_j $ in $ \Z $, such that }
  \prod_j u_j^{n_j} =1
  \right\} \tensor_\Z \Q
\,.
\end{equation}
In particular, we have $ \IO \subseteq \I $ under localization
of the base from $ \O $ to $ F $.
Note that we used here that $ \IO $ gives us exactly the relations
needed  to turn $ \coprod_{t \in \SPO} \dots $ into $ \dots \tensor \OQ $,
as $ \IO $ (without $ \dots \tensor_\Z \Q $) gives the multiplicative relations among elements in $ \SPO $,
and $ \SPO $ generates $ \O^* $.

Finally, we like to mention that for $ x $ in $ F $,
under the map $ \K 0 0 F _{| t = x}  \to \K 0 1 {X_F;\bbox} \iso \FQ $,
$ 1 $ is mapped to $ x^{\pm 1} $, see~\cite[Lemma~3.14]{Jeu95}.
The same holds for $ \O $ instead of $ F $, and this is compatible with products.

\subsection{Construction of the complexes for $ F $ and $ C' $.} \label{Fsection}

Several parts of the constructions of the complexes 
in this section and in Section~\ref{Ocomplexes} below were
carried out in earlier papers \cite{Jeu95,Jeu96,Bes-deJ98}, but we review them so that we can refer to
the relevant details in some new constructions for $ \O $ and in the calculations
relating to regulators in later sections.  Also, in various cases the constructions
were carried out more generally, in which case they tend to become 
dependent on assumptions on weights in $ K $-theory, and our
exposition below will avoid such assumptions.

\subsubsection{Construction of the complexes $ \scriptM 2 F $ and $ \tildescriptM 2 F $.}
\label{scriptM2Fconstruction}

The principle of the construction of the complex $ \scriptM 2 F $ was first used in Bloch's
Irvine notes (finally published as \cite{Blo00}).
The construction of $ \scriptM 2 F $ and $ \tildescriptM 2 F $
can be found in \cite[Section~3]{Jeu95}.

We start with the localization sequence
\begin{equation}
\label{basicFlocalization}
\begin{split}
\xymatrix{
 \dots \ar[r]
&
 \coprod_{t \in \SPF} \K 2 1 F \ar[r]
&
 \K 2 2 {X_F;\bbox} \ar[r]
&
 \K 2 2 {\Xfloc;\bbox} \ar[r]
&{}
}
\\
\xymatrix{
&
 \coprod_{t \in \SPF} \K 1 1 F \ar[r]
&
\K 1 2 {X_F;\bbox} \ar[r]
&
\dots
\,.
}
\end{split}
\end{equation}
Because $ \K 2 1 F = 0 $ for any field $ F $
by \eqref{weightdecomposition},
this means that the cohomological complex (in degrees 1 and 2)
\begin{equation}
\label{genF2complex}
RC_{(2)}(F) :
 \K 2 2 {\Xfloc;\bbox}
\to
 \coprod_{t \in \SPF} \K 1 1 F 
\end{equation}
has cohomology groups
$ H^1(RC_{(2)}(F)) \iso \K 3 2 F $ and
$ H^2(RC_{(2)}(F)) \iso \K 2 2 F $.

In \cite[Section~3.2]{Jeu95}, see also~\cite{Blo90}, 
for every $ x $ in $ \SPF $ an element $ [x]_2 $ was
constructed in $ \K 2 2 {\Xfloc;\bbox} $
with the property that
its boundary in $ \coprod \K 1 1 F $ is $ (1-x)^{-1}_{|t=x} $.
Let
\[
\Symb_1(F) = \K 1 1 F = \FQ
\,,
\]
and 
\[
\Symb_2(F) = \langle [x]_2 \text{ with } x \text{ in } \SPF \rangle_\Q + \I \cup \Symb_1(F)
\,.
\]
Then we get a subcomplex of \eqref{genF2complex}
\begin{equation} \label{F2symbcomplex}
Symb_2(F) :
\Symb_2(F)  \to \coprod_{t \in \SPF} \Symb_1(F)
\,.
\end{equation}
Letting $ \FQ $ act on the right in \eqref{Iinjects} gives the subcomplex
\begin{equation} \label{I2subcomplex}
\I \cup \FQ \to \dd(\dots)
\,,
\end{equation} 
which is acyclic by \cite[Lemma~3.7]{Jeu95}.
Taking the quotient of \eqref{F2symbcomplex} by \eqref{I2subcomplex},
we obtain the complex
\begin{equation*}
\scriptM 2 F :M_2(F) \to \FQ \tensor \FQ
\,,
\end{equation*}
where we used that $ \dd \I $ gives exactly the right
relations to turn $ \coprod_{t \in \SPF} \cdots $ into
$ \cdots \tensor \FQ $, as $ \SPF $ generates $ F^* $,
and  $ M_2(F) = \Symb_2(F) / \I \cup \Symb_1(F) = \Symb_2(F) / \I \cup \FQ $.
Then $ M_2(F) $ is a $ \Q $-vector space generated by the $ [x]_2 $,
$ x $ in $ \SPF $, and the boundary of $ [x]_2 $ is $ (1-x) \tensor x $.

Note that from the maps
\[
\scriptM 2 F \leftarrow Symb_2(F) \to RC_{(2)}(F)
\]
with the left one a quasi isomorphism, we obtain maps
\[ 
H^i(\scriptM 2 F ) \to \K 4-i 2 F
\]
for $ i=1 $ and 2.  The map for $ i=1 $ is an injection 
as the corresponding statement holds for $ RC_{(2)}(F) $ and
$ Symb_2(F) $ is a subcomplex, and we are in the lowest degree.
For $ i=2 $ the map is an isomorphism because $ \K 2 2 F $ is
the quotient of $ \FQ \tensor \FQ $ by $ \langle x \tensor (1-x) \text{ with $ x $ in $ \SPF $}\rangle $.

We shall quotient out the complex $ \scriptM 2 F $ 
in order to end up with a second term $ \bigwedge^2 \FQ $
rather than $ \FQ \tensor \FQ $.  
The shape of the quotient complexes $ \tildescriptM 2 F $ here and
$ \tildescriptM 3 F $ in Section~\ref{M3Fcomplexes} is more in
line with conjectures (see, e.g., \cite[Conjecture~2.1]{gonXpam}).
Besides, the definition of complex $ \scriptM 3 C' $ depends
on the complexes $ \tildescriptM 2 L $ for number fields $ L $.

Namely, consider the subcomplex
of $ \scriptM 2 F $
\begin{equation} \label{N2Fcomplex}
 N_2(F) \to \dd(\dots)
\end{equation}
with
\begin{equation} \label{N2Fdef}
 N_2(F) = \langle [u]_2 + [u^{-1}]_2 \text{ with } u \text{ in } \SPF \rangle_\Q \subseteq M_2(F) 
\,.
\end{equation}
As $ \dd ([x]_2 + [x^{-1}]_2 ) = x \tensor x $
the second term is in fact $ \Sym^2(\FQ) $.  
By the proof of~\cite[Corollary~3.22]{Jeu95}~\eqref{N2Fcomplex}
 is acyclic. Taking the quotient complex we get
\begin{equation} \label{tildescriptM2Fcomplex}
\tildescriptM 2 F : \tM_2(F) \to \bigwedge^2 \FQ
\,,
\end{equation}
with $ \tM_2(F) = M_2(F) / N_2(F) $,
and $ \dd [x]_2 = (1-x) \wedge x $.

Because $ \tildescriptM 2 F $ is quasi isomorphic to $ \scriptM 2 F $
we have maps
\begin{equation} \label{tM2maps}
H^i(\tildescriptM 2 F ) \to \K 4-i 2 F
\,.
\end{equation}
Again this maps is an injection for $ i=1 $ and an isomorphism for $ i=2 $.

There are essentially two ways of generalizing the complex $ \scriptM 2 F $.
The first one is to look at another part of the localization
sequence \eqref{basicFlocalization}, the other to replace $ X_F $
by $ X_F^n $ for $ n \geq 2 $, and use localization there, which will give a spectral
sequence.  The first will be used to construct the complex $ \Ccomp \bullet F $
in Section~\ref{CcompFconstruction} below, the second (with $ n=2 $) will be used for constructing the complex
$ \scriptM 3 F $ below.

\subsubsection{Construction of the complexes $ \scriptM 3 F $ and $ \tildescriptM 3 F $.} \label{M3Fcomplexes}

Those complexes were also defined in \cite[Section~3]{Jeu95}.
The complex $ \scriptM 3 F $ consists
of three terms in cohomological degrees 1, 2 and 3:
\begin{equation}
M_3(F) \to M_2(F) \tensor \FQ \to \FQ \tensor \bigwedge^2 \FQ 
\end{equation}
and comes equipped with maps
$ H^2(\scriptM 3 F ) \to \K 4 3 F $ and
$ H^3(\scriptM 3 F ) \to \K 3 3 F $.
The last of those two maps is in fact an isomorphism.

Although we shall need a similar complex $ \scriptM 3 {\O} $ in order
to have information about the special fiber,
we describe the complex $ \scriptM 3 F $ first, as it is notationally
easier.  Moreover, in the part of the complex we are interested
in, we can view $ \scriptM 3 {\O} $ as a subcomplex of $ \scriptM 3 F $
(see Remark~\ref{m2includes}).

Consider the divisors on $ X_F^2 $ defined by putting $ t_i =u_j $
for some $ u_j $ in $\SPF $ for $ i = 1 $ or 2.
Then there is a spectral sequence (see \cite[page~257]{Jeu96}
or \cite[Page~221]{Jeu95})
\begin{alignat}{3}
\label{Fss}
& \qquad\quad\vdots & \vdots\quad\qquad\qquad\qquad\qquad & \quad\qquad\vdots
\\
\notag
& \K 2 3 {X_F^{2,\loc};\bbox^2}
& 
\quad\coprod_{t_1 \in \SPF} \K 1 2 {\Xfloc; \bbox}
\,{\textstyle\coprod}
\coprod_{t_2 \in \SPF } \K 1 2 {\Xfloc; \bbox}
&
\quad
\coprod_{t_1, t_2  \in \SPF } \K 0 1 F
\\
\notag
& \K 3 3 {X_F^{2,\loc};\bbox^2}
& 
\quad\coprod_{t_1 \in \SPF } \K 2 2 {\Xfloc; \bbox}
\,{\textstyle\coprod}
\coprod_{t_2 \in \SPF } \K 2 2 {\Xfloc; \bbox}
&
\quad
\coprod_{t_1, t_2 \in \SPF } \K 1 1 F
\\
\notag
& \K 4 3 {X_F^{2,\loc};\bbox^2}
& 
\quad\coprod_{t_1 \in \SPF } \K 3 2 {\Xfloc; \bbox}
\,{\textstyle\coprod}
\coprod_{t_2 \in \SPF } \K 3 2 {\Xfloc; \bbox}
&
\quad
\coprod_{t_1, t_2 \in \SPF } \K 2 1 F
\\
\notag
& \qquad\quad\vdots & \vdots\quad\qquad\qquad\qquad\qquad & \quad\qquad\vdots
\end{alignat}
converging to $ \K * 3 {X_F^2;\bbox^2} \iso \K *+2 3 F $.  The only terms in it that contribute to $ \K 4 3 F $
are $ \K 2 3 {X_F^{2,\loc};\bbox^2} $ and
$ \coprod_{t_1 \in \SPF } \K 2 2 {\Xfloc; \bbox} \,{\textstyle\coprod} \coprod_{t_2 \in \SPF } \K 2 2 {\Xfloc; \bbox} $
because $ \coprod_{t_1,t_2 \in \SPF } \K 1 2 F $ is trivial.
Let $ RC_{(3)}(F) $ be the cohomological complex in degrees 1, 2 and 3, consisting of the
row in~\eqref{Fss} that begins with $ \K 3 3 {X_F^{2,\loc};\bbox^2} $:
\begin{equation} \label{gencomplex}
\begin{split}
RC_{(3)}(F) :
\K 3 3 {X_F^{2,\loc};\bbox^2} \to 
\qquad\qquad\qquad\qquad\qquad\qquad\qquad\qquad\qquad\qquad
\\
\qquad\qquad\quad\coprod_{t_1 \in \SPF } \K 2 2 {\Xfloc; \bbox}
\,{\textstyle\coprod}
\coprod_{t_2 \in \SPF } \K 2 2 {\Xfloc; \bbox}
\to 
\coprod_{t_1, t_2 \in \SPF } \K 1 1 F
\,.
\end{split}
\end{equation}
This complex was denoted $ C_{(3)} $ in \cite[Section~3.1]{Jeu95},
but considering the notational overload of the letter $ C $ in this paper,
we prefer to think of it as a Row Complex rather than just a
Complex.

Note that $ \K 1 2 F $ equals zero, so for $ i=2 $ and 3 there is a map
\begin{equation}
\label{genmap}
H^i( RC_{(3)}(F) ) \to \K 6-i 3 F
\,.
\end{equation}

For $ x $ in $ \SPF $, in addition to the element $ [x]_2 $ in $ \K 2 2 {\Xfloc; \bbox} $
of Section~\ref{scriptM2Fconstruction}, there is also
an element $ [x]_3 $ in $ \K 3 3 {X_F^{2,\loc};\bbox^2} $
(see \cite[Section~3.2]{Jeu95})
with boundary $ - [x]_{2 | t_1 = x} +[x]_{2 | t_2 = x}  $ in $ \coprod_{t_1 \in \SPF } \K 2 2 {\Xfloc; \bbox}
\,{\textstyle\coprod}
\coprod_{t_2 \in \SPF } \K 2 2 {\Xfloc; \bbox} $
in~\eqref{Fss}.
Let us define $ \Symb_n(F) \subseteq \K n n {X_F^{n-1,\loc};\bbox^{n-1}} $
for $ n=1 $, 2 and 3 by setting
\[
\Symb_1(F) = \FQ
\,,
\]
\[
\Symb_2(F) = \langle [u]_2 \text{ with } u \text{ in }  \SPF \rangle_\Q + \I \cup \Symb_1(F) 
\,,
\]
and 
\[
\Symb_3(F) = \langle [u]_3 \text{ with } u \text{ in }  \SPF \rangle_\Q + \I \, \tcup \, \Symb_2(F) 
\,.
\]
For $ n=2 $, those are the definitions given in Section~\ref{scriptM2Fconstruction},
and for $ n = 3 $, by $ \tcup $ we mean the following. In the projection $ X_F^2 $ to $ X_F $,
we
can use one of the factors to pull back $ \I $, the other to pull
back $ \Symb_2(F) $ and then take the product to land in $ \Symb_3(F) $,
giving us two cup products.
The $ \tcup $ indicates that we take the sum of the images of
both possibilities for those cup products.

Because, in \eqref{gencomplex}, $ \dd [u]_2 = (1-u)^{-1}_{|t=u} $, and
$ \dd [u]_3 = -[u]_{2|t_1 = u} + [u]_{2|t_2 = u} $, it follows that
\begin{equation}
\label{symbcomplex}
Symb_{(3)}(F) :
\Symb_3(F) \to \coprod_{t_1\in\SPF}\Symb_2(F) \,{\textstyle \coprod} \coprod_{t_2\in\SPF} \Symb_2(F)
        \to \coprod_{t_1,t_2\in\SPF} \Symb_1(F)
\end{equation}
is a subcomplex of~\eqref{gencomplex}.
It is shown in \cite[Lemma~3.9 and Remark~3.10]{Jeu95} that the subcomplex 
\begin{equation}
\label{IFcomplex}
\I \, \tcup \, \Symb_2(F) \to
  \coprod_{t_1 \in \SPF } \I \cup \FQ \,{\textstyle \coprod} \coprod_{t_2\in\SPF}  \I \cup \FQ + \dd (\dots) \to \dd (...)
\end{equation}
of~\eqref{symbcomplex} is acyclic.

$ S_2 $ acts on the spectral sequence~\eqref{Fss} by swapping
$ t_1 $ and $ t_2 $.  It therefore also acts on the complex~\eqref{gencomplex}
above.  Because the symbol $ [x]_3 $ is alternating by construction
(see \cite[Section~3.2]{Jeu95}), we can take the alternating parts 
of~\eqref{symbcomplex} and~\eqref{IFcomplex}, and form the quotient complex
\begin{equation*}
\scriptM 3 F : M_3(F) \to M_2(F) \tensor \FQ \to \FQ \tensor
\bigwedge^2 \FQ 
\,,
\end{equation*}
where
\[
M_3(F) = \Symb_3(F) / \left(\I \, \tcup \, \Symb_2(F) \right)^\alt
\,,
\]
and 
\[
M_2(F) = \Symb_2(F)/ \I \cup \FQ 
\]
as before in Section~\ref{scriptM2Fconstruction}.
Note that, for $ n=2 $ and 3, $ M_n(F) $ is a $ \Q $-vector space on symbols $ [x]_n $
for $ x $ in $ \SPF $, modulo non-explicit relations depending
on $ n $.  The maps in the complex are given by
\[
\dd [x]_3 = [x]_2 \tensor x
\]
and
\[
\dd [x]_2 \tensor y = (1-x) \tensor (x \wedge y)
.
\]
As before, we used here that $ \dd \I $ gives exactly the right
relations to turn $ \coprod_{t \in \SPF} \dots $ into
$ \dots \tensor \FQ $, as $ \SPF $ generates $ F^* $.
As $ Symb_{(3)}(F) $ is a subcomplex of $ RC_{(3)}(F) $, this
gives us maps
\[
\scriptM 3 F \leftarrow Symb_{(3)}(F)^\alt \to RC_{(3)}(F)^\alt
\to RC_{(3)}(F)
\]
with the left map a quasi isomorphism.
Combining this with~\eqref{genmap} gives us a map 
\begin{equation} \label{HM3Fmap}
H^i( \scriptM 3 F ) \to \K 6-i 3 F 
\end{equation}
for $ i=2 $ and 3.
(For $ i=1 $, starting with
$ H^1(RC_{(3)}(F) ) \to \K 5 3 F / \K 4 2 F \cup \FQ $, we still
obtain a map $ H^1(\scriptM 3 F ) \to \K 5 3 F / \K 4 2 F \cup \FQ $.)

Finally, we quotient out $ \scriptM 3 F $ in order to obtain
$ \tildescriptM 3 F $, as follows. Let
\begin{equation*}
  N_3(F) = \langle [u]_3 - [u^{-1}]_3 \text{ with } u \text{ in } \SPF \rangle_\Q \subseteq M_3(F) 
\end{equation*}
(cf.~\eqref{N2Fdef}; in general $ N_n(F) $ is generated by the
$ [u]_n +(-1)^n [u^{-1}]_n $) and consider the subcomplex
\begin{equation} \label{N3Fcomplex}
\xymatrix{
 N_3(F) \ar[r]
&
 N_2(F) \otimes \FQ \ar[r]
&
\dd(\dots)
}
\end{equation}
of $ \scriptM 3 F $. By the proofs of \cite[Proposition~3.20,
Corollary~3.22]{Jeu95} it is acyclic in degrees~2 and~3, hence
for the quotient complex
\begin{equation*}
 \tildescriptM 3 {F} : \tM_3(F) \to \tM_2(F) \otimes \FQ \to \bigwedge^3\FQ
\,,
\end{equation*}
where $ \tM_3(F) = M_3(F) / N_3(F) $,
we get a map 
\begin{equation} \label{HtildeM3Fmap}
 H^i(\tildescriptM 3 F ) \leftiso H^i(\scriptM 3 F ) \to \K 6-i 3 F
\,.
\end{equation}
In $ \tM_3(F) $ we still denote the class of $ [x]_i $ with $ [x]_i $, so that
the maps
are now given by 
$ \dd [u]_3 = [u]_2 \tensor u $
and
$ \dd [u]_2 \tensor v = (1-u) \wedge u \wedge v $.

\subsubsection{Construction of the complex $ \scriptM 3 C' $.}
\label{scriptM3Cconstruction}

In this section we consider the situation where we have 
 smooth, projective, geometrically irreducible
curve $ C' $ over a number field $ k $ with function field $ F' = k(C') $.

Because we are interested in finding elements in $ \K 4 3 C' $,
we introduce yet another complex, $ \scriptM 3 C' $, which
is the total complex associated to the double complex
\begin{equation*}
\begin{split}
\xymatrix{
M_3 (F') \ar[r]^-\dd \ar[d] & M_2 (F') \tensor_\Q \FpQ \ar[r]^-\dd \ar[d]_{\partial_1} & \FpQ\tensor \bigwedge^2 \FpQ \ar[d] _{\partial_2}\cr
0 \ar[r] & \coprod_x \tM_2 (k(x)) \ar[r] ^-\dd& \coprod_x \bigwedge^2 k(x)_\Q^*
\,. 
}
\end{split}
\end{equation*}
(Although not needed in this paper, one could define the complex
$ \tildescriptM 3 C' $ by using $ \tildescriptM 3 F' $ in the top row.)
Here the coproducts are over all closed points $ x $ of $ C' $.
The boundary maps are as follows.
The $ \dd $'s in the top row are as in $ \scriptM 3 F' $.
In the bottom row, $ \dd [z]_2 = (1-z) \wedge z $.
For the vertical maps, $ \partial_{1,x} ([g]_2 \tensor f) = \ord_x(f) \cdot [g(x)]_2 $,
with the convention that $ [0]_2 = [1]_2 = [\infty]_2 = 0 $.
Finally, 
$ \partial_{2,x} $ described as follows.  Let $ \pi $ be a uniformizer at $ x $,
$ u_j $ units at $ x $.  Then $ \partial_{2,x} $ is determined
by 
\[
 \pi \wedge u_1 \wedge u_2  \mapsto u_1(x) \wedge u_2(x) 
\text{ and }
 u_1 \wedge u _2 \wedge u_3  \mapsto 0 
\,.
\]
Therefore, an element
$ \sum_i [g_i]_2 \tensor f_i $ in $ H^2(\scriptM 3 F' ) $ 
satisfies
\begin{equation*}
  \sum_i (1-g_i) \tensor ( g_i \wedge f_i) = 0
\end{equation*}
in $ \FpQ \tensor \bigwedge^2 \FpQ $.  The additional condition
for it to lie in  $ H^2(\scriptM 3 C' ) $ is that
\begin{equation*}
\sum_i \ord_x(f_i) [g_i(x)]_2 = 0
\end{equation*}
in $ \tM_2(k(x)) $ for all closed points $ x $ in $ C' $, with the convention
that $ [0]_2 = [1]_2 = [\infty]_2 = 0 $.

We have an obvious map $ \scriptM 3 C' \to \scriptM 3 F' $, corresponding
to the localization map in~\eqref{kloc}.  In \cite[Theorem~5.2]{Jeu96},
it is shown that this induces a commutative diagram
\begin{equation} \label{Fboundcd}
\begin{split}
\xymatrix{
 H^2(\scriptM 3 C' ) \ar[d] \ar[r] & H^2(\scriptM 3 F' )  \ar[d] 
\cr
\K 4 3 C' \oplus \K 3 2 k \cup \FpQ \ar[r] & \K 4 3 F' 
\,.
}
\end{split}
\end{equation}
Note that it was shown in Remark~\ref{directsum} that
$ \K 4 3 C' \oplus \K 3 2 k \cup \FpQ $ is indeed a direct sum,
and that the lower horizontal map is an injection.

\begin{remark} \label{Fprojection}
If $ k $ is totally real then $ \K 3 2 k $ is zero.  But in general
we can use the projection 
\begin{equation*}
\K 4 3 C' \oplus \K 3 2 k \cup \FpQ \to \K 4 3 C' 
\end{equation*}
to get a map 
$ H^2(\scriptM 3 C' ) \to \K 4 3 C' $
as the composition
\[
H^2(\scriptM 3 C' ) \to \K 4 3 C' \oplus \K 3 2 k \cup \FpQ \to \K 4 3 C' 
\,.
\]
\end{remark}

\subsubsection{Construction of the complex $ \Ccomp \bullet F $.}
\label{CcompFconstruction}

The complex $ \Ccomp \bullet F $ is described in \cite[Section~3]{Jeu96},
but it was first constructed in \cite{Blo90}.
We recall its construction in order to clarify the construction
of the corresponding complex for $ \O $ in Section~\ref{CcompOconstruction}.

One starts with another part of the exact localization sequence~\eqref{basicFlocalization} in relative
$ K $-theory.
\begin{equation}
\label{secondbasicFlocalization}
\begin{split}
\xymatrix{
  \dots \ar[r]
&
 \coprod_{t \in \SPF} \K 3 2 F \ar[r]
&
 \K 3 3 {X_F;\bbox} \ar[r]
&
 \K 3 3 {\Xfloc;\bbox} \ar[r]
&{}
}
\\
\xymatrix{
 \coprod_{t \in \SPF} \K 2 2 F \ar[r]
&
\K 2 3 {X_F;\bbox} \ar[r]
&
\dots
}
\,.
\end{split}
\end{equation}
Because $ \K 2 3 (X_F;\bbox) \iso \K 3 3 F \iso K_3^M(F)_\Q  $,
so that the map $  \coprod_{t \in \SPF} \K 2 2 F \to \K 2 3 {X_F;\bbox} $
is surjective,
this shows that the cohomological complex in degrees 1 and 2
\begin{equation*}
AC_{(3)}(F) :
 \K 3 3 {\Xfloc;\bbox} 
\to
 \coprod_{t \in \SPF} \K 2 2 F 
\end{equation*}
has maps 
\[
H^1(AC_{(3)}(F)) \iso \K 4 3 F / \K 3 2 F \cup \FQ 
\]
and
\[
H^2(AC_{(3)}(F)) \iso \K 3 3 F 
\,.
\]
(Here $ AC $ stands for Auxiliary Complex.)

Again we have an acyclic subcomplex
\begin{equation*}
\I \cup \K 2 2 F \to \dd(\dots)
\,,
\end{equation*}
and therefore the quotient complex $ \Ccomp \bullet F $ is a
cohomological complex in degree 1 and 2,
\[
\Ccomp \bullet F :
\Ccomp 1 F \to \Ccomp 2 F
\,,
\]
with 
\[
\Ccomp 1 F = \frac{\K 3 3 {\Xfloc;\bbox} }{\I \cup \K 2 2 F }
\]
and
\[
\Ccomp 2 F = \K 2 2 F \tensor \FQ
\,.
\]
It comes with maps 
\[
H^1(\Ccomp \bullet F ) \iso \K 4 3 F / \K 3 2 F \cup \FQ
\]
and
\[
H^2(\Ccomp \bullet F ) \iso \K 3 3 F 
\,.
\]

Note that if $ g $ is in $ \SPF $, and $ f $ is in $ F^* $, then
$ [g]_2 \cup f $ lies in $ \K 3 3 {\Xfloc;\bbox} $.  In fact,
if we take the class of $ [g]_2 $ in $ M_2(F) $ instead, then
we do get a well-defined class in $ \Ccomp 1 F $, as $ \I \cup \FQ \cup f $
goes to zero in $ \Ccomp 1 F $ by definition.
Under the differential in the complex, $ \symb g f $ is mapped
to $ \{ (1-g)^{-1}, f \} \tensor g = - \{ 1-g , f \} \tensor g $,
so the condition for an element $ \sum_i \symb {g_i} {f_i} $
to be in $ H^1(\Ccomp \bullet F ) $ is that
\begin{equation*}
\sum_i \{ 1-g_i, f_i \} \tensor g_i = 0
\end{equation*}
in $ \K 2 2 F \tensor \FQ $.

The map
\[
M_{(2)}(F) \tensor \FQ \to \Ccomp 1 F
\]
given by 
\[
[g]_2 \tensor f \mapsto [g]_2 \cup f
\]
fits into a commutative diagram
\begin{equation}
\label{shiftedcomplexmap}
\begin{split}
\xymatrix{
M_3(F) \ar[r]\ar[d]
&
M_2(F) \tensor \FQ \ar[r]\ar[d]
&
\FQ \tensor \bigwedge^2\FQ\ar[d]
\\
0 \ar[r]
&
\Ccomp 1 F \ar[r]
&
\Ccomp 2 F 
}
\end{split}
\end{equation}
where we map $ f\tensor g\wedge h $ to $ \{f, g \} \tensor h - \{f, h\} \tensor g $.
Multiplying the map $ H^2(\scriptM 3 F ) \to \K 4 3 F $ by $ -1 $
if necessary, we obtain a commutative diagram
\begin{equation}
\label{auxdiagram}
\begin{split}
\xymatrix{
H^2(\scriptM 3 F ) \ar[r]\ar[d] & \K 4 3 F \ar[d]
\\
H^1(\Ccomp \bullet F ) \ar[r] & \K 4 3 F / \K 3 2 F \cup \FQ 
}
\end{split}
\end{equation}
(see \cite[Proposition~3.2]{Jeu96}).

\subsection{Construction of the complexes for $ \O $ and $ \CC' $.}
\label{Ocomplexes}

\begin{remark}
At various stages
there will be some properties of the complexes
for $ \O $ that depend on
$ \K 3 2 {\kk} $
being trivial.  Clearly,
this applies to $ \O $ as in Section~\ref{sec:intro} by our remarks
about the $ K $-groups of $ \Cspff $ and $ \Cspnfff $
in Section~\ref{prelimk}.
\end{remark}

\subsubsection{Construction of the complex $ \scriptM 2 {\O} $.}

When we try to imitate the localization sequence~\eqref{basicFlocalization}
for $ \O $ rather than $ F $, we are dealing with the two dimensional
scheme $ X_\O $, and we end up with a spectral sequence instead,
\begin{alignat}{3}
\notag
\qquad\vdots\qquad\qquad & \qquad\qquad\vdots\qquad &
\\
\notag
\K 1 2 {\Xoloc;\bbox}  &\qquad \coprod_{t \in \SPO} \K 0 1 F &
\\
\label{firstO-ss}
\K 2 2 {\Xoloc;\bbox}  &\qquad \coprod_{t \in \SPO} \K 1 1 F &\qquad\coprod_{t \in \SPkk} \K 0 0 {\kk}
\\
\notag
\K 3 2 {\Xoloc;\bbox}  &\qquad \coprod_{t \in \SPO} \K 2 1 F &\qquad\coprod_{t \in \SPkk} \K 1 0 {\kk}
\\
\notag
\qquad\vdots\qquad\qquad & \qquad\qquad\vdots\qquad & \vdots\qquad\qquad
\end{alignat}
which converges to $ \K * 2 {\Xoloc;\bbox} \iso \K *+1 2 {\O} $.

Because $ \K 2 1 F  $, $ \K 1 0 {\kappa} $ and $ \K 2 0 {\kappa} $
are all trivial, if we let $ RC_{(2)}(\O) $ be the cohomological
complex in degrees 1, 2 and 3, given by
\begin{equation}
\label{RC2Ocomplex}
\K 2 2 {\Xoloc;\bbox}  \to \coprod_{t \in \SPO} \K 1 1 F \to \coprod_{t \in \SPkk} \K 0 0 {\kk}
\,,
\end{equation}
then there are maps
$ H^1(RC_{(2)}(\O)) \iso \K 3 2 {\O} $ and
$ H^2(RC_{(2)}(\O)) \to \K 2 2 {\O} $.
The last map is surjective by Proposition~\ref{milnork} and the exact
sequence
\[
\dots \to \K 2 1 {\kappa} \to \K 2 2 {\O} \to \K 2 2 F \to \K 1 1 {\kappa} \to \dots 
\]
as $ \K 2 1 {\kappa} = 0 $.
Note that the map $ \K 1 1 F \to \K 0 0 {\kappa} $ is surjective,
so that $ H^3(RC_{(2)}(\O)) $ is zero, as is $ \K 1 2 {\O} $.

Now let $ A \subseteq \K 2 2 {\Xoloc;\bbox} $ be the inverse
image of $ \coprod_{t \in \SPO} \OQ $ in $ \coprod_{t \in \SPO} \K 1 1 F $.
Because $ \K 1 1 {\O} = \OQ $ is equal to  $ \ker\left( \K 1 1 F \to \K 0 0 {\kappa} \right) $,
this means that the subcomplex 
\begin{equation}
\label{firstOsubcomplex}
RC_{(2)}(\O) :
A \to  \coprod_{t \in \SPO} \OQ
\end{equation}
of~\eqref{RC2Ocomplex} has maps 
$ H^1(RC_{(2)}(\O)) \to \K 3 2 {\O} $ and
$ H^2(RC_{(2)}(\O)) \to \K 2 2 {\O} $.

We again use the element $ [u]_2 $ in $ \K 2 2 {\Xoloc;\bbox} $
for every $ u $ in $ \SPO $, and put
\[
\Symb_1(\O) = \K 1 1 {\O} = \OQ
\,,
\]
and 
\[
\Symb_2(\O) = \langle [u]_2 \text{ with } u \text{ in } \SPO \rangle_\Q + \IO \cup \OQ
\,.
\]
(See~\eqref{IOdef} for the definition of $ \IO $.)
Observe that, if $ u $ is in $ \SPO $ and $ v $ is in $ \OQ $,
then $ [u]_2 $ and $ \IO \cup v $ are in $ A $, so
we get a subcomplex of~\eqref{firstOsubcomplex}
\begin{equation}
\label{O2symbcomplex}
Symb_2(\O) :
\Symb_2(\O)  \to \coprod_{t \in \SPO} \OQ
\,,
\end{equation}
containing the acyclic subcomplex 
\begin{equation}
\label{IO2subcomplex}
\IO \cup \OQ \to \dd(\dots)
\,.
\end{equation}
We take the quotient complex of~\eqref{O2symbcomplex} by~\eqref{IO2subcomplex},
to obtain the complex
\begin{equation} \label{scriptM2O}
\scriptM 2 {\O} : M_2(\O) \to \OQ \tensor \OQ
\,,
\end{equation}
with $ M_2(\O) = \Sym_2(\O) / \I \cup \OQ $.
Then $ M_2(\O) $ is a $ \Q $-vector space generated by the $ [u]_2 $,
$ u $ in $ \SPO $, and $ \dd [u]_2 = (1-u) \tensor u $.
(Again, we used that $ \dd \IO \cup \OQ $ gives us exactly the
right relations to change $ \coprod_{t \in \SPO} \OQ$ into $ \OQ \tensor \OQ $
because $ \SPO $ generates $ \O^* $.)
Note that we now have maps
\[
\scriptM 2 {\O} \leftarrow Symb_2(\O) \to RC_{(2)}(\O)
\,,
\]
with the left one a quasi isomorphism, so we obtain maps
\begin{equation} \label{scriptM2maps}
H^i(\scriptM 2 {\O} ) \to \K 4-i 2 {\O}
\end{equation}
for $ i=1 $ and 2.  Again the map for $ i=1 $ is an injection 
(cf.~\eqref{tM2maps}).
For $ i=2 $ the map is a surjection by Proposition~\ref{milnork} because
$ \K 2 2 {\O} = \ker\left( \K 2 2 F \to \K 1 1 {\kappa} \right) $.

Localizing the base from $ \O $ to $ F $ in~\eqref{firstO-ss}
gives us~\eqref{Fss}, so that we get a map of complexes
$ M_2(\O) \to M_2(F) $
since the various steps in the constructions of the two complexes
are compatible.

\begin{remark} \label{m2includes}
The map $ M_2(\O) \to M_2(F) $
is injective.  Namely, because the construction of the complexes
for $ \scriptM 2 {\O} $ and $ \scriptM 2 F $ is compatible with
the localization from $ \O $ to $ F $ in ~\eqref{firstO-ss},
we have a commutative diagram
\[
\xymatrix{
0 \ar[r] & H^1(\scriptM 2 {\O} ) \ar[r]\ar[d] & M_2(\O) \ar[r]\ar[d] & \OQ \tensor \OQ \ar[d]
\\
0 \ar[r] & H^1(\scriptM 2 {F} ) \ar[r] & M_2(F) \ar[r] & \FQ \tensor \FQ
\,,
}
\]
with $ H^1(\scriptM 2 {\O}) \subseteq \K 3 2 {\O} $ 
and $ H^1(\scriptM 2 F ) \subseteq \K 3 2 F $.
From the exact localization sequence
\[
\dots \to \K 3 1 {\kk} \to \K 3 2 {\O} \to \K 3 2 F \to \K 2 1 {\kk} \to \dots
\]
we see that $ \K 3 2 {\O} \iso \K 3 2 F $, so that the map on
$ H^1 $'s must be injective.  As $ \OQ \tensor \OQ \to \FQ \tensor \FQ $
is clearly injective, $ M_2(\O) \to M_2(F) $ must be injective
as well. So we may think of $ M_2(\O) $ as the subspace of $ M_2(F) $
generated
by the $ [u]_2 $ with $ u $ in $ \SPO \subset \SPF $.  
\end{remark}

\subsubsection{Construction of the complex $ \scriptM 3 {\O} $.} \label{M3O}

In this subsection, we shall be making Assumption~\ref{sit}.

If we now try to imitate the construction of $ \scriptM 3 F $ using $ \O $ instead of $ F $,
see some differences.   For example, in the construction of the
spectral sequence,
in codimension one, we shall end up with copies of $ \{ t_i = u \} $
for $ u  $ in $ \SPO $, 
which look like $ X_\O $, out of which we have to remove the
intersections with all other such pieces of codimension one of
the form $ \{ t_i = v \} $ for $ i =1 $ and 2, and $ v $ in $ \SPO $.
Note that, in particular, we also cut out $ t_i = v $ with $ u $ and $ v $
different elements in $ \SPO $, but reducing to the same in the
residue field.  Then $ t_i=v $ cuts out the bit in the special
fibre in $ t_i = u $.  
We therefore end up with copies of $ \XFLOC =  X_F \setminus \{ t = u \text{ with } u \text{ in } \SPO \}$.

So if we do this for $ \O $, we end up with the following spectral
sequence, converging to $ \K * 3 {X_{\O}^2;\bbox^2} \iso \K *+2 3 {\O} $
(see \cite[(3.7)]{Bes-deJ98}). For typographical reasons, let
us abbreviate $ \K n j {X_\O^m;\bbox^m} $ to $ \Kab n j m {\O} $,
$ \K n j {\XFLOC;\bbox} $ to $ \Kab n j 1 F $,
and $ \K n j {X_\kk;\bbox} $ to $ \Kab n j 1 {\kk} $. Then the
spectral sequence is

\begin{alignat}{4}
\label{Oss}
&
\quad\vdots
&
\vdots\qquad\quad
&
\quad\qquad\qquad\qquad\qquad\vdots
\\
\notag
&
\Kab 2 3 2 {\O} 
& 
\quad 
\left( \coprod_{t \in \SPO } \Kab 1 2 1 F \right)^2
&
\quad
\coprod_{t_1, t_2 \in \SPO } \K 0 1 F
\,{\textstyle\coprod}
\left( \coprod_{t \in \SPkk } \Kab 0 1 1 {\kk} \right)^2
\\
\notag
&
\Kab 3 3 2 {\O}
& 
\quad
\left(\coprod_{t \in \SPO } \Kab 2 2 1 F \right)^2
&
\quad
\coprod_{t_1, t_2 \in \SPO } \K 1 1 F
\,{\textstyle\coprod}
\left( \coprod_{t \in \SPkk } \Kab 1 1 1 {\kk} \right)^2
&
\quad
\coprod_{t_1, t_2 \in \SPkk } \K 0 0 {\kk}
\\
\notag
&
\Kab 4 3 2 {\O}
& 
\quad
\left( \coprod_{t \in \SPO } \Kab 3 2 1 F \right)^2
&
\quad
\coprod_{t_1, t_2 \in \SPO } \K 2 1 F
\,{\textstyle\coprod}
\left( \coprod_{t \in \SPkk } \Kab 2 1 1 {\kk} \right)^2
&
\quad
\coprod_{t_1, t_2 \in \SPkk } \K 1 0 {\kk}
\\
\notag
&
\quad\vdots
&
\vdots\qquad\quad
&
\quad\qquad\qquad\qquad\qquad\vdots
&
\vdots\qquad\quad
\end{alignat}
Here the $ (\dots)^2 $ corresponds to two copies, corresponding
to a coproduct over $ t_1 $ in $ \SPO $ or $ \SPkk $, and $ t_2 $ in $ \SPO $ or $ \SPkk $.
As explained before, in order to obtain $ \XFLOC $ out of $ X_F $,
we only remove $ t_i = u_j $ with $ u_j $ in $ \SPO $.

Now notice that all $ \K j 0 {\kk} $ are zero for $ j \geq 1 $,
that $ \K j 1 F  $ is zero for $ j \geq 2 $, and finally that
$ \K j 1 {X_{\kk}^\loc;\bbox} $ is zero as well for
$ j \geq 2 $: we consider the exact localization sequence
\[
 \dots \to
\K j 1 {X_{\kk}^1;\bbox} \to
\K j 1 {X_{\kk}^\loc;\bbox} \to
\coprod \K j-1 0 {\kk} \to \dots
\,,
\]
and use that $ \K j 1 {X_{\kk}^1;\bbox} \iso \K j+1 1 {\kk} $,
which is zero as $ \K m 1 L = 0 $ for $ m \geq 2 $ for any field $ L $,
as well as that $ \K j-1 0 {\kappa} = 0 $ because $ j-1 \geq 1 $.
Therefore, with $ RC_{(3)}(\O) $ the following cohomological complex
in degrees 1 through 4 (corresponding to the row in~\eqref{Oss}
starting with $ \K 3 3 {X_{\O,\loc}^2; \bbox^2} $):
\begin{equation}
\label{Ogencomplex}
\begin{split}
RC_{(3)}(\O) :
\K 3 3 {X_\O^{2,\loc};\bbox^2} 
\to
\left( \coprod_{t \in \SPO } \K 2 2 {\Xfloc; \bbox} \right)^2
\to
\qquad\qquad\qquad
\\
\coprod_{t_1, t_2 \in \SPO } \K 1 1 F
\;{\textstyle\coprod}
\left(\coprod_{t \in \SPkk } \K 1 1 {X_{\kk}^\loc;\bbox} \right)^2
\to
\coprod_{t_1, t_2 \in \SPkk } \K 0 0 {\kk}
\end{split}
\end{equation}
has maps
\begin{equation}
\label{Ogenmap}
H^i( RC_{(3)}(\O) ) \to \K 6-i 3 {\O}
\end{equation}
for $ i = 2 $, 3 and 4.

\begin{remark}
Note that for $ i=4 $ this statement is vacuous since from the
localization sequence
\[
\dots \to \K 3 3 F \to \K 2 2 {\kk} \to \K 2 3 {\O} \to \K 2 3 F \to \dots
\]
and the facts that $ \K 2 3 F $ is trivial, and $ \K 3 3 F \to \K 2 2 {\kk} $ is surjective
(see Proposition~\ref{milnork}), it follows that $ \K 2 3 {\O} $ is zero.
\end{remark}

\begin{remark} \label{Oinjective}
The map
$\K 2 2 {\Xoloc;\bbox} \to \K 2 2 {\XFLOC;\bbox} \to \K 2 2 {\Xfloc;\bbox} $
is injective.
Namely, we have an exact localization sequence
\[
\dots \to \K 2 1 {X_{\kk}^\loc;\bbox} \to \K 2 2 {\Xoloc;\bbox} \to \K 2 2 {\XFLOC;\bbox}
\to \cdots
\,,
\]
and $ \K 2 1 {X_{\kk}^\loc;\bbox} $ equals zero, as seen above.
Also, we have an exact localization sequence
\[
\dots \to
\coprod_{t \in F^* \setminus \SPF \bigcup \{1\}} \K 2 1 F 
\to
\K 2 2 {\XFLOC; \bbox} 
\to
\K 2 2 {\Xfloc;\bbox}
\to \cdots
\,,
\]
and again $ \K 2 1 F $ is zero.
\end{remark}

\begin{remark}
\label{K33OFnotinjective}
Note that, because we can localize $ \O $ to $ F $, we have
a natural map of the spectral sequence in~\eqref{Oss} to the
one in~\eqref{Fss},
which, at the level of the complexes~\eqref{gencomplex} and~\eqref{Ogencomplex},
simply forgets the terms over $ \kk $, includes a coproduct over $ \SPO $
into the corresponding coproduct over $ \SPF $, and uses the
maps $ \K 2 2 {\Xoloc;\bbox} \to \K 2 2 {\Xfloc;\bbox} $
and  $ \K 3 3 {X_{\O}^{2,\loc};\bbox^2} \to \K 3 3 {X_F^{'2,\loc};\bbox^2} $.
By Remark~\ref{Oinjective}, the first one is always injective, and the second
is injective if $ \K 5 2 {\kappa} $ and $ \K 4 2 F $ are zero.
\end{remark}

Let us try to create a jewel in the crown of the scary notation
in~\eqref{Ogencomplex}.  Define
$ \Symb_n(\O) \subseteq \K n n {X_{\O}^{n-1,\loc};\bbox^{n-1}} $
for $ n=1 $, 2 and 3 by setting
\[
\Symb_1(\O) = \OQ
\,,
\]
\[
\Symb_2(\O) = \langle [u]_2 \text{ with } u \text{ in }  \SPO \rangle_\Q + \IO \cup \Symb_1(\O) 
\,,
\]
as before, and 
\[
\Symb_3(\O) = \langle [u]_3 \text{ with } u \text{ in }  \SPO \rangle_\Q + \IO \, \tcup \, \Symb_2(\O) 
\,.
\]
Again, by $ \tcup $ we denote that we use both products, coming
from the two ways of projecting $ X_\O^2 $ to $ X_\O $.

Note that for $ n=1 $, $ \Symb_1(\O) = \OQ \subseteq \Symb_1(F) = \FQ $,
and that for $ n=2 $, we can view $ \Symb_2(\O) \subseteq  \Symb_2(F) $
inside $ \K 2 2 {\Xfloc;\bbox} $ by Remark~\ref{Oinjective},
as $ \K 2 2 {\Xoloc;\bbox}  \subseteq \K 2 2 {\Xfloc;\bbox} $.

Because $ \dd [u]_2 = (1-u)^{-1}_{|t=u} $, and
$ \dd [u]_3 = -[u]_{2|t_1 = u} + [u]_{2|t_2 = u} $ (where both
terms lie in a copy of $ \K 2 2 {\Xoloc;\bbox} $ inside $ \K 2 2 {\Xfloc} $,
again by Remark~\ref{Oinjective}), it follows that
\begin{equation}
\label{Osymbcomplex}
Symb_{(3)}(\O) :
\Symb_3(\O) \to \left( \coprod_{t \in \SPO } \Symb_2(\O) \right)^2
   \to \coprod_{t_1,t_2 \in \SPO } \OQ
\end{equation}
is a subcomplex (in degrees 1, 2 and 3) of~\eqref{Ogencomplex}.  Note that we used here
that elements in $ \SPO $ never give rise to a pole or zero over
$ \kk $, so the map to $ \coprod \K 0 0 {\kk} $ is zero.
Also, we used
that an element $ [u]_2 $ with $ u $ in $ \SPO $ under the localization
(of its construction),
\[
\K 2 2 {X_\O \setminus\{t=u\};\bbox} \to
\K 1 1 {\O} \to \dots
\]
maps to $ (1-u)^{-1} $, so under the boundary in~\eqref{Oss} it never
hits the $ \K 1 1 {X_{\kk}^\loc;\bbox} $ component.  Similarly,
the elements in $ \IO \cup \OQ $ never hit the $ \K 1 1 {X_{\kk}^\loc;\bbox} $.

Again, one shows
that the subcomplex of~\eqref{Osymbcomplex} given by
\[
\IO \, \tcup \, \Symb_2(\O) \to
 \left( \coprod_{t} \IO \cup \OQ \right)^2
  + (\dots)
 \to   \dd (\dots)
\]
is acyclic; see~\cite[Lemma~3.7 and Remark~3.10]{Jeu95}.

Taking the quotient complex, and the alternating part for the
action of $ S_2 $ under swapping the coordinates, we finally
get a complex
\begin{equation*}
M_3(\O) \to M_2(\O) \to \OQ \tensor \bigwedge^2 \OQ
\,.
\end{equation*}
Here 
\[
M_3(\O) = \Symb_3(\O) / \left( \IO \, \tcup \,  \Symb_2(\O) \right)^\alt
\]
and, as before,
\[
M_2(\O) = \Symb_2(\O)/ \IO \cup \OQ 
\,.
\]
Note that  $ M_n(\O) $ is a $ \Q $-vector space on symbols $ [u]_n $
for $ u $ in $ \SPO $, modulo non-explicit relations depending
on $ n $.  The maps in the complex are given by
\[
\dd [u]_3 = [u]_2 \tensor u
\]
and
\[
\dd [u]_2 \tensor v = (1-u) \tensor (u \wedge v)
.
\]

In particular, the condition for an element $ \sum_i [u_i] \tensor v_i $
in $ M_2(\O) \tensor \OQ $ to lie in 
$ H^2(\scriptM 3 {\O} ) $ is that
\begin{equation} \label{Ocond}
\sum_i (1-u_i) \tensor (u_i \wedge v_i ) = 0
\end{equation}
in  $ \OQ \tensor \bigwedge^2 \OQ $.

Again  $ S_2 $ acts on the various complexes by swapping the
coordinates, and we get maps
\[
\scriptM 3 {\O} \leftarrow Symb_{(3)}(\O)^\alt \to RC_{(3)}(\O)^\alt \to RC_{(3)}(\O)
\]
with the left map a quasi isomorphism.
Combining this with~\eqref{Ogenmap} gives us a map 
\begin{equation} \label{scriptMOmaps}
H^i( \scriptM 3 {\O} ) \to \K 6-i 3 {\O} 
\end{equation}
for $ i=2 $ and 3, where the map for $ i=3 $ is a surjection
if $ \K 3 2 {\kappa} = 0 $ by Proposition~\ref{milnork} and the localization
sequence 
\[
\dots \to \K 3 2 {\kappa} \to \K 3 3 {\O} \to \K 3 3 F \to \K 2 2 {\kappa} \to \cdots 
\,.
\]

\begin{remark}
Notice that by construction (i.e., by compatibility of everything
we did with the localization of $ \O $ to $ F $), these maps
for $ i=2 $ or 3 fit into a commutative diagram
\begin{equation} \label{FOM3cd}
\begin{split}
\xymatrix{
H^i( \scriptM 3 {\O}) \ar[r] \ar[d] & \K 6-i 3 {\O} \ar[d]
\\
H^i( \scriptM 3 F ) \ar[r] & \K 6-i 3 F 
\,.
}
\end{split}
\end{equation}
We also note that it was proved in Remark~\ref{m2includes} that
the map $ M_2(\O) \to M_2(F) $ is injective.  Because we clearly
have that $ \OQ \to \FQ $ is an injection, this means that, in
degrees 2 and 3, $ \scriptM 3 {\O} $ injects into $ \scriptM 3 F $.
\end{remark}

\subsubsection{Construction of the complex $  \scriptM 3 {\CC'} $.} \label{M3CCconstruction}

In this subsection we imitate the definition of the complex $ \scriptM 3 C' $
in Section~\ref{scriptM3Cconstruction},
but using the complex $ \scriptM 3 {\O'} $ rather than $ \scriptM 3 F' $
in the top row.  
The advantage of using the complex $ \scriptM 3 {\CC'} $ (just like the advantage
of using any $ \O' $-complex over the corresponding $ F' $-complex)
is that the syntomic regulator gets the input it needs on the special
fibre of $ \CC' $.

We therefore put ourselves in the situation of Notation~\ref{case2},
so assume we have a number field $ k \subset K $,
a proper, smooth, irreducible curve $ \CC' $ over $ R'= \O \cap k $,
and that the generic fiber $ C' = \CC'\otimes_{R'} k $ is geometrically irreducible.
We put $ F' = k(C') $, and $ \O' $ the discrete valuation ring
in $ F' $ corresponding to the generic point of the special fibre
of $ \CC' $.  We have a commutative diagram as follows.
\begin{equation*}
\begin{split}
\xymatrix{
M_3 (\O') \ar[r]^-\dd \ar[d] & M_2 (\O') \tensor_\Q \OpQ \ar[r]^-\dd \ar[d]_{\partial_1} & \OpQ\tensor \bigwedge^2 \OpQ \ar[d] _{\partial_2}\cr
0 \ar[r] & \coprod_x \tM_2 (k(x)) \ar[r] ^-\dd& \coprod_x \bigwedge^2 k(x)_\Q^*
\,.
}
\end{split}
\end{equation*}
The $ \dd $'s in the top row are as in $ \scriptM 3 {\O'} $.
The vertical maps, and the map in the bottom row, are given by
the same formulae as before (see~\eqref{scriptM3Cconstruction}),
via the natural map $ \scriptM 3 {\O'} \to \scriptM 3 F' $
corresponding to the localization from $ \O' $ to $ F' $.

We let $ \scriptM 3 {\CC'} $ be the cohomological complex in degrees
1 through 4, given by the total complex associated to the double
complex in the commutative diagram above.
Note that therefore in particular, an element
$ \sum_i [u_i]_2 \tensor v_i $ in $ M_2(\O') \tensor \OpQ $ is
in $ H^2(\scriptM 3 {\CC'} ) $ if and only if it satisfies~\eqref{Ocond}
as well as, for every closed point $ x $ in $ C' $,
\begin{equation} \label{CCcond}
\sum_i \ord_x(v_i) [u_i(x)]_2 = 0
\end{equation}
in $ \tM_2(k(x)) $, with the convention that $ [0]_2 = [1]_2 = [\infty]_2 = 0 $.

The map to $ K $-theory is similar to the map for $ \scriptM 3 F' $,
but now we get
\[
H^2(\scriptM 3 {\CC'} ) \to H^2(\scriptM 3 {\O'} ) \to \K 4 3 {\O'}
\,,
\]
where the first arrow corresponds to forgetting the bottom row
in $ \scriptM 3 {\CC'} $.
In fact, because this is compatible with the localization to $ F' $ (i.e,
with the map $ \scriptM 3 {\O'} \to \scriptM 3 F' $), from~\eqref{Fboundcd}
we find that we have a commutative diagram
\begin{equation} \label{FOCcd}
\begin{split}
\xymatrix{
H^2(\scriptM 3 {\CC'} ) \ar[r] \ar[d] 
&
\K 4 3 {\CC'} \oplus \K 3 2 k \cup \OpQ \ar@{=}[d]
\\
H^2(\scriptM 3 C' ) \ar[r]
&
\K 4 3 C' \oplus \K 3 2 k \cup \FpQ 
\,,
}
\end{split}
\end{equation}
where the group on the right is contained in $ \K 4 3 {\O'} = \K 4 3 F' $,
and we used that
$  \K 4 3 {\CC'} \oplus  \K 3 2 k \cup \FpQ  = \K 4 3 C' \oplus  \K 3 2 k \cup \OpQ $
by Remarks~\ref{K4CtoK4Finjective} and~\ref{sameproducts}.
This proves that the top square in~\eqref{MAP} exists and commutes.

Note that in Theorem~\ref{main-thm1}(2),
the condition $ \partial_1(\alpha') = 0 $ on $ \alpha' $
in $ H^2(\scriptM 3 {\O'} ) $ is exactly that $ \alpha' $ satisfies~\eqref{CCcond},
hence lies in the subspace $ H^2(\scriptM 3 {\CC'} ) $. Therefore we have
proved the existence of $ \beta' $ in the theorem. Its uniqueness
is clear because the direct sum above gives an injection
$ \K 4 3 {\CC'} \to \K 4 3 {\O'} / \K 3 2 k \cup \OpQ $.

\begin{remark}
Just as in Remark~\ref{Fprojection}, we can consider
the projection 
\[
\K 4 3 {\CC'} \oplus \K 3 2 k \cup \OpQ \to \K 4 3 {\CC'}
\]
to get a map 
$ H^2(\scriptM 3 {\CC'} ) \to \K 4 3 {\CC'} $
as the composition
\[
H^2(\scriptM 3 {\CC'} ) \to \K 4 3 {\CC'} \oplus \K 3 2 k \cup \OpQ \to \K 4 3 {\CC'}
\,.
\]
\end{remark}

\subsubsection{Construction of the complex $ \Ccomp \bullet \O $.} \label{CcompOconstruction}
The remainder of the theorems in the introduction will be proved
in Section~\ref{sec:end}. The necessary calculations will in
fact depend heavily on the analogue of $ \Ccomp \bullet F $ for $ \O $,
$ \Ccomp \bullet \O $.

Because we are dealing with the two dimensional
scheme $ X_{\O} $, the localization sequence~\eqref{secondbasicFlocalization}
becomes a spectral sequence (cf.~\eqref{firstO-ss}):
\begin{alignat}{3}
\notag
\vdots\qquad\quad &\qquad\qquad\quad\vdots & \vdots\qquad
\\
\notag
\K 2 3 {\Xoloc;\bbox}  &\qquad \coprod_{t \in \SPO} \K 1 2 F &\qquad\coprod_{t \in \SPkk} \K 0 1 {\kk}
\\
\label{of-ss}
\K 3 3 {\Xoloc;\bbox}  &\qquad \coprod_{t \in \SPO} \K 2 2 F &\qquad\coprod_{t \in \SPkk} \K 1 1 {\kk}
\\
\notag
\K 4 3 {\Xoloc;\bbox}  &\qquad \coprod_{t \in \SPO} \K 3 2 F &\qquad\coprod_{t \in \SPkk} \K 2 1 {\kk}
\\
\notag
\vdots\qquad\quad &\qquad\qquad\quad\vdots & \vdots\qquad
\end{alignat}
converging to $ \K * 3 {X_\O;\bbox} \iso \K *+1 3 {\O} $.
Let us notice that $ \K 2 1 {\kk} $ and $ \K 3 1 {\kappa} $ are zero, and that the
exact localization sequence
\[
\cdots \to \K 3 1 {\kk} \to \K 3 2 {\O} \to \K 3 2 F \to \K 2 1 {\kk} 
\to \K 2 2 {\O} \to \K 2 2 F 
\to \dots
\]
tells us that $ \K 2 2 {\O} \subseteq \K 2 2 F $ and $ \K 3 2 {\O} \iso \K 3 2 F $.
Therefore we get an exact sequence
\[
0 \to \frac{\K 4 3 {\O} }{\K 3 2 {\O} \cup \OQ} \to
\K 3 3 {\Xoloc;\bbox} \to
\ker\left( \coprod_{t \in \SPO} \K 2 2 F \to \coprod_{t \in \SPkk} \K 1 1 {\kk} \right)
\,.
\]
In the middle row of the spectral sequence~\eqref{of-ss} above,
let $ B \subseteq \K 3 3 {\Xoloc;\bbox} $ be the inverse image of $ \coprod \K 2 2 {\O} $ (with
the coproduct over all of $ \SPO $).
Then we have a cohomological complex in degrees 1 and 2,
\begin{equation}
\label{Bcomplex}
AC_{(3)}(\O) : B \to \coprod_{t \in \SPO} \K 2 2 {\O} 
\,,
\end{equation}
and an isomorphism
\[
H^1(AC_{(3)}(\O)) \iso \frac{\K 4 3 {\O} }{\K 3 2 {\O} \cup \OQ } 
\]
and a map
\[
 H^2(AC_{(3)}(\O)) \to \K 3 3 {\O} 
\,.
\]

\begin{remark}
If $ \K 3 2 {\kk} = 0 $, or more generally, the map $ \K 4 3 F \to \K 3 2 {\kappa} $
is surjective, then from the exact localization sequence
\[
\dots \to \K 4 3 F \to \K 3 2 {\kk} \to \K 3 3 {\O} \to \K 3 3 F \to
\K 2 2 {\kk} \to \dots
\,,
\]
Proposition~\ref{milnork} and~eqref{scriptM2maps},
we see that
the map $ \coprod_{t \in \SPO} \K 2 2 {\O} \to \K 3 3 {\O} $,
and hence the map $ H^2( AC_{(3)}(\O) ) \to \K 3 3 {\O} $, are
surjective. 
\end{remark}

\begin{remark}
Because $ \K 1 2 F $ and $ \K 2 1 {\kk} $ are zero,
and $ \K 2 2 F \to \K 1 1 {\kappa} $ is surjective,
from~\eqref{of-ss} we get that there is an exact sequence
\[
\Ker\left( \coprod_{t \in \SPO}  \K 2 2 F \to \coprod_{t \in \SPkk} \K 1 1 {\kk} \right)
\to
\K 2 3 {X_{\O};\bbox} 
\to
\K 2 3 {X_{\O}^\loc;\bbox} 
\to
0
\,.
\]
If $ \K 3 2 {\kappa} $ is zero, or, more generally, the map $ \K 4 3 F \to \K 3 2 {\kappa} $
surjective, then Proposition~\ref{milnork} tells us that $ \coprod_{t \in \SPO} \K 2 2 {\O} $
surjects onto $ \K 2 3 {X_\O;\bbox} \iso \K 3 3 {\O} $, and we can
conclude that $ \K 2 3 {X_{\O,\loc};\bbox} $ is zero.
\end{remark}

Now we consider the acyclic subcomplex
\[
\IO \cup \K 2 2 {\O} \to \dd(\dots)
\]
of~\eqref{Bcomplex}, and quotient out to find a complex
\[
\Ccomp \bullet \O :
\Ccomp 1 \O \to \Ccomp 2 \O
\,,
\]
where 
\begin{equation}\label{C1}
\Ccomp 1 \O = \frac{B}{\IO \cup \K 2 2 {\O} } 
\end{equation}
and
\[
\Ccomp 2 \O =  \K 2 2 {\O} \tensor \O_\Q^* 
\,.
\]
We still have an isomorphism
\begin{equation}
\label{H1auxC}
H^1(\Ccomp \bullet \O ) \iso \K 4 3 {\O} / \K 3 2 {\O} \cup \OQ
\end{equation}
and a map
\begin{equation*}
H^2(\Ccomp \bullet \O ) \to \K 3 3 {\O} 
\,,
\end{equation*}
which by Proposition~\ref{milnork} and~\eqref{scriptM2maps}
is a surjection if $ \K 4 3 F \to \K 3 2 {\kappa} $ is
surjective, e.g., if $ \K 3 2 {\kappa} = 0 $.

Observe that if $ g $ is in $ \SPO $, and $ f $ is in $ \OQ $, then $ \symb g f  $
is in $ \Ccomp 1 \O $, and has boundary $ \{(1-g)^{-1}, f\} \tensor g = - \{ (1-g) , f \} \tensor g $
in $ \Ccomp 2 \O $.  The condition for $ \sum_i \symb {g_i} {f_i} $
to be in $ H^1(\Ccomp \bullet \O ) $ is therefore that
\begin{equation*}
\sum_i \{ 1-g_i , f_i \} \tensor g_i = 0
\end{equation*}
in  $ \Ccomp 2 \O = \K 2 2 {\O} \tensor \OQ $.

Note that because the construction of the spectral sequence in
\eqref{of-ss} is compatible with localizing the base from $ \O $ to $ F $
and enlarging the coproduct from being over $  \SPO $ to $ \SPF $
(in which case it becomes the localization sequence in~\eqref{secondbasicFlocalization}),
and that $ \IO $ is contained in $ \I $, and $ \K 2 2 {\O} \subseteq \K 2 2 F $,
we have an obvious map of complexes,
\[
\Ccomp \bullet \O \to \Ccomp \bullet F
\,,
\]
which fits into the commutative diagram
\begin{equation} \label{FOauxcd}
\begin{split}
\xymatrix{
H^1(\Ccomp \bullet \O ) \ar[r]\ar[d] & \K 4 3 {\O} / \K 3 2 {\O} \cup \OQ \ar[d]
\\
H^1(\Ccomp \bullet F ) \ar[r] & \K 4 3 {F} / \K 3 2 {F} \cup \FQ
\,,
}
\end{split}
\end{equation}
and similarly for $ H^2 $.

Finally, we have a commutative diagram
\begin{equation*}
\xymatrix{
M_3(\O) \ar[r]\ar[d]
&
M_2(\O) \tensor \OQ \ar[r]\ar[d]
&
\OQ \tensor \bigwedge^2\OQ\ar[d]
\\
0 \ar[r]
&
\Ccomp 1 \O \ar[r]
&
\Ccomp 2 \O 
}
\end{equation*}
as follows. We map $ [u]_2 \tensor v $ to $ [u]_2 \cup v $, and $ u\tensor v\wedge w $
to $ \{u, v \} \tensor w - \{u, w \} \tensor v $.  
This gives rise to a commutative diagram
\begin{equation}
\label{MCO}
\begin{split}
\xymatrix{
H^2(\scriptM 3 {\O} ) \ar[r]\ar[d] & \K 4 3 {\O} \ar[d]
\\
H^1(\Ccomp \bullet {\O} ) \ar[r] & \K 4 3 {\O} / \K 3 2 {\O} \cup \OQ 
\,,
}
\end{split}
\end{equation}
which is the bottom left square of~\eqref{MAP}.
Obviously, the two diagrams above are compatible with~\eqref{shiftedcomplexmap}
and~\eqref{auxdiagram} under the localization from $ \O $ to $ F $.

\subsubsection{Construction of the complexes $ \tildescriptM 2 {\O} $ and $ \tildescriptM 3 {\O} $.} \label{tM2O}
For $ n=2 $ and 3, let
$ N_n(\O) = \langle [u]_n +(-1)^n [u^{-1}]_n \text{ with } u \text{ in } \SPO \rangle_\Q \subseteq M_n(\O) $.
Consider the subcomplex of $ \scriptM 2 {\O} $ given by
\[
 N_2(\O) \to \dd(\dots)
\,.
\]
Because the corresponding subcomplex~\eqref{N2Fcomplex} of $ \scriptM 2 F $
is acyclic and the natural map $M_2(\O) \to M_2(F)  $ is an injection (see Remark~\ref{m2includes}),
this subcomplex is acyclic.
The second term is $ \Sym^2(\OQ) $, and the resulting quotient complex of
$ \scriptM 2 {\O} $ is
\begin{equation} \label{tildescriptM2O}
\tildescriptM 2 {\O} : \tM_2(\O) \to \bigwedge^2 \OQ
\,,
\end{equation}
with $ \tM_2(\O) = M_2(\O) / N_2(\O) $,
and $ \dd [u]_2 = (1-u) \wedge u $.

Because $ \tildescriptM 2 {\O} $ is quasi isomorphic to $ \scriptM 2 {\O} $
we have maps
\[
H^i(\tildescriptM 2 {\O} ) \to \K 4-i 2 {\O}
\,.
\]
For $ i=1 $ this is again an injection.
There is a map $ \tildescriptM 2 {\O} \to \tildescriptM 2 F $
obtained by localizing the construction from $ \O $ to $ F $,
and for $ i=1,2 $ a commutative diagram
\[
\xymatrix{
 H^i(\tildescriptM 2 {\O}) \ar[d]& H^i(\scriptM 2 {\O}) \ar[r] \ar[l]_-{\simeq}\ar[d] & \K 4-i 2 {\O} \ar[d]
\\
 H^i(\tildescriptM 2 F )         & H^i(\scriptM 2 F )   \ar[r] \ar[l]_-{\simeq}       & \K 4-i 2 F 
\,.
}
\]
In this diagram for $ i=1 $ the central vertical map is injective
by the discussion in Remark~\ref{m2includes}.
Hence the same holds for the map $ H^1(\tildescriptM 2 {\O}) \to H^1(\tildescriptM 2 F ) $,
the map $ \tM_2(\O) \to \tM_2(F) $ is an injection, and
$ \tildescriptM 2 {\O} $ is a subcomplex of $ \tildescriptM 2 F $.

By Remark~\ref{m2includes}, in the commutative diagram
\begin{equation*}
\begin{split}
\xymatrix{
 M_3(\O) \ar[r] \ar[d] & M_2(\O)\otimes\OQ \ar[r]\ar[d] & \OQ\otimes\bigwedge^2\OQ \ar[d]
\\
 M_3(F)  \ar[r]        & M_2(F) \otimes\FQ \ar[r]       & \FQ\otimes\bigwedge^2\FQ
}
\end{split}
\end{equation*}
the two right-most maps are injective. (If we knew (as part of
the rigidity conjecture) that $ H^1(\scriptM 3 {\O} ) \to  H^1(\scriptM 3 F ) $
were injective, then this would also hold for the left-most map.)
We can quotient out the complex $ \scriptM 3 {\O} $ in the first
row by the subcomplex
\begin{equation*}
\xymatrix{
 N_3(\O) \ar[r]
&
 N_2(\O) \otimes \OQ \ar[r]
&
\dd(\dots)
\,,}
\end{equation*}
which maps to the subcomplex~\eqref{N3Fcomplex} of the second row.
We saw earlier that $ d : N_2(\O) \to \Sym^2(\OQ) $
is an isomorphism, so as in the proof of~\cite[Corollary~3.22]{Jeu95}
one sees that this subcomplex is acyclic in degrees 2 and~3.
The quotient complex is
\begin{equation*}
 \tildescriptM 3 {\O} : \tM_3(\O) \to \tM_2(\O) \otimes \OQ \to \bigwedge^3\OQ
\,,
\end{equation*}
where $ \tM_3(\O) = M_3(\O) / N_3(\O) $,
and the natural map $ \tildescriptM 3 {\O} \to \tildescriptM 3 F $
is an injection in degrees~2 and~3 because, as we saw earlier,
$ \tM_2(\O) $ injects into $ \tM_2(F) $.
Still denoting the class of $ [x]_i $ with $ [x]_i $, the maps
are now given by 
\[
\dd [u]_3 = [u]_2 \tensor u
\]
and
\begin{equation}\label{otild5}
\dd [u]_2 \tensor v = (1-u) \wedge u \wedge v
.
\end{equation}
Using~\eqref{scriptMOmaps} we see that for $ i=2,3 $ we have a commutative diagram
\begin{equation*}
\begin{split}
\xymatrix{
H^i( \tildescriptM 3 {\O}) \ar[d] & H^i( \scriptM 3 {\O} ) \ar[d] \ar[l]_-{\simeq} \ar[r] & \K 6-i 3 {\O} \ar[d]
\\
H^i( \tildescriptM 3 F ) &  H^i( \scriptM 3 F ) \ar[l]_-{\simeq}\ar[r] & \K 6 4-i F 
\,.
}
\end{split}
\end{equation*}

\subsection{A diagram.}

For the convenience of the reader, we give a commutative diagram
summarizing the cohomology groups of  most of the complexes introduced, and the maps.  We have
kept the lay-out of the diagram in the same spirit as the relativity
in the plane.  Note that the outer square is only relevant in
the situation of Notation~\ref{case2}, and that we may replace
$ F $ and $ \O $ with $ F' $ and $ \O' $ in this case.

The top half of this diagram is the top of the one in~\eqref{MAP}.
The vertical maps correspond to the maps from
constructions over $ \O $ to the corresponding constructions
over $ F $.  The horizontal maps are the maps on cohomology of
complexes constructed in the previous subsections, and the 
diagonal maps correspond to the maps in 
\eqref{shiftedcomplexmap},
\eqref{FOM3cd}, \eqref{FOCcd} and~\eqref{FOauxcd}.
\begin{equation}
\label{BIGCD}\notag
\xymatrix
@*+<1pt>
@!=40pt
{
*[r]{H^2(\scriptM 3 {\CC'} )} \ar[rrrrr]\ar[ddddd]\ar[rd]&&&&& *[l]{\K 4 3 {\CC'} \oplus \K 3 2 k \cup \OpQ }\ar[ddddd]\ar[ld]
\\
& H^2(\scriptM 3 {\O} ) \ar[rrr]\ar[ddd]\ar[rd]&&& \K 4 3 {\O}  \ar[ddd]\ar[ld]
\\
& & H^1(\Ccomp \bullet \O ) \ar[r]\ar[d]\ar@{}[dr]|{{\textstyle{(\ref{BIGCD})}}}& \frac{\K 4 3 {\O} }{\K 3 2 {\O} \cup \OQ }\ar[d]
\\
& &  H^1(\Ccomp \bullet F ) \ar[r] & \frac{\K 4 3 F }{ \K 3 2 F \cup \FQ } 
\\
&  H^2(\scriptM 3 F ) \ar[rrr]\ar[ru] &&& \K 4 3 F \ar[lu]
\\
*[r]{H^2(\scriptM 3 C' ) } \ar[rrrrr]\ar[ru] &&&&& *[l]{\K 4 3 C' \oplus \K 3 2 k \cup \FpQ }\ar[lu]
}
\end{equation}
Note that by Remarks~\ref{K4CtoK4Finjective} and~\ref{sameproducts}
the rightmost vertical map is an isomorphism.

\section{The classical case} \label{sec:classical}

In Proposition~\ref{blueprop} below, we
rephrase the results in Theorem~4.2 and Remarks~4.3 and ~4.5 of \cite{Jeu96}
in a way that resembles the formulae in Theorems~\ref{main-thm3}
and~\ref{main-thm4}(1) (see Remark~\ref{compremark}
for some thoughts on this comparison).
In fact, Sections~\ref{sec:trip-loc} and~\ref{sec:trip-glob}
grew out of attempts to obtain syntomic analogues of those results
of loc.\ cit., but the resulting formulae
seem to be less flexible than the classical ones so we rephrase
the latter.

In the next proposition, we let
$ \Hdr^1(F, \R(2)) = \lim_{\to \atop U} \Hdr^1 (U, \R(2)) $
where the limit is over $ U $ with $ \Can\setminus U $ finite,
and similarly for $ H_\mathcal{D}^2(F, \R(3)) $. 
Here $ \R(m) = (2 \pi i)^m \R \subset \C $.
If $ \omega $ is holomorphic on $ \Can $, then by \cite[Proposition~4.6]{Jeu96}
one has a well-defined map $ \Hdr^1(F, \R(2)) \to \C $ by taking a representative
$ \beta $ of a class in $ \Hdr^1(F, \R(2)) $ satisfying~(9) of
loc.\ cit., and computing $ \int_\Can \beta \wedge \omega $.

\begin{proposition} \label{blueprop}
Let $ C $ be a smooth, proper, irreducible curve over $ \C $ with function field
$ F=\C(C) $, and let $ \Can $ be the analytic manifold associated to $ C(\C) $.
For a holomorphic 1-form $ \omega $ on $ \Can $,
the maps
\begin{alignat*}{1}
 \myphi :  M_2(F) \otimes \FQ & \to \C
\\
 [g]_2 \otimes f & \mapsto -4 \int_{C_{an}}  \log|f| \log|g| \dd \log|1-g|  \wedge \omega
\end{alignat*}
and
\begin{alignat*}{1}
 \tmyphi : \tM_2(F) \otimes \FQ & \to \C
\\
 [g]_2 \otimes f & \mapsto 
 -\frac{8}{3} \int_\Can \log|f| (\log|g|\dd\log|1-g| - \log|1-g|\dd\log|g|) \wedge \omega
\end{alignat*}
are well-defined, and induce maps $ H^2(\scriptM 3 F ) \to \C $
and $ H^2(\tildescriptM 3 F ) \to \C $ respectively.
Moreover, with $ \regC : \K 4 3 F \to H_\mathcal{D}^2(F, \R(3)) \simeq \Hdr^1(F, \R(2)) $ the Beilinson
regulator map, the compositions
\begin{equation*}
\xymatrixcolsep{4pc}
\xymatrix{
  H^2(\scriptM 3 F ) \ar[r]^-{\eqref{HM3Fmap}} & \K 4 3 F \ar[r]^-{\int_\Can \regC(\cdot)\wedge\omega} & \C
}
\end{equation*}
and
\begin{equation*}
\xymatrixcolsep{4pc}
\xymatrix{
  H^2(\tildescriptM 3 F ) \ar[r]^-{\eqref{HtildeM3Fmap}} & \K 4 3 F \ar[r]^-{\int_\Can \regC(\cdot)\wedge\omega} & \C
}
\end{equation*}
coincide with these induced maps.
\end{proposition}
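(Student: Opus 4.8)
The plan is to deduce all the assertions from \cite[Theorem~4.2, Remarks~4.3 and~4.5]{Jeu96} by matching the formulae. First I recall from loc.\ cit.\ that there are well-defined maps $\Theta\colon M_2(F)\otimes\FQ\to\C$ and $\widetilde{\Theta}\colon\tM_2(F)\otimes\FQ\to\C$, each sending a symbol $[g]_2\otimes f$ to a fixed rational multiple of $\int_\Can\eta(f,g)\wedge\omega$, where $\eta(f,g)$ is a real $1$-form which, for weight reasons, is an $\R$-linear combination of the forms $\log|a|\log|b|\,\dd\log|c|$ with $a,b,c\in\{f,g,1-g\}$; that $\Theta$ and $\widetilde{\Theta}$ induce maps on $H^2(\scriptM 3 F )$ and $H^2(\tildescriptM 3 F )$; and that these induced maps coincide with $\int_\Can\regC(\cdot)\wedge\omega$ via~\eqref{HM3Fmap} and~\eqref{HtildeM3Fmap} respectively. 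It then suffices to prove that, already as maps on the free $\Q$-vector space on the symbols $[g]_2\otimes f$, one has $\Theta=\myphi$ and $\widetilde{\Theta}=\tmyphi$: well-definedness on $M_2(F)\otimes\FQ$ (resp.\ on $\tM_2(F)\otimes\FQ$), the factorization through cohomology, and the identification with $\regC$ are then inherited from \cite{Jeu96}.

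To compare the forms I would argue as follows. The $1$-forms defining $\myphi$ and $\tmyphi$, namely $-4\log|f|\log|g|\,\dd\log|1-g|$ and $-\tfrac{8}{3}(\log|f|\log|g|\,\dd\log|1-g|-\log|f|\log|1-g|\,\dd\log|g|)$, are themselves $\R$-linear combinations of the $\log|a|\log|b|\,\dd\log|c|$ with $a,b,c\in\{f,g,1-g\}$. Using the Leibniz rule $\dd(\log|a|\log|b|\log|c|)=\log|b|\log|c|\,\dd\log|a|+\log|a|\log|c|\,\dd\log|b|+\log|a|\log|b|\,\dd\log|c|$, I would rewrite the difference of the two relevant $1$-forms, in each of the two cases, as $\dd\gamma(f,g)$ with $\gamma(f,g)$ an $\R$-linear combination of the triple products $\log|a|\log|b|\log|c|$. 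Since $\omega$ is holomorphic, $\dd(\gamma(f,g)\,\omega)=\dd\gamma(f,g)\wedge\omega$, so it remains to prove $\int_\Can\dd(\gamma(f,g)\,\omega)=0$. The function $\gamma(f,g)$ is smooth away from the finitely many zeros and poles of $f$, $g$ and $1-g$; excising $\epsilon$-discs around these points, applying Stokes' theorem, and letting $\epsilon\to0$, the boundary circle $|z|=\epsilon$ around such a point contributes a term of the form $c\log^3\epsilon\oint_{|z|=\epsilon}\omega=0$ (the integral vanishes since $\omega$ is holomorphic, hence regular, there) together with terms of size $O(\epsilon\log^2\epsilon)$, all of which tend to $0$. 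This would give $\Theta=\myphi$ and $\widetilde{\Theta}=\tmyphi$ on symbols.

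For the $\tmyphi$ statement I would separately observe that $\tmyphi$ already annihilates $N_2(F)\otimes\FQ$, so that it genuinely descends to $\tM_2(F)\otimes\FQ$: under $u\mapsto u^{-1}$ one has $\log|u^{-1}|=-\log|u|$ and $\log|1-u^{-1}|=\log|1-u|-\log|u|$, and a short direct computation shows that the $1$-form $\log|u|\,\dd\log|1-u|-\log|1-u|\,\dd\log|u|$ changes sign, whence $\tmyphi([u]_2\otimes f)+\tmyphi([u^{-1}]_2\otimes f)=0$. The step I expect to be the main obstacle is the bookkeeping in the second paragraph: extracting the precise shape of $\eta(f,g)$ from \cite{Jeu96} and carrying out the explicit rewriting of $1$-forms; the analytic input, namely the vanishing of the boundary integrals, is routine once one uses that $\omega$ is everywhere holomorphic.
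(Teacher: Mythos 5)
You follow the same broad route as the paper: delegate the identification with the Beilinson regulator to \cite{Jeu96}, and reconcile the various integrands by adding exact terms and a limit form of Stokes (your boundary analysis, the vanishing of $\log^3\epsilon\oint_{|z|=\epsilon}\omega$ and the $O(\epsilon\log^2\epsilon)$ terms, is exactly the ``limit version of Stokes'' the paper invokes). But the step you lean on most heavily is too strong as stated. You want the maps you call $\Theta$ and $\widetilde\Theta$ to equal $\myphi$ and $\tmyphi$ \emph{symbol by symbol}, the discrepancy of integrands being exact of the form $\dd(\log|a|\log|b|\log|c|)$. That mechanism cannot be counted on: already for the two maps of the proposition themselves, $\myphi([g]_2\tensor f)$ and $\tmyphi([g]_2\tensor f)$ do not agree on a single symbol. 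After adding $0=\int_\Can\dd\bigl(-\tfrac{4}{3}\log|f|\log|g|\log|1-g|\,\omega\bigr)$, they still differ by $\tfrac{4}{3}\int_\Can\log|1-g|\,\bigl(\log|g|\,\dd\log|f|-\log|f|\,\dd\log|g|\bigr)\wedge\omega$, which there is no reason to expect to vanish for an individual $[g]_2\tensor f$; it is trilinear in $(1-g,g,f)$ and antisymmetric in $(g,f)$, so it dies only on sums with $\sum_i(1-g_i)\tensor(g_i\wedge f_i)=0$. This is exactly how the paper proceeds: it quotes \cite{Jeu96} (Proposition~3.2 and the proof of Theorem~4.2) only for the $ \scriptM 3 F $ statement, and obtains the $ \tildescriptM 3 F $ statement by showing that $\myphi$ and $\tmyphi$ agree on the kernel of $M_2(F)\tensor\FQ\to\FQ\tensor\bigwedge^2\FQ$, which is where the classes from $H^2$ live. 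Unless the formulas you extract from \cite{Jeu96} happen to have precisely the same (anti)symmetry shape as $\myphi$, respectively $\tmyphi$, your symbol-level identity is simply false, and you must retreat to equality on cocycles anyway; the argument should be organized around that weaker (and sufficient) statement from the start.

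Two further points. Well-definedness on $M_2(F)\tensor\FQ$ and $\tM_2(F)\tensor\FQ$ should not be ``inherited'' from \cite{Jeu96}: it has a one-line direct proof, which is what the paper gives --- the assignments are trilinear in $(1-g,g,f)$, antisymmetric in the first two entries in the $\tmyphi$ case, hence factor through $d\tensor\id$, i.e.\ through $(1-g)\tensor g\tensor f$, respectively $(1-g)\wedge g\tensor f$. Your check that the $\tmyphi$-integrand changes sign under $g\mapsto g^{-1}$ is correct but only handles $N_2(F)$, not the non-explicit relations defining $M_2(F)$. Finally, \cite{Jeu96} is stated for curves over number fields and for $\omega$ satisfying a complex-conjugation condition; to use it in the present generality (arbitrary $C/\C$, arbitrary holomorphic $\omega$) one must note, as the paper explicitly does, that these hypotheses are not used in the proof of Theorem~4.2 of loc.\ cit.
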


\begin{proof}
Since $ d \otimes \id : M_2(F) \otimes \FQ \to \FQ \otimes \FQ \otimes \FQ $
maps $ [g]_2 \otimes f $ to $ (1-g) \otimes g \otimes f $, $ \myphi $ is well-defined.
That is induces the stated map on $ H^2(\scriptM 3 F ) $ and that this induced map
has the stated property 
follows from Proposition~3.2 and (the proof of) Theorem~4.2 of~\cite{Jeu96}.
(The condition in loc.\ cit.\ that $ C $ is defined over a number field
is not used in the proof of Theorem~4.2. The same holds
for the condition with respect to complex conjugation on $ \omega $,
which guaranteed only that the value of the integral was in $ \R(1) \subset \C $.)

Similarly, $ d \otimes \id : \tM_2(F) \otimes \FQ \to (\bigwedge^2 \FQ) \otimes \FQ $
maps $ [g]_2 \otimes f $ to $ (1-g)\wedge g \otimes f $, so $ \tmyphi $
exists.
Using a limit version of Stokes theorem we may add
$ 0= \int_\Can \dd ( \alpha \wedge \omega ) $
for $ \alpha = - \frac{4}{3} \log|g|\log|1-g|\log|f| $, so we map $ [g]_2 \otimes f $
to
\begin{equation*}
-\frac{4}{3} \int_\Can (3 \log|f| \log|g| \dd \log|1-g| 
         + \log|1-g| (\log|g| \dd \log|f| - \log|f| \dd \log|g| )) \wedge \omega
\,.
\end{equation*}
So $ \tmyphi $ and $ \myphi $ coincide on the kernel of the map $ M_2(F) \otimes \FQ \to \FQ \otimes (\bigwedge^2 \FQ) $
that maps $ [g]_2 \otimes f $ to $ (1-g) \otimes (g \wedge f) $.
That $ \tmyphi $ induces a map on 
$ H^2(\tildescriptM 3 F ) $ with the desired property then follows
from the corresponding statements for~$ \myphi $.
\end{proof}

\begin{remark}
The Bloch-Wigner dilogarithm $ D(z) : \P_\C^1 \setminus\{0,1,\infty\} \to (2 \pi i)\R \subset \C $
satisfies $ \dd D(z) = \log|z|\dd i \arg(1-z)-\log|1-z|\dd i \arg(z) $
and extends to a continuous function on $ \P_\C^1 $. It is the
function in the classical case that corresponds to $ \Lmod 2 (z) $
in the sense that they have similar functional equations, e.g.,
$ D(z) + D(z^{-1}) = 0 $.
Because $ \dd \log(g) \wedge \omega = \dd \log(1-g) \wedge \omega = 0$,
we find $  \dd(\Pzag 2 (g) \log|f|\omega) $ equals
\begin{equation*}
 \Pzag 2 (g) \dd\log|f| \wedge\omega +
\log|f| (\log|1-g|\dd\log|g|-\log|g|\dd \log|1-g| ) \wedge \omega
\,.
\end{equation*}
Hence $ \tmyphi $ is also given by mapping $ [g]_2 \otimes f $
to $ - \frac{8}{3} \int_\Can \log|f| D(g) \omega$.
\end{remark}

\section{Coleman integration}
\label{sec:coleman}

In this short section we briefly discuss Coleman's integration
theory in the one dimensional case only. The interested reader may
refer to~\cite{Bes98a} for more details.

Coleman theory is done on wide open spaces in the sense of
Coleman~\cite{Col-de88}. In general these are the overconvergent
spaces described in section~\ref{sec:regulators}. In the
one-dimensional case these can be described concretely in the
following way. Let $X$ be a curve over $\Cp$ with good reduction
(there is a minor assumption that it is obtained by extension
of coefficients from a curve over a complete discretely
valued subfield,
which will always be satisfied in our case). The rigid
analytic space $X(\Cp)$ is set-theoretically decomposed as the union
$X=\cup_x U_x $ where $x$ varies over the points in the reduction of $X$
and $U_x$ is the residue disc (tube in the language of Berthelot) of
points reducing to $x$. By the assumption of good reduction each
residue disc is isomorphic to a disc $|z|<1 $. A wide open space $U$ is
obtained from $X$ by fixing a finite set of points $S$ in the
reduction and throwing
away the discs inside the residue discs $U_x$, $x\in S$, isomorphic to $|z|<r$
for arbitrarily large $r<1 $. $U$ should be thought of as the inverse
limit of the correspondeing spaces $U_r$.

Coleman theory associates to $U$ the $\Cp$-algebra $\acol(U)$ and the
$\acol(U)$-modules $\ocol^i(U)$ with differentials forming a
complex. The key property is that this complex is exact at the
one and zero forms, i.e, there is an exact sequence
\begin{equation*}
  0\to \Cp \to \acol(U) \to \ocol^1(U) \to \ocol^2(U)\,.
\end{equation*}
The space $\ocol^1(U) $ contains the space $\Omega^1(U) $ of
overconvergent forms on $U$, i.e., those forms that
are rigid analytic on some $U_r$. Similarly, the space $\acol(U)  $
contains the space $A(U)$ of overconvergent functions. The
differential extends the usual differential on the subspaces.

The whole picture extends to higher dimensions.
We shall only need the case where $U$ is one-dimensional. In this case
the space $\ocol^2(U)$ is already $0$.

Coleman functions may be interpreted as locally analytic functions on
$U$. More precisely, again in the one-dimensional case, for $x\notin S$,
the intersection of the residue disc $U_x$ with $U$ is $U_x$, while for
$x\in S$ it is
an annulus $e_x$ isomorphic to an annulus of the form $r<|z|<1 $. A
Coleman function is analytic on each $U_x$ and is
a polynomial algebra $A(e_x)[\log(z)]$ where $z$ is a local parameter
on $U_x$ (here, there is an implicit global choice of a branch of the $p$-adic polylogarithm).

We define the space $\acola (U)$ to be the inverse image of
$\Omega^1(U)\subseteq \ocol^1(U)$ under the differential $d$. The space of
differentials $\Ocola(U)$ is the product $\acola(U)\cdot \Omega^1(U) $.

If $\omega\in \Omega^1(U_r)$ and $y,z\in U_r $ the integral $\int_z^y
\omega$ is clearly well-defined as $f(y)-f(z)$ where $f\in \acol(U_r) $ and $d
f= \omega$. It is a basic property of Coleman integration that if
$X,U,\omega,z,y$ are all defined over the complete subfield
$K$, then so is the integral $\int_z^y \omega$.

\section{Regulators} \label{sec:regulators}

In this section we compute the regulator on
$ \Ccomp {1} \O  $ in (modified) syntomic cohomology.
In case the element lies in the subspace $ H^1( \Ccomp {\bullet} \O ) $,
we also explain how we wish to interpret the cup product of this
regulator with the cohomology class of a form $\omega$ of the second
kind on $C$, and what are the obstacles for doing so, thus paving
the way for constructions in the next sections.

We first write down the relevant spaces and the (modified) syntomic
complexes computing their cohomology. For the full story the reader
should consult~\cite{Bes98a}.

We begin with a smooth proper relative curve $\CC/R$. Related to that
is the space $X_{\CC} := \PP_{\CC}^1\setminus\{t=1\}$.
The superscript loc will denote various
localizations, obtained by removing the image of a finite number of $R$-sections. We note that the computations in this section can be done after a finite base change, so we may easily get from more general localizations into this situation by further localization.
We shall use localizations $\CC^{\loc}$ of $\CC$ or $X_{\CC}^{\loc} $ of
$X_{\CC}$.
If the localization is non-trivial, and we may and do assume this,
then all localized schemes are affine.

Our goal is to compute the syntomic regulator $ \K 4 3 {\CC} \to
\hsyn^2(\CC,3)$. According to~\cite[Proposition~8.6.3]{Bes98a} there is an
isomorphism, commuting with the regulator, $\hsyn^2(\CC,3)
\xrightarrow{\simeq} \htms^2(\CC,3)$, where $\htms$ is the 
Gros style modified rigid syntomic cohomology, in the sense of
loc.~cit.
From now on we shall therefore concentrate on modified syntomic
cohomology. We shall refer to it simply as syntomic cohomology.

Let us recall one of the possible models for modified syntomic cohomology for
affine schemes. Let $A$ be an affine $R$-scheme. We assume we have an
open embedding $A\inject \overline{A}$, where $\overline{A}$ is proper.
From the embedding $A\inject \overline{A}$ one obtains the overconvergent
space $A^\dagger$. This space can be made sense out in
Grosse-Kl{\"o}nne's theory of overconvergent spaces~\cite{Grosse00} as
the space whose affine ring, $\O(A^\dagger)$, is the weak completion,
in the sense of
Monsky-Washnitzer, of $\O(A)$. However, here we shall simply think of
$A^\dagger$ formally as the inverse system of strict neighborhoods of
the special fiber of $A$ in that of $\overline{A}$

We further assume that we
have an $R$-linear endomorphism $\phi:A^\dagger \to A^\dagger$ whose reduction
is a power of Frobenius, say of degree $q=p^r$. We call $\phi$ a
Frobenius endomorphism. Standard results (\cite[Thm A-1]{Col85}
or~\cite[Thm 2.4.4.ii]{Put86}) imply
one always has such $\phi$.

With the above data, we have
\begin{equation*}
  \htms^n(A,j) = H^n(\textup{MF}(F^j \Omega^\bullet (A^\dagger)
  \xrightarrow{1-\phi^\ast/q^j}\Omega^\bullet (A^\dagger)))\,.
\end{equation*}
Here, the filtration is the stupid filtration on the space of
differentials and $\textup{MF}$ denotes the mapping fiber (Cone shifted
by $ -1 $).
To be more precise, one really needs to take the limit of these
cohomology groups with respect to powers of $\phi$, in a way explained
in~\cite{Bes98a}, but it is also explained there that one can ignore
this point.

The cohomomology groups $\htms$ are in fact functorial with respect to
arbitrary maps of schemes. This functoriality is not at all obvious
from the definition except in the case where the maps extend to the
dagger spaces and commute with $\phi$. Fortunately, this will
always be the case for us. In this situation, one may also construct
relative cohomology in the obvious way (the reader is advised to look
at~\cite[Section~\ref{sec:down}]{Bes-deJ98} 
for constructions of
complexes computing
relative syntomic cohomology).

To end this general review we recall that the corresponding syntomic
regulator is defined by the formula
\begin{equation}\label{eq:regfunc}
  f\in \O(A)^* \subset K_1(A) \mapsto (\dlog(f),\log(f_0)/q)\in
  \htms^1(A,1)\,,
\end{equation}
where $f_0 = f^q/\phi^\ast(f)$ and has the property
that $\log(f_0) $ is in $ \O(A^\dagger)$.
We also recall from\cite[Definition~6.5]{Bes98a} that the cup
product $\htms^\bullet (A,i) \times \htms^\bullet (A,j) \to
\htms^\bullet(A,i+j)$ is given by 
\begin{equation}
  \label{eq:cupprod}
  \begin{split}
    (\omega_1,\epsilon_1)\cup (\omega_2,\epsilon_2) = &\Big(\omega_1\wedge
    \omega_2,\\& 
    \epsilon_1 \wedge \left(\gamma +(1-\gamma)
     \frac{\phi^\ast}{q^j}\right) \omega_2\\
    +(-1)^{\deg(\omega_1)}
    &\left(\left((1-\gamma)+ \gamma
        \frac{\phi^\ast}{q^i}\right)\omega_1\right)
    \wedge \epsilon_2\Big)\,.
  \end{split}
\end{equation}
for some constant $\gamma$, which can be taken arbitrarily (producing
homotopic products).

We now write these constructions for the affine schemes we are
considering. To simplify notation we write $U$ for
$(\CC^\loc)^\dagger$, $U'$ for
$(X_{\CC}^{\loc})^\dagger$, and $X_U$ for
$(X_{\CC^{\loc}})^\dagger$. We may localize such that
$U'\subset X_U$. We fix a Frobenius endomorphism $\phi:U\to U$. We can
then take the Frobenius endomorphism for $X_U$ to be the product of
$\phi$ with the map $t\mapsto t^q$ and for $U'$ the restriction of
this endomorphism to $U'$. Since $t\mapsto t^q$ fixes $0$ and $\infty$
we can use the embedding of $U$ in $U'$ at $t=0$ and $t=\infty$. With
this we have the following models for syntomic cohomology.
\begin{equation}
  \label{eq:syntomicmodel}
  \htms^i(X_{\CC}^{\loc},i)=\frac{
    \{(\omega,\epsilon),\; \omega\in \Omega^1(U'),\; \epsilon\in
    \Omega^{i-1}(U'),\; d\omega=0,\; d \epsilon=
    \left(1-\frac{\phi^\ast}{q^i}\right)(\omega)\}
    }{
      \{(0,d \epsilon),\; \epsilon\in \Omega^{i-2}(U')\}
    }
\end{equation}
For $i=1,2$. Now, for the relative one we can write, by throwing away
terms which are forced to be $0$,
\begin{equation}
  \label{eq:reltwo}
    \htms^2(X_{\CC}^{\loc},\bbox,2)= \frac{
      \left\{(\omega,\epsilon,\epsilon_\infty,\epsilon_0),\;
        \begin{aligned}
          &\omega\in
          \Omega^2(U'),\; \epsilon\in \Omega^1(U'),\;\epsilon_s \in
          \O(U),s=0,\infty,\\
          &d\omega=0,\; d \epsilon=
          \left(1-\frac{\phi^\ast}{q^i}\right)(\omega),\; d \epsilon_s =
          \epsilon|_{\{t=s\}},\;s=0,\infty
      \end{aligned}
      \right\} }{ \left\{\left(0,d
      \epsilon,\epsilon|_{\{t=\infty\}},\epsilon|_{\{t=0\}}\right),\;
      \epsilon\in \O(U')\right\}}
\end{equation}
The map between $\htms^2(X_{\CC},\bbox,2)$ and $\htms^i(X_{\CC},i)$
remembers only $\omega$ and $\epsilon$. Since $U'$ is two dimensional
and therefore does not support forms of degree $3$, we also have
\begin{equation}
  \label{eq:relthree}
  \begin{split}
    &\htms^3(X_{\CC}^{\loc},\bbox,3)=\\ &\frac{
      \{(\epsilon,\epsilon_\infty,\epsilon_0) ,\; \epsilon\in
      \Omega^2(U'),\;\epsilon_s \in
      \Omega^1(U),s=0,\infty, \; d \epsilon=0,\; d \epsilon_s =
      \epsilon|_{\{t=s\}},\;s=0,\infty\} }{ \left\{\left(d
      \epsilon,\epsilon|_{\{t=\infty\}},\epsilon|_{\{t=0\}}\right),\;
      \epsilon\in \Omega^1(U')\right\}}
\end{split}
\end{equation}
If we replace $U'$ by $X_U$ we obtain a model for
$\htms^3(X_{\CC^{\loc}},\bbox,3)$

The last model is
\begin{equation}
  \label{eq:Cmodel}
  \htms^2(\CC^{\loc},3)=\frac{\{\epsilon\in \Omega^1(U),\;
  d\epsilon=0\}}{\{d \epsilon\;,\epsilon\in \O(U)\}}
\end{equation}
This is of course just the first de Rham cohomology of $U$. However,
the ``correct'' isomorphism with this cohomology is not the
obvious one but rather the one twisted by $1-\phi^\ast/q^3$, i.e.,
\begin{equation}
  \label{eq:twist}
  \Hdr^1(U) \to  \htms^2(\CC^{\loc},3),\;
  [\eta] \mapsto [(1-\phi^\ast/q^3) \eta]
\end{equation}
(for an explanation of this see~\cite[Proposition
10.1.3]{Bes98a}). Here, and in what
follows, we denote the cohomology class of an element in square brackets.

At this point, we are able to make more precise the definition of the
$p$-adic regulator for open curves that was hinted to in the
introduction before stating Theorem~\ref{main-thm2}. As explained
there, for each $U$ as above, one has a canonical
projection $\Hdr^1(U) \xrightarrow{\canproj}
\Hdr^1(C/K)$. This is the unique Frobenius equivariant splitting of
the natural restriction map in the other direction. These projections
are compatible in the obvious way when restricting to a smaller $U$.

\begin{definition}\label{regdefined}
The regulator map
\begin{equation*}
    \reg: \K 4 3 {\CC^\loc } \to \Hdr^1(C/K)
\end{equation*}
is the composition
\begin{equation*}
  \K 4 3 {\CC^\loc} \to \Hdr^1(U/K) \xrightarrow{\canproj} \Hdr^1(C/K)\,.
\end{equation*}
Using the compatibility of the maps $\canproj$ mentioned above
for all possible $ \CC^\loc $,
from $ \K 4 3 {\O} = \dirlim_{\CC^\loc} \K 4 3 {\CC^\loc} $
(see \cite[Proposition~2.2]{Qui67} or \cite[Lemma~5.9]{Sri96})
we also obtain a well defined regulator map
\begin{equation*}
    \reg: \K 4 3 {\O } \to \Hdr^1(C/K)
    \,.
\end{equation*}
\end{definition}

We need a formula for the cup product 
$\htms^2(X_{\CC}^{\loc},\bbox,2)\times
\htms^1(X_{\CC}^{\loc},1) \to
\htms^3(X_{\CC}^{\loc},\bbox,3)$ in terms of the
models~\eqref{eq:reltwo},~\eqref{eq:syntomicmodel} and~\eqref{eq:relthree}
respectively. Using the formula for a cup product between a cone and a
complex and~\eqref{eq:cupprod} with $\gamma=0$ we find the
following formula:
\begin{equation}
  \label{eq:relcup}
  (\omega,\epsilon,\epsilon_\infty,\epsilon_0) \cup
  (\eta,h) = ( h \omega + \epsilon\wedge \frac{\phi^\ast}{q} \eta ,
  \epsilon_\infty \eta, \epsilon_0 \eta)\,.
\end{equation}

Suppose now that $f$ and $g$ are in $\O^\ast(\CC^{\loc})$ (see
Subsection~\ref{CcompOconstruction}).
To compute the regulator of $[g]_2\cup (f)$
we start with $ [g]_2$ in $ \kbb(X_{\CC}^{\loc},\bbox)$.
It maps in $\kbb(X_{\CC}^{\loc})$ to 
$-\tmg \cup (1-g)$, by pulling back along g the corresponding result
for the universal elements~\cite[Proposition~\ref{was6.7}]{Bes-deJ98}.

\begin{lemma}\label{firstreg}
  We have in  $\htms^2(X_{\CC}^{\loc},2)$ that $- ch(\tmg \cup (1-g)) =
 (\omega_g,\epsilon_g)$, in the model~\eqref{eq:syntomicmodel} with
\begin{align*}
  \omega_g &= -\omg \\ \intertext{and} \epsilon_g &= \epsg
\end{align*}
\end{lemma}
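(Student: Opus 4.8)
The plan is to deduce the formula from additivity and multiplicativity of the syntomic Chern class $ch$ together with the explicit cup product formula \eqref{eq:cupprod}. As recalled just before the lemma, $[g]_2$ maps to $-\tmg\cup(1-g)$ in $\kbb(X_{\CC}^{\loc})$, where $\tmg=\frac{t-g}{t-1}$ and the pullback of $1-g$ are both units on the (suitably localized) $X_{\CC}^{\loc}$; hence $ch$ of the image of $[g]_2$ equals $-ch(\tmg\cup(1-g))=-\,ch(\tmg)\cup ch(1-g)$, the two factors lying in $\htms^1(X_{\CC}^{\loc},1)$.

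First I would write down the two factors using the regulator formula \eqref{eq:regfunc}:
\begin{equation*}
 ch(\tmg)=\bigl(\dlog(\tmg),\ \ovq\log(\tmg)_0\bigr),\qquad ch(1-g)=\bigl(\dlog(1-g),\ \ovq\log(1-g)_0\bigr),
\end{equation*}
where $(\cdot)_0$ denotes $(\cdot)^q/\phi^\ast(\cdot)$; by the property of $f_0$ recorded just after \eqref{eq:regfunc}, $\log(\tmg)_0$ and $\log(1-g)_0$ lie in $\O(U')$, so both pairs are elements of the model \eqref{eq:syntomicmodel} for $\htms^1(X_{\CC}^{\loc},1)$. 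Next I would substitute these into \eqref{eq:cupprod} with $\gamma=0$ and $i=j=1$, so that $\gamma+(1-\gamma)\phi^\ast/q^j$ becomes $\phi^\ast/q$ and $(1-\gamma)+\gamma\phi^\ast/q^i$ becomes the identity. Since the first entry of each factor is a $1$-form and the second a function, \eqref{eq:cupprod} yields
\begin{equation*}
 ch(\tmg)\cup ch(1-g)=\Bigl(\dlog(\tmg)\wedge\dlog(1-g),\ \ovqs\log(\tmg)_0\,\dlog\phi^\ast(1-g)-\ovq\log(1-g)_0\,\dlog(\tmg)\Bigr),
\end{equation*}
where I used $\phi^\ast\dlog(1-g)=\dlog\phi^\ast(1-g)$ and $\ovq\cdot\ovq=\ovqs$. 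Multiplying by $-1$ identifies the first entry with $-\omg=\omega_g$ and the second with $\ovq\log(1-g)_0\,\dlog(\tmg)-\ovqs\log(\tmg)_0\,\dlog\phi^\ast(1-g)=\epsg=\epsilon_g$, which is the assertion. The choice $\gamma=0$ is what produces exactly the representative in the statement; other choices give homotopic ones.

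As a sanity check I would verify that $(\omega_g,\epsilon_g)$ is a cocycle for \eqref{eq:syntomicmodel}: $d\omega_g=0$ is immediate since $\omega_g$ is a wedge of closed logarithmic forms, and $d\epsilon_g=(1-\phi^\ast/q^2)\omega_g$ is forced by the cocycle conditions on the two factors together with the Leibniz-type shape of \eqref{eq:cupprod}, but can also be checked by hand from $d\bigl(\log(h)_0\bigr)=q\,\dlog(h)-\dlog\phi^\ast(h)$. I expect the real work here to be bookkeeping rather than mathematics: making sure $\tmg$ and the pullback of $1-g$ are genuinely units on $X_{\CC}^{\loc}$ and that all forms in sight are overconvergent on $U'$, and --- the point most likely to cause friction --- using the normalizations in \eqref{eq:regfunc} and \eqref{eq:cupprod} (the factor $\ovq$, the degree-dependent signs) consistently enough that the cup product formula applies verbatim. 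Once the conventions are pinned down, the computation itself is a single substitution.
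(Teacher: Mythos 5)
Your proposal is correct and follows exactly the paper's argument: the proof in the paper is a one-line citation of the regulator formula \eqref{eq:regfunc} for units, compatibility of $ch$ with cup products, and the cup product formula \eqref{eq:cupprod} with $\gamma=0$, which is precisely the substitution you carry out. Your explicit computation (including the sign $(-1)^{\deg\omega_1}$ and the factor $\ovqs$) reproduces $(\omega_g,\epsilon_g)$ correctly, so there is nothing to add.
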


\begin{proof}
  This follows from the formula~\eqref{eq:regfunc} for the regulators
  of functions, the
  compatibility of $ch$ with cup products and the cup product
  formula~\eqref{eq:cupprod}.
\end{proof}

  In what follows, the notation $[a_1,\dots,a_i]$ will denote
the class of $ (a_1,\dots,a_i) $ in~\eqref{eq:reltwo} or~\eqref{eq:relthree},
depending on if $ i=3 $ or~4.

\begin{proposition}\label{3.10}
  We have in $\htms^2(X_{\CC}^{\loc},\bbox,2)$, using the
  model~\eqref{eq:reltwo}, 
  \begin{equation*}
    ch([g]_2)=[\omega_g,\epsilon_g,0,\Theta(g)]
  \end{equation*}
  where
  \begin{equation}\label{Theta}
    d \Theta(g)= \epsilon_g|_{t=0} = \epsgi \,.
  \end{equation}
\end{proposition}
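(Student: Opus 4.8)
The plan is to deduce the proposition from Lemma~\ref{firstreg} by lifting the computation of the \emph{absolute} syntomic class of $[g]_2$ to the \emph{relative} one. Recall that $[g]_2\in\kbb(X_{\CC}^{\loc},\bbox)$ maps under the forgetful map to $-\tmg\cup(1-g)$ in $\kbb(X_{\CC}^{\loc})$, so by Lemma~\ref{firstreg} and the additivity of the regulator $ch$, the image of $ch([g]_2)$ in $\htms^2(X_{\CC}^{\loc},2)$ is the class of $(\omega_g,\epsilon_g)$ in the model~\eqref{eq:syntomicmodel}. First I would invoke the compatibility of $ch$ with the morphism from the relative model~\eqref{eq:reltwo} to~\eqref{eq:syntomicmodel} that remembers only $(\omega,\epsilon)$: any representative $(\omega',\epsilon',\epsilon_\infty',\epsilon_0')$ of $ch([g]_2)$ then has $\omega'=\omega_g$ and $\epsilon'=\epsilon_g+d\phi$ for some $\phi\in\O(U')$, and after subtracting the coboundary $(0,d\phi,\phi|_{t=\infty},\phi|_{t=0})$ we may assume $ch([g]_2)=[\omega_g,\epsilon_g,\epsilon_\infty,\epsilon_0]$ with $\epsilon_s\in\O(U)$ and $d\epsilon_s=\epsilon_g|_{t=s}$ for $s=0,\infty$.

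It then remains to pin down $\epsilon_\infty$ and $\epsilon_0$, which amounts to restricting the one-form $\epsilon_g=\epsg$ to the sections $\{t=\infty\}$ and $\{t=0\}$, each a copy of $U$ inside $U'$. The two facts I would use are that $\tmg$ restricts to $1$ at $t=\infty$ and to $g$ at $t=0$, and that the Frobenius on $X_U$, being the product of $\phi$ on $U$ with $t\mapsto t^q$, fixes both sections and restricts there to $\phi$ on $U$; hence $(\tmg)_0$ restricts to $1$ at $t=\infty$ and to $g_0$ at $t=0$, while $\log(1-g)_0$ and $\dlog\phi^\ast(1-g)$ are pulled back from $U$ and so restrict to themselves. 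Carrying this out gives $\epsilon_g|_{t=\infty}=0$ and $\epsilon_g|_{t=0}=\epsgi$.

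Finally I would assemble the conclusion. From $d\epsilon_\infty=\epsilon_g|_{t=\infty}=0$ and the fact that an overconvergent function on the connected space $U$ with vanishing differential is constant, $\epsilon_\infty$ is a constant $c$; subtracting the coboundary associated to the constant function $c\in\O(U')$ (which has $dc=0$ and restricts to $c$ on both sections) replaces the representative by $[\omega_g,\epsilon_g,0,\epsilon_0-c]$. Setting $\Theta(g):=\epsilon_0-c\in\O(U)$ we obtain $ch([g]_2)=[\omega_g,\epsilon_g,0,\Theta(g)]$ with $d\Theta(g)=\epsilon_g|_{t=0}=\epsgi$, which is~\eqref{Theta}. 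The only genuinely technical point is the pair of restriction computations in the second step, and in particular keeping careful track of how $\phi$ and the operation $f\mapsto f_0$ behave under restriction to the two sections; everything else is bookkeeping with the models~\eqref{eq:reltwo},~\eqref{eq:syntomicmodel} and their coboundaries.
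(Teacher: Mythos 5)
Your proposal is correct and follows essentially the same route as the paper: reduce to Lemma~\ref{firstreg} via the map that forgets the last two coordinates, normalize the representative so its first two coordinates are exactly $(\omega_g,\epsilon_g)$, and read off from closedness that $d\epsilon_\infty=\epsilon_g|_{t=\infty}=0$ and $d\epsilon_0=\epsilon_g|_{t=0}=\epsgi$. Your explicit restriction computations and the removal of the constant $\epsilon_\infty$ by a coboundary are exactly the steps the paper's proof leaves implicit (the constant being harmless since $\Theta(g)$ is only determined up to a constant anyway).
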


\begin{proof}
We are looking for a closed four-tuple, whose first two coordinates
represent the cohomology class of $(\omega_g,\epsilon_g)$.
It is easy to see that we may assume that the first two coordinates
are indeed $(\omega_g,\epsilon_g)$. Then the closedness condition
implies that the differentials of the next two coordinates
give the restriction to $t=\infty$ and $t=0$ respectively of
$\epsilon_g$. These are respectively $0$ and
$\epsilon_g|_{t=0}$, so the result is clear.  
\end{proof}

\begin{remark}
1. One can show that there exist a
function $\Theta$ on $\PP^1$ such that $\Theta(g)$ is indeed the
composition of $\Theta$ and $g$, but we shall not need to use this.\\
2. The determination of the regulator at this stage is incomplete,
since we have only determined $\Theta(g)$ up to a constant. It will
turn out that for the regulator computation this is irrelevant. For
the computation of the boundary this becomes much trickier. We in fact
failed to determine the boundary of the regulator directly. When we
need this towards the end of Section~\ref{sec:end} for the proof
of Theorem~\ref{main-thm1}, we shall use a trick to overcome this
difficulty, which in particular forces us to assume working over a
number field at that stage.
\end{remark}

\begin{proposition}
  The regulator of  $\symb{g}{f}$ in
  $\htms^3(X_{\CC}^{\loc},\bbox,3) $ is
  represented by the following element in the model~\eqref{eq:relthree}
  \begin{equation*}
   \epsilon(g,f):=\left(\ovq \log f_0 \omega_g + \ovq
    \epsilon_g\wedge \phi^\ast \dlog
    f,0,\ovq \Theta(g) \phi^\ast \dlog f\right)
  \end{equation*}

\end{proposition}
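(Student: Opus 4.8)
The plan is to recognise $ch(\symb{g}{f})$ as a cup product of classes that have already been determined. By multiplicativity of the syntomic regulator together with the cup-product formula~\eqref{eq:cupprod} (see also \cite[Definition~6.5]{Bes98a}), one has $ch(\symb{g}{f})=ch([g]_2)\cup ch((f))$ inside $\htms^3(X_{\CC}^{\loc},\bbox,3)$, the pairing being $\htms^2(X_{\CC}^{\loc},\bbox,2)\times\htms^1(X_{\CC}^{\loc},1)\to\htms^3(X_{\CC}^{\loc},\bbox,3)$ of~\eqref{eq:relcup}. Here $(f)$ is the image of $f\in\O^*(\CC^{\loc})$ under pull-back along $X_{\CC}^{\loc}\to\CC^{\loc}$; since $t\mapsto t^q$ fixes $t=0$ and $t=\infty$, our Frobenius on $X_U$ restricts to $\phi$ on each of those slices, and because $f$ does not involve $t$ one has $f_0=f^q/\phi^\ast f$ whether computed on $U$ or on $U'$. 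Hence by the regulator formula~\eqref{eq:regfunc}, $ch((f))$ is represented by $(\dlog f,\ovq\log f_0)$.

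Next I would substitute the representative $[\omega_g,\epsilon_g,0,\Theta(g)]$ of $ch([g]_2)$ from Proposition~\ref{3.10} and the representative $(\dlog f,\ovq\log f_0)$ of $ch((f))$ into~\eqref{eq:relcup}. The first coordinate comes out as $\ovq\log f_0\,\omega_g+\epsilon_g\wedge\frac{\phi^\ast}{q}\dlog f=\ovq\log f_0\,\omega_g+\ovq\,\epsilon_g\wedge\phi^\ast\dlog f$; the $t=\infty$ term is $0$ since $\epsilon_\infty=0$; and the $t=0$ term is the cup product, inside the syntomic cohomology of $\{t=0\}\cong\CC^{\loc}$, of the boundary $0$-cochain $\Theta(g)$ with the restriction of $ch((f))$. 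Applying~\eqref{eq:cupprod} with $\gamma=0$ to this last pairing produces the twist $\phi^\ast/q$, so the $t=0$ term equals $\Theta(g)\cdot\frac{\phi^\ast}{q}\dlog f=\ovq\,\Theta(g)\,\phi^\ast\dlog f$, using $(\dlog f)|_{t=0}=\dlog f$ and that the Frobenius on $\{t=0\}$ is $\phi$. The resulting triple is exactly $\epsilon(g,f)$.

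Finally I would check that $\epsilon(g,f)$ really is a cocycle for the model~\eqref{eq:relthree}, which is routine once dimensions are exploited: the first coordinate is a closed $2$-form on the two-dimensional $U'$, since $d\omega_g=0$, $d\epsilon_g=(1-\phi^\ast/q^2)\omega_g$ by~\eqref{eq:syntomicmodel}, and every $3$-form on $U'$ vanishes; its restrictions to the one-dimensional slices $\{t=0\}$ and $\{t=\infty\}$ vanish as well, so the remaining cocycle conditions reduce to $d(0)=0$ and to $d(\ovq\,\Theta(g)\,\phi^\ast\dlog f)=0$, the latter because $d(\Theta(g)\,\phi^\ast\dlog f)$ is a $2$-form on the curve $U$ (the relation $d\Theta(g)=\epsilon_g|_{t=0}$ from Proposition~\ref{3.10} is needed only to see the expression is legitimate). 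The one point requiring genuine care is the bookkeeping in~\eqref{eq:relcup} and~\eqref{eq:cupprod}: keeping straight on which overconvergent space ($U$, $U'$ or $X_U$) each of $\omega_g$, $\epsilon_g$, $\Theta(g)$, $f$ and $\log f_0$ lives, and how $\phi$ acts on the $t=0$ and $t=\infty$ slices; with that settled the proposition is a direct substitution. (Alternatively, as with Lemma~\ref{firstreg} and Proposition~\ref{3.10}, one may obtain the formula by pulling back the corresponding universal computation of~\cite{Bes-deJ98}.)
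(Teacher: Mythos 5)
Your proposal is correct and takes essentially the same approach as the paper, whose proof simply invokes the compatibility of the regulator with cup products together with the relative cup-product formula~\eqref{eq:relcup}; this is exactly your substitution of the representative from Proposition~\ref{3.10} and of $ch((f))$ from~\eqref{eq:regfunc} into that product. Your extra care with the $t=0$ slice term (recomputing it via~\eqref{eq:cupprod} with $\gamma=0$, which produces the $\phi^\ast/q$ twist appearing in the stated formula) and the routine cocycle check merely spell out what the paper leaves implicit.
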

\begin{proof}
This follows again from the compatibility of the regulator with cup
products and from the formulas for the cup product in relative
syntomic cohomology~\eqref{eq:relcup}.
\end{proof}

Suppose now that $\alpha=\sum_i [g_i]_2 \cup (f_i)$ belongs to
\[
H^1(\Ccomp \bullet \O ) \iso \K 4 3 {\O} / \K 3 2 {\O} \cup \OQ 
\,,
\]
see~\eqref{H1auxC}.
Note that $ \alpha $ is only determined
up to an element in $ \IO \cup \OQ $, see~\eqref{Bcomplex} and~\eqref{C1}.
A term in the latter space consists explicitly of elements of the form
\begin{equation}\label{mydelta}
  \delta=\sum_j \delta_{1,j} \cup \delta_{2,j}
\end{equation}
 with $\delta_{1,j}\in
\K 1 1 {X_\CC^\loc,\bbox} $ and $\delta_{2,j} \in \K 2 2 {\CC^\loc} $,
for all possible localizations.
Therefore, for an appropriately
chosen $\CC^\loc$, there
exists $\beta\in \K 3 3 {X_{\CC^\loc},\bbox} $
whose restriction to $(X_{\CC}^\loc,\bbox)$ is $\alpha+\delta$, where 
$\delta $ is as in \eqref{mydelta}.
If we write $ch(\beta)=
[\epsilon,\epsilon_\infty,\epsilon_0]$, with the $\epsilon$'s living
on $X_U$, then we have
$[\epsilon,\epsilon_\infty,\epsilon_0]|_{(X_\CC^\loc,\bbox)}=\sum
[\epsilon(g_i,f_i)]+ch(\delta)$. Writing this explicitly this means that
\begin{equation*}
  (\epsilon,\epsilon_\infty,\epsilon_0)|_{(U',\bbox)}=\sum
\epsilon(g_i,f_i)+ ch(\delta) +
(d \lambda,\lambda|_{\{t=\infty\}},\lambda|_{\{t=0\}})
\end{equation*}
for some $\lambda\in \Omega^1(U')$ and where now $ch(\delta)$ means
any form representing this class.

The isomorphism $ \reliso : \htms^3(X_{\CC^\loc},\bbox,3) \isom \htms^2(\CC^\loc,3)$
is obtained by integration from $0$ to $\infty$. More precisely it is
given by 
\begin{equation}
  [\epsilon,\epsilon_\infty,\epsilon_0]\mapsto [(\int_0^\infty
  \epsilon ) - (\epsilon_\infty-\epsilon_0)]\,,\label{eq:integisom}
\end{equation}
 where the integration is only with
respect to the variable $t$. Note that we are integrating forms on $X_U$.

For forms on
$U'$ we may do Coleman integration instead
(Section~\ref{sec:coleman}). This
technique was introduced
in~\cite[Section~\ref{sec:down}]{Bes-deJ98}. Note that we only
discussed Coleman integration over $\Cp$. The extension of scalars of
$U$ and the fibers of $U'\to U$, to $\Cp$ are wide open space in the
sense of Coleman so
one can do Coleman integration on them. By abuse of notation we shall
continue to denote this extension of scalars by the same letters.
Coleman integration will be the same as ordinary integration if the
forms extend to $X_U$. The theory of Coleman integration is not
sufficiently developed yet to tell us that what we do makes sense in
general, so we must be careful to check that it makes sense for the
particular forms we are working with.

Now we check what happens to the term
$\epsilon(g,f)$ under this integration. The integral of the first term is
\begin{align*}
  &\int_0^\infty \ovq \log f_0 \omega_g + \ovq \epsilon_g\wedge \phi^\ast
  \dlog f =
  \ovq \log f_0 \int_0^\infty \omega_g + \ovq
  \left(\int_0^\infty\epsilon_g\right)\wedge \phi^\ast  \dlog f\\ =
  &\ovq \log f_0 \log g \dlog (1-g) - \ovqs
  \log (1-g)_0 \log g  \phi^\ast  \dlog f\,.
\end{align*}
The last equality follows because $\int_0^\infty \dlog \tmg  = -\log
g$ and the term involving $\log (\tmg)_0$ vanishes because it does not
involve a $dt$. Adding the term $a_0$ we obtain
\begin{equation}\label{basform}
  \begin{split}
    \int_0^\infty \epsilon(g,f)=
    &\ovq \log f_0 \log g \dlog (1-g)\\ - &\ovqs
    \log (1-g)_0 \log g  \phi^\ast  \dlog f\\ + &\ovq \Theta(g) \phi^\ast
    \dlog f\,.
  \end{split}
\end{equation}
Note that this integral belongs to $\Ocola(U)$, in the notation of
Section~\ref{sec:coleman}.

\begin{lemma}
For $\delta$ in $ \IO \cup \K 2 2 {\O} $ we have  $\reliso(ch(\delta)) = 0 $.
\end{lemma}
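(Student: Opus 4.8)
The plan is to use multiplicativity of the syntomic regulator $ch$ and $\Q$-linearity of $\reliso$ to reduce to a single product $\delta=\delta_1\cup\delta_2$ with $\delta_1$ in $\IO\subseteq\K 1 1 {X_\CC^\loc,\bbox}$ and $\delta_2$ in $\K 2 2 {\CC^\loc}$ pulled back along $X_\CC^\loc\to\CC^\loc$, after passing to a suitable localization. Here $\delta_1$ is represented by a function $u=\prod_i\bigl(\tfrac{t-u_i}{t-1}\bigr)^{n_i}$ with $u_i$ in $\SPO$ and $\prod_i u_i^{n_i}=1$. First I would compute $ch(\delta_1)$: by the regulator formula~\eqref{eq:regfunc} its image in $\htms^1(X_\CC^\loc,1)$ is $(\dlog u,\ovq\log u_0)$ with $u_0=u^q/\phi^\ast(u)$, and the relation $\prod_i u_i^{n_i}=1$ forces both $\dlog u$ and $\log u_0$ to restrict to $0$ at $t=0$ and $t=\infty$, so this lifts to a cocycle in the relative complex computing $\htms^1(X_\CC^\loc,\bbox,1)$. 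Next, since $\CC^\loc$ is a curve, $\Omega^2$ of the relevant dagger space vanishes, so $ch(\delta_2)$ in $\htms^2(\CC^\loc,2)$ is represented by $(0,\eta)$ for a closed $1$-form $\eta$, which pulls back unchanged to $X_\CC^\loc$.

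Next I would feed these into the cup-product formula~\eqref{eq:cupprod} (with $\gamma=0$) for $\htms^1\times\htms^2\to\htms^3$ on $X_\CC^\loc$: the $\omega$-component is $\dlog u\wedge 0=0$ and the $\epsilon$-component is $-\dlog u\wedge\eta$, while the boundary data of $ch(\delta_1)$, being $0$, contribute nothing; so $ch(\delta)$ in $\htms^3(X_\CC^\loc,\bbox,3)$ is represented, in the model~\eqref{eq:relthree}, by $(-\dlog u\wedge\eta,0,0)$ (the cocycle condition $(\dlog u)|_{t=s}=0$ for $s=0,\infty$ being the same computation as above). Applying $\reliso$ via~\eqref{eq:integisom} then yields the class of $\int_0^\infty(-\dlog u\wedge\eta)$, the integration being over $t$. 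The crucial observation is that, writing $\dlog u=\bigl(\sum_i n_i(\tfrac1{t-u_i}-\tfrac1{t-1})\bigr)dt-\sum_i n_i\tfrac{du_i}{t-u_i}$, the wedges $du_i\wedge\eta$ vanish because $du_i$ and $\eta$ are both pulled back from the curve $\CC^\loc$; hence only the $dt$-part survives and
$\int_0^\infty(-\dlog u\wedge\eta)=\bigl[-\sum_i n_i\log\tfrac{t-u_i}{t-1}\bigr]_{0}^{\infty}\eta=\bigl(\sum_i n_i\log u_i\bigr)\eta=\log\bigl(\prod_i u_i^{n_i}\bigr)\eta=0$,
using the constant-term values $0$ at $t=\infty$ and $\log u_i$ at $t=0$. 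Together with the vanishing relative boundary terms this gives $\reliso(ch(\delta))=0$, and linearity finishes the general case.

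The hard part will be bookkeeping rather than ideas: pinning down the exact relative cup-product $\htms^1(X_\CC^\loc,\bbox,1)\times\htms^2(X_\CC^\loc,2)\to\htms^3(X_\CC^\loc,\bbox,3)$ out of~\eqref{eq:cupprod} with all signs and $\phi^\ast/q^j$ twists in place — the twist here multiplies $\omega_2=0$ and so disappears, but one should check this is structural and not accidental — and making sure every Coleman integral invoked is legitimate for the overconvergent forms at hand, in particular that $\dlog u\wedge\eta$ and the primitives in play lie in $\Ocola(U)$ in the sense of Section~\ref{sec:coleman} so that~\eqref{eq:integisom} genuinely applies. Once the reduction to $\delta_1\cup\delta_2$ is secured, the rest rests only on the two vanishings $du_i\wedge\eta=0$ and $\prod_i u_i^{n_i}=1$.
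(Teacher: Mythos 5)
Your proposal is correct, and it supplies exactly the content that the paper's own proof outsources: the paper makes the same first move (writing $\delta$ as a sum of terms $\delta_{1}\cup\delta_{2}$ with $\delta_{1}$ in $\K 1 1 {X_\CC^\loc,\bbox} $ and $\delta_{2}$ in $\K 2 2 {\CC^\loc} $, as in~\eqref{mydelta}) and then simply cites the proof of Proposition~7.2 of~\cite{Bes-deJ98} for the vanishing, whereas you carry out that computation in the present setting. Your mechanism — that $\prod_i u_i^{n_i}=1$ makes $(\dlog u,\ovq\log u_0)$ a relative cocycle with vanishing boundary data, that the curve's dagger space carries no $2$-forms so the de~Rham component of $ch(\delta_2)$ is zero, that $du_i\wedge\eta=0$ since both are pulled back from the curve, and that the fibre integral $\int_0^\infty \dlog u$ equals $\log u|_{t=\infty}-\log u|_{t=0}=-\sum_i n_i\log u_i=-\log\bigl(\prod_i u_i^{n_i}\bigr)=0$ — is precisely what the cited argument rests on, so the two routes prove the same thing; yours is self-contained, the paper's is shorter. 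Two points you already flag deserve emphasis: the product of a relative class with an absolute one must be formed with the cone version of~\eqref{eq:cupprod} (its only new ingredient is the boundary components, which vanish here because the relative data of $ch(\delta_1)$ are forced to be zero), and since $\dlog u\wedge\eta$ lives on $U'$ rather than on $X_U$, the expression $\reliso(ch(\delta))$ is to be read as fibrewise Coleman integration of a chosen representative; the ambiguity coming from terms $(d\lambda,\lambda|_{\{t=\infty\}},\lambda|_{\{t=0\}})$ is not claimed to vanish in this lemma but is handled separately (Proposition~\ref{4.17} together with the goodness of $L_\omega$), so obtaining $0$ on the nose for your particular representative is exactly what is needed.
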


\begin{proof}
  As in \eqref{mydelta} $\delta$ is a sum of terms of the form
  $\delta_{1} \cup \delta_{2}$ with $\delta_{1}$ in
$ \K 1 1 {X_\CC^\loc,\bbox} $ and $\delta_{2} $ in $\K 2 2 {\CC^\loc} $.
  That $ \reliso $ vanishes on these elements follows from the proof
  of~\cite[Proposition~\ref{rregfactors}]{Bes-deJ98}.   
\end{proof}

Now we deal with the term
$(d \lambda,\lambda|_{\{t=\infty\}},\lambda|_{\{t=0\}})$.

\begin{proposition}\label{4.17}
  Suppose that $X_{\CC}^\loc$ is obtained from $X_{\CC^\loc}$ by
  removing the graphs of  $t=h_j(x)$ for $j=1,\ldots, n$.
  Assume further that the reductions
  of those graphs are either disjoint or identical (which we can achieve
 by shrinking $ \CC^\loc $).
 Then there are $a_j(x), a(x) \in \O(U)$ such that we have
  \begin{equation*}
   \reliso (d \lambda,\lambda|_{\{t=\infty\}},\lambda|_{\{t=0\}})
    =d (a+\sum_{j} a_j \log(h_j))\,,
  \end{equation*}
  where, if there are two $h_j$ with identical reduction, one may take
  just one of them.
  In particular, it belongs to  $\Ocola(U)$.
\end{proposition}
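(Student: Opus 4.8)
My plan is to evaluate $\reliso\bigl(d\lambda,\lambda|_{\{t=\infty\}},\lambda|_{\{t=0\}}\bigr)$ directly from the explicit description~\eqref{eq:integisom}, reduce it to a statement about the $dt$-coefficient of $\lambda$, normalise that coefficient so that it has only simple poles along the graphs, and then compute term by term. Applying~\eqref{eq:integisom} gives
\[
\reliso\bigl(d\lambda,\lambda|_{\{t=\infty\}},\lambda|_{\{t=0\}}\bigr)=\int_0^\infty d\lambda-\bigl(\lambda|_{\{t=\infty\}}-\lambda|_{\{t=0\}}\bigr),
\]
where the integral is over the variable $t$ and is a Coleman integral on the fibres of $U'\to U$ (which are wide open spaces, so this makes sense). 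Writing $\lambda=P\,dt+Q\,dx$ with $x$ a coordinate on $\CC^{\loc}$, one has $d\lambda=(\partial_tQ-\partial_xP)\,dt\wedge dx$ and $\lambda|_{\{t=s\}}=Q|_{\{t=s\}}\,dx$; the terms involving $Q$ cancel and we are left with the identity of $1$-forms on $U$
\[
\reliso\bigl(d\lambda,\lambda|_{\{t=\infty\}},\lambda|_{\{t=0\}}\bigr)=-\Bigl(\int_0^\infty\partial_xP\,dt\Bigr)\,dx .
\]
Thus only the $dt$-coefficient $P\in\O(U')$ of $\lambda$ matters; it is regular in $t$ except for poles along $\{t=1\}$ and along the graphs $t=h_j(x)$, and is regular at $t=0,\infty$.

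Next I would normalise $P$. Replacing $\lambda$ by $\lambda+d\mu$ for a function $\mu\in\O(U')$ changes the right-hand side only by the exact $1$-form $-d\bigl(\mu|_{\{t=\infty\}}-\mu|_{\{t=0\}}\bigr)$ on $U$, which will be absorbed into the term $a$; and since the $dt$-part of $d\bigl(c(x)(t-h_j)^{-m}\bigr)$ equals $-mc(x)(t-h_j)^{-m-1}\,dt$, a suitable such $\mu$ (with poles along the graphs) cancels all poles of $P$ of order $\ge 2$ along the $t=h_j(x)$. So I may assume $P$ has at most simple poles along the graphs and write its partial-fraction decomposition in $t$ as
\[
P=\sum_j\frac{r_j(x)}{t-h_j(x)}+P^\flat ,
\]
where $r_j$ is the residue of $P\,dt$ at $t=h_j(x)$, i.e.\ the residue of $\lambda$ along the graph $\{t=h_j(x)\}$, and $P^\flat$ is regular in $t$ away from $t=1$. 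After a possible further shrinking of $\CC^{\loc}$, using the hypothesis that the reductions of the graphs are disjoint or identical (so that distinct graphs do not interfere), one gets $r_j\in\O(U)$ and $P^\flat\in\O(X_U)$.

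Now I compute term by term. For $P^\flat$ there are no poles moving with $x$, so one may differentiate the Coleman integral under the integral sign: $\int_0^\infty\partial_xP^\flat\,dt=d\bigl(\int_0^\infty P^\flat\,dt\bigr)$, and $a:=\int_0^\infty P^\flat\,dt$ lies in $\O(U)$ because the only non-rigid-analytic term in the $t$-primitive of $P^\flat$ is $\res_{t=1}(P^\flat\,dt)\cdot\log(t-1)$, whose constant term vanishes at $t=0$ (as $\log(-1)=0$) and at $t=\infty$. For each pole term, using $\partial_x\bigl(\tfrac{r_j}{t-h_j}\bigr)=\tfrac{r_j'}{t-h_j}-\partial_t\bigl(\tfrac{r_jh_j'}{t-h_j}\bigr)$ together with the facts that $\log(t-h_j)$ has constant term $0$ at $t=\infty$ and value $\log(h_j)$ at $t=0$, while $\tfrac{r_jh_j'}{t-h_j}$ is $0$ at $t=\infty$ and $-\tfrac{r_jh_j'}{h_j}$ at $t=0$, one gets $\int_0^\infty\partial_x\bigl(\tfrac{r_j}{t-h_j}\bigr)\,dt=-r_j'\log(h_j)-\tfrac{r_jh_j'}{h_j}$. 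Summing and multiplying by $dx$,
\[
\reliso\bigl(d\lambda,\lambda|_{\{t=\infty\}},\lambda|_{\{t=0\}}\bigr)=\sum_j\bigl(\log(h_j)\,dr_j+r_j\,\dlog(h_j)\bigr)-da=d\Bigl(-a+\sum_j r_j\log(h_j)\Bigr),
\]
so the assertion holds with $a_j:=r_j$, after renaming $-a$ as $a$ and folding in the exact term from the normalisation of $P$. If $h_{j_1}$ and $h_{j_2}$ have the same reduction then $h_{j_2}/h_{j_1}\in\O(U)^\ast$ reduces to $1$, so $\log(h_{j_2}/h_{j_1})\in\O(U)$ and $r_{j_2}\log(h_{j_2})$ may be rewritten as $r_{j_2}\log(h_{j_1})$ plus a term absorbed into $a$, which justifies keeping only one $h_j$ per reduction class. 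Finally, expanding $d\bigl(a+\sum_ja_j\log(h_j)\bigr)=\bigl(da+\sum_ja_j\,\dlog(h_j)\bigr)+\sum_j(\log h_j)\,da_j$ exhibits it as an element of $\Ocola(U)$: the first summand is in $\Omega^1(U)$, and since $d(\log h_j)=\dlog(h_j)\in\Omega^1(U)$ and $da_j\in\Omega^1(U)$, each term $(\log h_j)\,da_j$ lies in $\acola(U)\cdot\Omega^1(U)=\Ocola(U)$.

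The computation above is entirely routine; the real work lies in the normalisation step — checking, after a possibly further shrinking of $\CC^{\loc}$ and the subtraction of exact forms, that $P$ may be taken with simple poles along the graphs and that $r_j$ and $P^\flat$ (hence $a$) lie in the correct spaces of overconvergent functions on strict neighbourhoods. This overconvergence bookkeeping is of the same nature as in the proof of~\cite[Proposition~\ref{rregfactors}]{Bes-deJ98}, and I would ultimately reduce to that argument.
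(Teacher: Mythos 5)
Your first reduction is exactly the paper's: only the $dt$-coefficient of $\lambda$ survives, the $dx$-part cancelling against $\lambda|_{\{t=\infty\}}-\lambda|_{\{t=0\}}$ (up to an overall sign convention, which is harmless since the statement is existential in $a,a_j$), and your evaluation of a simple-pole term $r_j(x)/(t-h_j(x))$, yielding $d(r_j\log h_j)$, is in substance the paper's computation $\int_{t=0}^{t=\infty} a_j(x)\,dx\wedge\dlog(t-h_j(x))=\pm a_j\log(h_j)\,dx$. Where you diverge is in how the decomposition $P=\sum_j r_j(x)/(t-h_j(x))+P^\flat$ with $r_j\in\O(U)$ and $P^\flat\in\O(X_U)$ is produced, and this is where your argument has a genuine gap rather than merely deferred routine work. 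First, an overconvergent function on $U'=(X_{\CC}^{\loc})^\dagger$ does not have finite-order poles along the removed graphs: along each removed tube its principal part is an infinite series $\sum_{m\ge 1}c_{j,m}(x)(t-h_j)^{-m}$ converging on a strict neighbourhood, so ``subtract finitely many $d\bigl(c(x)(t-h_j)^{-m}\bigr)$ to cancel all poles of order $\ge 2$'' does not apply as written; the honest version requires subtracting an infinite sum $\mu$ and proving that $\mu$ is again overconvergent on $U'$ (in both $t$ and $x$). Second, even granting a normalisation, the two assertions that carry the whole content of the proposition --- that the residue coefficients $r_j$ are overconvergent functions of $x$, i.e.\ lie in $\O(U)$, and that the remainder integrates to an element of $\O(U)$ --- are exactly what you defer to ``overconvergence bookkeeping of the same nature as'' the proof of Proposition~7.2 of~\cite{Bes-deJ98}; but that result, as used in this paper, concerns the vanishing of $\reliso$ on $\IO\cup\K 2 2 {\O} $ and does not supply a Mittag--Leffler decomposition with controlled residues, so the central step of your proof is asserted rather than proved.

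The paper closes precisely this point by a different idea, which your write-up is missing: it regards the $dt$-coefficient as the closed two-form $\gamma=g(x,t)\,dx\wedge dt\in\Omega^2(U')$ and invokes the residue exact sequence $\hr^2((X_{\CC^\loc})_\kappa/K)\to\hr^2((X_{\CC}^{\loc})_\kappa/K)\xrightarrow{\res}\oplus_j\hr^1((\CC^\loc)_\kappa/K)$. The residue of $\gamma$ along (the reduction of) each graph is represented by a form $a_j(x)\,dx$ with $a_j\in\O(U)$, and subtracting $\gamma_j=a_j(x)\,dx\wedge\dlog(t-h_j(x))$ kills all residues, so the remaining class comes from the non-localized space and integrates to a holomorphic one-form $a(x)\,dx$ on $U$; this produces $r_j=a_j\in\O(U)$ and the remainder term in one stroke, and it also explains intrinsically the clause about graphs with identical reduction, since the residue maps are indexed by the reductions (your post hoc fix via $\log(h_{j_2}/h_{j_1})\in\O(U)$ is fine, but secondary). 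To complete your proof you would either have to import this cohomological step, or genuinely carry out the fiberwise Mittag--Leffler argument with uniform overconvergence estimates; as it stands, the explicit computation you present is the easy part of the paper's proof, and the hard part is not reduced to anything established.
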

\begin{proof}
We have global coordinates $x$ and $t$ on $U'$ so we can write
$\lambda=f(x,t) dx + g(x,t)dt$. Then 
\begin{equation*}
  d \lambda= \left(\frac{\partial g}{\partial x}-
 \frac{\partial f}{\partial t}\right) dx\wedge dt\,.
\end{equation*}
Therefore
\begin{equation*}
   \int_{t=0}^{t=\infty} d \lambda =  \left(\int_{t=0}^{t=\infty}
   \frac{\partial
   g}{\partial x} dt\right)dx - (f(x,\infty)-f(x,0))dx\,.
\end{equation*}
But the second term is exactly $\lambda|_{t=\infty}-\lambda|_{t=0}$ so we
find
\begin{equation*}
 \reliso [d \lambda,\lambda|_{\{t=\infty\}},\lambda|_{\{t=0\}}]=
  d \left(\int_{t=0}^{t=\infty} g(x,t) dt\right)\,.
\end{equation*}
Consider now the two-form $\gamma=g(x,t) dx\wedge dt \in \Omega^2(U')$. This
is closed so represents a cohomology class in
$\hr^2((X_{\CC}^\loc)_\kappa/K)$. We have a short exact sequence
\begin{equation*}
  \hr^2((X_{\CC^\loc})_\kappa/K) \to \hr^2((X_{\CC}^\loc)_\kappa/K
  \xrightarrow{\res}
  \oplus_i \hr^1((\CC^\loc)_\kappa/K)\,,
\end{equation*}
where the map $\res=\oplus_j \res_j$ is the sum of the boundary maps
on the reductions
of $t=h_j(x)$, composed with the pullback under the isomorphisms of
these graphs with $(\CC^\loc)_\kappa$. Suppose that $\res_j(\gamma)$
is the cohomology class of $a_j(x) dx\in \Omega^1(U)$. Let
$\gamma_j:=a_j(x) dx \wedge \dlog(t-h_j(x))$. Clearly
$\res_l(\gamma_j)=0$ if $l\ne j$. We claim that
$\res_j(\gamma_j)=\res_j(\gamma)$. This can be seen easily by applying
the map $(x,t)\to (x,t-h_j(x))$, transforming $\gamma_j$ to $a_j(x)
dx\wedge \dlog(t)$. Thus, $\gamma- \sum_j \gamma_j$ extends to
$\hr^2((X_{\CC^\loc})_\kappa/K)$ and its integral is a holomorphic one
form on $U$. Let this form be $a(x) dx$. Since
$\int_{t=0}^{t=\infty}\gamma_j = \pm a_j(x) \log(h_j(x)) dx $ we find
$\pm \int_{t=0}^{t=\infty} \gamma =(a(x)+\sum a_j(x)
\log(h_j(x))) dx$ and dividing by $dx$ we find
$\int_{t=0}^{t=\infty} g(x,t) dt = \pm (a(x)+\sum a_j(x)
\log(h_j(x))) $. This completes the proof.
\end{proof}

These results give us a strategy for breaking the regulator into a sum
of terms, each depending on the pairs $(g_i,f_i)$, as follows. Suppose
that $\omega $ is a form of the second kind on $C$ and let $[\omega]$
be its cohomology class in $\Hdr^1(C/K) $.
\begin{definition}\label{proplist1}
  A functional $L_\omega:\Ocola(U)\to \Cp$ will be called good if it has
  the following properties:
\begin{itemize}
\item
   it kills terms of the forms $d a$ and $d(a \log f)$ for $a,f\in \O(U) $;
\item
   it kills all terms of the form $\reliso [d\lambda,\lambda|_{\{t=\infty\}},\lambda|_{\{t=0\}}]$;
\item 
   if $\eta$ is in $\Omega^1(U)$ then we have
  $L_\omega(\eta)=  \canproj([\eta]) \cup [\omega]$. 
\end{itemize}
\end{definition}

\begin{proposition}\label{proplist}
Suppose that an element $ \beta $ in $ \K 4 3 {\CC^\loc} $ maps to 
$ \sum_i [g_i]_2 \cup (f_i) $ in $ H^1(\Ccomp \bullet \O ) $
under the natural map
$ \K 4 3 {\CC^\loc} \to \K 4 3 {\O} \to \K 4 3 {\O} / \K 3 2 {\O} \cup \OQ $,
see~\eqref{H1auxC},
and that $ch(\beta)=[\eta_0]$ in the model~\eqref{eq:Cmodel}. Then
we have, for a good functional $L_\omega$,
\begin{equation*}
  \canproj([\eta_0])\cup [\omega] = \sum_i L_\omega\left(\int_0^\infty
  \epsilon(g_i,f_i)\right)\,.
\end{equation*}
\end{proposition}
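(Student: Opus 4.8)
The plan is to unwind the syntomic regulator of $\beta$ through the relative $K$-theory isomorphisms and the explicit cochain formulas established above, and then to push the outcome through the three defining properties of a good functional.

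First I would pass to relative $K$-theory. Under the isomorphism $\K 3 3 {X_{\CC^\loc},\bbox} \isom \K 4 3 {\CC^\loc} $ of Section~\ref{k-theory}, $\beta$ corresponds to a unique $\beta'$, and, as explained in the discussion around~\eqref{mydelta} (after shrinking $\CC^\loc$ if necessary), one may arrange that the image of $\beta'$ in $\K 3 3 {X_\CC^\loc,\bbox} $ is $\alpha + \delta$, with $\alpha = \sum_i \symb{g_i}{f_i}$ and $\delta$ in $\IO \cup \K 2 2 {\O} $ of the shape~\eqref{mydelta}. Since the syntomic regulator intertwines this $K$-theory isomorphism with $\reliso$ (cf.~\cite{Bes98a}) and $\reliso$ is compatible with the restriction $\htms^3(X_{\CC^\loc},\bbox,3) \to \htms^3(X_\CC^\loc,\bbox,3)$, the class $[\eta_0] = ch(\beta)$ is the image under $\reliso$ of the restriction of $ch(\beta')$. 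Writing $ch(\beta') = [\epsilon,\epsilon_\infty,\epsilon_0]$ with forms on $X_U$ (the model~\eqref{eq:relthree} with $U'$ replaced by $X_U$), and using Proposition~\ref{3.10} together with the representative $\epsilon(g_i,f_i)$ for the regulator of $\symb{g_i}{f_i}$ obtained above, the restriction to $(U',\bbox)$ satisfies the cochain identity
\[
(\epsilon,\epsilon_\infty,\epsilon_0)|_{(U',\bbox)} = \sum_i \epsilon(g_i,f_i) + ch(\delta) + \bigl(d\lambda,\lambda|_{\{t=\infty\}},\lambda|_{\{t=0\}}\bigr)
\]
for some $\lambda \in \Omega^1(U')$, exactly as recorded in the discussion preceding Definition~\ref{proplist1}.

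Next I would apply the integration map~\eqref{eq:integisom}, interpreted as Coleman integration on $U'$, to this identity. Its value on $ch(\beta')$ represents $[\eta_0]$ in the model~\eqref{eq:Cmodel}, so there is an overconvergent function $a$ with
\[
\eta_0 = \sum_i \int_0^\infty \epsilon(g_i,f_i) + \reliso(ch(\delta)) + \reliso\bigl[d\lambda,\lambda|_{\{t=\infty\}},\lambda|_{\{t=0\}}\bigr] + d a
\]
as an identity in $\Ocola(U)$, where $\int_0^\infty \epsilon(g_i,f_i)$ denotes the element~\eqref{basform}. By the Lemma preceding Proposition~\ref{4.17}, $\reliso(ch(\delta))$ vanishes in $\htms^2(\CC^\loc,3)$, hence is $d$ of an overconvergent function, while by Proposition~\ref{4.17} the $\lambda$-term equals $d\bigl(a' + \sum_j a_j \log(h_j)\bigr)$. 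Evaluating $L_\omega$ on both sides and invoking its three properties, the left side is $\canproj([\eta_0]) \cup [\omega]$ by the third property, since $\eta_0$ is a closed form in $\Omega^1(U)$; the $da$-term and the rewritten $\reliso(ch(\delta))$- and $\lambda$-terms are killed by the first property (the $\lambda$-term also by the second); and what remains is $\sum_i L_\omega\bigl(\int_0^\infty \epsilon(g_i,f_i)\bigr)$, which is the asserted equality.

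The points requiring care are the standard compatibilities: that the regulator intertwines $\K 3 3 {X_{\CC^\loc},\bbox} \isom \K 4 3 {\CC^\loc} $ with $\reliso$, that $\reliso$ commutes with restriction, and that every form occurring in the two displays genuinely lies in $\Ocola(U)$, so that the Coleman integrals and the evaluation of $L_\omega$ are legitimate (the kind of ``makes sense'' check flagged in Section~\ref{sec:regulators}). I expect the only ingredient of genuine substance, and it is already supplied by the preparatory material, to be the cochain identity for the restriction of $ch(\beta')$ together with the vanishing $\reliso(ch(\delta)) = 0$; granted these, the proposition follows purely formally from the definition of a good functional.
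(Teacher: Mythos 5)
Your argument is correct, and most of it assembles the same ingredients as the paper's own proof: the cochain identity on $(U',\bbox)$ from the discussion around~\eqref{mydelta}, the Lemma giving $\reliso(ch(\delta))=0$, Proposition~\ref{4.17} for the $\lambda$-term, and the three properties of Definition~\ref{proplist1}. Where you genuinely diverge is in how you handle the fact that $\beta$ is an \emph{arbitrary} element with prescribed image in $\K 4 3 {\O} / \K 3 2 {\O} \cup \OQ$: the paper works with the element constructed in that discussion (which is only asserted to \emph{exist} with restriction $\alpha+\delta$) and bridges to the given $\beta$ by first proving that $\beta\mapsto L_\omega(ch(\beta))$ factors through the quotient, via the syntomic computation that $ch(\gamma\cup f)$, for $\gamma\in \K 3 2 {\CC^\loc}$ and $f\in\O^*(\CC^\loc)$, is a constant multiple of $\dlog f$ (using \eqref{eq:regfunc} and \eqref{eq:cupprod}) and hence is killed by the first good-functional property. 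You instead assert that the given $\beta'$ itself restricts to $\alpha+\delta$, which lets you skip that step; this assertion is true, but your citation is off: the discussion around~\eqref{mydelta} does not say this. What justifies it is the exact sequence in the construction of $\Ccomp \bullet \O$, which shows that the restriction $\K 4 3 {\O}\iso \K 3 3 {X_\O;\bbox}\to \K 3 3 {\Xoloc;\bbox}$ has kernel $\K 3 2 {\O}\cup\OQ$ and induces the injection underlying \eqref{H1auxC}; hence the restriction of $\beta'$ depends only on the hypothesized class and equals the unique closed lift $\alpha+\delta$ of $\sum_i \symb{g_i}{f_i}$, with equality at a finite localization level by the usual filtered-colimit argument (this is the point where the paper instead runs its factorization computation). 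With that justification spelled out your route is sound and somewhat more $K$-theoretic; what it gives up is that the paper's detour also establishes the factorization statement itself, which is reused as part 2 of Theorem~\ref{aux-thm}, so if you adopt your route you should still prove that statement separately (or note that it follows from part 1). Two small further remarks: the phrase ``the image under $\reliso$ of the restriction of $ch(\beta')$'' should be ``$\reliso(ch(\beta'))$, computed on $X_U$ and compared with the Coleman integration of the restricted cochain identity on $U'$'' (which is what your display then correctly uses), and your extra term $da$ is harmless since it is killed by the first property.
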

\begin{proof}
We must first show that the map
\begin{equation*}
   \K 4 3 {\CC^\loc} \xrightarrow{ch} \htms^2(\CC^{\loc},3)
   \xrightarrow{\eta_0\mapsto L_\omega(\eta_0)} \Cp
\end{equation*}
factors via $ \K 4 3 {\O} / \K 3 2 {\O} \cup \OQ $. By further
localizing, it suffices
to show that the map above vanishes on elements of the form $\gamma
\cup f$ with $\gamma \in  \K 3 2 {\CC^\loc} $ and $f\in \O^*({\CC^\loc})$.
We have
\begin{equation}
  \htms^1(\CC^{\loc},2)=\{(0,\epsilon),\epsilon\in \O(U),\;d\epsilon=0\} =\{(0,\epsilon),\epsilon\in K\} 
\,.
\end{equation}

Thus $ch(\gamma) = (0,\alpha)$ for some $\alpha\in K$. On the other hand,
by~\eqref{eq:regfunc} we have $ch(f)= (\dlog f,\log(f_0)/q)$ (here
$f_0$) does not matter).  Using \eqref{eq:cupprod} we obtain, in the
model \eqref{eq:Cmodel}
\begin{equation*}
  ch(\gamma\cup f) = (0,\alpha)\cup  (\dlog f,\log(f_0)/q) = \alpha
  \dlog f \,.
\end{equation*}
The factorization thus follows from first property of the good
functional (with $a=1)$. Next, by Proposition~\ref{4.17} 
the first property also implies that $L_\omega $ kills all terms of
the form $ \reliso [d\lambda,\lambda|_{\{t=\infty\}},
\lambda|_{\{t=0\}}]$.
The result now follows immediately from the discussion above.
\end{proof}

\begin{remark}
  There is a final wrinkle here because of the
  normalization~\eqref{eq:twist}
  for the syntomic regulator. For $\beta$ as in the
  Corollary, the regulator of $\beta$ is in fact $[\eta]$ with
  $\left(1-\frac{\phi^\ast}{q^3}\right) \eta=\eta_0$ Thus, once we
  have the 
  functional $L_\omega$ we shall be able to compute $\canproj(
  \eta_0)\cup [\omega]$ but will in fact want $\canproj(
  \eta)\cup [\omega]$. Fortunately, it
  is easy to see (and will be explained) that if we know
  $\canproj(
  \eta_0)\cup [\omega]$ for \emph{all}
  $\omega$, then we also know  $\canproj(
  \eta)\cup [\omega]$ for all $\omega$. 
  In fact, as in previous computations, the result with $\eta$ is much
  simpler than with $\eta_0$, confirming the ``correctness'' of our
  normalization.
\end{remark}

\section{Wishes}
\label{sec:wishes}

This section is highly speculative. It contains no formal
proofs. Nevertheless, we feel it is vital for the understanding of a
significant portion of the computations to come. It also suggests
interesting research directions into a more canonical representation
of syntomic cohomology, one that would make the computations in the
syntomic case equivalent to the complex case.

We want to follow a strategy that proved very successful in computing
syntomic regulators on $K_2$ of curves (see the discussion after
Proposition~5.2 in~\cite{Bes98b}). We argue
heuristically, in some make believe world where syntomic cohomology
looks much more like Deligne cohomology from the computational
standpoint, and get a formula for the
regulator. Then we try to relate this formula with the formula we
obtained in the previous section and see what needs to be proved to
show that the two formulas are equivalent. That the make believe
formula turns out to be correct is a strong indication that one should
be able to turn the make believe computation into a rigorous one.

The make believe computation is based on the
following assumptions:
\begin{itemize}
\item The ``cohomology'' is given by the pairs $(\omega,h)$ where
  $\omega$ is an $i$-form and $h$ is an $i-1$ form with
  $dh=\omega$. Of course $h$ is not an actual form but something like
  a Coleman form, for example a Coleman function.
\item The ``regulator'' of a function $f$ is the pair $(\dlog(f),\log(f))$.
\item The cup product is given by $(\omega_1,h_1)\cup (\omega_2,h_2)=
  (\omega_1\wedge \omega_2,\omega_1\wedge h_2$ or $h_1\wedge \omega_2)$.
\end{itemize}

With these rules, we can redo the computation from the previous
section in this make believe language:
We have in $\htms^2(X_{\CC}^{\loc},2)$ that $- ch(\tmg \cup
(1-g))=(\omega_g,\varepsilon_g)$ with
$\omega_g$ as in Lemma~\ref{firstreg} and $\varepsilon_g=-\log (1-g) \dlog
(\tmg)$. Since the restriction of $\varepsilon_g$ to $t=0$ is
$-\log(1-g) \dlog(g)= d \Li_2(g)$ we have, following the proof of
Proposition~\ref{3.10}, that
$ch([g]_2)\in \htms^2(X_{\CC}^{\loc},\bbox,2)$ equals
$[\omega_g,\varepsilon_g,0,\Li_2(g)]$. Cupping with  $(\dlog(f),\log(f))$
we get
\begin{equation*}
 \tilde\epsilon(g,f):= ch([g]_2 \cup (f))=[-\log(f) \dlog (\tmg)\wedge \dlog (1-g)),0,0]\,.
\end{equation*}
Applying $ \reliso $ we find
$ \reliso (\tilde\epsilon(g,f)) = \log(f) \log (g) \dlog (1-g) $.

We now compare this with  $\int_0^\infty \epsilon(g,f)$ of
\eqref{basform}. Continuing to mimic the discussion of the $K_2$
in~\cite{Bes98b}, the former version should be an untwisted version of
the latter, i.e., without the ``twist'' by $ (1-\fdiv{3})$. To see this, we
use the formalism described in \cite[Remark~\ref{trick}]{Bes98b} to get
\begin{equation}\label{symbolic}
\begin{split}
  \left(1-\fdiv{3}\right) [\log(f) \log(g) \dlog(1-g)] =\phantom{+}
  &\ovq \log(f_0) \log(g) \dlog(1-g)\\ + &\ovqs \logf(f) \log(g)
  \dlog(1-g)_0\\ + &\ovqt \log(g_0) \logf(f) \phi^\ast \dlog (1-g) 
\end{split}
\end{equation}
This already begins to look similar to  $\int_0^\infty \epsilon(g,f)$
but there are differences. We want to argue that the difference is
``exact''. This can not be taken to simply mean being the differential
of something, since in Coleman's theory every form is
integrable. Experience has shown that things are exact if they are the
differential of a product of functions. We shall use two such
assertions. To each one will correspond a precise statement in the
following sections, which will be justified by the techniques we shall
introduce. To remind ourselves where these occured, we
shall call them ``Wishes'', and mark them explicitly. The first one is 
\begin{wish}
  We have in cohomology that $ \Theta(g) \dlogf(f) =  - \logf(f) \, \dd \Theta(g) $.
\end{wish}

Using this wish we can write the term $\ovq \Theta(g) \dlogf(f)$ in
\eqref{basform} as
\begin{align*}
  &\phantom{=}-\ovq d \Theta(g) \logf(f) \\ &= - \ovq
  \left(\epsgi\right) \logf(f)\\ &=
-\ovqs \log(1-g)_0 \dlog(g) \logf(f) + \ovqt \log(g_0) \dlog \phi^\ast (1-g) \logf(f) \,,
\end{align*}
so we obtain
\begin{align*}
\int_0^\infty \epsilon(g,f)=
    &\phantom{-}\ovq \log(f_0) \log(g) \dlog (1-g) - \ovqs
  \log (1-g)_0 \log(g)  \phi^\ast  \dlog(f)\\
  &-\ovqs \log(1-g)_0 \dlog(g) \logf(f) + \ovqt \log(g_0) \dlog \phi^\ast (1-g) \logf(f) \,.  
\end{align*}
Comparing this with $\left(1-\fdiv{3}\right) \left(\log(f) \log(g)
\dlog(1-g)\right)$ given in~\eqref{symbolic} we see that the first and last
terms are the same, and that therefore we get our desired equality,
``twisted'' by $1-\fdiv{3}$ if we get our second wish to come
true.

\begin{wish}
  We have in cohomology that
  \begin{equation*}
  \log (1-g)_0 \log(g)  \phi^\ast(\dlog(f))
  + \log(1-g)_0 \logf(f)  \dlog(g)
  +  \log(g) \logf(f) \dlog(1-g)_0 
 \end{equation*}
is trivial.
\end{wish}

In Sections~\ref{sec:trip-loc} and~\ref{sec:trip-glob} we shall introduce triple indices. The wishes
described above correspond to precise results stated in terms
of triple indices, which we can indeed prove.

\section{The triple index, local theory}
\label{sec:trip-loc}

We first briefly recall the theory of the ``local index''
from~\cite[Section~\ref{sec:recip}]{Bes98b}. In our new context this
should be called the double 
index. To make things slightly simpler, we work in an algebraic
context. The transition to working with annuli is straightforward.

Let $K$ be a field of characteristic $0$. 
We consider the algebra 
$\alog:=K((z))[\log(z)]$ of polynomials over the formal variable
$\log(z)$, over the field of finite
to the left Laurent power series in $z$. We further consider 
the module of differentials $\alog \cdot dz$. It is an easy excercise
in integration by parts to see that every form in $\alog \cdot dz$ has
an integral in $\alog$ in a unique way up to a constant. We
distinguish in $\alog$ the subfield $\mer:= K((z))$ of
meromorphic functions and the subspace $\aloga=\mer + K\cdot \log(z)$
consisting exactly of all functions whose differential is in $\mer
\cdot dz$. To $F\in \aloga$ we can associated the residue of its
differential $\res dF\in K$. If $F\in \aloga$, then $F\in \mer $ if
and only if $\res dF=0$.

\begin{definition}[{\cite[Proposition~\ref{double-index}]{Bes98b}}]
  The double index is the unique anti-symmetric bilinear form
  $\pair{~,~} : \aloga  \times \aloga \to K$ such that $\pair{F,G}= \res
  FdG$ whenever this last expression makes sense.
\end{definition}
We recall that the construction of this index is essentially trivial:
one notices that the anti-symmetry forces
$\pair{\log(z),\log(z)}=0$ and that $\pair{F,G}= -\res GdF$ whenever
this expression makes sense. Then one writes $F=\alpha\log(z)+f$, $G=
\beta\log(z)+g$ with $f,g\in \mer$ and then one uses the bilinearity
to write $\pair{F,G}$ as a sum of terms that can be computed.

The triple index turns out to be a bit more complicated. First of all
we need to explain on which data it is evaluated:
\begin{itemize}
\item three functions $F,G,H$ in $\aloga$;
\item for each two functions $R$ and $S$ out of $F,G,H$ a choice of
  $\int RdS$ (i.e., a function in $\alog$ whose differential is $RdS$)
  and of $\int SdR$ in such a way that
  \begin{equation} \label{integparts}
     \int RdS+ \int SdR = RS \,.
  \end{equation}
\end{itemize}
As it will turn out this information is a bit redundant: clearly
$\int RdS$ determines $\int SdR$. Also it will turn out that the index will
be independent of $\int FdG$. Still, these symmetric data are very
convenient. To not carry around too much notation,
we shall simply denoted these data by $(F,G;H)$, where the additional choices
should be understood from the context. In particular, any permutation
of $F,G,H$ induces an obvious permutation of the additional 
data. Also, if $(F_i,G;H)$, $i=1,2$ are given with all their
additional data then there is a natural choice of data for
$(F_1+F_2,G;H)$, and similary in the second and third positions. If we
do need to indicate a change in the auxiliary
data we shall write this as $(F,G;H|I_{FdG},\cdots)$, where the
subscript $FdG$ indicates that $I$ is an integral of $FdG$.

\begin{proposition}
  There exist a unique function from data as above to $K$, denoted
 $(F,G;H) \mapsto \pair{F,G;H}$, called the  triple index,
  such that the following conditions are satisfied.
  \begin{enumerate}
  \item Trilinearity - the triple index is linear in each of the
    three variables, which means that $\pair{\alpha_1 F_1 + \alpha_2
      F_2,G;H} = \alpha_1 \pair{F_1,G;H} +\alpha_2 \pair{F_2,G;H}$
    provided that all auxiliary data are chosen in the way indicated
    above, and similary for linearity in $G$ and $H$.
  \item Symmetry - we have $\pair{F,G;H}=\pair{G,F;H}$, again with the
    choice of auxiliary data indicated above.
  \item Triple identity - We have, again with the obvious additional
    choices, $$ \pair{F,G;H}+\pair{F,H;G}+\pair{G,H;F}=0 . $$
  \item Reduction to the double index - if $G\in \mer$ then $\pair{F,G;H}
    = \pair{F,\int GdH}$, where $\int GdH$ is taken from the auxiliary
    data and is in $\aloga$ because by assumption $GdH \in \mer \cdot dz$.
  \end{enumerate}
\end{proposition}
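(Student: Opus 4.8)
The plan is to establish uniqueness first, since the reductions that properties~(1)--(4) impose will at the same time hand us the formula to use for existence. Write each argument uniquely as $F=\alpha\log(z)+f$ with $\alpha\in K$ and $f\in\mer$, and likewise $G$ and $H$. By trilinearity~(1) (which includes the stated convention on how the auxiliary data behaves under sums of arguments), $\pair{F,G;H}$ becomes a $K$-linear combination of the eight ``pure'' values $\pair{X_1,X_2;X_3}$ in which each $X_i$ is either $\log(z)$ or the meromorphic part of the corresponding argument, so it suffices to see that each of these is forced.

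I would run through these according to the middle entry. If $X_2\in\mer$, then $X_2\,dX_3\in\mer\cdot dz$, hence $\int X_2\,dX_3\in\aloga$, and property~(4) reduces $\pair{X_1,X_2;X_3}$ to the double index $\pair{X_1,\int X_2\,dX_3}$, which is already determined. If $X_2=\log(z)$ but $X_1\in\mer$, the symmetry~(2) puts a meromorphic function in the middle slot and we are back in the previous case. If $X_1=X_2=\log(z)$ and $X_3=f_3\in\mer$, then the triple identity~(3) applied to $(\log(z),\log(z);f_3)$ gives $\pair{\log(z),\log(z);f_3}=-2\pair{\log(z),f_3;\log(z)}$, whose right-hand side has meromorphic middle entry and is therefore determined. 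Finally, (3) applied to $(\log(z),\log(z);\log(z))$ gives $3\pair{\log(z),\log(z);\log(z)}=0$, so this term vanishes since $K$ has characteristic~$0$. Thus every value is forced; and in each reduction only integrals of the shape $\int X_2\,dX_3$ (and, via~\eqref{integparts}, their partners $\int X_3\,dX_2$) occur, which is precisely the source of the claimed independence from the choice of $\int F\,dG$.

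For existence I would take the formula coming out of this reduction as the definition: decompose the three arguments, split the auxiliary data accordingly --- fixing the canonical choice $\int\log(z)\,d\log(z)=\frac{1}{2}\log(z)^2$ --- and declare $\pair{F,G;H}$ to be the resulting $K$-linear combination of double indices. One first checks this is insensitive to the constant ambiguities involved in splitting the auxiliary integrals, which is immediate because the double index annihilates constants. Property~(1) is then built in, and~(2) and~(4) follow from the corresponding symmetries of the explicit combination, property~(4) because when $G\in\mer$ the surviving terms collapse to $\pair{F,\int G\,dH}$. What is left is the triple identity~(3): one expands $\pair{F,G;H}$, $\pair{F,H;G}$ and $\pair{G,H;F}$ into double indices, applies anti-symmetry of the double index and the relation~\eqref{integparts}, that $\int R\,dS+\int S\,dR=RS$, to collapse the sum, and uses the normalization $\int\log(z)\,d\log(z)=\frac{1}{2}\log(z)^2$ for the one genuinely new contribution.

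I expect this triple identity to be the main obstacle, precisely because it entangles the three inequivalent choices of which argument sits in the middle slot --- each governed by a different pair of auxiliary integrals --- so that making the cancellation come out exactly forces one to track both the integration-by-parts relation and the residue contributions hidden inside $\int X_2\,dX_3$ when $X_3$ carries a $\log(z)$ component. Everything else is routine bookkeeping with the already-constructed double index.
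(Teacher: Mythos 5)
Your overall route is the same as the paper's: decompose each argument as $\alpha\log(z)+f$ with $f\in\mer$, use trilinearity to reduce to the eight pure cases, settle every case having a meromorphic entry by reduction to the double index combined with symmetry and the triple identity, force $\pair{\log(z),\log(z);\log(z)}=0$ from the triple identity with the symmetric choice $\tfrac12\log^2(z)$, and then take the resulting explicit combination of double indices as the definition and verify the axioms formally (the paper packages the same steps as a change-of-constant lemma, Lemma~\ref{tripexists}, and a recipe lemma, Lemma~\ref{recepy}, for the cases with a meromorphic entry). Your uniqueness argument is sound, including the observation that $\int F\,dG$ never occurs.

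The genuine problem is your justification of well-definedness of the explicit formula: ``insensitive to the constant ambiguities \ldots{} because the double index annihilates constants'' is false. For a constant $C$ one has $\pair{F,C}=-\res\, C\,dF=-C\,\res dF$, which is nonzero whenever $dF$ has a residue (for instance $F=\log(z)$); the paper's change-of-constant lemma says precisely that altering the chosen $\int G\,dH$ by $C$ alters $\pair{F,G;H}$ by $-C\,\res dF$, and this dependence on the auxiliary constants is essential later (Proposition~\ref{const-term-expr} is meaningless without specifying the integrals). If your claim were true, the triple index would be independent of all auxiliary data, which already contradicts the reduction-to-the-double-index axiom. What is actually true, and what must be checked, is this: when you redistribute a constant $C$ between two pure pieces of the same total integral (respecting~\eqref{integparts} at the level of pieces), two of the pure terms in your combination change, by $-C\,\res dX$ and $+C\,\res dX$ where $X$ is the remaining entry, and these cancel. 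This cancellation bookkeeping is exactly the ``tedious formal check'' in the paper's proof; your argument is repairable, but this step needs that check rather than the claim that the double index kills constants.
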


\begin{proof}
We first show that the dependency on the choices of integrals is
forced by the properties of the triple index.

\begin{lemma}
\label{tripexists}
   Suppose that the triple index exists. We then have the following
   change of constant formulae:
   \begin{enumerate}
   \item If $ C $ is a constant, then
     \begin{align*}    
       \pair{F,G;H|(I+C)_{GdH},(J-C)_{HdG}}&=
       \pair{F,G;H|I_{GdH},J_{HdG}}-C\cdot \res dF\\
       \pair{F,G;H|(I+C)_{FdH},(J-C)_{HdF}}&=
       \pair{F,G;H|I_{FdH},J_{HdF}}-C\cdot \res dG
     \end{align*}
   \item The triple index is independent of the integral $\int FdG $.
   \end{enumerate}
\end{lemma}

\begin{proof}
We use the trilinearity. Consider the data $(F,0;H)$, where the
additional data are the same for $F$ and $H$ but we take the integral
of $0 dH$ to be $C$, hence we are forced to take that of $H d0$ to be
$-C$. We take $\int 0 dF =0$. The trilinarity implied that
$\pair{F,G;H}$ and $\pair{F,0;H}$
gives the left-hand side of the formula. But reduction to the double
index means that $\pair{F,0;H}=\pair{F,C}=-\res C dF$. An identical
argument proves the second case. Finally, if in the above argument we
take instead $\int 0 dF =D$ and $\int 0 dH=0$, we see from exactly the
same argument that the integral is independent of the auxiliary choice
$\int F dG$.
\end{proof}

We now check that the triple index is uniquely defined on all data
where at least one of $F$, $G$, $H$ is in $\mer$. Clearly in this case
we can use Reduction to the double index together with symmetry and
the triple formula to compute the index, so it is clearly unique. The
following lemma gives existence.

\begin{lemma}\label{recepy}
  Consider the following recepy:
  \begin{enumerate}
  \item  if $G\in \mer$
    define $\pair{F,G;H}=\pair{F,\int GdH}$;
  \item  if $F\in \mer$ define
    $\pair{F,G;H}=\pair{G,F;H}$ where the last expression is defined
    as in (1);
  \item if $H\in\mer$ define
    $\pair{F,G;H}=-(\pair{F,H;G}+\pair{G,H;F})$ where each of these terms
    is defined as in 1.
  \end{enumerate}
  Then this recepy gives a well-defined $\pair{F,G;H}$ in all cases
  where at least one of $F$, $G$ and $H$ is in $\mer$ and restricted
  to this subset it satisfies all properties of the triple index.
\end{lemma}

\begin{proof}
To show that this expression is well-defined we need to consider what
happens when two 
of $F,G,H$ are in $\mer$: If $F,G\in \mer$ we check that $\pair{F,\int
  GdH}=\pair{G,\int FdH}$. This follows because by the definition of
the double index both expressions equal $\res FGdH$. Next we check that
if $G,H\in \mer$ then 
\begin{align*}
  &\pair{F,\int GdH} + \pair{F,\int HdG} + \pair{G,\int HdF}\\&=
  \pair{F,GH} +  \pair{G,\int HdF} \text{ by bilinearity of the double index and~\eqref{integparts}}\\
  &= -\res GHdF +  \res GHdF =0\,.
\end{align*}
Thus we find that we have a well-defined expression. We need to check
that all properties of the expected triple index hold in this
case. Trilinearity is essentially clear from the bilinearity of the
double index. Symmetry is also easy: if $F$ or $G$ are in $\mer$ then
symmetry follows from the first two rules. If $H $ is in $ \mer$ then the
expression in (3) is clearly symmetric in $F$ and $G$. The triple
identity is forced by (3) and the reduction to the double index is an
immediate consequence of our check that the triple index is well-defined.
\end{proof}

Note that the proof of Lemma~\ref{tripexists} applies verbatim for this
partial triple index, so we know the dependency on the choices of
integrals.

To extend the triple index to all $F$, $G$ and $H$ we first check the
case where $F=G=H=\log(z)$. Then we can arrange that all auxiliary data
equal $(1/2)\log^2(z)$. The triple formula implies immediately that
(with these data)
\begin{equation}
  \label{triple-log}
  \pair{\log(z),\log(z);\log(z)}=0
.
\end{equation}
We can now
demonstrate uniqueness for the triple index.  Suppose
$F_i=\alpha_i\log(z)+f_i$, $i=1,2,3$ where $\alpha_i\in K$ and $f_i\in
\mer$.  Choose some auxiliary data $\int RdS$ for any two $R$ and $S$
out of $f_i$ and $\alpha_i \log(z)$, where we continue to take $\int
\log(z)\dlog(z)= (1/2)\log^2(z)$. Using trilinearity and
\eqref{triple-log} we can write $\pair{F_1,F_2;F_3}$, with some choice
of auxiliary data, as the sum with some coefficients of triple
indices where at least one of the entries is in $\mer$ which are
therefore computable by previous considerations. Now we can use change
of constant to write $\pair{F_1,F_2;F_3}$ with arbitrary auxiliary
data. This shows uniqueness and gives a formula for the general
index. We need to check that this formula is well-defined, which given
the fact that all the summands are well-defined thanks to Lemma~\ref{recepy}
amounts to checking independence of the choices of the auxiliary
data. This is Just a tedious formal check: suppose for example that we
add $C$ to $\int \alpha_1\log(z)df_3$, and correspondingly subtract
$C$ from  $\int f_3\alpha_1\dlog z$. This will have the effect that
$\int F_1 d F_3 $ will be added a $C$ and $\int F_3 d F_1 $ will be
subtracted a $C$. This procedure will subtract $\alpha_2 C= C \res dF_2$ from
$\pair{\alpha_1 ,\alpha_2 \log(z);f_3} $ and will not change any of
the other indices. This shows that the change does not alter the index.

It remains to check that our formula satisfies all the properties for
the triple index. First the change of constant formula of
Lemma~\ref{tripexists} is clear
because we used it in the definition and we showed that the formula we
get is well-defined. Now given change of constant it easy to see that
it is enough to check trilinearity, symmetry and triple identity for
one choice of auxiliary data. The derivation of these three formulas is
then completely formal. Finally, reduction to the double index can
only occur if at least one $\alpha_i$ is $0$. But in this case we
clearly get the triple index for the case where $F_i\in \mer$ so we
know this formula already.
\end{proof}

To compute the triple index in some concrete situations, which will be
needed later, we introduce the notion of the constant term.
\begin{definition}
\label{constanttermdef}
  The constant term, with respect to the variable $z$ is the linear
  functional $c_z:\alog \to K$, first defined on $\mer$ by 
  \begin{align*}
    &c_z(\sum a_n z^n)=a_0\\
    \intertext{and then in general by}
    &c_z(\sum_{i=0}^\infty f_i(z) \log^i(z)) = c_z(f_0) \,.
  \end{align*}
\end{definition}

Note that the unlike the triple index, the constant term definitely
depends on the choice of the local parameter $z$. For example, for
$\alpha\in K$ and the function $f(z)=\log(z) = \log(\alpha z)-
\log(\alpha)$ we have $c_z(f)=0$ but $c_{\alpha z}(f)= - \log(\alpha)$.

\begin{proposition}\label{const-term-expr}
  Let $F$, $G$ and $H$ be three functions
  in $\aloga$ whose
  differentials (which are in $\mer dz$) have at most simple poles at
  $0$. The choice of integrals $\int FdH$ and $\int GdH$ gives 
  auxiliary data for the computation of $\pair{F,G;H}$ and with
  respect to this choice we have
  \begin{equation*}
    \pair{F,G;H}= c_z(F)\cdot c_z(G) \cdot \res dH - \res dF\cdot c_z(\int
    GdH) - \res dG\cdot c_z(\int FdH)
  \end{equation*}
\end{proposition}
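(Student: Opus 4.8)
The plan is to compute $\pair{F,G;H}$ by splitting each of $F$ and $G$ into its logarithmic and meromorphic parts, expanding by trilinearity, and evaluating the resulting triple indices by reduction to the double index.

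First I would unwind the hypothesis. Writing $F=\alpha\log(z)+f$, $G=\beta\log(z)+g$, $H=\gamma\log(z)+h$ with $f,g,h\in\mer=K((z))$, the condition that $dF$, $dG$, $dH$ have at most simple poles at $0$ forces $f,g,h\in K[[z]]$, since $d(\sum_n a_n z^n)=\sum_n n a_n z^{n-1}dz$ has at most a simple pole exactly when $a_n=0$ for $n<0$. Hence $\alpha=\res dF$, $\beta=\res dG$, $\gamma=\res dH$, while $c_z(F)=f(0)$, $c_z(G)=g(0)$; I would also record the trivial facts $c_z(\log(z))=0$, $c_z(\tfrac12\log^2(z))=0$, and $c_z(u\log(z))=0$ whenever $u\in K[[z]]$ with $u(0)=0$, together with $\pair{\log(z),\log(z)}=0$ (forced by anti-symmetry) and $\pair{\log(z),m}=-\res(m\,d\log(z))=-c_z(m)$ for $m\in\mer$.

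Next I would expand, using trilinearity in the first two slots, choosing the auxiliary data for the four summands to add up to those prescribed for $(F,G;H)$ — in particular a single primitive $\int\log(z)\,dH$ shared by all, with $\int f\,dH=\int F\,dH-\alpha\int\log(z)\,dH$ and $\int g\,dH=\int G\,dH-\beta\int\log(z)\,dH$:
\[
\pair{F,G;H}=\alpha\beta\,\pair{\log(z),\log(z);H}+\alpha\,\pair{\log(z),g;H}+\beta\,\pair{f,\log(z);H}+\pair{f,g;H}.
\]
For $\pair{f,g;H}$, reduction to the double index gives $\pair{f,\int g\,dH}=\res(f\,g\,dH)=\gamma f(0)g(0)=c_z(F)c_z(G)\res dH$. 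For $\pair{\log(z),g;H}$, and — after a use of symmetry — for $\pair{f,\log(z);H}$, reduction gives $\pair{\log(z),\int g\,dH}=-c_z(\int g\,dH)$ and $\pair{\log(z),\int f\,dH}=-c_z(\int f\,dH)$, using $\pair{\log(z),\log(z)}=0$ and $\pair{\log(z),m}=-c_z(m)$. For $\pair{\log(z),\log(z);H}$ I would expand $H$ as well: the $\pair{\log(z),\log(z);\log(z)}$ piece vanishes by \eqref{triple-log}, and the triple identity turns $\pair{\log(z),\log(z);h}$ into $-2\pair{\log(z),h;\log(z)}=-2\pair{\log(z),\int h\,d\log(z)}$, which by \eqref{integparts} and the vanishing constant terms above evaluates to $-2c_z(\int\log(z)\,dH)$. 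Substituting the relations for $\int f\,dH$ and $\int g\,dH$ and collecting, the $c_z(\int\log(z)\,dH)$ contributions cancel, $\alpha\beta+\alpha\beta-2\alpha\beta=0$, and what survives is exactly
\[
\pair{F,G;H}=c_z(F)c_z(G)\res dH-\res dF\cdot c_z(\textstyle\int G\,dH)-\res dG\cdot c_z(\textstyle\int F\,dH).
\]

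The main obstacle is not any single computation but the bookkeeping of auxiliary integrals: tracking exactly which primitive in $\alog$ each $\int R\,dS$ denotes as one passes through the symmetry, reduction, and triple-identity moves, and checking that the choices made for the sub-indices remain compatible with the prescribed $\int F\,dH$ and $\int G\,dH$. Any mismatch there is harmless by the change-of-constant formula of Lemma~\ref{tripexists}, so in the write-up I would either fix one convenient system of atomic integrals and correct at the end, or argue directly as above that the shared primitive $\int\log(z)\,dH$ drops out; the individual evaluations are then immediate.
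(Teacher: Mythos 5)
Your proof is correct and follows essentially the same route as the paper: decompose each function into its $\log(z)$ and holomorphic parts (using the simple-pole hypothesis), expand by trilinearity, and evaluate the resulting pieces via reduction to the double index, the triple identity, and the vanishing of $\pair{\log(z),\log(z);\log(z)}$, with the change-of-constant formula of Lemma~\ref{tripexists} absorbing the auxiliary-data bookkeeping. The only difference is organizational: the paper first normalizes all integrals to have vanishing constant term (the pairing $\ppair{\cdot}$) and checks six basis cases, whereas you carry the prescribed integrals through and verify directly that the $c_z(\int \log(z)\,dH)$ contributions cancel — both are valid.
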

\begin{proof}
We have a bilinear map
\begin{equation*}
  (F,H) \to \pint FdH:=\text{ unique } \int FdH \text{ with } c_z(\int
  FdH)=0\,.
\end{equation*}
Therefore, we see that the map
\begin{equation*}
  (F,G,H) \to \ppair{F,G;H}:= \pair{F,G;H \Big |\pint FdH_{FdH}, \pint
  GdH_{GdH}}
\end{equation*}
is trilinear and symmetric in $F$ and $G$. By Lemma~\ref{tripexists}
it suffices to prove that
\begin{equation}\label{triconst}
  \ppair{F,G,H} = c_z(F)\cdot c_z(G) \cdot \res dH
\end{equation}
and as both sides are
trilinear and symmetric in $F$ and $G$, and as $F= a\log(z) + f(z)$
with $f(z)$ holomorphic and similary for $G$ and $H$, it suffices to
treat the following cases:\\
(1) When $f$, $g$ and $h$ are holomorphic we have
  \begin{equation*}
    \ppair{f,g,h} = \res fgdh = 0 = c_z(f) c_z(g) \res dh
  \end{equation*}
  since $\res dh=0$.\\
(2) Suppose $F=G=H=\log(z)$. Since $c_z(\log^2(z)/2)=0$ we see that
  the local index computed with all auxiliary data set equal to $\log^2(z)/2$
  is given by $\ppair{\log(z),\log(z);\log(z)}$, and this we know is $0$
  by~\eqref{triple-log}. On the other hand, the
  right-hand side of~\eqref{triconst} is also zero since
  $c_z(\log(z))=0$.\\
(3) If $g$ and $h$ are holomorphic we have
  \begin{equation*}
    \ppair{\log(z),g;h} = \pair{\log(z),\pint gdh}=-\res (\pint gdh)
    \dlog z = (\pint gdh)(0) = 0 \,,
  \end{equation*}
  which equals $c_z(\log(z)) c_z(g) \res dh$ as required.\\
(4) if $f$ and $g$ are holomorphic we find
  \begin{equation*}
    \ppair{f,g;\log(z)} = \res fg \dlog z = fg(0) = c_z(f) c_z(g) \res
    \dlog z \,.
  \end{equation*}
(5) If $g$ is holomorphic and $a=c_z(g) $ we see that $\pint (g-a)
    \dlog z= \pint g \dlog z - a\log(z)$. Using this we find
    \begin{align*}
      \pair{\log(z),g;\log(z)} &= \pair{\log(z),\pint g \dlog z} =
      \pair{\log(z),\pint (g-a) \dlog z}\\
      &=-\res \left(\pint (g-a) \dlog z\right) \dlog z=0
    \end{align*}
since $\pint (g-a) \dlog z $ is holomorphic and has constant term
$0$. This again equals the right-hand side.\\
(6) The final case is for $\pair{\log(z),\log(z);h} $ with $h$
holomorphic. Now $c_z(h\log(z))=0$, so we have the equation
$\pint h \dlog z + \pint \log(z) dh = h\log(z)$. We therefore
immediately deduce this last case from the previous one and the
triple identity.
\end{proof}

\section{The triple index, global theory}
\label{sec:trip-glob}

At this point we shall switch for convenience to assuming that our
ground field is $\Cp $.
Suppose now that we consider an open annulus $V\isom \{r<|z|<s\}$ with a
parameter $z$. Then exactly the same analysis gives us a triple index
on $V$.

The uniqueness of the triple index immediately
implies (compare~\cite[Lemma~\ref{functoriality}]{Bes98b}) the
following result.

\begin{lemma}
  If $\phi:V \to V$ is an endomorphism of degree $n$, let $\phi^\ast
  (F,G;H)$ be defined in the obvious way, pulling back by $\phi$ all
  the auxiliary data. Denote these data simply by $(\phi^\ast
  F,\phi^\ast G;\phi^\ast H)$. Then we have the formula
  \begin{equation*}
    \pair{\phi^\ast F,\phi^\ast G;\phi^\ast H} = n\pair{F,G;H}.
  \end{equation*}
\end{lemma}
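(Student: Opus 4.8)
The approach is to use the uniqueness clause of the Proposition characterizing the triple index. Fix the degree-$n$ endomorphism $\phi\colon V\to V$. First I would observe that $\phi^\ast$ transports auxiliary data coherently: if $I\in\alog$ satisfies $dI=F\,dH$, then $d(\phi^\ast I)=\phi^\ast(F\,dH)=(\phi^\ast F)\,d(\phi^\ast H)$, so $\phi^\ast I$ is a legitimate choice for $\int(\phi^\ast F)\,d(\phi^\ast H)$; moreover $\phi^\ast(RS)=(\phi^\ast R)(\phi^\ast S)$, so the integration-by-parts normalization~\eqref{integparts} is preserved, and $\phi^\ast$ maps $\mer$ into $\mer$ and $\aloga$ into $\aloga$ since $\phi^\ast(\mer\cdot dz)\subseteq\mer\cdot dz$. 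Hence, starting from any data $(F,G;H)$ together with its chosen integrals, the tuple $(\phi^\ast F,\phi^\ast G;\phi^\ast H)$ with the pulled-back integrals is again admissible, and I may define a new function on data by
\[
 L(F,G;H):=\tfrac1n\,\pair{\phi^\ast F,\phi^\ast G;\phi^\ast H},
\]
computed with respect to these pulled-back data.

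Next I would verify that $L$ satisfies the four defining properties of the triple index. Trilinearity, symmetry in the first two slots, and the triple identity are immediate: $\phi^\ast$ is linear and commutes with the natural operations on auxiliary data used to add or permute entries, and $\pair{\,,\,;\,}$ already enjoys these properties, so they pass to $L$. The only property where the degree $n$ enters is reduction to the double index. If $G\in\mer$, then $\phi^\ast G\in\mer$, so by that property for the triple index, $\pair{\phi^\ast F,\phi^\ast G;\phi^\ast H}=\pair{\phi^\ast F,\,\phi^\ast\bigl(\int G\,dH\bigr)}$, the integral being the pulled-back one taken from the auxiliary data. Now I invoke the functoriality of the \emph{double} index (\cite[Lemma~\ref{functoriality}]{Bes98b}), namely $\pair{\phi^\ast A,\phi^\ast B}=n\,\pair{A,B}$, which rests ultimately on $\res d(\phi^\ast F)=n\,\res dF$ (for $F=f+a\log z$ with $f\in\mer$ one has $\res dF=a$, and $\phi^\ast F$ has $an\log z$ as its $\log$-part). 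Applied with $A=F$ and $B=\int G\,dH$ this gives $\pair{\phi^\ast F,\phi^\ast(\int G\,dH)}=n\,\pair{F,\int G\,dH}$, hence $L(F,G;H)=\pair{F,\int G\,dH}$, which is exactly reduction to the double index for the original data.

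By the uniqueness assertion of the Proposition, $L$ coincides with the triple index, i.e.\ $\pair{\phi^\ast F,\phi^\ast G;\phi^\ast H}=n\,\pair{F,G;H}$, as claimed. There is no deep obstacle here; the only points demanding care are the bookkeeping of how $\phi^\ast$ acts on the (slightly redundant) auxiliary data, so that $L$ is genuinely a function on \emph{data}, and identifying precisely which previously established fact—the degree-$n$ scaling of residues, equivalently the functoriality of the double index—is the input that powers the reduction step. Everything else is formal.
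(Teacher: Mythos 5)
Your proof is correct and is essentially the paper's own argument: the paper disposes of this lemma in one line, saying it follows immediately from the uniqueness characterization of the triple index (comparing with the functoriality lemma for the double index in the $K_2$ paper), which is exactly the scaling input $\pair{\phi^\ast A,\phi^\ast B}=n\pair{A,B}$ you feed into the reduction-to-the-double-index step. You have merely written out the routine verification (pullback preserves $\mer$, $\aloga$, and the normalization of the auxiliary data, and scales residues by $n$) that the paper leaves implicit.
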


Consider now a wide open space $U$ over $\Cp$ with annuli ends set
$\End(U)$. We
shall denote the triple index with respect to the end $e$ by the
subscript $e$. When we are given 3 Coleman functions $F$, $G$ and $H$
on $U$, such that their differentials are in $A(U)$, we may choose
Coleman integrals for all forms $RdS$ when $R$ and $S$ are among $F$,
$G$ and $H$, and we may do so in such a way that $\int RdS + \int
SdR=RS$ globally. This allows us to compute $\pair{F,G;H}_e$ at each
end $e$ and we may consider the global triple index
\begin{equation*}
  \pair{F,G;H}_\gl =\sum_{e\in \End(U)} \pair{F,G;H}_e
\end{equation*}

\begin{lemma}
  The expression $\pair{F,G;H}_\gl$ is independent of the auxiliary
  choices, so depends only on $F$, $G$ and $H$.
\end{lemma}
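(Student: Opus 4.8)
The plan is to reduce the independence statement to the change--of--constant formulae of Lemma~\ref{tripexists}, together with the residue theorem on $U$. First I would observe that the admissible auxiliary data form a torsor under $\Cp^3$: by the exactness of the Coleman complex recalled in Section~\ref{sec:coleman}, a Coleman primitive of a fixed overconvergent $1$--form on $U$ is unique up to an additive constant in $\Cp$; and by the constraint~\eqref{integparts}, once $\int F\,dG$, $\int F\,dH$ and $\int G\,dH$ are chosen, the three transposed integrals $\int G\,dF$, $\int H\,dF$ and $\int H\,dG$ are determined (and, conversely, any Coleman primitive of $F\,dG$ extends to an admissible datum). Consequently any two admissible global choices are connected by a finite sequence of elementary moves, each of which replaces one of $\int F\,dG$, $\int F\,dH$, $\int G\,dH$ by itself plus a constant $C\in\Cp$ (with the opposite constant added to the transposed integral). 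It therefore suffices to show that each such elementary move leaves $\pair{F,G;H}_\gl$ unchanged.

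Next I would dispatch the three moves end by end. A change in $\int F\,dG$ is harmless because, by Lemma~\ref{tripexists}(2), the local index $\pair{F,G;H}_e$ does not depend on $\int F\,dG$, for every end $e$. For a change $\int F\,dH\mapsto \int F\,dH + C$, Lemma~\ref{tripexists}(1) gives $\pair{F,G;H}_e\mapsto \pair{F,G;H}_e - C\cdot\res_e(dG)$ at each $e$, so that $\pair{F,G;H}_\gl$ changes by $-C\sum_{e\in\End(U)}\res_e(dG)$; symmetrically, a change in $\int G\,dH$ changes the global index by $-C\sum_{e\in\End(U)}\res_e(dF)$. Thus everything comes down to the vanishing of the total residues $\sum_{e\in\End(U)}\res_e(dF)$ and $\sum_{e\in\End(U)}\res_e(dG)$, where $\res_e$ denotes the residue at the end $e$ occurring in the local theory.

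This last vanishing is the point that I expect to be the main obstacle, and is the key input. Since $dF$ and $dG$ are overconvergent $1$--forms on $U$, their restriction to each annulus end $e$ is a convergent Laurent series in the local parameter times $dz$, so $\res_e(dF)$ and $\res_e(dG)$ are the residues used in Proposition~\ref{const-term-expr}. Viewing $U$ as a smooth proper curve with finitely many disjoint open discs removed, the ends $e$ being the associated boundary annuli, the residue theorem yields $\sum_{e\in\End(U)}\res_e\eta = 0$ for every overconvergent $1$--form $\eta$ on $U$; applying this to $\eta=dF$ and $\eta=dG$ kills both sums. Combining the three elementary cases then shows that $\pair{F,G;H}_\gl$ depends only on $F$, $G$ and $H$, as asserted.
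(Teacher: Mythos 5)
Your proposal is correct and follows essentially the same route as the paper: since Coleman primitives are determined up to global constants, one reduces to the change-of-constant formulae of Lemma~\ref{tripexists}, and the resulting change of the global index is a constant times $\sum_{e}\res_e dF$ (or $\sum_e \res_e dG$), which vanishes by the residue theorem on the wide open space $U$. The paper's proof is just a terser version of this argument (treating the change of $\int G\,dH$ as the representative case), so your more systematic enumeration of the three elementary moves adds detail but no new idea.
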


\begin{proof}
  Since the possible integrals differ from one another by a global
  constant, if we change for example $\int GdH$ by a constant $C$, the
  change of constant formula implies that the global triple index
  changes by $\sum_e C \res_e dF=C\sum_e \res_e dF= C\cdot 0=0$.
\end{proof}

Unlike the global double index, the global triple index does not
depend solely on the cohomology classes of $dF,\cdots$, and not even
just on the differentials of the functions. For example, if $C$ is a
constant we have the
formula $\pair{F,C;H}_\gl=\sum_e \pair{F, \int CdH}_e =C\sum_e
\pair{F,H}_e$. However, we do have the following.
\begin{lemma}\label{constinthird}
  If $C$ is a constant then $\pair{F,G;C}_\gl=0$.
\end{lemma}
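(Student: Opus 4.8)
The plan is to reduce the global triple index with a constant in the third slot to the global double index, by combining the triple identity with reduction to the double index. First I would fix, once and for all, globally consistent Coleman integrals for the relevant forms on $U$ — so that $\int R\,dS+\int S\,dR=RS$ holds on all of $U$ for $R,S\in\{F,G,C\}$ — and in particular take $\int F\,dC=\int G\,dC=0$, $\int C\,dF=CF$, $\int C\,dG=CG$. Since the lemma just proved guarantees that $\pair{F,G;C}_\gl$, $\pair{F,C;G}_\gl$ and $\pair{G,C;F}_\gl$ are independent of the auxiliary data, it suffices to carry out the computation for this single choice.

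Next I would apply the triple identity (property~(3) of the Proposition on the triple index) at each annulus end $e$ of $U$,
\[
\pair{F,G;C}_e+\pair{F,C;G}_e+\pair{G,C;F}_e=0 \,,
\]
and sum over all $e\in\End(U)$ to get $\pair{F,G;C}_\gl+\pair{F,C;G}_\gl+\pair{G,C;F}_\gl=0$. Then I would invoke reduction to the double index: because $C\in\mer$, at each end $\pair{F,C;G}_e=\pair{F,\int C\,dG}_e=\pair{F,CG}_e=C\pair{F,G}_e$ by bilinearity of the double index, and likewise $\pair{G,C;F}_e=C\pair{G,F}_e=-C\pair{F,G}_e$ by antisymmetry of the double index. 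Summing these over $e$ yields $\pair{F,C;G}_\gl=C\pair{F,G}_\gl$ and $\pair{G,C;F}_\gl=-C\pair{F,G}_\gl$ (this is exactly the formula recorded in the remark preceding the statement), and substituting into the summed triple identity gives $\pair{F,G;C}_\gl = -C\pair{F,G}_\gl + C\pair{F,G}_\gl = 0$, as desired. (Equivalently, one can skip the triple identity and start directly from recipe~(3) of Lemma~\ref{recepy}, $\pair{F,G;C}_e=-\pair{F,C;G}_e-\pair{G,C;F}_e$, which produces the same cancellation.)

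I do not anticipate a genuine obstacle here; the only thing needing a little care is the bookkeeping of auxiliary data — one must ensure that the integrals used in the triple identity at a given end are the same ones used when invoking reduction to the double index there — but this is precisely what the already-established independence of $\pair{\,\cdot\,,\cdot\,;\cdot\,}_\gl$ from auxiliary choices, together with the symmetric data convention $(F,G;H)$, is designed to let us arrange.
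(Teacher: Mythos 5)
Your proof is correct and follows essentially the same route as the paper: triple identity, then reduction to the double index on $\pair{F,C;G}$ and $\pair{G,C;F}$, and cancellation by anti-symmetry of the global double index. The only cosmetic difference is that you keep $C$ explicit and pin down the auxiliary integrals via the independence of $\pair{\,\cdot\,,\cdot\,;\cdot\,}_\gl$, whereas the paper computes $\pair{F,G;1}_\gl$ and absorbs the constant of integration using the independence of the global double index on the choice of $\int dG$ and $\int dF$.
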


\begin{proof}
Indeed,
\begin{align*}
  \pair{F,G;1}_\gl&=-\pair{F,1,G}_\gl - \pair{G,1,F}_\gl \text{ by the triple identity}\\
  &= -\pair{F,\int dG}_\gl - \pair{G,\int dF}_\gl \text{by reduction to the double index}\\
  &= -\pair{F,G}_\gl - \pair{G,F}_\gl = 0 \,,
\end{align*}
where the last two equalities follow because the global double index
is independent of the choice of the integral and by the anti-symmetry
of the double index.
\end{proof}

The lemma suggests that the global triple index is quite an
interesting creature. It deserves further study. For our purposes we
only need the following results:
\begin{proposition}\label{trip-recip}
  Let $F$, $G$, $H$ in $\acol(U)$ have $dF,dG,dH$ in 
  $\Omega^1(U)$, and suppose that $[dF]$ and $[dG]$ are eigenvectors for
  Frobenius with eigenvalue $q$. Then
  $\pair{F,G;H}_\gl = 0$.
\end{proposition}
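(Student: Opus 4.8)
The plan is to exploit the degree-$q$ Frobenius endomorphism $\phi$ of each annulus end $e$ together with the functoriality formula $\pair{\phi^\ast F,\phi^\ast G;\phi^\ast H}_e = q\,\pair{F,G;H}_e$ established in the lemma preceding the proposition, summed over ends to get $\pair{\phi^\ast F,\phi^\ast G;\phi^\ast H}_\gl = q\,\pair{F,G;H}_\gl$. The idea is that if $[dF]$ and $[dG]$ are Frobenius eigenvectors with eigenvalue $q$, then $\phi^\ast F$ and $F$ differ (cohomologically, and up to a constant on $U$) by a factor of $q$, and similarly for $G$; since $\pair{\cdot,\cdot;\cdot}_\gl$ is trilinear, depends only on $F$, $G$, $H$ (by the previous lemma), and is insensitive to adding constants in the third slot (Lemma~\ref{constinthird}) and to additive constants in any slot (the global version of the change-of-constant formula, as used to prove independence of auxiliary data), one gets $q^2 \pair{F,G;H}_\gl$ on one side and $q\,\pair{F,G;H}_\gl$ on the other, forcing the index to vanish.

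First I would make precise the statement ``$\phi^\ast F = qF$ up to what the index cannot see.'' The hypothesis is that $[dF]\in \Hdr^1(U)$ satisfies $\phi^\ast[dF] = q[dF]$, i.e.\ $\phi^\ast dF - q\,dF = d\mu$ for some $\mu\in \acol(U)$; hence $d(\phi^\ast F - qF - \mu) = 0$, so $\phi^\ast F = qF + \mu + c_F$ with $c_F$ a global constant. Crucially $d\mu = \phi^\ast dF - q\,dF$ lies in $\Omega^1(U)$, so $\mu$ is an overconvergent \emph{function} on $U$; this is exactly the kind of term whose contribution to the global triple index I can control, because the restriction of an overconvergent function to an annulus end lies in $A(e)$ and the index of such terms reduces (via trilinearity and ``reduction to the double index'') to global double indices, which are cohomological and vanish on exact classes. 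The same applies to $G$: $\phi^\ast G = qG + \nu + c_G$ with $\nu$ overconvergent and $c_G$ constant.

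Next I would expand $\pair{\phi^\ast F,\phi^\ast G;\phi^\ast H}_\gl$ by trilinearity in the first two arguments, writing $\phi^\ast F = qF + (\mu + c_F)$ and $\phi^\ast G = qG + (\nu + c_G)$, keeping $\phi^\ast H$ fixed in the third slot. This gives $q^2 \pair{F,G;\phi^\ast H}_\gl$ plus a collection of cross terms in which at least one of the first two arguments is an overconvergent function or a constant. For an argument that is a constant, Lemma~\ref{constinthird} handles the case where it sits in the third slot, and the change-of-constant argument (or direct reduction to the global double index) handles the first or second slot; for an argument that is an overconvergent function $\mu\in A(U)$, ``reduction to the double index'' turns the triple index into $\pair{\,\cdot\,,\int \mu\,d(\cdot)}_\gl$, a global double index which depends only on cohomology classes and is zero once one checks the relevant class is exact — here one uses that $d\mu$ is exact on $U$. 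I would also need to replace $\phi^\ast H$ in the third slot by $H$: since the triple index with respect to $e$ transforms by $q$ under $\phi^\ast$ on \emph{all three} slots simultaneously, the cleanest route is to apply functoriality directly to the whole triple $(\phi^\ast F,\phi^\ast G;\phi^\ast H)$ rather than to treat $H$ separately, giving $\pair{\phi^\ast F,\phi^\ast G;\phi^\ast H}_\gl = q\,\pair{F,G;H}_\gl$; combining with the expansion $\pair{\phi^\ast F,\phi^\ast G;\phi^\ast H}_\gl = q^2\pair{F,G;H}_\gl + (\text{vanishing cross terms})$ yields $q^2\pair{F,G;H}_\gl = q\,\pair{F,G;H}_\gl$, hence $(q^2-q)\pair{F,G;H}_\gl = 0$, and since $q>1$ we conclude $\pair{F,G;H}_\gl = 0$.

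The main obstacle I anticipate is the bookkeeping of the auxiliary integrals: the triple index is only defined relative to a choice of Coleman integrals $\int R\,dS$ satisfying $\int R\,dS + \int S\,dR = RS$, and when I substitute $\phi^\ast F = qF + \mu + c_F$ into the data I must verify that the naturally induced auxiliary data on the right-hand side differ from a standard choice only by global constants, so that the change-of-constant formula (global version) applies and the discrepancies wash out in the sum over ends via $\sum_e \res_e\,dF = 0$. Making this compatibility airtight — in particular checking that ``$\phi^\ast(\int R\,dS)$'' decomposes correctly under the additive splitting of $\phi^\ast F$ and $\phi^\ast G$ — is the one genuinely delicate formal point; everything else is a routine application of trilinearity, Lemma~\ref{constinthird}, reduction to the double index, and the functoriality lemma.
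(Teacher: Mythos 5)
Your overall skeleton (write $\phi^\ast F=qF+r$, $\phi^\ast G=qG+s$ with $r,s\in A(U)$, expand by trilinearity, use the degree-$q$ functoriality of the global index) is the same as the paper's, but the final step has a genuine gap. Your expansion correctly gives $\pair{\phi^\ast F,\phi^\ast G;\phi^\ast H}_\gl=q^2\pair{F,G;\phi^\ast H}_\gl+(\text{cross terms})$, with $\phi^\ast H$ in the third slot; in the last paragraph you silently replace this by $q^2\pair{F,G;H}_\gl$. Nothing justifies that substitution: $H$ is arbitrary, only $[dF]$ and $[dG]$ are assumed to be Frobenius eigenvectors, so $[d(\phi^\ast H)]=\phi^\ast[dH]$ is in general neither equal nor proportional to $[dH]$, and $\phi^\ast H-H$ is a Coleman function that is \emph{not} in $A(U)$, so neither Lemma~\ref{constinthird} nor any change-of-constant statement identifies the two indices. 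What you actually obtain is only $q\pair{F,G;H}_\gl=q^2\pair{F,G;\phi^\ast H}_\gl$, which by itself proves nothing. The paper closes exactly this gap with an additional idea you are missing: it first shows $\pair{F,G;h}_\gl=0$ for $h\in A(U)$ (via the triple identity together with the vanishing when one of the first two entries is in $A(U)$), so that $\gamma:[dH]\mapsto\pair{F,G;H}_\gl$ is a well-defined functional on $\Hdr^1(U)$; the identity above then says $\gamma\circ\phi^\ast=q^{-1}\gamma$, and such a functional must vanish because the eigenvalues of $\phi^\ast$ on $H^1(U)$ are $q$ or Weil numbers of weight one, never $1/q$. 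Without this Weil-number argument (or an equivalent) your proof does not conclude.

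A secondary problem is your justification of the cross-term vanishing. After reduction to the double index, a term such as $\pair{\mu,\phi^\ast G;\phi^\ast H}_\gl$ with $\mu\in A(U)$ becomes $\sum_e\pair{\phi^\ast G,\int\mu\,d(\phi^\ast H)}_e$, and you claim this vanishes because ``the relevant class is exact --- here one uses that $d\mu$ is exact on $U$.'' That is not the relevant point: the classes in play are those of $d(\phi^\ast G)$ and of $\mu\,d(\phi^\ast H)$, neither of which is exact in general. The vanishing is the reciprocity statement from~\cite{Bes98b} (Corollary~4.11 there), which is precisely what the paper cites at this step; it is an input, not a formal consequence of exactness of $d\mu$. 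By contrast, the bookkeeping of auxiliary integrals that you single out as the delicate point is harmless, since the global triple index is independent of those choices.
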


\begin{proof}
We begin by establishing the following formulae.
If $r\in A(U)$ then 
\begin{equation}
  \label{log+hol}
  \pair{F,r,H}_\gl = \sum_e
  \pair{F,\int rdH}_e = 0
\end{equation}
where the last equality follows
from~\cite[Corollary~\ref{reciprocity}]{Bes98b}.
Similarly we find that if also $s\in
A(U)$ then $\pair{s,G,H}_\gl=0$. Now if $h\in A(U)$, then
\begin{equation*}
\pair{F,G;h}_\gl = -\pair{F,h;G}_\gl -
\pair{G,h;F}_\gl = 0 \,,  
\end{equation*}
by application of~\eqref{log+hol}. This last formula shows that for
fixed $F$ and $G$ the function $H \mapsto \pair{F,G;H}_\gl$
depends only on the cohomology class of $dH$, $[dH]\in H^1(U)$. Let
$\phi$ be a Frobenius lift on $U$. The assumption on $F$ and $G$ implies
the existence of $r,s\in A(U)$ such that $\phi^\ast
F=qF+r$ and
$\phi^\ast G= qG +s$. Using this we can compute
\begin{align*}
  &q\pair{F,G;H}_\gl =   \pair{\phi^\ast F,\phi^\ast
  G;\phi^\ast H}_\gl \\&=    \pair{qF+r ,qG+s;\phi^\ast
  H}_\gl =  q^2   \pair{F,G;\phi^\ast H}_\gl \,, 
\end{align*}
using bilinearity and~\eqref{log+hol}.
This shows that the functional $[dH] \mapsto
\pair{F,G;H}_\gl$ is an eigenvector for the action of
$\phi^\ast$ with eigenvalue $1/q$. Such a functional must be $0$
because the eigenvalues of $\phi^\ast$ on $H^1(U)$ are either $q$ or
Weil numbers of weight~1.
\end{proof}

Note that this proposition applies in particular when $F$ and $G$ are of
the form $r+\log(f)$ where $r,f\in A(U)$. This follows since by~\cite[Lemma~2.5.1]{Col-de88},
$\log(f^q/\phi^\ast(f)) $ is in $ A(U)$.

\begin{proposition}\label{needfortild}
  Suppose $\omega\in \Omega^1(U)$ has trivial residues on all residue
  ends, so that its Coleman integral $F_\omega$ is in fact analytic on
  the ends. Let $F,G,H$ be Coleman functions on $U$ whose
  differentials are holomorphic and represent eigenvectors for
  Frobenius with eigenvalue $q$. Then
\begin{equation}\label{stokes}
  \sum_e \pair{F,G;\int F_\omega dH}_e  + \sum_e \pair{F,H;\int F_\omega dG}_e + \sum_e \pair{G,H;\int F_\omega dF}_e
\end{equation}
equals zero.
\end{proposition}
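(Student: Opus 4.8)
The plan is to deduce the statement directly from Proposition~\ref{trip-recip}; the one point to notice is that the residue hypothesis makes the Coleman primitive $F_\omega$ genuinely \emph{overconvergent}, not merely a Coleman function. Concretely, $\omega$ is overconvergent, hence rigid analytic on some $U_r$; on every residue disc $U_x$ contained in $U$ it then has no pole, so zero residue, and on every annulus end it has zero residue by hypothesis. Therefore its Coleman primitive $F_\omega$ carries no logarithmic term anywhere on $U$: it is analytic on each residue disc (as is every Coleman function) and analytic on each end, hence analytic on $U_r$, i.e.\ $F_\omega\in A(U)$. This is exactly the content of the phrase ``$F_\omega$ is in fact analytic on the ends'', combined with the automatic analyticity on the residue discs inside $U$.

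With $F_\omega\in A(U)$ in hand, for each $C\in\{F,G,H\}$ the form $F_\omega\,dC$ lies in $\Omega^1(U)$ — since $\Omega^1(U)$ is a module over $A(U)$ and $dC\in\Omega^1(U)$ — so $\int F_\omega\,dC$ is a Coleman function on $U$ whose differential lies in $\Omega^1(U)$. Now I would apply Proposition~\ref{trip-recip} three times. In the first global triple index in~\eqref{stokes} the first two arguments are $F$ and $G$, whose differentials lie in $\Omega^1(U)$ and represent Frobenius eigenvectors of eigenvalue $q$, while the third argument $\int F_\omega\,dH$ has its differential $F_\omega\,dH$ in $\Omega^1(U)$; hence $\pair{F,G;\int F_\omega\,dH}_\gl=0$. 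The second term of~\eqref{stokes} vanishes by the same proposition applied to the pair $(F,H)$, and the third by applying it to $(G,H)$; this uses precisely that all three of $[dF]$, $[dG]$, $[dH]$ are $q$-eigenvectors, so that in each of the three triples the two non-integrated entries satisfy the hypotheses of Proposition~\ref{trip-recip}. Summing the three vanishing terms yields~\eqref{stokes}.

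The crux — indeed the only non-formal point — is the claim that ``trivial residues on all ends'' upgrades $F_\omega$ to an overconvergent function with no monodromy: it is this that puts $F_\omega\,dH$ into $\Omega^1(U)$ and makes Proposition~\ref{trip-recip} applicable. Granting it, nothing further is required. Note in particular that, consistently with the statement, no residue condition on $dF$, $dG$, $dH$ themselves is needed; only that their cohomology classes are eigenvectors of Frobenius with eigenvalue $q$.
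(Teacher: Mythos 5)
There is a genuine gap, and it sits exactly at the point you call the crux. Your claim that the residue hypothesis makes $F_\omega$ overconvergent is false: a Coleman integral is only \emph{locally} analytic, and being analytic on each residue disc and on each end does not glue to a rigid analytic (overconvergent) function on $U$. If one had $F_\omega\in A(U)$, then $\omega=dF_\omega$ would be exact, i.e.\ $[\omega]=0$ in $H^1(U)$; but in the intended application $\omega$ is a form of the second kind on $C$ with nontrivial class, so $F_\omega\notin A(U)$ in general. The hypothesis ``trivial residues on the ends'' only removes the $\log$ term of $F_\omega$ on each annulus end, which is what makes the local triple indices $\pair{F,G;\int F_\omega dH}_e$ defined at all; the paper says this explicitly and then immediately warns that the sums in~\eqref{stokes} are \emph{not} global triple indices in the sense of Section~\ref{sec:trip-glob}, precisely because $F_\omega\,dH$ is not holomorphic. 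Consequently Proposition~\ref{trip-recip} cannot be applied to the three terms separately, and your conclusion that each of the three sums vanishes individually is wrong: if $\sum_e\pair{\log(f),\log(g);\int F_\omega\dlog(1-g)}_e$ were always zero, the regulator formula of Theorem~\ref{main-thm3} would be identically zero, which it is not. Only the cyclic sum of the three terms vanishes, and that is the whole content of the proposition.

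The paper's actual argument is a Frobenius-equivariance argument modelled on the proof of Proposition~\ref{trip-recip}, not a direct application of it. One first observes that when $F_\omega$ happens to be holomorphic the identity does hold by Proposition~\ref{trip-recip}, which shows the expression depends only on the class $[\omega]\in H^1(U)$. One then checks, by reduction to the double index and the triple identity, that the cyclic sum vanishes whenever any of $F$, $G$, $H$ is replaced by a holomorphic function, so the expression also factors through the classes $[dF]$, $[dG]$, $[dH]$. Writing the expression as $T(F,G,H,\omega)$ and using the eigenvector hypothesis on $[dF],[dG],[dH]$, one gets $qT(F,G,H,\omega)=T(\phi^\ast F,\phi^\ast G,\phi^\ast H,\phi^\ast\omega)=q^3T(F,G,H,\phi^\ast\omega)$, so the functional $[\omega]\mapsto T(F,G,H,\omega)$ is a $q^{-2}$-eigenvector of $\phi^\ast$ on the dual of $H^1(U)$ and must vanish by the theory of Weil numbers. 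To repair your proof you would need to reproduce this (or an equivalent) global argument; the shortcut through $F_\omega\in A(U)$ is not available.
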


\begin{proof}
Note that the expression above makes sense since on each residue end
$e$ the form $F_\omega dH$ is analytic, so the corresponding
triple index is defined, and similarly with $ H $ replaced by $ F
$ and $ F $. Note also that this is of course not a global index in the sens
of this section, since $F_\omega dH$ is not holomorphic. The
strategy for the proof is the same as for
Proposition~\ref{trip-recip}. First we notice that if $F_\omega$ is in
fact holomorphic, then the identity holds by
Proposition~\ref{trip-recip}. It follows that the expression factors
via the cohomology class $[\omega]$. Suppose now that we replace
$F$ by a holomorphic function $u$. We then have
\begin{align*}
   \sum_e \pair{u,G;\int F_\omega dH}_e &=
   \sum_e\pair{G,\int F_\omega u dH}_e\\
   \sum_e \pair{u,H;\int F_\omega dG}_e &=
   \sum_e\pair{H,\int F_\omega u dG}_e
 \end{align*}
by reduction to the double index, and
\begin{align*}
  &\phantom{=} \sum_e \pair{G,H;\int F_\omega du}_e =
   \sum_e \pair{G,H; F_\omega u-\int u \omega}_e\\
  &= \sum_e \pair{G,H; F_\omega u}_e \quad
   \text{by Proposition~\ref{trip-recip}}\\
   &=-\sum_e\pair{G,F_\omega u;H}
   -\sum_e\pair{H,F_\omega u;G}\quad\text{by the triple identity}\\
   &=-\sum_e\pair{G,\int F_\omega u dH}_e-
   \sum_e\pair{H,\int F_\omega u dG}_e
\end{align*}
by reducing to the double index again as $ F_\omega $ is analytic. This shows that if we replace
$F$ by $u$ in the formula to be proved we indeed get
$0$. Similarly we get the same result if we replace $G$ by a
holomorphic $v$, $H$ by a holomorphic $w$, or if we do $2$ or
$3$ of these replacements at the same time. Now, exactly as in the proof of
Proposition~\ref{trip-recip}, writing the right-hand side of~\eqref{stokes}
as $T(F,G,H,\omega)$, we easily get from the previous computation that
\begin{align*}
  q T(F,G,H,\omega)&=T(\phi^\ast F,
  \phi^\ast G,\phi^\ast H,\phi^\ast \omega)\\ &=
   q^3 T(F,G,H,\phi^\ast\omega)
\end{align*}
which shows that the functional $\gamma([\omega]):=
T(F,G,H,\omega)$ satisfies $ \gamma( \phi^* [\omega] ) = q^{-2} \gamma(
[\omega] ) $,
so that $ \gamma(q^2 \phi^* - \id) [\omega]) = 0 $.  By the theory of
Weil numbers, it follows that $ \gamma=0$. This proves what we want.
\end{proof}

\section{A formula for the regulator} \label{sec:begin}

In this section we obtain our first explicit regulator formula,
Theorem~\ref{aux-thm}, using the theory of the triple index.
For technical reasons, the syntomic regulator itself must be
developed over a discretely values field. However, since we have
formulas for the regulator that make sense over $\Cp $ as well, we
work from now until the end of this paper over $\Cp$.

Now that we have at our disposal the triple index, we can interpret
our make believe computation of Section~\ref{sec:wishes} in such a
way that it will become true. We continue with the notation of the
previous section, so $U$ is a wide open space over $\Cp$.

The first thing that the triple index allows us to do is to extend the
cup product to some Coleman differential forms.
We first need a lemma.

\begin{lemma} \label{7.1}
  The map $\Ocola(U) \to H^1(U) \otimes \Omega^1(U)$ given by 
  \begin{equation*}
    \sum F_{\omega_i} \eta_i \to \sum [\omega_i] \otimes \eta_i
  \end{equation*}
  is well-defined.
\end{lemma}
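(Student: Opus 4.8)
The plan is to reduce well-definedness to a single linear-independence property of Coleman primitives over the overconvergent functions; everything else is formal. The asserted map is a priori defined only on the products $F_\omega\eta$ with $F_\omega\in\acola(U)$, $\omega:=dF_\omega\in\Omega^1(U)$ and $\eta\in\Omega^1(U)$, so the content is that $\sum_iF_{\omega_i}\eta_i=0$ in $\ocol^1(U)$ forces $\sum_i[\omega_i]\otimes\eta_i=0$ in $H^1(U)\otimes\Omega^1(U)$. Since $U$ is one-dimensional, $\ocol^2(U)=0$, so the exact sequence $0\to\Cp\to\acol(U)\to\ocol^1(U)\to\ocol^2(U)$ of the Coleman complex gives (for $U$ connected) a short exact sequence $0\to\Cp\to\acola(U)\xrightarrow{d}\Omega^1(U)\to0$, and since $[dF]=0$ in $H^1(U)$ exactly when $F\in A(U)$, the assignment $F\mapsto[dF]$ identifies $\acola(U)/A(U)$ with $H^1(U)$. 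Fixing a basis $[\omega^{(1)}],\dots,[\omega^{(m)}]$ of the finite-dimensional space $H^1(U)$ and Coleman primitives $F_{\omega^{(j)}}\in\acola(U)$, we obtain a decomposition $\acola(U)=A(U)\oplus\bigoplus_j\Cp F_{\omega^{(j)}}$ of $\Cp$-vector spaces. Expanding every $F_{\omega_i}$ accordingly and collecting terms, the $A(U)$-parts contribute an overconvergent form to $\sum_iF_{\omega_i}\eta_i$ and contribute $0$ to $\sum_i[\omega_i]\otimes\eta_i$ (as $[d(\text{overconvergent})]=0$); so the whole lemma reduces to the claim $(\star)$: if $\eta_1,\dots,\eta_m\in\Omega^1(U)$ and $\sum_{j=1}^m F_{\omega^{(j)}}\eta_j\in\Omega^1(U)$, then $\eta_1=\dots=\eta_m=0$.

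I would prove $(\star)$ in two steps. First, $[\eta_j]=0$ in $H^1(U)$: by Coleman integration by parts $\int F_{\omega^{(j)}}\eta_j=F_{\omega^{(j)}}F_{\eta_j}-\int F_{\eta_j}\omega^{(j)}$ with $F_{\eta_j}$ a Coleman primitive of $\eta_j$, so $\sum_jF_{\omega^{(j)}}\eta_j$ is the differential of a Coleman function of unipotence level at most two whose level-two iterated-integral symbol in $H^1(U)\otimes H^1(U)$ is $\sum_j[\omega^{(j)}]\otimes[\eta_j]$; this function has level at most one (its differential is the overconvergent form $\sum_jF_{\omega^{(j)}}\eta_j$), so the symbol vanishes, and linear independence of the $[\omega^{(j)}]$ forces $[\eta_j]=0$, i.e. $\eta_j=dr_j$ with $r_j\in A(U)$. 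Second, $\sum_jF_{\omega^{(j)}}\,dr_j\in\Omega^1(U)$; integrating by parts once more, $\sum_jF_{\omega^{(j)}}r_j$ has overconvergent differential, hence lies in $\acola(U)=A(U)\oplus\bigoplus_j\Cp F_{\omega^{(j)}}$, and subtracting the appropriate $\Cp$-combination of the $F_{\omega^{(j)}}$ reduces $(\star)$ to: $\sum_jF_{\omega^{(j)}}u_j\in A(U)$ with $u_j\in A(U)$ forces all $u_j=0$ — equivalently, $1,F_{\omega^{(1)}},\dots,F_{\omega^{(m)}}$ are linearly independent over $A(U)$.

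This $A(U)$-linear independence is the crux, and I expect it to be the only real difficulty. It is a structural property of Coleman integration: in the unipotence filtration on $\acol(U)$ the primitives of a basis of $H^1(U)$ are free generators over the overconvergent functions in degree one, which I would quote from~\cite{Bes98a}. Its local mechanism is that at an annulus end $e$ with parameter $z$ one has $F_{\omega^{(j)}}|_e=\res_e(\omega^{(j)})\log z+(\text{overconvergent on }e)$ with $\log z$ transcendental over the overconvergent functions there, while the residue-free part of $H^1(U)$ is controlled by the eigenvalues of $\phi^\ast$ (namely $q$ or Weil numbers of weight one), exactly as in the proofs of Propositions~\ref{trip-recip} and~\ref{needfortild}. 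Granting this, all $u_j=0$, so each $r_j$ is constant and $\eta_j=0$; this establishes $(\star)$, and unwinding the reduction shows that $\sum_iF_{\omega_i}\eta_i$ determines $\sum_i[\omega_i]\otimes\eta_i$, which is the assertion. The remainder of the proof is bookkeeping with the Coleman complex and with integration by parts.
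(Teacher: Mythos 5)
There is a genuine gap. The paper's own proof of Lemma~\ref{7.1} is a single citation to \cite[Corollary~6.2]{Bes99}, i.e.\ to the structure theory of Coleman functions developed via the Tannakian formalism, and your argument does not replace that input: it quietly reuses it twice. Your formal reductions are fine (the decomposition $\acola(U)=A(U)\oplus\bigoplus_j \Cp F_{\omega^{(j)}}$, and the reduction of well-definedness to the claim $(\star)$), but both load-bearing steps afterwards are asserted rather than proved. In Step 1 you invoke a ``level-two iterated-integral symbol'' with values in $H^1(U)\otimes H^1(U)$ and use that it vanishes on functions of level one; the existence/well-definedness of that symbol (independence of the presentation as a sum $\sum_j\int F_{\omega^{(j)}}\eta_j$) is exactly the kind of statement Lemma~\ref{7.1} expresses -- composing the lemma's map with $\Omega^1(U)\to H^1(U)$ gives precisely your symbol -- so at this point you are assuming a coarser version of the result you are proving, with no proof or citation. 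In Step 2 the crux, linear independence of $1,F_{\omega^{(1)}},\dots,F_{\omega^{(m)}}$ over $A(U)$, is ``quoted from \cite{Bes98a}'', but that structural freeness statement is not in \cite{Bes98a}; it belongs to the Tannakian/unipotent-filtration theory of \cite{Bes99}, i.e.\ to the very result the paper cites. If one is allowed to quote that theory, the whole proposal collapses to the paper's one-line proof.

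The sketched ``local mechanism'' for the crux also does not suffice as stated. The expansion $F_{\omega^{(j)}}|_e=\res_e(\omega^{(j)})\log z+(\text{analytic on }e)$ only constrains the residue components: if all the $\omega^{(j)}$ have vanishing residues on every end (the interesting case, e.g.\ classes of forms of the second kind), the transcendence of $\log z$ gives no information, since each $F_{\omega^{(j)}}$ is then analytic on every end while still failing to be overconvergent globally. The appeal to Frobenius eigenvalues ``as in Propositions~\ref{trip-recip} and~\ref{needfortild}'' cannot be transplanted either: those arguments work because one already has a well-defined linear functional on $H^1(U)$ (via the global double/triple index, whose own construction in Section~9 uses Lemma~\ref{7.1}); to run the same argument for a hypothetical relation $\sum_j F_{\omega^{(j)}}u_j\in A(U)$ you would need the coefficients $u_j$ to define a functional on $H^1(U)$, which presupposes exactly the independence you are trying to prove, and $\phi^\ast$ acting on tuples in $A(U)^m$ is not a finite-dimensional operator to which the Weil-number argument applies. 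So, as written, the proposal is a (correct) reduction of the lemma to two unproved structural facts about Coleman functions, not a proof; the honest route is the paper's citation of \cite[Corollary~6.2]{Bes99}.
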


\begin{proof}
This is~\cite[Corollary~6.2]{Bes99}.
\end{proof}

\begin{proposition}
  There is a unique bilinear map
$$
\dpair{~,~}: \acola(U) \otimes   \Ocola(U) \to \Cp
$$ 
  such that we have, for any $F$, $G$, $H$ in $\acola(U)$,
  \begin{equation} \label{dpairdef}
    \dpair{F,G \dd H}=\pair{F,G;H}_\gl \,.
  \end{equation}
\end{proposition}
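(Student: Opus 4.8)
The plan is to fix $F\in\acola(U)$ and construct the functional $\dpair{F,\cdot}\colon\Ocola(U)\to\Cp$ by working one slot at a time, using Lemma~\ref{7.1} to keep track of the relations that hold in $\Ocola(U)$. Uniqueness is immediate: since $d\colon\acola(U)\to\Omega^1(U)$ is surjective (by exactness of the Coleman complex in Section~\ref{sec:coleman}, $\Omega_{\textup{col}}^2(U)=0$), we have $\Ocola(U)=\acola(U)\cdot\Omega^1(U)$ spanned over $\Cp$ by the products $G\dd H$ with $G,H\in\acola(U)$, so \eqref{dpairdef} together with bilinearity determines the map completely. For existence, the first step is to check that $(G,\eta)\mapsto\pair{F,G;H}_\gl$, where $H\in\acola(U)$ is any Coleman integral of $\eta\in\Omega^1(U)$, is a well-defined $\Cp$-bilinear map $\acola(U)\times\Omega^1(U)\to\Cp$. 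It is independent of the choice of primitive $H$ because two primitives differ by a constant and $\pair{F,G;c}_\gl=0$ for constant $c$ by Lemma~\ref{constinthird}; bilinearity comes from the trilinearity of the global triple index (which holds for $\pair{~,~}_\gl$ unconditionally, since $\pair{F,G;H}_\gl$ is independent of the auxiliary data, as shown in Section~\ref{sec:trip-glob}) together with the fact that a primitive of $\eta_1+\eta_2$ may be taken to be $H_1+H_2$.

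Next I would record that this bilinear map kills $A(U)$ in the first slot: if $G\in A(U)$, then reducing to the double index on each annulus end (with a single global Coleman primitive of the holomorphic form $G\dd H$ as auxiliary datum) gives $\pair{F,G;H}_\gl=\sum_e\pair{F,\int G\dd H}_e=0$, which is exactly \eqref{log+hol} — equivalently \cite[Corollary~\ref{reciprocity}]{Bes98b} — with $r=G$. Since $d\colon\acola(U)\to\Omega^1(U)$ is surjective with kernel the constants, it induces an isomorphism $\acola(U)/A(U)\xrightarrow{\simeq}\Omega^1(U)/dA(U)=H^1(U)$, $G\mapsto[dG]$; hence the bilinear map factors through $H^1(U)\otimes\Omega^1(U)$, producing a $\Cp$-linear map
\[
\Psi_F\colon H^1(U)\otimes\Omega^1(U)\longrightarrow\Cp,\qquad \Psi_F([dG]\otimes dH)=\pair{F,G;H}_\gl .
\]
By Lemma~\ref{7.1} there is a well-defined $\Cp$-linear map $\rho\colon\Ocola(U)\to H^1(U)\otimes\Omega^1(U)$ sending $\sum_iF_{\omega_i}\eta_i$ to $\sum_i[\omega_i]\otimes\eta_i$; writing a product $G\dd H$ as $F_{dG}\cdot dH$ shows $\rho(G\dd H)=[dG]\otimes dH$. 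I would then set $\dpair{F,\cdot}:=\Psi_F\circ\rho$. This is $\Cp$-linear on $\Ocola(U)$, satisfies $\dpair{F,G\dd H}=\Psi_F([dG]\otimes dH)=\pair{F,G;H}_\gl$ as required, and depends $\Cp$-linearly on $F$ because $\pair{\cdot,G;H}_\gl$ is linear in its first argument; hence $(F,\xi)\mapsto\dpair{F,\xi}$ is bilinear, completing both existence and uniqueness.

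The only genuinely non-formal ingredients are Lemma~\ref{7.1} (that $\sum_i[\omega_i]\otimes\eta_i$ is a well-defined invariant of $\sum_iF_{\omega_i}\eta_i$, which is precisely where the relations defining $\Ocola(U)$ get controlled) and the reciprocity law \eqref{log+hol}; everything else is bookkeeping with trilinearity, symmetry, the triple identity, reduction to the double index, and auxiliary-data independence. The conceptual point I expect to be the real obstacle is not any single computation but the organization: $\Ocola(U)$ is a proper quotient of $\acola(U)\otimes_\Cp\Omega^1(U)$, not the tensor product itself, so the content of the proposition is that the global triple index respects exactly the relations that are quotiented out — and Lemma~\ref{7.1} is what repackages this as the vanishing of $\rho$ on those relations, reducing the whole matter to the two cited facts.
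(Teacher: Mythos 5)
Your uniqueness argument and the first reduction (that $(G,\eta)\mapsto\pair{F,G;H}_\gl$, $\dd H=\eta$, is well defined and bilinear, using Lemma~\ref{constinthird} and trilinearity) are fine, but the next step is false: for a general $F\in\acola(U)$ the global triple index does \emph{not} kill $A(U)$ in the second slot. Reduction to the double index gives $\pair{F,G;H}_\gl=\sum_e\pair{F,\int G\dd H}_e$, and this sum is a global double index, which in general computes a cup product and is nonzero; the simplest case $G=1$ gives $\pair{F,1;H}_\gl=\pair{F,H}_\gl$, which by~\eqref{eq:projform} equals $\canproj[\dd F]\cup[\dd H]$ when the classes extend to $C$. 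This is exactly the phenomenon the paper points out just before Lemma~\ref{constinthird} ($\pair{F,C;H}_\gl=C\pair{F,H}_\gl$): the triple index does not factor through $[\dd G]$. The vanishing~\eqref{log+hol} that you invoke is established only inside the proof of Proposition~\ref{trip-recip}, under the standing hypothesis that $[\dd F]$ is a Frobenius eigenvector of eigenvalue $q$ (equivalently $\canproj[\dd F]=0$), and that hypothesis is not available here; indeed the pairing is later applied with $F$ a Coleman integral of a form of the second kind, whose projected class is nonzero. Worse, if your vanishing were true your construction $\Psi_F\circ\rho$ would give $\dpair{F,\dd H}=\Psi_F([\dd 1]\otimes \dd H)=0$, contradicting the defining property~\eqref{dpairdef}, which forces $\dpair{F,\dd H}=\pair{F,H}_\gl$ --- the very quantity the rest of the paper needs to be a cup product.

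The repair is to aim lower, which is what the paper does: use Lemma~\ref{7.1} only to conclude that the kernel of the multiplication map $\acola(U)\otimes\Omega^1(U)\to\Ocola(U)$ is contained in $A(U)\otimes\Omega^1(U)$, and then observe that on $A(U)\otimes\Omega^1(U)$ your trilinear functional reduces to the global \emph{double} index, $\pair{F,g;H}_\gl=\pair{F,\int g\dd H}_\gl$, which depends only on the form $g\dd H$ (the global double index is insensitive to the constant of integration because $\sum_e\res_e \dd F=0$). Hence on a kernel element $\sum_j g_j\otimes\mu_j$ with $g_j\in A(U)$ and $\sum_j g_j\mu_j=0$ the functional gives $\pair{F,\int\sum_j g_j\mu_j}_\gl=0$, so it descends directly to $\Ocola(U)$; there is no factorization through $H^1(U)\otimes\Omega^1(U)$, and none is needed.
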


\begin{proof}
By definition, $\Ocola(U)$ is generated by forms like $G \dd H$ so
uniqueness is clear. To show the existence we first note that by
Lemma~\ref{constinthird} the right-hand side depends only on
$\dd H$. This shows that
$\dpair{~,~}$ is well-defined as a map $\acola(U)\otimes \acola(U)
\otimes \Omega^1(U) \to \Cp$, where the tensors are taken over
$\Cp$. Lemma~\ref{7.1} shows that the kernel of the map $G\otimes \dd H \to
G\dd H$ from $\acola(U) \otimes \Omega^1(U)$ to $\Ocola(U)$ is contained
in $A(U) \otimes \Omega^1(U)$ so it is enough to observe that that if
$g $ in $  A(U)$ then $\pair{F,g;H}_\gl = \pair{F,\int g\dd H}_\gl$ indeed
depends only on the form $g\dd H$.
\end{proof}

The interest in the pairing $ \dpair{~,~} $ lies in the fact that
its restriction to
$\acola(U) \otimes \Omega^1(U)$ is given by 
\[
  \dpair{F,\dd G}= \pair{F,G}_\gl
\,.  
\]
The pairing on the right was studied in~\cite{Bes98b}. 
It is
known to depend only on $\dd F$, and if $\dd F$, $\dd G$ give cohomology classes
that extend to $C$ it is simply given by the cup product. This proves
part of the following result.

\begin{proposition} \label{premaketrue}
  Let $\omega $ in $  \Omega^1(U)$, such that $[\omega]$ extends to
  $C$, and
  let $F=F_\omega $ in $  \acola(U)$ be a Coleman integral of $\omega$. The
  functional $L_\omega(\eta)=\dpair{F,\eta}$ on $\Ocola(U)$ is good in
  the sense of Definition~\ref{proplist1}.
\end{proposition}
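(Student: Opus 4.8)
The plan is to check the three defining properties of a good functional (Definition~\ref{proplist1}) for $L_\omega(\eta)=\dpair{F_\omega,\eta}$. Two preliminary reductions make this manageable. First, as noted just before the statement, the global double index $\pair{\,\cdot\,,\,\cdot\,}_\gl$ depends only on the differentials of its arguments and agrees with the cup product on classes extending to $C$; this already gives well-definedness of $L_\omega$ and the third property in the case where $[\eta]$ itself extends to $C$, so only the general case of property~3 remains there. Second, the second property is a formal consequence of the first: by Proposition~\ref{4.17} every term $\reliso[d\lambda,\lambda|_{\{t=\infty\}},\lambda|_{\{t=0\}}]$ equals $d(a+\sum_j a_j\log h_j)$ with $a,a_j\in\O(U)$, hence is a sum of terms $da$ and $d(a_j\log h_j)$ (after shrinking $U$ so the $h_j$ are invertible on it), which are killed once property~1 is known. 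So the real content is properties~1 and~3.

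\textbf{Property 3.} Given $\eta\in\Omega^1(U)$, write $\eta=dG$ with $G\in\acola(U)$; this is possible because $U$ is one–dimensional, so $\ocol^2(U)=0$ and $d\colon\acol(U)\to\ocol^1(U)$ is onto (and then $dG=\eta\in\Omega^1(U)$ forces $G\in\acola(U)$). Then $L_\omega(\eta)=\dpair{F_\omega,dG}=\pair{F_\omega,G}_\gl$. Using bilinearity together with the reciprocity vanishing $\pair{F_\omega,r}_\gl=0$ for $r\in A(U)$ (the instance of \cite[Corollary~\ref{reciprocity}]{Bes98b} already invoked in the proof of Proposition~\ref{trip-recip}), one checks that $\pair{F_\omega,G}_\gl$ depends only on the class $[\eta]\in\Hdr^1(U)$. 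Decompose $[\eta]=\canproj([\eta])+\eta_N$ along the Frobenius–equivariant splitting $\Hdr^1(U)=\Hdr^1(C)\oplus\Ker(\canproj)$. On the first summand the known case gives $\canproj([\eta])\cup[\omega]$ (possibly after absorbing a sign to match the normalisation in Definition~\ref{proplist1}), so it remains to show $\pair{F_\omega,G_N}_\gl=0$ whenever $[dG_N]=\eta_N\in\Ker(\canproj)$. For this I would copy the weight argument of Proposition~\ref{trip-recip}: a Frobenius lift $\phi$ acts on $\Ker(\canproj)$ with eigenvalue $q$, so $\phi^\ast G_N=qG_N+r$ with $r\in A(U)$; combining $\pair{\phi^\ast F_\omega,\phi^\ast G_N}_\gl=q\pair{F_\omega,G_N}_\gl$ with bilinearity and $\pair{\phi^\ast F_\omega,r}_\gl=0$ shows that $[\omega]\mapsto\pair{F_\omega,G_N}_\gl$ is a $\phi^\ast$–invariant linear functional on $\Hdr^1(C)$, hence $0$ since the eigenvalues of $\phi^\ast$ on $\Hdr^1(C)$ are Weil numbers of weight~$1$.

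\textbf{Property 1.} For $da$ with $a\in\O(U)$ one has $da\in\Omega^1(U)$ with $[da]=0$, so $L_\omega(da)=\canproj(0)\cup[\omega]=0$ by property~3. For $d(a\log f)$ (with $f$ invertible so that $\log f\in\acola(U)$) expand $d(a\log f)=(\log f)\,da+a\,d\log f$; applying~\eqref{dpairdef} gives $L_\omega(d(a\log f))=\pair{F_\omega,\log f;a}_\gl+\pair{F_\omega,a;\log f}_\gl$, which by the triple identity equals $-\pair{\log f,a;F_\omega}_\gl$. Since $a$ is analytic on every annulus end, reduction to the double index rewrites this as $-\pair{\log f,\int a\,dF_\omega}_\gl=-\pair{\log f,\int a\omega}_\gl$, and this vanishes by \cite[Corollary~\ref{reciprocity}]{Bes98b}, the second argument being an integral of $a\cdot\omega$ with $a\in A(U)$ while $d(\log f)\in\Omega^1(U)$. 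This disposes of all three properties.

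\textbf{Main obstacle.} The delicate point is the general case of property~3: one must take care that the global double index really descends to cohomology classes (which itself uses the reciprocity law) and then run the Frobenius weight/Weil-number argument exactly as in Proposition~\ref{trip-recip}. Everything else — the reduction of property~2 to property~1, the $da$ case, and the triple-index bookkeeping for $d(a\log f)$ — is formal once the relevant reciprocity corollary of \cite{Bes98b} is available.
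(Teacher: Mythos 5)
Your proposal is correct, and its real content overlaps the paper's proof, but you route two of the three properties differently. The paper's own proof is much shorter: it takes property~3 (and hence the $da$ case of property~1) as already known from \cite{Bes98b} — the identity $\canproj([\eta])\cup[\omega]=\pair{F_\eta,F_\omega}_\gl$ is exactly \cite[Proposition~4.10]{Bes98b}, quoted later as~\eqref{eq:projform} — and the reduction of property~2 to property~1 via Proposition~\ref{4.17} is the same as yours (it appears in the proof of Proposition~\ref{proplist}); so the only thing the paper actually proves is the vanishing on $\dd(a\log f)$, which it does by the triple identity plus a direct appeal to Proposition~\ref{trip-recip} (which applies to the pair $a,\log f$ by the remark following that proposition, since $\log(f^q/\phi^\ast f)\in A(U)$). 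Your treatment of $\dd(a\log f)$ is essentially the same manipulation — in fact your display $\pair{F,\log f;a}_\gl+\pair{F,a;\log f}_\gl=-\pair{\log f,a;F}_\gl$ has the signs right where the paper's display contains a typo — except that you then reduce to the double index and invoke the reciprocity corollary of \cite{Bes98b}, which amounts to inlining the step \eqref{log+hol} of the proof of Proposition~\ref{trip-recip} rather than citing that proposition; the vanishing there really rests on $\canproj[\dlog f]=0$ (the eigenvalue-$q$ property of $[\dlog f]$), not merely on $a$ being analytic, so citing Proposition~\ref{trip-recip} directly is the cleaner bookkeeping. The genuine divergence is property~3: instead of quoting \cite[Proposition~4.10]{Bes98b}, you re-derive it by splitting $\Hdr^1(U)$ as the image of $\Hdr^1(C)$ plus $\Ker(\canproj)$ and killing the kernel part with a Frobenius/Weil-number argument modelled on Proposition~\ref{trip-recip}. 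This is a valid, more self-contained route (it uses only the weaker facts stated just before the proposition, namely dependence of $\pair{\,\cdot\,,\,\cdot\,}_\gl$ on $\dd F$ only and agreement with the cup product when both classes extend to $C$, plus the semisimple eigenvalue-$q$ action on $\Ker(\canproj)$, which follows from the residue description of that kernel); what it buys is independence from the stronger projection formula of \cite{Bes98b}, at the cost of essentially reproving it. The sign hedge you make is harmless — the paper itself is loose about the ordering in the antisymmetric pairings.
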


\begin{proof}
Note that we are not claiming that this functional is independent of
the choice of the constant of integration. The only property we need
to prove is that $L_\omega$ vanishes on forms of type $\dd (a\log f)$,
with $ a $ and $ f $ in $ A(U) $. This is easily established:
\begin{align*}
  \dpair{F,\dd (a\log f)}&=\dpair{F,a \dlog f}+\dpair{F, \log f \dd a}\\&=\pair{F,a;\log f}_\gl +\pair{F,a;\log f}_\gl \\
  &= \pair{a,\log f;F}_\gl =0
\end{align*}
by Proposition~\ref{trip-recip}.
\end{proof}

\begin{corollary} \label{factorcor}
The composition
$ \K 4 3 {\O} \xrightarrow{\reg} \Hdr^1(C)  $
factorizes through the quotient map
$ \K 4 3 {\O} \to \K 4 3 {\O} / \K 3 2 {\O} \cup \OQ $.
\end{corollary}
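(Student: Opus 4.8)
The plan is to show that $ \reg $ vanishes on the subspace $ \K 3 2 {\O} \cup \OQ \subseteq \K 4 3 {\O} $, which gives the asserted factorization at once. Since that subspace is spanned by the elements $ \gamma \cup f $ with $ \gamma \in \K 3 2 {\O} $ and $ f \in \OQ $, it is enough to treat such an element. First I would use $ \K 4 3 {\O} = \dirlim_{\CC^\loc} \K 4 3 {\CC^\loc} $ (Definition~\ref{regdefined}) to choose, for a suitable localization $ \CC^\loc $, an element $ \beta \in \K 4 3 {\CC^\loc} $ whose image in $ \K 4 3 {\O} $ is $ \gamma \cup f $. Because $ \reg $ on $ \K 4 3 {\O} $ is by construction compatible with this direct limit, $ \reg(\gamma \cup f) = \reg(\beta) $, so it suffices to prove $ \reg(\beta) = 0 $. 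Under the isomorphism $ H^1(\Ccomp \bullet \O ) \iso \K 4 3 {\O} / \K 3 2 {\O} \cup \OQ $ of~\eqref{H1auxC}, the image of $ \beta $ is the class of $ \gamma \cup f $, which is $ 0 $.

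Next, write $ ch(\beta) = [\eta_0] $ in the model~\eqref{eq:Cmodel}. Let $ \omega $ be an arbitrary form of the second kind on $ C $, and let $ L_\omega $ be the good functional attached to it by Proposition~\ref{premaketrue}. Applying Proposition~\ref{proplist} to $ \beta $, whose image in $ H^1(\Ccomp \bullet \O ) $ is $ 0 $ — so the sum $ \sum_i [g_i]_2 \cup (f_i) $ appearing there may be taken empty — yields $ \canproj([\eta_0]) \cup [\omega] = 0 $. Since every class in $ \Hdr^1(C/K) $ is represented by a form of the second kind, and the cup product pairing $ \Hdr^1(C/K) \times \Hdr^1(C/K) \to \Hdr^2(C/K) \iso K $ is non-degenerate (Poincar\'e duality for the smooth proper curve $ C $), it follows that $ \canproj([\eta_0]) = 0 $ in $ \Hdr^1(C/K) $.

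It then remains to pass from $ \eta_0 $ to the actual regulator. By the normalization~\eqref{eq:twist} one has $ \reg(\beta) = \canproj([\eta]) $, where $ \eta $ is characterized by $ (1 - \phi^\ast/q^3)\eta = \eta_0 $ in $ \Hdr^1(U/K) $. As $ \canproj $ is Frobenius equivariant, $ (1 - \phi^\ast/q^3)\canproj([\eta]) = \canproj([\eta_0]) = 0 $; and since the eigenvalues of $ \phi^\ast $ on $ \Hdr^1(C/K) $ are Weil numbers of weight $ 1 $, the operator $ 1 - \phi^\ast/q^3 $ is invertible there, whence $ \canproj([\eta]) = 0 $, i.e. $ \reg(\beta) = 0 $. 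Therefore $ \reg(\gamma \cup f) = 0 $, and $ \reg $ factors through $ \K 4 3 {\O} / \K 3 2 {\O} \cup \OQ $. The one step that needs a little care is checking that Proposition~\ref{proplist} really applies to a lift $ \beta $ of the zero class — that the set-up preceding that proposition is met and that its conclusion remains meaningful for the empty sum — but this is routine bookkeeping; everything else is a formal consequence of Propositions~\ref{premaketrue} and~\ref{proplist} together with Poincar\'e duality on $ C $.
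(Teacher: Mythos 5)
Your argument is correct and is essentially the paper's own proof: both rest on the existence of a good functional (Proposition~\ref{premaketrue}), on Proposition~\ref{proplist}, on letting $\omega$ vary and using nondegeneracy of the cup product on $\Hdr^1(C)$, and on the invertibility of $1-\phi^\ast/q^3$ there. The only difference is presentational: you verify vanishing of $\reg$ on the subspace $\K 3 2 {\O} \cup \OQ$ directly (applying Proposition~\ref{proplist} to a lift with the empty sum), whereas the paper states the same input as the factorization of $\reg$ followed by the twist and $\cup[\omega]$ for every $\omega$, and then untwists.
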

\begin{proof}
By Proposition~\ref{proplist} and the normalization~\eqref{eq:twist}, the fact that a good
functional for $\omega$ exists implies that the composition
$$ \K 4 3 {\O} \xrightarrow{\reg} \Hdr^1(C)
\xrightarrow{1-\phi^\ast/q^3}  \Hdr^1(C) \xrightarrow{\cup [\omega]}
  K  $$ factors. As this is true for any $\omega$ it follows that  $
  \K 4 3 {\O} \xrightarrow{\reg} \Hdr^1(C)
  \xrightarrow{1-\phi^\ast/q^3}  \Hdr^1(C)$ factors, but
  $1-\phi^\ast/q^3 $ is invertible on $\Hdr^1(C)$ so the result follows.
\end{proof}

Propositions~\ref{premaketrue} and~\ref{proplist} suggest
that we need to compute $\dpair{F,\int_0^\infty \epsilon(g,f)}$. 
We shall manipulate this, by ``making our wishes come true'', in the form of the following proposition.

\begin{proposition} \label{7.6}
  Let $F$ be as in Proposition~\ref{premaketrue} and let $g,f\in
  \O^\ast(\CC^{\loc})$ with $g \neq 1$. Let $\int_0^\infty \epsilon(g,f)$ be as in~\eqref{basform}. Then we have 
  \begin{equation} \label{basic-eq}
  \dpair{F,\int_0^\infty \epsilon(g,f)}=
      \sum_e \someterm(g,f,F)_e
      \,,
  \end{equation}
  where
  \begin{equation} \label{basic-eq1}
    \begin{split}
      \someterm(g,f,F)_e= &\ovq \pair{\log f_0,\log g;\int F \dlog
        (1-g)}_e \\+ &\ovqs \pair{\logf(f),\log(g); \int
        F\dlog(1-g)_0}_e \\ + &\ovqt \pair{\logf(f),\log(g_0); \int F
        \phi^\ast \dlog (1-g)}_e \,.
    \end{split}
  \end{equation}
\end{proposition}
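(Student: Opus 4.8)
The plan is to expand the form $\int_0^\infty \epsilon(g,f)$ given in~\eqref{basform} as a sum of three terms in $\Ocola(U)$ and apply the pairing $\dpair{F,\cdot}$ to each, using bilinearity and the defining property~\eqref{dpairdef} to rewrite everything in terms of global triple indices, and then localize the global triple indices into a sum over annuli ends $e$. Recall from~\eqref{basform} that
\begin{equation*}
  \int_0^\infty \epsilon(g,f)=
  \ovq \log f_0 \, \log g \, \dlog(1-g)
  - \ovqs \log(1-g)_0 \, \log g \, \phi^\ast \dlog f
  + \ovq \Theta(g) \, \phi^\ast \dlog f\,,
\end{equation*}
and that by~\eqref{Theta} we have $\dd \Theta(g) = \epsgi$. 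The first thing I would do is verify that each of the three summands lies in $\Ocola(U)$ (so that $\dpair{F,\cdot}$ is defined on it): the relevant integrals $\log g$, $\Theta(g)$, $\log f_0$, $\log(1-g)_0$, $\phi^\ast\dlog f$ are all in $\acola(U)$ by the discussion in Section~\ref{sec:coleman} together with the remark that $\log(f^q/\phi^\ast f)\in A(U)$, so each product of such a function with a form in $\Omega^1(U)$ (coming from $\dlog(1-g)$, $\dlog(1-g)_0$, or $\phi^\ast\dlog f$) is in $\Ocola(U)$.

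Next I would apply $\dpair{F,\cdot}$ term by term. For the first summand, $\dpair{F, \log f_0 \log g \,\dlog(1-g)} = \pair{F,\log f_0 \log g; 1-g}_\gl$ — but to get the stated form one should instead read it as $\pair{\log f_0, \log g; \int F\dlog(1-g)}$ after using the triple identity and reduction to the double index; more precisely, since $\log f_0 \, \dd(\log g \cdot (1-g)\text{-stuff})$ — here I would argue using the triple identity $\pair{A,B;C}+\pair{A,C;B}+\pair{B,C;A}=0$ together with reduction to the double index (part~(4) of the triple index proposition) to move the roles around, obtaining $\dpair{F, \log f_0 \log g\,\dlog(1-g)} = \sum_e \pair{\log f_0,\log g;\int F\dlog(1-g)}_e$. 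The key manipulation here is that $\dpair{F, R\,\dd S}=\pair{F,R;S}_\gl$, and that one may freely re-integrate: $\pair{F,R;S}_\gl$ depends only on $R\,\dd S$ and can be re-expressed as $\pair{\text{(integral of }F\,\dd S),R}$-type expressions via reduction to the double index. The dangerous point is bookkeeping of which function plays which role and verifying the $\ovq$, $\ovqs$, $\ovqt$ coefficients come out as claimed; the coefficient $\ovqt$ on the third term arises because the $\phi^\ast$ in front of $\dlog(1-g)$ combines with the $\ovq$ in $\Theta(g)$ and the $\ovqs$ from the original $\epsilon_g|_{t=0}$, but I would be careful to track this through~\eqref{Theta}.

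For the second and third summands I would substitute $\dd\Theta(g) = \epsgi$ into the third term: $\dpair{F,\Theta(g)\phi^\ast\dlog f}$; using that $\dd(\Theta(g)\,\logf(f)) = \logf(f)\,\dd\Theta(g) + \Theta(g)\,\dlogf(f)$ and that $\dpair{F,\cdot}$ kills forms of type $\dd(a\log h)$ for $a,h\in A(U)$ — here $\logf(f)\in A(U)$ and $\Theta(g)$ need not be in $A(U)$, so one must instead use the triple index identities directly rather than this exactness — I would rewrite $\dpair{F,\Theta(g)\dlogf(f)} = \pair{F,\Theta(g);\phi^\ast f}_\gl$ and then apply the triple identity plus $\dpair{F,\logf(f)\,\dd\Theta(g)}=\dpair{F,\logf(f)\,\epsgi}$ to split it. Expanding $\epsgi = \ovq\log(1-g)_0\dlog g - \ovqs \log g_0\,\dlog\phi^\ast(1-g)$ and combining with the second summand of $\int_0^\infty\epsilon(g,f)$, the $\ovqs\log(1-g)_0\log g\,\phi^\ast\dlog f$ terms should be rearranged — via reduction to the double index and anti-symmetry, as in the proof of Proposition~\ref{trip-recip} — into $\ovqs\pair{\logf(f),\log g;\int F\dlog(1-g)_0}_e$, and the $\log g_0$ term into $\ovqt\pair{\logf(f),\log g_0;\int F\phi^\ast\dlog(1-g)}_e$. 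I expect the main obstacle to be exactly this: organizing the algebra so that the three surviving global-triple-index terms appear with the precise coefficients and arguments in~\eqref{basic-eq1}, while showing that all the "cross terms" produced in the regrouping (those of the form $\dpair{F,\dd(\text{product of }A(U)\text{-functions})}$, and those handled by Proposition~\ref{trip-recip}) vanish. The vanishing of the cross terms is where Proposition~\ref{trip-recip} (applied to $F,G\in\{\log f_0+\text{hol}, \dots\}$ which are of the form $r+\log(h)$ with $r,h\in A(U)$, hence eigenvectors for Frobenius with eigenvalue $q$ up to $A(U)$) and the triple-index reciprocity results from Section~\ref{sec:trip-glob} do the work, and I would invoke them explicitly at each step where a term is to be discarded.
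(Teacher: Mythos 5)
Your outline follows the same route as the paper's proof: pair $F$ against \eqref{basform} term by term via \eqref{dpairdef}, shuffle the entries of the resulting triple indices end by end, substitute \eqref{Theta} for $\dd\Theta(g)$, and dispose of the leftover sums using Proposition~\ref{trip-recip} and the reciprocity of \cite[Corollary~\ref{reciprocity}]{Bes98b}. But the proposal stops exactly where the proposition lives: you defer the regrouping that produces the three terms of \eqref{basic-eq1} with their coefficients, calling it ``the main obstacle'', and the mechanism you cite for it is not the right one. Two identities have to be proved, and neither is established in your text. First, $\sum_e \pair{\logf(f),\Theta(g)F}_e = \sum_e \pair{\logf(f),\int F\,\dd\Theta(g)}_e$; this follows from bilinearity of the double index applied to the splitting $\Theta(g)F=\int F\,\dd\Theta(g)+\int\Theta(g)\,\dd F$, the second piece being killed globally by \cite[Corollary~\ref{reciprocity}]{Bes98b} — only after this may one substitute \eqref{Theta}, which is what produces the $\ovqs$- and $\ovqt$-pieces you describe. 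Second, $\sum_e\bigl(\pair{\log g,F\log(1-g)_0;\logf(f)}_e+\pair{\logf(f),F\log(1-g)_0;\log g}_e+\pair{\logf(f),\log g;\int F\dlog(1-g)_0}_e\bigr)=0$; this is not ``reduction to the double index and anti-symmetry'': it is the triple identity for the triple with middle entry $F\log(1-g)_0$, after replacing $\int F\dlog(1-g)_0$ by $F\log(1-g)_0$ in the third slot at the cost of $\sum_e\pair{\logf(f),\log g;\int \log(1-g)_0\,\dd F}_e$, which vanishes by Proposition~\ref{trip-recip} (the eigenvector hypothesis holds because $\logf(f)$ and $\log(g)$ are of the form $r+c\log(h)$ with $r,h\in A(U)$). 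Without these two identities there is no proof, since together they are precisely the content of \eqref{basic-eq}.

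Two further points you should repair. You never record that $F=F_\omega$ is holomorphic on every annulus end (because $\dd F=\omega\in\Omega^1(U)$ has trivial residues there, $[\omega]$ extending to $C$); this is what legitimizes every reduction-to-the-double-index move you invoke, e.g.\ $\pair{F,\log g;\int \log f_0\dlog(1-g)}_e=\pair{\log g,\int F\log f_0\dlog(1-g)}_e=\pair{\log f_0,\log g;\int F\dlog(1-g)}_e$, and it is also what makes $F\log(1-g)_0$ a legitimate (analytic) entry on each end. Finally, your intermediate expression $\pair{F,\log f_0\log g;1-g}_\gl$ is not well formed: $\log f_0\log g\notin\acola(U)$ (its differential involves $\log g\,\dlog f_0$, which is not overconvergent), and the third slot would in any case have to be $\log(1-g)$; the correct first reading of \eqref{dpairdef} is $\pair{F,\log g;\int\log f_0\dlog(1-g)}_\gl$, with $\log f_0$ moved into the first slot only afterwards, by the end-by-end reductions above.
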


\begin{proof}
We have by~\eqref{basform} and~\eqref{dpairdef}
\begin{align*}
  \dpair{F,\int_0^\infty \epsilon(g,f)}=
  \sum_e\Big(&\ovq \pair{F,\log g;\int \log f_0 \dlog
    (1-g)}_e\\
  - &\ovqs \pair{F,\log g; \int \log(1-g)_0 \dlogf(f)}_e\\
      +&\ovq \pair{F,\Theta(g);\dlogf(f)}_e \Big)\,.
\end{align*}
Note that $d F$ is in $\Omega^1(U)$ and has trivial residues along all
annuli ends. It follows that $F$ is holomorphic on each annuli end.

At every annulus $e$ we obtain the identities
\begin{align*}
  \pair{F,\log g;\int \log f_0 \dlog(1-g)}_e &=
  \pair{\log(g),\int F \log f_0 \dlog(1-g)}_e\\&=\pair{\log f_0,\log g;\int
    F \dlog (1-g)}_e\\
  \pair{F,\log g; \int \log(1-g)_0 \dlogf(f)}_e &=
  \pair{\log g, \int F \log(1-g)_0 \dlogf(f)}_e\\&=
  \pair{\log g, F\log(1-g)_0;\logf(f)}_e\\
\intertext{and}
  \pair{F,\Theta(g);\dlogf(f)}_e &=
  \res_e F\Theta(g)\dlogf(f)=
  \pair{\logf(f),\Theta(g)F}_e
\end{align*}
so we obtain
\begin{align*}
  \dpair{F,\int_0^\infty \epsilon(g,f)}=
  \sum_e\Big(&\ovq \pair{\log f_0,\log g;\int F \dlog
    (1-g)}_e\\
  - &\ovqs \pair{\log g, F\log(1-g)_0;\logf(f)}_e\\
      -&\ovq \pair{\logf(f),\Theta(g)F}_e \Big)\,.
\end{align*}
To equate this with the right-hand side of~\eqref{basic-eq} we now
realize our wishes one by one. First we notice that
the first summands in each expression are identical. The realization
of the first wish corresponds to the formula
\begin{align*}
    &\phantom{\,=\,} \sum_e \pair{\logf(f),\Theta(g)F}_e\\ &=
     \sum_e \pair{\logf(f),\int Fd\Theta(g)}_e+
     \sum_e \pair{\logf(f),\int\Theta(g)\dd F}_e\\ &=
     \sum_e \pair{\logf(f),\int Fd\Theta(g)}_e \,,
\end{align*}
as the second sum on the second line vanishes
by~\cite[Corollary~\ref{reciprocity}]{Bes98b}.
Now we may use the formula~\eqref{Theta} for $d\Theta(g)$ to write this as
\begin{align*}
  \sum_e\Big(&\ovq \pair{\logf(f),F\log(1-g)_0;\log(g)}_e\\
  -&\ovqs \pair{\logf(f),\log(g_0); \int F \phi^\ast \dlog (1-g)}_e\Big)\,,
\end{align*}
so the left-hand side of~\eqref{basic-eq} becomes
\begin{align*}
  \sum_e\Big(&\ovq \pair{\log f_0,\log g;\int F \dlog
    (1-g)}_e\\
  - &\ovqs \pair{\log g, F\log(1-g)_0;\logf(f)}_e\\
  -&\ovqs \pair{\logf(f),F\log(1-g)_0;\log(g)}_e\\
  +&\ovqt \pair{\logf(f),\log(g_0); \int F \phi^\ast \dlog (1-g)}_e
  \Big)\,.
\end{align*}
Now the last term also agrees with the last term of the right-hand side of 
\eqref{basic-eq} and we are left with verifying the realization of the
second wish in the form of
\begin{align*}
   \sum_e\Big(&\pair{\log g, F\log(1-g)_0;\logf(f)}_e\\+
  &\pair{\logf(f),F\log(1-g)_0;\log(g)}_e\\ +
  &\pair{\logf(f),\log(g); \int F\dlog(1-g)_0}_e\Big) =0 \,.
\end{align*}
If the last triple index is replaced by $\pair{\logf(f),\log(g);
F\log(1-g)_0}_e$ the result is an immediate consequence of the
triple identity, and indeed we have
\begin{align*}
  &\sum_e\pair{\logf(f),\log(g); \int F\dlog(1-g)_0}_e\\
  = \phantom{-} &\sum_e\pair{\logf(f),\log(g); F\log(1-g)_0}_e\\
              - &\sum_e\pair{\logf(f),\log(g); \int \log(1-g)_0 \dd F}_e \,,
\end{align*}
and the last sum is $0$ by Proposition~\ref{trip-recip}.
\end{proof}

\begin{proposition} \label{G-expr}
  Let $G$ be such that $d G$ lies in $ \Omega^1(U)$ and $G$ is holomorphic on
  annuli ends. Then, with the notation of Proposition~\ref{7.6}, we
  have  
\begin{equation*}
   \someterm(g,f,\phi^\ast G)_e
 = \pair{\log(f),\log(g);\int (\phi^\ast-\ovqs) G
    \dlog(1-g)}_e\,.
\end{equation*}
\end{proposition}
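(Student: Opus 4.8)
The plan is to substitute $F=\phi^\ast G$ into the defining formula~\eqref{basic-eq1} for $\someterm$, unwind the Frobenius factors, and reorganize everything using the trilinearity of the triple index together with its transformation formula under a degree $q$ endomorphism. Fix choices of integrals $\gamma:=\int G\dlog(1-g)$ and $\delta:=\int\phi^\ast G\dlog(1-g)$. Since $\phi$ lifts the $q$-power Frobenius and is $R$-linear, we have $\logf(f)=\phi^\ast\log(f)$, $\log(f_0)=q\log(f)-\logf(f)$, $\log(g_0)=q\log(g)-\logf(g)$, and $\dlog(1-g)_0=q\dlog(1-g)-\phi^\ast\dlog(1-g)$. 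Using the linearity of the indefinite Coleman integral and the identity $\int\phi^\ast G\,\phi^\ast\dlog(1-g)=\phi^\ast\gamma$, the three third-slot integrals occurring in~\eqref{basic-eq1} become $\delta$, $q\delta-\phi^\ast\gamma$, and $\phi^\ast\gamma$ respectively.

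Next I would expand each of the three triple indices in~\eqref{basic-eq1} by trilinearity, choosing all auxiliary integrals compatibly with the evident linear relations between them and taking the purely Frobenius-pulled-back auxiliary data to be pullbacks of the corresponding data downstairs. Collecting the coefficients $\ovq$, $\ovqs$, $\ovqt$, one finds that essentially everything telescopes, the upshot being
\begin{equation*}
  \someterm(g,f,\phi^\ast G)_e=\pair{\log(f),\log(g);\delta}_e-\ovqt\,\pair{\logf(f),\logf(g);\phi^\ast\gamma}_e\,.
\end{equation*}
The change-of-constant formula of Lemma~\ref{tripexists} is needed here to see that the intermediate terms cancel exactly rather than only up to constants, since the first slots $\logf(f)$ and the second slot $\log(g)$ may have nonzero residue on $e$.

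To finish, since $\phi$ may be (and is) taken to restrict to a degree $q$ endomorphism of the annulus end $e$, and $\phi^\ast\gamma$ is an integral of $\phi^\ast\bigl(G\dlog(1-g)\bigr)$ with all auxiliary data chosen as pullbacks, the transformation formula for the triple index under an endomorphism (the annulus version recorded in Section~\ref{sec:trip-glob}) gives $\pair{\logf(f),\logf(g);\phi^\ast\gamma}_e=q\,\pair{\log(f),\log(g);\gamma}_e$. Substituting and using linearity in the third slot once more yields
\begin{equation*}
  \someterm(g,f,\phi^\ast G)_e=\pair{\log(f),\log(g);\delta}_e-\ovqs\,\pair{\log(f),\log(g);\gamma}_e=\pair{\log(f),\log(g);\int(\phi^\ast-\ovqs) G\dlog(1-g)}_e\,,
\end{equation*}
which is the assertion.

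The main obstacle is the bookkeeping in the middle step: one must pin down mutually consistent choices of all the auxiliary integrals entering the three triple indices so that the telescoping is literally valid, which is precisely the kind of tedious but routine check the change-of-constant formula handles. A minor additional point to verify is that $\phi$ can indeed be arranged to preserve the annulus end $e$ (so that the degree $q$ transformation lemma applies there), which holds after a harmless finite base change because the reduction of $\phi$ fixes the punctured formal neighborhood underlying $e$.
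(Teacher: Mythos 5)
Your argument is essentially the paper's own proof: substitute $F=\phi^\ast G$ in~\eqref{basic-eq1}, rewrite each $h_0$-type term as $q\,h$ minus its Frobenius pullback, expand by trilinearity so the middle terms telescope to leave $\pair{\log(f),\log(g);\int\phi^\ast G\dlog(1-g)}_e-\ovqt\pair{\logf(f),\logf(g);\phi^\ast\!\int G\dlog(1-g)}_e$, and then apply the degree-$q$ pullback formula for the triple index to convert the second term into $\ovqs\pair{\log(f),\log(g);\int G\dlog(1-g)}_e$. Your extra remarks on consistent auxiliary-data choices (via the change-of-constant formula) and on $\phi$ preserving the annulus end just make explicit points the paper leaves implicit, so the proposal is correct and follows the same route.
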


\begin{proof}
Let $F=\phi^\ast G$. We replace in~\eqref{basic-eq1} each term of the
  form $h_0$ by $q\log(h) -\logf(h)$. Then we get
\begin{alignat*}{1}
  \someterm(g,f,F)_e = & \, \phantom{\,+\,\,\,} \ovq \pair{q\log(f) -\logf(f),\log g;\int F \dlog (1-g)}_e
\\
 & + \ovqs \pair{\logf(f),\log(g); q\int F\dlog(1-g)-\int F\dlogf(1-g)}_e
\\
 & + \ovqt \pair{\logf(f),q\log(g)-\logf(g); \int F \phi^\ast \dlog (1-g)}_e
\,,
\end{alignat*}
which after some cancelations equals
\begin{equation*}
  \pair{\log(f),\log(g);\int F \dlog(1-g)}_e - \ovqt \pair{\logf(f),\logf(g);\int F \dlogf(1-g)}_e
\,.
\end{equation*}
After substituting $\phi^\ast G$ for $F$ this becomes
\begin{alignat*}{1}
  &  \pair{\log(f),\log(g);\int \phi^\ast G \dlog(1-g)}_e
  - \ovqs \pair{\log(f),\log(g);\int G \dlog(1-g)}_e
\\
  = \, & \pair{\log(f),\log(g);\int (\phi^\ast-\ovqs) G \dlog(1-g)}_e
\end{alignat*}
as required.
\end{proof}

We now proceed to apply this theory to elements in $ K $-theory.

\begin{theorem} \label{aux-thm}
1. Suppose that an element $ \beta \in  \K 4 3 {\CC^\loc} $ maps to 
$ \sum_i [g_i]_2 \cup f_i $ in $ H^1(\Ccomp \bullet \O ) $
under the composition (with the last isomorphism from \eqref{H1auxC})
\begin{equation} \label{mainAmap1}
    \K 4 3 {\CC^\loc} \to \K 4 3 {\O} \to \K 4 3 {\O} / \K 3 2 {\O}
    \cup \OQ \xrightarrow{\simeq}  H^1(\Ccomp \bullet \O )
\,,
\end{equation}
and that $ch(\beta)\in  \htms^2(\CC^{\loc},3)$ is the image of
$[\eta]\in \Hdr^1(U)$ under the map
\eqref{eq:twist}.
Let $\omega $ in $ \Omega^1(U)$ have trivial residues along all
  annuli ends of $U$. Then
  \begin{equation} \label{keyregform}
    \pair{F_\omega, F_\eta}_{\gl} = \sum_i \sum_e
  \pair{\log(f_i),\log(g_i);\int F_\omega \dlog(1-g_i)}_e\,,
  \end{equation}
  where $F_\omega$ and $F_\eta$ are any Coleman integrals of $\omega$ and 
  $\eta$ respectively.

  2. In particular, the composition
  \begin{equation*}
    \K 4 3 {\CC^\loc} \xrightarrow{ch} \htms^2(\CC^{\loc},3)
    \xrightarrow{[\eta] \mapsto \pair{F_\omega, F_\eta}_{\gl}} \Cp
  \end{equation*}
  factors via \eqref{mainAmap1}.
\end{theorem}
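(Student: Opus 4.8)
The plan is to feed the computation of Section~\ref{sec:begin} into the normalisation~\eqref{eq:twist} of the syntomic regulator, turning the heuristic ``final wrinkle'' remark after Proposition~\ref{proplist} into a proof. Put $\eta_0=(1-\phi^\ast/q^3)\eta$, so that $ch(\beta)=[\eta_0]$ in the model~\eqref{eq:Cmodel}. As $\omega$ has trivial residues along every annulus end, its class lies in the image of $\Hdr^1(C/K)\to\Hdr^1(U)$, i.e.\ it extends to $C$, so $L_\omega=\dpair{F_\omega,\cdot}$ is a good functional by Proposition~\ref{premaketrue}. Shrinking $\CC^\loc$ so that all $g_i$, $1-g_i$, $f_i$ are units and writing $\sum_i[g_i]_2\cup f_i$ for the image of $\beta$ under~\eqref{mainAmap1}, Propositions~\ref{proplist} and~\ref{7.6} give, for every $\omega$ as above,
\begin{equation*}
  \canproj([\eta_0])\cup[\omega]=\sum_i\dpair{F_\omega,\textstyle\int_0^\infty\epsilon(g_i,f_i)}=\sum_i\sum_e\someterm(g_i,f_i,F_\omega)_e=:S([\omega]),
\end{equation*}
and I also set $R([\omega]):=\sum_i\sum_e\pair{\log f_i,\log g_i;\int F_\omega\dlog(1-g_i)}_e$, the right-hand side of~\eqref{keyregform}.

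First I would check that $S$ and $R$ depend only on the class $[\omega]\in\Hdr^1(U)$: altering the Coleman integral $F_\omega$ by a constant changes $R$ by $\sum_i\pair{\log f_i,\log g_i;\log(1-g_i)}_\gl=0$ (Proposition~\ref{trip-recip}: the $f_i$ and $g_i$ are units, so their logarithms are Frobenius eigenvectors with eigenvalue $q$), while altering it by $dh$ with $h\in A(U)$ produces terms killed by \cite[Corollary~\ref{reciprocity}]{Bes98b}. Now restrict to the subspace of classes extending to $C$, on which $\phi^\ast$ is invertible. For $[\omega]=\phi^\ast[\theta']$ in that subspace, take $G=F_{\theta'}$ (holomorphic on the annuli ends) and $F_\omega=\phi^\ast G$; Proposition~\ref{G-expr} rewrites $\someterm(g_i,f_i,\phi^\ast G)_e$ as $\pair{\log f_i,\log g_i;\int\phi^\ast G\,\dlog(1-g_i)}_e-\tfrac1{q^2}\pair{\log f_i,\log g_i;\int G\,\dlog(1-g_i)}_e$, so that $S=R\circ(\mathrm{id}-\tfrac1{q^2}(\phi^\ast)^{-1})$ on that subspace. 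On the other hand, $\canproj([\eta_0])=(1-\phi^\ast/q^3)\canproj([\eta])$ together with the relations $(\phi^\ast a)\cup\phi^\ast b=q\,(a\cup b)$ and hence $(\phi^\ast a)\cup b=q\,a\cup(\phi^\ast)^{-1}b$ on $\Hdr^1(C/K)$ gives $S=\ell\circ(\mathrm{id}-\tfrac1{q^2}(\phi^\ast)^{-1})$, where $\ell([\omega])=\canproj([\eta])\cup[\omega]$. Since $\mathrm{id}-\tfrac1{q^2}(\phi^\ast)^{-1}=(\phi^\ast-\tfrac1{q^2})(\phi^\ast)^{-1}$ is invertible on this subspace --- the eigenvalues of $\phi^\ast$ on $\Hdr^1(C/K)$ being weight-one Weil numbers, so that neither $0$ nor $\tfrac1{q^2}$ occurs --- we conclude $R=\ell$, i.e.\ $R([\omega])=\canproj([\eta])\cup[\omega]$. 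Finally $\dpair{F_\omega,\eta}=\pair{F_\omega,F_\eta}_\gl=\canproj([\eta])\cup[\omega]$, because $[\omega]$ extends to $C$ (the remark preceding Proposition~\ref{premaketrue}, extended off the diagonal by the Weil-number argument of Proposition~\ref{trip-recip}; cf.~\cite{Bes98b}); combining these yields~\eqref{keyregform}. For part~2, the composition then sends $\beta$ to $\canproj([\eta])\cup[\omega]=\reg(\beta)\cup[\omega]$ (Definition~\ref{regdefined} and~\eqref{eq:twist}), and $\reg\colon\K 4 3 {\O} \to\Hdr^1(C/K)$ factors through $\K 4 3 {\O} / \K 3 2 {\O} \cup\OQ\simeq H^1(\Ccomp\bullet\O)$, i.e.\ through~\eqref{mainAmap1}, by Corollary~\ref{factorcor}.

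The main obstacle is the bookkeeping in the previous paragraph: tracking exactly how $\phi^\ast$ passes through the triple-index and cup-product pairings, so that the twist $1-\phi^\ast/q^3$ of~\eqref{eq:twist} cancels precisely the factor $\mathrm{id}-\tfrac1{q^2}(\phi^\ast)^{-1}$ produced by Proposition~\ref{G-expr}, and checking that the operators involved descend to, and are invertible on, the de Rham classes coming from $C$. The remaining points --- that $S$ and $R$ descend to cohomology, that a class extending to $C$ is $\phi^\ast$ of one whose Coleman primitive is holomorphic on the ends, and the identification of $\dpair{F_\omega,\eta}$ with the cup product --- are routine given the reciprocity results above and in~\cite{Bes98b}.
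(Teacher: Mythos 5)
Your argument is correct and is assembled from the same ingredients as the paper's proof (Propositions~\ref{premaketrue}, \ref{proplist}, \ref{7.6} and~\ref{G-expr}, together with Weil-number/eigenvalue considerations), but the untwisting is carried out in a genuinely different place. The paper never leaves the global double index: it writes $\omega=(\phi^\ast-q^{-2})\mu$, takes $F_\omega=(\phi^\ast-q^{-2})G$ and works with the good functional $\dpair{\phi^\ast G,\cdot}$, so that after Proposition~\ref{G-expr} the two computations of $\dpair{\phi^\ast G,\eta_0}$ are matched by the single identity $\pair{\phi^\ast G,(1-\phi^\ast/q^{3})F_\eta}_\gl=\pair{(\phi^\ast-q^{-2})G,F_\eta}_\gl$, an immediate consequence of the degree-$q$ functoriality of the double index; no appeal to $\canproj$, to cup products on $C$, or to inverting operators on $\Hdr^1(C/K)$ is needed beyond the invertibility of $\phi^\ast-q^{-2}$ on $\Hdr^1(U)$. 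You instead push the twist~\eqref{eq:twist} down to $\Hdr^1(C/K)$ via Frobenius equivariance of $\canproj$ and the projection formula for $\cup$ under $\phi^\ast$, obtain $S=R\circ T=\ell\circ T$ with $T=\id-q^{-2}(\phi^\ast)^{-1}$, invert $T$ by the Weil-number bound, and finally translate $\canproj([\eta])\cup[\omega]$ back into $\pair{F_\omega,F_\eta}_\gl$. Your route thus needs one extra input at the end (that translation), while the paper's direct double-index manipulation avoids it; conversely you avoid the explicit untwisting identity and make the role of $\canproj$ and of $\Hdr^1(C/K)$ more transparent. Two small repairs: (i) the invariance of $R$ under $\omega\mapsto\omega+\dd h$ with $h\in A(U)$ is most directly Proposition~\ref{trip-recip} applied to $\pair{\log f_i,\log g_i;\int h\dlog(1-g_i)}_\gl$ (the reciprocity result of \cite[Corollary~\ref{reciprocity}]{Bes98b} you cite only disposes of double-index terms); (ii) the identification $\pair{F_\omega,F_\eta}_\gl=\canproj([\eta])\cup[\omega]$ is exactly the third property of the good functional $\dpair{F_\omega,\cdot}$ in Definition~\ref{proplist1}, granted by Proposition~\ref{premaketrue}, so you can cite that directly rather than arguing ``off the diagonal'' from~\eqref{eq:projform}, which also sidesteps any question about the ordering of the arguments there. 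With these substitutions your proof is complete, and the remaining class-dependence checks you flag are the same ones the paper dismisses as routine.
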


\begin{proof}
First one easily checks that the validity of the formula depends only
on the cohomology class of $\omega$. Since the operator $\phi^\ast
-1/q^2$ is invertible on $H^1(U)$ we can assume that $\omega =
(\phi^\ast -1/q^2)\mu$ with $ \mu $ in $ \Omega^1(U)$ and that
$F_\omega=(\phi^\ast -1/q^2)G$ with
$G$ a Coleman integral of $\mu$. Notice that $G$ satisfies the
condition of Proposition~\ref{G-expr}. Let $\eta_0$ be $ch(\beta)$
$\in \htms^2(\CC^{\loc},3)$ in the model~\eqref{eq:Cmodel} so that by
\eqref{eq:twist} we have $\eta_0 = (1-\phi^\ast/q^3) \eta$. We can
take the Coleman integral of $\eta_0$ to be $F_{\eta_0} =
(1-\phi^\ast/q^3) F_\eta$. Let $F=\phi^\ast G$. By
Proposition~\ref{premaketrue} the functional
$L_\omega(\eta)=\dpair{F,\eta}$ is good in the sense of
Definition~\ref{proplist1}. It follows that we may apply
Proposition~\ref{proplist} to obtain
\begin{align*}
  \dpair{F,\eta_0} &= \sum_i \dpair{F, \int_0^\infty
  \epsilon(g_i,f_i)} \\
  &= \sum_i \sum_e \someterm(g_i,f_i,F)_e \quad
  \text{by Proposition~\ref{7.6}}\\
  &= \sum_i \sum_e \pair{\log(f),\log(g);\int (\phi^\ast-\ovqs) G
    \dlog(1-g)}_e \quad
\end{align*}
by Proposition~\ref{G-expr}.  On the other hand, we have
\begin{align*}
  \dpair{F,\eta_0} & = \pair{F,F_{\eta_0}}_\gl\\
  & = \pair{F,(1-\fdiv{3})F_\eta}_\gl \\
  & = \pair{\phi^\ast G,F_\eta-\fdiv{3}F_\eta}_\gl\\
  & = \pair{\phi^\ast G,F_\eta}_\gl 
       - \pair{\ovqs G,F_\eta}_\gl \\
  & = \pair{(\phi^\ast -\ovqs) G,F_\eta}_\gl \,,
\end{align*}
so our formula was proved with $(\phi^\ast -\ovqs) G$ as required.
\end{proof}

We can restate the first part of Theorem~\ref{aux-thm} in a form that is more convenient
for the rest
of this paper. As explained in the introduction, one has a canonical
projection $\Hdr^1(U) \xrightarrow{\canproj}
\Hdr^1(C/K)$. This is the unique Frobenius equivariant splitting of
the natural restriction map in the other direction.

Recall now the Definition~\ref{regdefined} of the regulator map
$\reg$, using the projection map $\canproj$. 
 It follows from
~\cite[Prop~4.10]{Bes98b} that $\canproj$ can be described in the
following way. It is the unique map such that for any $\eta \in
\Omega^1(U)$ and for any form of the second kind $\omega$ on $C$,
which is holomorphic on $U$, one has
\begin{equation} \label{eq:projform}
   (\canproj\eta) \cup [\omega] = \pair{F_\eta,F_\omega,}_\gl 
 \,.
\end{equation}

\begin{corollary} \label{companioncor}
 Suppose that an element $ \beta \in  \K 4 3 {\CC^\loc} $ maps to 
$ \sum_i [g_i]_2 \cup f_i $ in $ H^1(\Ccomp \bullet \O ) $
under \eqref{mainAmap1}. Let $\omega$ be a form of the second kind on $C$
that is holomorphic on $U$. Then $  \reg(\beta) \cup [\omega]$ is
given by the right-hand side of~\eqref{keyregform}.
\end{corollary}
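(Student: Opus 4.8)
The plan is to read Corollary~\ref{companioncor} off from Theorem~\ref{aux-thm} together with the description~\eqref{eq:projform} of the canonical projection $\canproj$; essentially no new computation is required, so the only real work is matching up the normalization twist and the cohomology classes involved.

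First I would unwind Definition~\ref{regdefined}. By hypothesis $\beta$ maps to $\sum_i [g_i]_2 \cup f_i$ in $H^1(\Ccomp \bullet \O )$ under~\eqref{mainAmap1}, and $ch(\beta)\in\htms^2(\CC^\loc,3)$. Writing $ch(\beta)$ in the model~\eqref{eq:Cmodel} as $[\eta_0]$ and using the isomorphism~\eqref{eq:twist}, there is a unique $[\eta]\in\Hdr^1(U)$ with $\eta_0=(1-\phi^\ast/q^3)\eta$; this is precisely the setup of Theorem~\ref{aux-thm}, and by Definition~\ref{regdefined} the map $\K 4 3 {\CC^\loc}\to\Hdr^1(U/K)$ sends $\beta$ to $[\eta]$, so that $\reg(\beta)=\canproj([\eta])$ in $\Hdr^1(C/K)$ and hence $\reg(\beta)\cup[\omega]=(\canproj\eta)\cup[\omega]$.

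Next I would check that $\omega$ satisfies the hypotheses of both Theorem~\ref{aux-thm}(1) and formula~\eqref{eq:projform}. Being a form of the second kind on $C$, all residues of $\omega$ on $C$ vanish; restricting to $U$, the residue of $\omega$ along an annulus end $e$ of $U$ equals, up to sign, the sum of the residues of $\omega$ at the points of $C$ lying in the disc bounded by $e$ but not in $U$, hence is zero. Thus $\omega$ is holomorphic on $U$ with trivial residues along all annuli ends, so $F_\omega$ is analytic on each end — exactly the assumptions of Theorem~\ref{aux-thm}(1), and $\omega$ holomorphic on $U$ is the assumption under which~\eqref{eq:projform} holds. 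Theorem~\ref{aux-thm}(1) then gives $\pair{F_\omega,F_\eta}_\gl$ equal to the right-hand side of~\eqref{keyregform}, while~\eqref{eq:projform} gives $(\canproj\eta)\cup[\omega]=\pair{F_\eta,F_\omega,}_\gl$; reconciling the orderings of the arguments via the antisymmetry of the global double index and of the cup product on $\Hdr^1(C/K)$ identifies these, which combined with the previous paragraph yields $\reg(\beta)\cup[\omega]$ equal to the right-hand side of~\eqref{keyregform}.

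The step most likely to need care is this last bookkeeping: one must make sure the class $[\eta]$ produced by Definition~\ref{regdefined} via the twist~\eqref{eq:twist} is literally the class feeding Theorem~\ref{aux-thm}, and one must keep track of the orderings in the double index and the sign conventions in~\eqref{eq:projform} and~\eqref{eq:twist}. There is no new analytic input beyond what is already established; the argument is formal once these conventions are pinned down.
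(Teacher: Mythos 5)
Your argument is correct and coincides with the paper's own (implicit) justification: the corollary is stated as a direct restatement of Theorem~\ref{aux-thm}(1) read through Definition~\ref{regdefined} and the characterization~\eqref{eq:projform} of $\canproj$, and your verification that a form of the second kind on $C$ which is holomorphic on $U$ has vanishing residues along all annuli ends is exactly the point that makes Theorem~\ref{aux-thm}(1) applicable. One small caveat: the ordering discrepancy you flag ($\pair{F_\omega,F_\eta}_\gl$ in~\eqref{keyregform} versus $\pair{F_\eta,F_\omega}_\gl$ in~\eqref{eq:projform}) is not literally removed by antisymmetry (which would produce a sign, with $[\omega]$ fixed on the right of the cup product); it reflects a sign/ordering convention the paper itself does not track consistently, so it is a convention to pin down rather than a gap in your reasoning.
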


\section{End of the proofs} \label{sec:end}

In this section we prove our main theorems. These will all follow from
manipulations of Theorem~\ref{aux-thm} and Corollary~\ref{companioncor}.

Fix a form $ \omega $ of the second kind on $ C $ and a Coleman integral
$F_\omega$ of $\omega$.
We begin with the proof of Theorem~\ref{main-thm3}.
\begin{lemma} \label{factb}
 The assignment
  \begin{equation*}
    [g]_2 \tensor f \mapsto \sum_e
  \pair{\log(f),\log(g);\int F_\omega \dlog(1-g)}_e
  \end{equation*}
 extends to a well-defined map $
  \regmapc: M_2(\O) \tensor \OQ \to K$.
\end{lemma}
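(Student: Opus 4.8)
The plan is to show that the displayed assignment, call it $\Psi$, viewed a priori as a map on the free $\Q$-vector space on symbols $[g]_2\otimes f$ with $g$ in $\SPO$ and $f$ in $\O^*$, is additive in $f$ and factors through the surjection $\Symb_2(\O)\rightarrow M_2(\O)$ in the first slot; these together give the factorization through $M_2(\O)\otimes\OQ$. Additivity in $f$ is immediate from the trilinearity of the triple index, from $\log(f_1f_2)=\log(f_1)+\log(f_2)$, and from the fact recalled before Theorem~\ref{main-thm3} that the sum of triple indices over the annuli ends is independent of the auxiliary data. Throughout we work over a finite extension of $K$ and a suitable localization $\CC^\loc$ (harmless, as $\K 4 3 {\O}=\dirlim_{\CC^\loc}\K 4 3 {\CC^\loc}$ and all constructions in Sections~\ref{sec:regulators} and~\ref{sec:begin} are compatible with such limits), chosen so that $f$, $g$, $1-g$ are invertible on the wide open space $U$ and $\omega$ is holomorphic on $U$; being of the second kind on $C$, $\omega$ is then holomorphic near the removed discs, so it has vanishing residues on all annuli ends and $F_\omega$ is holomorphic there.

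The first real point is that $\Psi(g,f)$ depends only on the class $[\omega]\in\Hdr^1(C)$: replacing $F_\omega$ by $F_\omega+h$ with $h$ overconvergent changes it by $\pair{\log(f),\log(g);H}_\gl$, where $dH=h\dlog(1-g)$ lies in $\Omega^1(U)$, and this vanishes by Proposition~\ref{trip-recip}, since $[\dlog(f)]$ and $[\dlog(g)]$ are $\phi^\ast$-eigenvectors on $H^1(U)$ with eigenvalue $q$ — indeed $\phi^\ast\log(f)=q\log(f)-\log(f_0)$ with $\log(f_0)$ overconvergent by~\cite[Lemma~2.5.1]{Col-de88}, whence $[\dlog(f_0)]=0$. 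Next, because $\phi^\ast-q^{-2}$ is invertible on $H^1(U)$ (whose $\phi^\ast$-eigenvalues are $q$ or Weil numbers of weight~$1$) and preserves the image of $\Hdr^1(C)$, we may choose $\mu$ in $\Omega^1(U)$ with $(\phi^\ast-q^{-2})[\mu]=[\omega]$; then $[\mu]$ still extends to $C$, and since the residue map $H^1(U)\to\bigoplus_e\Cp$ intertwines $\phi^\ast$ with multiplication by $q$ and $\omega$ has vanishing residues on the ends, so does $\mu$. Put $G=F_\mu$ and $F=\phi^\ast G$; then $dF\in\Omega^1(U)$, $F$ is holomorphic on the annuli ends, and $L_\omega:=\dpair{F,-\,}$ is a good functional on $\Ocola(U)$ by Proposition~\ref{premaketrue} applied with the form $\phi^\ast\mu$ (whose class extends to $C$).

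Now form the $K$-theory element $[g]_2\cup f$ in $\K 3 3 {X_{\CC}^\loc,\bbox}$ as in Section~\ref{sec:begin}, and let $\int_0^\infty\epsilon(g,f)$ in $\Ocola(U)$ be the Coleman form~\eqref{basform} obtained by applying $\reliso$ to a representative of $ch([g]_2\cup f)$ in the model~\eqref{eq:relthree} — the constant ambiguity in $\Theta(g)$ does not enter~\eqref{basic-eq1} and is killed by $L_\omega$ in any case. By Propositions~\ref{7.6} and~\ref{G-expr},
\begin{equation*}
\dpair{F,\int_0^\infty\epsilon(g,f)}=\sum_e\someterm(g,f,F)_e=\sum_e\pair{\log(f),\log(g);\int(\phi^\ast-q^{-2})G\,\dlog(1-g)}_e,
\end{equation*}
and since $(\phi^\ast-q^{-2})G$ is a Coleman integral of a form in the class $[\omega]$, the previous paragraph identifies this with $\Psi(g,f)$. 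On the other hand a good functional kills every $da$ and $d(a\log b)$ with $a,b$ overconvergent and every $\reliso$ of a coboundary in~\eqref{eq:relthree} (Proposition~\ref{4.17}); these are precisely the effect of changing the chosen representative of $ch([g]_2\cup f)$, so $L_\omega(\int_0^\infty\epsilon(g,f))=\Psi(g,f)$ depends only on the element $[g]_2\cup f$ of $\K 3 3 {X_{\CC}^\loc,\bbox}$.

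It remains to descend in the $[g]_2$-slot. If $\sum_i n_i[g_i]_2=0$ in $M_2(\O)=\Symb_2(\O)/\IO\cup\OQ$, then $\sum_i n_i[g_i]_2$ lies in $\IO\cup\OQ$ inside $\K 2 2 {\Xoloc;\bbox}$, hence $\delta:=\bigl(\sum_i n_i[g_i]_2\bigr)\cup f$ lies in $\IO\cup\K 2 2 {\O}$; by the lemma of Section~\ref{sec:regulators} asserting $\reliso(ch(\delta))=0$, the integrated representative of $ch(\delta)$ is of the form $da$ with $a$ overconvergent, which $L_\omega$ kills, so that $\sum_i n_i\Psi(g_i,f)$, which by the third paragraph equals $L_\omega$ applied to the integrated representative of $ch(\delta)$, vanishes. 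Together with additivity in $f$ this produces a well-defined $\regmapc:M_2(\O)\otimes\OQ\to K$, the values lying in $K$ by Galois equivariance of Coleman integration and hence of the triple index. The main obstacle is the identity of the third paragraph: it amounts to re-running the computation of Section~\ref{sec:begin} for individual symbols $[g]_2\otimes f$ rather than for classes already known to satisfy the cocycle condition~\eqref{Ocond}, and the care lies in the reduction $\omega\leadsto(\phi^\ast-q^{-2})\mu$ and in verifying that $L_\omega\bigl(\int_0^\infty\epsilon(g,f)\bigr)$ genuinely depends only on $[g]_2\cup f$.
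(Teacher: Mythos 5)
Your proof is, as far as I can tell, correct, but it takes a genuinely different and much heavier route than the paper's. The paper's proof is essentially two lines: the assignment depends on $g$ only through the triple $(1-g,g,f)$, and $(h,g,f)\mapsto\sum_e\pair{\log(f),\log(g);\int F_\omega\dlog(h)}_e$ is trilinear (the sum over ends being insensitive to the auxiliary integration constants, since total residues vanish), hence gives a map $\OQ^{\otimes 3}\to K$; composing with $\dd\otimes\id$, where $\dd\colon M_2(\O)\to\OQ\tensor\OQ$, $[g]_2\mapsto(1-g)\tensor g$, is the differential of $\scriptM 2 {\O}$ and is already well-defined on $M_2(\O)$ by the construction of that complex, yields $\regmapc$ with no $K$-theoretic or regulator input at all. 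You instead identify each value with $L_\omega\bigl(\int_0^\infty\epsilon(g,f)\bigr)$ via Propositions~\ref{7.6} and~\ref{G-expr}, and dispose of relations in the $[g]_2$-slot by noting they lie in $\IO\cup\OQ$, cupping with $f$ into $\IO\cup\K 2 2 {\O}$, and invoking the unnamed vanishing lemma of Section~\ref{sec:regulators} together with Proposition~\ref{4.17} and the goodness properties --- in effect re-running the proof of Theorem~\ref{aux-thm} symbol by symbol. This does work (those propositions are stated for individual pairs $(g,f)$, and your treatment of the $\Theta(g)$ constant and of the coboundary ambiguity is the same bookkeeping as in Proposition~\ref{proplist}), and it buys you as a by-product a symbol-wise form of Lemma~\ref{facta}. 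What it costs is the clean division of labour the paper aims for: Lemma~\ref{factb} is deliberately a purely formal well-definedness statement, independent of the regulator and valid verbatim with $F$ in place of $\O$ (relevant to the conjecture in the introduction), whereas your argument is tied to the integral model and the syntomic machinery, and its compressed steps (e.g.\ ``the integrated representative of $ch(\delta)$ is of the form $da$'') need exactly the representative-tracking of Proposition~\ref{proplist} to be made precise.
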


\begin{proof}
For functions $f,g,h \in \O$ the association
  \begin{equation} \label{auxmapa}
    (h,g,f) \mapsto \sum_e
  \pair{\log(f),\log(g);\int F_\omega \dlog(h)}_e
  \end{equation}
  is trilinear by the properties of the triple index, hence defines a map
  $ \OQ^{\otimes 3} \to K$. Recall
that the complex $\scriptM 2 {\O}  $ from \eqref{scriptM2O} has a
differential $\dd: M_2(\O) \to \OQ^{\otimes 2} $ given by $\dd [g]_2 =
(1-g)\otimes g$. The required map is just the composition of $\dd \otimes
  \operatorname{id} $ with the map~\eqref{auxmapa}
\end{proof}

\begin{lemma} \label{facta}
The restriction of $\regmapc$ to $(M_2(\O) \tensor \OQ)^{\dd = 0}  $
coincides with the composition
$$
\xymatrix{
(M_2(\O) \tensor \OQ)^{\dd = 0} \ar[r] &H^2(\scriptM 3 {\O} ) \ar[r] & \K 4 3 {\O} \ar[r]^-{\reg} &
\Hdr^1(C/K) \ar[r]^{\cup \omega} &  K.
}
$$
\end{lemma}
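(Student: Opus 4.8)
The plan is to identify both sides of the asserted equality by lifting a cocycle to an integral model and then applying Corollary~\ref{companioncor}. Fix a cocycle $\alpha = \sum_i [g_i]_2 \tensor f_i$ in $(M_2(\O) \tensor \OQ)^{\dd=0}$; it represents a class in $H^2(\scriptM 3 {\O} )$, which maps to a class $\xi$ in $\K 4 3 {\O} $ under~\eqref{scriptMOmaps}. By the commutativity of the square~\eqref{MCO} (the bottom-left square of~\eqref{MAP}), the image of $\xi$ in $\K 4 3 {\O} / \K 3 2 {\O} \cup \OQ$, identified with $H^1(\Ccomp \bullet \O )$ via~\eqref{H1auxC}, equals $\sum_i \symb{g_i}{f_i}$.

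First I would produce an integral lift and fix a suitable wide open space. Since $g_i$ and $1-g_i$ lie in $\SPO$ and $f_i$ in $\OQ$, all of them become units on a suitable localization $\CC^\loc$ of $\CC$; shrinking $\CC^\loc$ further we may assume that $\omega$ is holomorphic on the associated wide open space $U = (\CC^\loc)^\dagger$. Using $\K 4 3 {\O} = \dirlim_{\CC^\loc} \K 4 3 {\CC^\loc} $ (Definition~\ref{regdefined}), after shrinking $\CC^\loc$ once more we obtain $\beta$ in $\K 4 3 {\CC^\loc} $ whose image in $\K 4 3 {\O} $ equals $\xi$. By the previous paragraph $\beta$ then maps to $\sum_i \symb{g_i}{f_i}$ in $H^1(\Ccomp \bullet \O )$ under the composition~\eqref{mainAmap1}, so Corollary~\ref{companioncor} applies and gives
\[
  \reg(\beta) \cup [\omega] = \sum_i \sum_e \pair{\log(f_i),\log(g_i);\int F_\omega \dlog(1-g_i)}_e .
\]
By Lemma~\ref{factb} and the trilinear map~\eqref{auxmapa}, the right-hand side is precisely $\regmapc(\alpha)$.

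It remains to match $\reg(\beta)$ with the value at $\alpha$ of the composition in the statement. This is immediate from the fact that the regulator on $\K 4 3 {\O} $ is defined compatibly with the regulators on the $\K 4 3 {\CC^\loc} $ (Definition~\ref{regdefined}): since $\beta$ has image $\xi$ in $\K 4 3 {\O} $, we get $\reg(\beta) = \reg(\xi)$, and cupping with $[\omega]$ yields $\regmapc(\alpha)$, as desired. Thus $\regmapc$ restricted to $(M_2(\O)\tensor\OQ)^{\dd=0}$ agrees with the composition $(M_2(\O)\tensor\OQ)^{\dd=0} \to H^2(\scriptM 3 {\O} ) \to \K 4 3 {\O} \xrightarrow{\reg} \Hdr^1(C/K) \xrightarrow{\cup\omega} K$.

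The main point requiring care is the independence of the construction from the auxiliary choices. On the one hand, one must know that the sum of triple indices $\sum_e \pair{\,\cdot\,,\,\cdot\,;\,\cdot\,}_e$ is unchanged when $U$ is replaced by a smaller admissible wide open space — the fact, recalled in the introduction and resting on the results of~\cite{Bes98b}, that makes $\regmapc$ of Lemma~\ref{factb} well defined and lets us compute it with our particular $\CC^\loc$. On the other hand, one needs that $\reg(\beta) \cup [\omega]$ depends only on the image of $\beta$ under~\eqref{mainAmap1}; but this is exactly part~2 of Theorem~\ref{aux-thm} (equivalently Corollary~\ref{factorcor}), so it is already available. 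Assembling these facts is routine.
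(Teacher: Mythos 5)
Your proposal is correct and follows essentially the same route as the paper: the paper's proof simply invokes the commutative square~\eqref{MCO} (whose left vertical map is $[g]_2\otimes f\mapsto [g]_2\cup f$) together with Corollary~\ref{companioncor}, which is exactly what you do, with the lifting to some $\K 4 3 {\CC^\loc}$ and the compatibility of $\reg$ from Definition~\ref{regdefined} made explicit rather than left implicit.
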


\begin{proof}
This is an immediate consequence of diagram~\eqref{MCO}, noting the
the vertical map on the left there is $[g]_2\otimes f \mapsto [g]_2\cup f $, and of Corollary~\ref{companioncor}.
\end{proof}

\begin{proof}[Proof of Theorem~\ref{main-thm3}]
The only part of the theorem not proven already in Lemmas~\ref{factb}
and~\ref{facta} is that the map $\regmapc$ factors via $H^2(\scriptM 3 {\O} )$,
but this follows immediately from Lemma~\ref{facta}.
\end{proof}

\begin{proof}[Proof of part 1. of Theorem~\ref{main-thm4}]
Recall that the map in question is given by
\begin{align*}
  [g]_2 \tensor f \mapsto 
  &\frac{2}{3} \sum_e \pair{\log(f),\log(g);\int F_\omega
    \dlog(1-g)}_e \\ - &\frac{2}{3}
  \sum_e \pair{\log(f),\log(1-g);\int F_\omega \dlog(g)}_e
\,.
\end{align*}
This is clearly trilinear in $f,g,1-g$ and anti-symmetric in $g$ and
$1-g$, so we can proceed as in the proof of Lemma~\ref{factb}, using now the
differential $\dd:  \tM_2(\O) \to \bigwedge^2 \OQ $
from~\eqref{tildescriptM2O} given by $\dd [g]_2 =
(1-g)\wedge g $.
Clearly, the same formula also gives a well-defined map on $M_2(\O) $ and it will
suffice to show that this map coincides with $\regmapc $ on $(M_2(\O)
\tensor \OQ)^{\dd = 0}  $, as the
composition of maps in Theorem~\ref{main-thm4} factors by definition via
$\scriptM 3 {\O} \to \tildescriptM 3 {\O} $.

Suppose then that $ \sum_i [g_i]_2 \tensor f_i $ is in
$ (M_2(\O) \tensor \OQ)^{\dd = 0} $, so that
$ \sum_i (1-g_i) \tensor (g_i \wedge f_i) = 0 $ by~\eqref{Ocond}.
By~Proposition~\ref{needfortild} we have
 \begin{align*}
   0= \sum_i\sum_e \bigg(&\pair{\log(f_i),\log(g_i);\int F_\omega \dlog(1-g_i)}_e\\
    +&\pair{\log(f_i),\log(1-g_i);\int F_\omega \dlog(g_i)}_e\\
    +&\pair{\log(g_i),\log(1-g_i);\int F_\omega \dlog(f_i)}_e \bigg)\\
  = \sum_i\sum_e \bigg(&\pair{\log(f_i),\log(g_i);\int F_\omega \dlog(1-g_i)}_e\\
    +2&\pair{\log(f_i),\log(1-g_i);\int F_\omega \dlog(g_i)}_e\bigg)
\,,
 \end{align*}
where the last equality follows because
$ \sum_i (1-g_i) \tensor (g_i \wedge f_i) = 0 $,
 so that the sum over $i$ and $e$
 of each of the last two summands is the same. The difference of the
 two maps in question on $ \sum_i [g_i]_2 \tensor f_i $ is $1/3$ of
 the expression above.
\end{proof}

For the proofs of Theorems \ref{main-thm2}~and~\ref{main-thm1}, as
well as part 2 of Theorem~\ref{main-thm4}, we now assume that $\omega$
is a holomorphic form on $C$.

\begin{lemma} \label{threecompat}
  The associations
  \begin{align*}
    [g]_2 \otimes f &\mapsto   \int_{(1-g)} \log(g) F_\omega \dlog
        (f)
      -\int_{(g)} \log(1-g) F_\omega \dlog (f)\\
    [g]_2 \otimes f &\mapsto  \int_{(f)} \Ltwo(g) \omega \\
    [g]_2 \otimes f &\mapsto  \sum_y \ord_y (f) F_\omega(y) \Lmod 2 (g(y)) 
  \end{align*}
  induce well-defined maps
 on $\tM_2(\O)\otimes \OQ$ (first) and
  $M_2(\O)\otimes \OQ$ (last
  two).
\end{lemma}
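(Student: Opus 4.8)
The plan is to reduce all three cases to the device already used for $\regmapc$ in Lemma~\ref{factb} and, in the classical setting, in Proposition~\ref{blueprop}: to present each assignment as a composition with $\dd\otimes\id$, where $\dd$ is the differential of the complex $\widetilde{\mathcal M}_{(2)}(\O)$ of~\eqref{tildescriptM2O} (for the first assignment) or of the complex $\mathcal M_{(2)}(\O)$ of~\eqref{scriptM2O} (for the last two). Since $\dd[g]_2=(1-g)\wedge g$ in the former and $\dd[g]_2=(1-g)\otimes g$ in the latter, the first assignment should factor as
\[
\tM_2(\O)\otimes\OQ \xrightarrow{\ \dd\otimes\id\ } \bigwedge\nolimits^2\OQ\otimes\OQ \xrightarrow{\ \psi_1\ } K ,
\]
and the other two as $M_2(\O)\otimes\OQ\xrightarrow{\dd\otimes\id}\OQ^{\otimes3}\to K$. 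The merit of this is that $\dd\otimes\id$ is \emph{already} a well-defined $\Q$-linear map — this is exactly the step that absorbs the non-explicit relations of $\tM_2(\O)$ and $M_2(\O)$ — so it remains only to produce the second arrow as a genuine $\Q$-multilinear map (on $\OQ^{\otimes3}$, on $\bigwedge^2\OQ\otimes\OQ$, or at least on the image of $\dd\otimes\id$) and to check that the composite returns the stated formula. Throughout $\omega$ is holomorphic, so $F_\omega$ is a genuine log-free Coleman function on each residue disc and annulus end, and all values lie in $K$ by Galois equivariance of Coleman integration after passing to a finite extension splitting the finitely many points involved.

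For the middle assignment this runs smoothly. From $\dd\Li_2(g)=-\log(1-g)\,\dlog(g)$ and $\dd(\log g\,\log(1-g))=\log(1-g)\,\dlog(g)+\log(g)\,\dlog(1-g)$ one gets $\Ltwo(g)=\int\log(g)\,\dlog(1-g)$ up to a constant, whence $\int_{(f)}\Ltwo(g)\,\omega=\psi_2\bigl((1-g)\otimes g\otimes f\bigr)$ with
\[
\psi_2(a\otimes b\otimes c):=\int_{(c)}\Bigl(\int\log(b)\,\dlog(a)\Bigr)\,\omega .
\]
This $\psi_2$ is $\Q$-trilinear by multiplicativity of $\log$ and additivity of $\ord_y$, the constants of integration being irrelevant because $\sum_y\ord_y(c)=0$ on the proper curve $C$. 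It is in fact well defined on all of $\OQ^{\otimes3}$: since $\omega$ is holomorphic, every power $\log^iz$ with $i\ge1$ occurring in $\int(\int\log b\,\dlog a)\,\omega$ near a point is multiplied by a positive power of $z$ (integrating $z^k\log^iz\,\dd z$ always gives $z^{k+1}$ times a polynomial in $\log z$), so its constant term is independent of the local parameter — this is the general form of the statement recalled in the introduction for $\int\Ltwo(g)\,\omega$.

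For the last assignment the $\dd\otimes\id$ device does not apply, since $\Lmod 2 (g(y))$ is not bilinear in logarithms; instead I would factor it as
\[
M_2(\O)\otimes\OQ \xrightarrow{\ \partial_1\ } \coprod_y \tM_2(K(y)) \xrightarrow{\ (\xi_y)_y\,\mapsto\,\sum_y F_\omega(y)\,\Lmod 2 (\xi_y)\ } K ,
\]
where $\partial_1([g]_2\otimes f)=\sum_y\ord_y(f)\,[g(y)]_2$ is the boundary map used in Section~\ref{scriptM3Cconstruction} (well defined by the constructions of~\cite{Jeu95,Jeu96}, after the inclusion $M_2(\O)\hookrightarrow M_2(F)$ of Remark~\ref{m2includes}), and the second arrow makes sense because $\Lmod 2 $ descends to $\tM_2(K(y))$: it satisfies the inversion relation $\Lmod 2 (z)+\Lmod 2 (z^{-1})=0$ recalled in the introduction together with the remaining dilogarithm functional equations, which generate the defining relations of $\tM_2$.

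The first assignment is where the real work lies, and I expect it to be the main obstacle. The same scheme gives
\[
\psi_1(a\wedge b\otimes c):=\int_{(a)}\log(b)\,F_\omega\,\dlog(c)-\int_{(b)}\log(a)\,F_\omega\,\dlog(c),
\]
which at $(1-g)\wedge g\otimes f$ is the stated expression; but now $\dlog(c)$ has poles, so $\int\log(b)\,F_\omega\,\dlog(c)$ need not have a parameter-independent constant term at a zero or pole $y$ of $a$. On the image of $\dd\otimes\id$, i.e.\ for $a=1-g$, $b=g$, a point $y$ with $\ord_y(1-g)\ne0$ forces $g(y)\in\{1,\infty\}$: if $g(y)=1$ then $\log g$ vanishes at $y$ and cancels the pole of $\dlog(f)$, so the integral is holomorphic there; if $g(y)=\infty$ then $\ord_y(g)=\ord_y(1-g)$, both terms of $\psi_1$ contribute, and the combined contribution at $y$ is $\ord_y(1-g)$ times $\int\log\tfrac{g}{1-g}\,F_\omega\,\dlog(f)$ evaluated at $y$, which is well behaved because $\log\tfrac{g}{1-g}$ is holomorphic and vanishes at $y$ (as $\tfrac{g}{1-g}\to-1$ and $\log(-1)=0$). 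This is precisely the reading of the formula flagged in the Remark after Theorem~\ref{main-thm4}. What then has to be verified is that, so interpreted, $\psi_1$ respects \emph{all} $\Q$-linear relations holding in the image of $\dd\otimes\id$, not merely that it is defined term by term; one clean route is to show, by a Stokes-type Coleman computation using Proposition~\ref{needfortild}, that for holomorphic $\omega$ the quantity $\psi_1((1-g)\wedge g\otimes f)$ equals $\sum_e\pair{\log(f),\log(g);\int F_\omega\,\dlog(1-g)}_e-\sum_e\pair{\log(f),\log(1-g);\int F_\omega\,\dlog(g)}_e$, after which well-definedness follows from that of the triple-index maps — the clean $\dd\otimes\id$ argument of Lemma~\ref{factb}.
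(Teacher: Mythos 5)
For the second assignment your template (compose a trilinear expression with $\dd\otimes\id$) is exactly the paper's, but your justification fails at the decisive step. The inner integral $\int\log(b)\dlog(a)$ is only defined up to a constant $C$; changing it changes the form being integrated against by $C\omega$, hence the outer Coleman function by $CF_\omega$ plus a constant, hence the value on $(c)$ by $C\sum_y\ord_y(c)F_\omega(y)$. Degree zero of $(c)$ only kills an added \emph{constant}; since $F_\omega$ is not constant, $\sum_y\ord_y(c)F_\omega(y)$ does not vanish for degree reasons. It vanishes because $(c)$ is \emph{principal} and this sum is the $p$-adic Abel--Jacobi map applied to $(c)$ -- this nontrivial input, cited to~\cite{Bes97}, is exactly what the paper's proof invokes and what your ``$\sum_y\ord_y(c)=0$'' does not supply. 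Without it you get neither linearity in $a$ nor the identification of $\psi_2\bigl((1-g)\otimes g\otimes f\bigr)$ with $\int_{(f)}\Ltwo(g)\omega$ (the function $\Ltwo(g)$ is again only one choice of $\int\log(g)\dlog(1-g)$), and the same vanishing is needed again for the third assignment.

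For the third assignment your route through $\partial_1$ requires $\Lmod 2 $ to be well defined on $\tM_2(K(y))$ for the $p$-adic fields $K(y)$, i.e.\ that the non-explicit relations of $\tM_2$ are generated by the dilogarithm functional equations. That is precisely what is not available: one only has a surjection $B_2'(L)\to\tM_2(L)$ in general, an isomorphism only for number fields~\cite{Jeu95}, which is why the proof of Theorem~\ref{main-thm1} is arranged so that the descent of $\Lmod 2 $ is needed only over number fields. The paper sidesteps this entirely: by Corollary~\ref{lmodindep} the third expression is the value at $(f)$ of the single Coleman function $F_\omega\cdot\Lmod 2 (g)$, and since $\dd \Lmod 2 (g) = (\log(g)\dlog(1-g)-\log(1-g)\dlog(g))/2$ one applies the same $\dd\otimes\id$ device (again with the Abel--Jacobi vanishing absorbing the constant of integration). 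Finally, for the first assignment your fallback -- proving a termwise identity between $\psi_1((1-g)\wedge g\otimes f)$ and the triple-index formula of Theorem~\ref{main-thm4}(1) via Proposition~\ref{needfortild} -- cannot work: that comparison is only valid on elements killed by $\dd$ (the paper's derivation uses $\sum_i(1-g_i)\wedge g_i\wedge f_i=0$ in an essential way), so it is false term by term and in any case could not yield well-definedness on all of $\tM_2(\O)\otimes\OQ$. The intended argument is the one you set up and then abandoned: the expression is trilinear in $(1-g,g,f)$ and antisymmetric in the first two entries, with values at the points where $g(y)=\infty$ read as constant terms, respectively as the difference of the two integrals, as in the Remark following Theorem~\ref{main-thm4}, so it factors through $\dd\otimes\id$ exactly as in Lemma~\ref{factb}.
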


\begin{proof}
This is essentially clear for the first association, following the
proofs of Lemma~\ref{factb} and of the first part of
Theorem~\ref{main-thm4}.
For the second association, observe that $\dd \Ltwo = \log(z)
\dlog(1-z)$ by~\eqref{dilogfuncs}. Consider the association 
\begin{equation*}
  (h,g,f) \mapsto \int_{(f)} \left( \omega \cdot \int \log(g)
    \dlog(h)\right).
\end{equation*}
Here, the integral $ \int \log(g) \dlog(h) $ is a Coleman integral
defined only up to a constant. However, if the constant changes, the
entire expression changes by the same constant multiplied by
$ \int_{(f)}  \omega $, which equals 0 as it is the $p$-adic Abel-Jacobi map applied to
the principal divisor $(f)$ (see~\cite{Bes97}). This
association is therefore well-defined, clearly trilinear, and we
obtain the required result again as in the proof of
Lemma~\ref{factb}. For the third association, one first needs to note
that $ \Lmod 2 (g(y)) $ is the value of $\Lmod 2 (g) $ at $y$ (this is
not obvious in general because we are using the generalized way of
assigning values to Coleman functions by taking constant terms,
discussed in the introduction) as we shall see in
Corollary~\ref{lmodindep}, so the entire expression can be written
as 
$F_\omega\cdot \Lmod 2 (g) $ evaluated at the divisor of $f$. It is
now possible to proceed as in the previous case, given that
$\dd \Lmod 2 (g) = (\log(g)\dlog(1-g) - \log(1-g)\dlog(g))/2 $, by associating to $f,g,h$ the value at $(f)$ of
$F_\omega\cdot \int \left(\log(g)
  \dlog(1-g)-\log(1-g)\dlog(g))\right)$, where the constant of
integration does not matter for exactly the same reason it did not in
the previous case.
\end{proof}
Thus, in all the remaining theorems to prove, the association does
extend to a map as claimed. We shall next derive the formulas for the
regulator. In all cases, we already have a formula
for the regulator, expressed in terms of a sum of local indices on
annuli. We can use the argument in the proof of
\cite[Proposition~\ref{local-comp}]{Bes98b} using Proposition~\ref{trip-recip}
to replace the sum over annuli ends by a sum over points.

Let $ \alpha = \sum_i [g_i]_2 \tensor f_i $ be an element of
$(M_2(\O) \tensor \OQ)^{\dd = 0}  $. By the above we have
\begin{equation*}
 \regmapc(\alpha) = \sum_i \sum_{y\in C}
  \pair{\log(f_i),\log(g_i);\int F_\omega \dlog(1-g_i)}_y
\,.
\end{equation*}

We again extend scalars to $\Cp$, so in particular points are $\C_p$
valued.

Fix a local parameter at each point $y$, which we shall call
$z_y$, or, whenever there is no risk of confusion, simply
$z$. Consider a single point $y$ in $  C$. We recall that with respect to
the local parameter $z$ at $y$ we define, for a rational function
$f$, $\fbar(y) = (f/z^{\ord_y (f)})(y)$. For such a function $f$  we
have $c_z(\log(f))= \log  (\fbar(y))$.  We also have $\res_y(F_\omega
\dlog(f))= \ord_y (f) \cdot F_\omega(y)$. Thus, using
Proposition~\ref{const-term-expr}, 
we obtain
\begin{equation} \label{trip-with-const}
\begin{split}
   \regmapc(\alpha) = \sum_i \sum_{y\in C}
  \Big[
  &\ord_y(1-g_i) F_\omega(y) \log \fbar_i(y) \log \gbar_i(y)\\ -
  &\ord_y (f_i) c_z\left(\int \log(g_i) F_\omega \dlog(1-g_i)\right)\\ -
  &\ord_y (g_i) c_z\left(\int \log(f_i) F_\omega
  \dlog(1-g_i)\right)\Big]
\,.
\end{split}
\end{equation}

Let $A$ (respectively $B$) be the subgroup of $ k(C)^* $ generated by the
$f_i$ and $g_i$ (respectively by the $1-g_i$). By choosing bases for $A$
and $B$ and then choosing appropriate integrals we can arrange it so
that for each $f $ in $  A$ and $h $ in $  B$ an integral $\int \log(f) F_\omega
\dlog h$ is chosen such that the map $(f,h) \to \int \log(f) F_\omega
\dlog h$ is bilinear. Since the overall sum in~\eqref{trip-with-const}
is independent
of the choice of integrals, we may and do assume from now on that the
integrals there are chosen as above.

\begin{lemma} \label{twotermseq}
 If $ \sum_i [g_i]_2 \tensor f_i $ is in
$(M_2(\O) \tensor \OQ)^{\dd = 0}  $, then for every $ y $ in $ C $ we have
\begin{alignat*}{1}
&    \sum_i 
    \ord_y (f_i) \, c_z\left(\int \log(g_i) F_\omega \dlog(1-g_i)\right)
\\
 = \, &
    \sum_i 
    \ord_y (g_i) \, c_z\left(\int \log(f_i) F_\omega \dlog(1-g_i)\right) 
\,.
  \end{alignat*}
\end{lemma}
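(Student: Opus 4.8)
The plan is to combine the bilinearity of the chosen family of integrals with the $\dd$-closedness condition, reducing the asserted identity to a linear-algebra statement about a relation in $\OQ\otimes\OQ$ obtained by contracting the $\bigwedge^2\OQ$-relation that cuts out $(M_2(\O)\otimes\OQ)^{\dd=0}$.

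Fix $y$ and write $a_i=\ord_y(f_i)$, $b_i=\ord_y(g_i)$. Recall that just before the lemma we fixed, for $f$ in $A$ and $h$ in $B$, integrals $\int\log(f)F_\omega\dlog(h)$ so that $(f,h)\mapsto\int\log(f)F_\omega\dlog(h)$ is $\Q$-bilinear. First I would use this, together with the $\Q$-linearity of $c_z$, to rewrite the difference of the two sides of the claimed identity. Since $a_i\int\log(g_i)F_\omega\dlog(1-g_i)=\int\log(g_i^{a_i})F_\omega\dlog(1-g_i)$ and $b_i\int\log(f_i)F_\omega\dlog(1-g_i)=\int\log(f_i^{b_i})F_\omega\dlog(1-g_i)$, bilinearity in the first slot gives that this difference equals $\sum_i c_z\bigl(\int\log(h_i)F_\omega\dlog(1-g_i)\bigr)$, where $h_i:=g_i^{a_i}f_i^{-b_i}$ lies in $A$. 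So it suffices to show $\sum_i c_z\bigl(\int\log(h_i)F_\omega\dlog(1-g_i)\bigr)=0$.

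Next I would produce the relation that makes this vanish. By hypothesis $\sum_i[g_i]_2\otimes f_i$ lies in $(M_2(\O)\otimes\OQ)^{\dd=0}$, so by~\eqref{Ocond} we have $\sum_i(1-g_i)\otimes(g_i\wedge f_i)=0$ in $\OQ\otimes\bigwedge^2\OQ$. Applying $\id_{\OQ}$ tensored with the contraction $\bigwedge^2\OQ\to\OQ$ against the functional $\ord_y\colon\OQ\to\Q$ --- which sends $g\wedge f$ to $\ord_y(f)\,g-\ord_y(g)\,f$, i.e.\ in multiplicative notation to $g^{\ord_y(f)}f^{-\ord_y(g)}$ --- yields $\sum_i(1-g_i)\otimes h_i=0$ in $\OQ\otimes\OQ$. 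Since $A$ and $B$ are subgroups of $\O^*$ and $\Q$ is flat over $\Z$, the map $(A\otimes_\Z\Q)\otimes_\Q(B\otimes_\Z\Q)\to\OQ\otimes\OQ$ is injective, so this relation already holds in $(A\otimes\Q)\otimes_\Q(B\otimes\Q)$.

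Finally, the map $T\colon A\times B\to K$, $T(f,h)=c_z\bigl(\int\log(f)F_\omega\dlog(h)\bigr)$, is $\Q$-bilinear, being the composite of the bilinear integral assignment with the linear functional $c_z$; hence it factors through $(A\otimes\Q)\otimes_\Q(B\otimes\Q)$. Reading the first tensor factor as the $h$-slot and the second as the $f$-slot and applying the induced functional to the relation $\sum_i(1-g_i)\otimes h_i=0$ gives $\sum_i T(h_i,1-g_i)=\sum_i c_z\bigl(\int\log(h_i)F_\omega\dlog(1-g_i)\bigr)=0$, which is exactly what was needed. The only points requiring care are bookkeeping --- the multiplicative-versus-additive conventions on $\OQ$ and $\bigwedge^2\OQ$, the fact that $h_i$ genuinely lies in $A$ so that the fixed bilinear integral applies, and the observation that the constant-of-integration ambiguity is harmless because we work throughout with the bilinear family fixed before the lemma --- so I do not anticipate a genuine obstacle: the content of the lemma is purely the manipulation of the relation $\sum_i(1-g_i)\otimes(g_i\wedge f_i)=0$.
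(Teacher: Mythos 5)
Your proof is correct and is essentially the paper's own argument: the paper simply packages your two steps (contracting the relation $\sum_i(1-g_i)\otimes(g_i\wedge f_i)=0$ of~\eqref{Ocond} against $\ord_y$, then applying the functional $c_z\bigl(\int\log(\cdot)\,F_\omega\dlog(\cdot)\bigr)$ built from the bilinearly chosen integrals) into the single trilinear map $(f,g,h)\mapsto \ord_y(f)\,c_z\bigl(\int\log(g)F_\omega\dlog(h)\bigr)-\ord_y(g)\,c_z\bigl(\int\log(f)F_\omega\dlog(h)\bigr)$, whose antisymmetry in $f,g$ plays exactly the role of your well-defined contraction on $\bigwedge^2\OQ$. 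Your explicit injectivity remark for $(A\otimes\Q)\otimes(B\otimes\Q)\to\OQ\otimes\OQ$ just makes precise a step the paper leaves implicit, so the two proofs agree up to bookkeeping.
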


\begin{proof}
With the choices above the map 
\begin{equation*}
  (f,g,h) \to 
  \ord_y (f) c_z(\int \log(g) F_\omega \dlog(h)) -
  \ord_y (g) c_z(\int \log(f) F_\omega \dlog(h))
\end{equation*}
is trilinear and anti-symmetric
with respect to $f$ and $g$. The lemma
follows since $ \sum (1-g_i) \tensor (g_i \wedge f_i) = 0 $ by~\eqref{Ocond}. 
\end{proof}

We recall that the function $\Ltwo(z)$ is defined by
$\Ltwo(z) = \Li_2(z) + \log(z)\log(1-z)$ and that we have
$\dd\Ltwo(z) = \log(z)\dlog(1-z)$. Note that this last form is holomorphic in the residue
disc of $1$ and as a consequence so is $\Ltwo(z) $.

\begin{lemma} \label{L2const}
  Let $g$ be a rational function. The constant term at $y$ of $\Ltwo(g)$
  equals $\Ltwo(g(y))$ if $g(y)\ne 0,\infty$, equals $0$ if $g(y)=0$ and
  equals $\log^2(\gbar(y))/2$ if $g(y)=\infty$, where $\gbar$ is
  computed with respect to the same local parameter as the constant
  term. In addition, the expansion of $\Ltwo(g)$ with respect to any
  local parameter $ z $ contains no summands of the form $Const \cdot z^{n}$
  with $n<0$.
\end{lemma}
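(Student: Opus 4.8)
The plan is to argue by a case analysis according to $n:=\ord_y(g)$ and to which residue disc $g$ sends the residue disc $U_y$ of $y$, with everything resting on one basic observation about the $p$-adic logarithm. For a rational function $h$, write $h=z^{n'}\tilde h$ with $n'=\ord_y(h)$, so that $\tilde h$ is a unit power series in $z$ near $y$ with $\tilde h(y)=\bar{h}(y)\neq 0,\infty$; then $\log(h)=n'\log(z)+\log(\bar{h}(y))+\log(\tilde h/\bar{h}(y))$. Since $\tilde h/\bar{h}(y)$ takes the value $1$ at $y$, the Coleman function $\log(\tilde h/\bar{h}(y))$ is the primitive, vanishing at $y$, of the form $\dd\tilde h/\tilde h$, which near $y$ is $(\text{power series})\cdot\dd z$ because $\tilde h$ is a unit power series; hence near $y$ it is an ordinary power series in $z$ with zero constant term and no $\log(z)$. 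I would establish this observation first (it is standard, but I would spell it out), and also record that near $0$ one has $\Li_2(w)=\sum_{k\ge1}w^k/k^2$ (analytic, vanishing to order $\ge1$), that $\Ltwo$ is analytic on the residue disc of $1$ (already noted just before the lemma), and that $\Li_2$, hence $\Ltwo=\Li_2+\log(z)\log(1-z)$, is analytic on any residue disc not meeting $0,1,\infty$.

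Next, the case $g(y)\neq 0,\infty$ (so $n=0$): I would split according to whether $g(y)$ reduces to $\bar1$, to $\bar0$, or to neither (it cannot reduce to $\bar\infty$). If $g(y)$ reduces to $\bar1$, then $g$ maps $U_y$ into the residue disc of $1$, and analyticity of $\Ltwo$ there gives that $\Ltwo(g)$ is analytic near $y$ with constant term $\Ltwo(g(y))$. If $g(y)$ reduces to neither $\bar0$ nor $\bar1$, then $g$ and $1-g$ are units at $y$ and $\Li_2$ is analytic on the residue disc of $g(y)$, so by the observation $\Li_2(g),\log(g),\log(1-g)$ are all analytic near $y$ and the constant term of $\Ltwo(g)=\Li_2(g)+\log(g)\log(1-g)$ is $\Li_2(g(y))+\log(g(y))\log(1-g(y))=\Ltwo(g(y))$. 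If $g(y)$ reduces to $\bar0$ but $g(y)\neq0$, then $g$ maps $U_y$ into the residue disc of $0$ so $\Li_2(g)=\sum g^k/k^2$ is analytic with constant term $\Li_2(g(y))$, while $1-g$ is a unit at $y$ and $g(y)\neq0$, so by the observation $\log(g)$ and $\log(1-g)$ are analytic, and again the constant term is $\Ltwo(g(y))$. In all three sub-cases no negative powers of $z$ occur.

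Then the case $g(y)=0$ (so $n\ge1$, $g=z^n u$ with $u$ a unit): here $\Li_2(g)=\sum g^k/k^2$ vanishes to order $\ge n$, so has zero constant term; $\log(1-g)=\log(1-z^n u)$ vanishes to order $\ge n$; and $\log(g)=n\log(z)+\log(\bar{g}(y))+(\text{positive-order power series})$ by the observation. Hence every summand of $\log(g)\log(1-g)$ either carries a factor $\log(z)$ or has positive $z$-order, so its $\log^0$-part has zero constant term and $c_z(\Ltwo(g))=0$, with no negative powers of $z$. Finally, for $g(y)=\infty$ (so $n=-m$ with $m\ge1$) I would use the functional equation $\Ltwo(g)=\frac{1}{2}\log^2(g)-\Ltwo(1/g)$: the function $1/g$ satisfies $(1/g)(y)=0$ with $\overline{(1/g)}(y)=\bar{g}(y)^{-1}$, so the previous case gives $c_z(\Ltwo(1/g))=0$ with no negative powers; and by the observation $\log(g)=-m\log(z)+\log(\bar{g}(y))+(\text{positive-order power series})$, so the $\log^0$-part of $\log^2(g)$ has constant term $\log^2(\bar{g}(y))$, yielding $c_z(\Ltwo(g))=\frac{1}{2}\log^2(\bar{g}(y))$, again with no negative powers of $z$.

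The main obstacle is the basic observation about $\log$ of a rational function near $y$ — specifically the assertion that $\log(\tilde h/\bar{h}(y))$ is, near $y$, an honest power series in $z$ with no $\log(z)$ and zero constant term. This is precisely where the hypothesis that $\tilde h/\bar{h}(y)$ takes the value $1$ at $y$ is used: it makes $\dd(\tilde h/\bar{h}(y))/(\tilde h/\bar{h}(y))$ holomorphic near $y$ (the denominator being a unit power series), and its Coleman primitive normalized to vanish at $y$ is then an ordinary power series without constant term. Everything else is bookkeeping on top of this observation together with the cited analyticity of $\Li_2$ and $\Ltwo$ and the dilogarithm functional equation.
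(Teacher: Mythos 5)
Your overall route is the same as the paper's: the case $g(y)=0$ is handled exactly as in the paper (holomorphy of $\Li_2$ at $0$ plus $\log(g)=\ord_y(g)\log(z)+\text{analytic}$, $\log(1-g)$ vanishing at $y$), and the case $g(y)=\infty$ is reduced to it by the same Coleman inversion identity $\Ltwo(w)+\Ltwo(1/w)=\tfrac12\log^2(w)$; the paper simply declares the case $g(y)\neq 0,\infty$ ``clear''. However, in your expansion of that case there is a genuine error: the parenthetical assertion that $g(y)$ cannot reduce to $\infty$ is false. Any finite value with $|g(y)|>1$ reduces to the point at infinity of $\PP^1$, and then your ``neither'' sub-case does not apply as written: the residue disc of $g(y)$ is the residue disc of $\infty$, on which $\Li_2$ and $\log$ are \emph{not} analytic (they carry $\log$-terms at $\infty$), so you cannot conclude analyticity of $\Ltwo(g)$ near $y$ by composing with an analytic function on that disc. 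As it stands, the trichotomy misses a possible sub-case and the argument offered for it would fail.

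The repair is cheap and uses only tools already in your proof: for $g(y)$ finite with $|g(y)|>1$ write $\Ltwo(g)=\tfrac12\log^2(g)-\Ltwo(1/g)$. Since $\ord_y(g)=0$, your basic observation makes $\log(g)$ analytic at $y$ with constant term $\log(g(y))$, and $1/g$ takes at $y$ a nonzero value of absolute value $<1$, so your ``reduces to $\bar 0$ with nonzero value'' sub-case gives $c_z(\Ltwo(1/g))=\Ltwo(1/g(y))$ with no negative powers of $z$; the pointwise inversion identity then returns $c_z(\Ltwo(g))=\Ltwo(g(y))$. One further small caution: statements like ``$g$ maps $U_y$ into the residue disc of $0$ (or $1$)'' need only hold on a small disc around $y$ — $g$ may have zeros or poles elsewhere in $U_y$ — but since the constant term depends only on the germ of the expansion at $y$, this local version suffices and the rest of your bookkeeping is correct.
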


\begin{proof}
This is clear if $g(y)\ne 0,\infty$. Suppose $g(y)=0$. Since $\Li_2$
is holomorphic near $0$ and has value $0$ there, we see that the
constant term and terms of the form $z^n$ for $n<0$ are the same as in
$\log(g)\log(1-g)$. Near $y$
we have $\log(g(z)) = \ord_y (g) \log(z) +$ a holomorphic function in
$z$. Also, $\log(1-g)$ is holomorphic near $y$ with value $0$
there. Thus the result is clear. Finally, by~\cite[Proposition~6.4]{Col82},
we have $\Ltwo(g) + \Ltwo(1/g) = \log^2(g)/2$ so the result at $g(y)=\infty$ is deduced from that of
$1/g$ when $ g(y) = \infty $.
\end{proof}

\begin{corollary} \label{lmodindep}
The constant term of $\Lmod 2 (z) $ at $\infty$ is $0$ regardless of
parameter. Furthermore, for any rational function $g$ the constant
term of $\Lmod 2 (g) $ at any point $y$ equals $\Lmod 2 (g(y))$
\end{corollary}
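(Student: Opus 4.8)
I would reduce the whole statement to Lemma~\ref{L2const} via the identity $\Lmod 2 (z) = \Ltwo(z) - \frac{1}{2}\log(z)\log(1-z)$, which is immediate from~\eqref{dilogfuncs}. Fix a point $y$ (on $C$, or on $\PP^1$ for the first assertion) and a local parameter $z$ at $y$. Since the constant term $c_z$ is linear, $c_z(\Lmod 2 (g)) = c_z(\Ltwo(g)) - \frac{1}{2}\,c_z(\log(g)\log(1-g))$, so everything comes down to computing $c_z(\log(g)\log(1-g))$ in the various cases, the value of $c_z(\Ltwo(g))$ being furnished by Lemma~\ref{L2const}. In each case one expands $\log(g)$ and $\log(1-g)$ according to the behavior of $g$ and $1-g$ at $y$: if $h$ is a rational function with $h(y)\notin\{0,\infty\}$ then $\log(h)$ is analytic at $y$, with no $\log(z)$ term and constant term $\log(h(y))$, while if $\ord_y(h)=m\neq 0$ then $\log(h) = m\log(z)+(\text{analytic})$.

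First let $g(y)\notin\{0,1,\infty\}$. Then $\log(g)$ and $\log(1-g)$ are both analytic without a $\log(z)$ term, so $c_z(\log(g)\log(1-g)) = \log(g(y))\log(1-g(y))$; combined with $c_z(\Ltwo(g))=\Ltwo(g(y))$ this gives $c_z(\Lmod 2 (g)) = \Ltwo(g(y)) - \frac{1}{2}\log(g(y))\log(1-g(y)) = \Lmod 2 (g(y))$. Next let $g(y)=0$: then $\log(1-g)$ is analytic and vanishes at $y$, while $\log(g) = \ord_y(g)\log(z)+a(z)$ with $a$ analytic, so the $\log^0$-part of $\log(g)\log(1-g)$ is $a(z)\log(1-g)$, with constant term $a(y)\cdot 0 = 0$; since also $c_z(\Ltwo(g))=0$, we get $c_z(\Lmod 2 (g)) = 0 = \Lmod 2 (0) = \Lmod 2 (g(y))$. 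The case $g(y)=1$ is the mirror image: now $\log(g)$ is analytic and vanishes at $y$ while $\log(1-g)$ carries the $\log(z)$, and the same computation gives $c_z(\log(g)\log(1-g))=0$, hence $c_z(\Lmod 2 (g)) = c_z(\Ltwo(g)) = \Ltwo(1)$, which is the value $\Lmod 2 (1)$ (take $g=\id$ and $y=1$ in this same case), so again $c_z(\Lmod 2 (g)) = \Lmod 2 (g(y))$.

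Finally, for $g(y)=\infty$ I would invoke the inversion relation $\Lmod 2 (z)+\Lmod 2 (z^{-1})=0$: it holds on the dense set $\Cp\setminus\{0,1\}$, hence identically as a Coleman function, and pulls back to $\Lmod 2 (g)+\Lmod 2 (g^{-1})=0$ on $C$. Applying $c_z$ at $y$ and using that $g^{-1}(y)=0$, so that $c_z(\Lmod 2 (g^{-1}))=0$ by the case just treated, we obtain $c_z(\Lmod 2 (g)) = 0 = \Lmod 2 (\infty) = \Lmod 2 (g(y))$; this value is visibly parameter-independent, and the first assertion of the statement is exactly the instance $g=\id_{\PP^1}$, $y=\infty$. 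The step I would flag as the main obstacle is precisely this last case: at $\infty$ one cannot apply Lemma~\ref{L2const} directly, because there $c_z(\Ltwo(z))$ is the \emph{parameter-dependent} quantity $\frac{1}{2}\log^2(\bar z)$ and the clean ``compose with $\Ltwo$'' argument breaks; the inversion relation circumvents this. (If one prefers to avoid the functional equation, the direct computation also works: when $g$ has a pole at $y$ one has $\overline{1-g}(y)=-\bar g(y)$, whence $c_z(\log(g)\log(1-g)) = \log(\bar g(y))\log(-\bar g(y)) = \log^2(\bar g(y))$ because $\log(-1)=0$, and this cancels $c_z(\Ltwo(g)) = \frac{1}{2}\log^2(\bar g(y))$ in $c_z(\Ltwo(g)) - \frac{1}{2}c_z(\log(g)\log(1-g))$, again giving $0$.)
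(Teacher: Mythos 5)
Your proposal is correct and follows essentially the same route as the paper: the identity $\Lmod 2 (z) = \Ltwo(z)-\log(z)\log(1-z)/2$ together with Lemma~\ref{L2const} and a direct check that the constant term vanishes when $g(y)=0,\infty$ (the cancellation of $\log^2(\gbar(y))/2$ at a pole being exactly what you record in your final parenthetical). The detour through the inversion relation $\Lmod 2 (z)+\Lmod 2 (z^{-1})=0$ is a valid alternative for the case $g(y)=\infty$, but it is not needed: the paper simply performs that direct computation.
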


\begin{proof}
Since $ \Lmod 2 (z) = \Ltwo(z)-\log(z)\log(1-z)/2 $ it is easy to check
that the constant term of $\Lmod 2 (g) $ is $0$ at either $g(y)=
0,\infty$, from which the result follows.
\end{proof}

\begin{lemma} \label{czindependence}
  For any point $y $ in $  C$ and for any choice of a Coleman integral
  $\int \Ltwo(g)\omega$ the quantity $c_z (\int \Ltwo(g) \omega)$ is
  independent of the choice of the local parameter $z$ at $y$.
\end{lemma}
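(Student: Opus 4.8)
The statement is local at $y$, so the plan is to analyse the expansion near $y$ of a Coleman integral $G$ of $\Ltwo(g)\omega$ in two arbitrary local parameters $z$ and $z'$ and to show that its constant term is the same in both. It suffices to do this for one fixed choice of the integration constant, since two choices differ by an element of $\Cp$, which contributes that same element to the constant term with respect to \emph{every} local parameter. So fix a Coleman integral $G$ of $\Ltwo(g)\omega$.

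The first real step is to pin down the shape of $G$ near $y$. By Lemma~\ref{L2const}, with respect to any local parameter $z$ at $y$ one has $\Ltwo(g)=\sum_{i=0}^{2}p_i(z)\log^i(z)$ with all $p_i\in\Cp[[z]]$; in particular no negative powers of $z$ occur. Since $\omega$ is holomorphic on $C$ we may write $\omega=w(z)\,dz$ with $w\in\Cp[[z]]$, so that $\Ltwo(g)\omega=\sum_{i=0}^{2}c_i(z)\log^i(z)\,dz$ with $c_i\in\Cp[[z]]$; crucially this form has no $\frac{dz}{z}$-component. Integration by parts for Coleman functions gives $\int c(z)\log^i(z)\,dz=C(z)\log^i(z)-i\int\frac{C(z)}{z}\log^{i-1}(z)\,dz$, where $C\in\Cp[[z]]$ is the antiderivative of $c$ vanishing at $y$ (so that $\frac{C(z)}{z}\in\Cp[[z]]$ again), together with $\int\frac{dz}{z}=\log(z)$. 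A descending induction on the $\log$-degree then shows that
\[
G=\sum_{i=0}^{2}f_i(z)\log^i(z),\qquad f_i\in\Cp[[z]],\qquad f_1(0)=f_2(0)=0,
\]
the only freedom being the value $f_0(0)$, which absorbs the chosen integration constant; thus $c_z(G)=f_0(0)$.

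It remains to compare with the parameter $z'$. Writing $z=v(z')z'$ with $v$ a unit in $\Cp[[z']]$, one has $\log(z)=\log(z')+\log(v(z'))$ with $\log(v(z'))\in\Cp[[z']]$. Substituting and expanding $\log^i(z)=\bigl(\log(z')+\log(v(z'))\bigr)^i$ gives $G=\sum_{j=0}^{2}g_j(z')\log^j(z')$ with $g_0(z')=\sum_{i=0}^{2}f_i\bigl(v(z')z'\bigr)\bigl(\log(v(z'))\bigr)^i$. Since $f_1$ and $f_2$ vanish at $0$ and $v$ is a unit, the $i=1$ and $i=2$ summands lie in $z'\Cp[[z']]$ and contribute nothing to the constant term, whereas $f_0\bigl(v(z')z'\bigr)$ has constant term $f_0(0)$. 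Hence $c_{z'}(G)=f_0(0)=c_z(G)$, as claimed.

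The one step requiring genuine care is the second: showing that the Coleman integral of a form whose $\log$-coefficients are holomorphic, and which has no $\frac{dz}{z}$-component, again has holomorphic $\log$-coefficients, of which those of positive $\log$-degree vanish at $y$. This is precisely where the \emph{no negative powers} clause of Lemma~\ref{L2const} is indispensable: a genuine $\frac{dz}{z}$-contribution would integrate to a $\log(z)$-term with a nonzero constant coefficient, and such a term does not survive the change of parameter. Everything else is routine bookkeeping with power series.
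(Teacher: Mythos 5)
Your argument is correct, but it follows a genuinely different route from the paper's. The paper never writes out the local expansion of $\int\Ltwo(g)\omega$ itself: it integrates by parts twice, first as $\int\Ltwo(g)\omega=\Ltwo(g)f_\omega-\int f_\omega\,\dd\Ltwo(g)$ with $f_\omega$ the Coleman integral of $\omega$ vanishing at $y$, and then splitting $\int f_\omega\log(g)\dlog(1-g)=\log(g)\int f_\omega\dlog(1-g)-\int\bigl(\int f_\omega\dlog(1-g)\bigr)\dlog(g)$ with $\int f_\omega\dlog(1-g)$ normalized to vanish at $y$; the first two resulting terms have constant term $0$ for every parameter, and the last is holomorphic at $y$, so its constant term is simply its value. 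You instead determine the shape of the expansion of the integral itself (holomorphic $\log$-coefficients, those of positive $\log$-degree vanishing at $y$) and verify invariance under $z=v(z')z'$ directly. Your route is more elementary and yields a slightly stronger structural statement about the integral, at the cost of power-series bookkeeping that the paper's choice of normalizations avoids.

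One caveat: you invoke Lemma~\ref{L2const} for the claim that all three coefficients $p_0,p_1,p_2$ of $\Ltwo(g)=\sum_i p_i(z)\log^i(z)$ lie in $\Cp[[z]]$, whereas its last assertion, as stated, only concerns the summands carrying no $\log$ factor, i.e.\ $p_0$. Your proof genuinely needs the stronger statement: a $z^{-1}$ term in $p_1$, say, would integrate to a $\log^2(z)$-term with nonzero constant coefficient and ruin the parameter-independence. The stronger statement is true and follows at once from the local expressions written in the proof of that lemma ($\Li_2(g)$ and $\log(1-g)$ holomorphic and $\log(g)=\ord_y(g)\log(z)+\text{(holomorphic)}$ near a zero of $g$, the pole case reducing to this via $\Ltwo(g)+\Ltwo(1/g)=\log^2(g)/2$, the remaining cases being analytic at $y$), but it is not literally what you cite, so you should include that one-line verification.
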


\begin{proof}
Let $f_\omega$ be the unique Coleman integral of $\omega$ that
vanishes at $y$. We may choose a Coleman integral $\int f_\omega d
\Ltwo(g)$ in such a way that the integration by parts formula
\begin{equation*}
  \int \Ltwo(g) \omega = \Ltwo(g) f_\omega - \int f_\omega \dd \Ltwo(g)
\end{equation*}
holds. It is therefore sufficient to show that the constant term of
each of the summands on the right is independent of the parameter. From
the last assertion in Lemma~\ref{L2const} and the fact that $f_\omega(y)=0$
it is easy to see that the constant term of the first summand is $0$. 
For the second summand we have
\begin{align*}
  \int f_\omega \dd \Ltwo(g) & = \int f_\omega \log(g) \dlog (1-g) \\
       & =  \log(g) \int f_\omega \dlog(1-g) - \int (\int f_\omega \dlog(1-g))
  \dlog(g)
\end{align*}
for appropriate choices of integrals. As $f_\omega \dlog(1-g)$ is
holomorphic at $y$, we may arrange it so that $\int f_\omega \dlog(1-g)$
vanishes at $y$. Then in the last formula the first term has constant
term $0$ while the second term is holomorphic at $y$ hence its
constant term is independent of $z$.
\end{proof}

Using the last lemma we may set
\begin{equation*}
  \int \Ltwo(g) \omega|_y := c_z \left(\int \Ltwo(g) \omega\right)
\end{equation*}
with respect to any parameter $z$ at $y$.  Using this we can define
$\int_D \Ltwo(g)\omega$ for any divisor $D$ of degree zero.
If we change $\int \Ltwo(g)
\omega$ by a constant, its value at $y$ in the above sense will
change by the same constant. Thus when $D$ has degree $0$ the integral
$\int_D \Ltwo(g) \omega$ does not depend on the constant of integration
even if $D$ and the divisor of $g$ have a common support. This
explains the general definition of the integral in Theorem~\ref{main-thm1}.

\begin{lemma} \label{intlconst}
  Choose integrals such that the integration by parts formula 
  \begin{equation*}
  \int \log(g)F_\omega \dlog(1-g) = F_\omega \Ltwo(g) - \int \Ltwo(g)\omega
  \end{equation*}
  is satisfied. Then we have at a point $y$ and with respect to the local
  parameter $z$,
  \begin{equation*}
    c_z( \int \log(g)F_\omega \dlog(1-g)) = F_\omega(y) c_z(\Ltwo(g))
    - \int \Ltwo(g)\omega|_y
  \,.
  \end{equation*}
\end{lemma}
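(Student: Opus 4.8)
The plan is to apply the constant-term functional $c_z$ to both sides of the chosen integration-by-parts identity and then to evaluate the two resulting terms on the right separately. Since $c_z$ is $\Cp$-linear, the identity
\[
\int \log(g) F_\omega \dlog(1-g) = F_\omega \Ltwo(g) - \int \Ltwo(g)\omega
\]
gives at once
\[
c_z\Bigl(\int \log(g) F_\omega \dlog(1-g)\Bigr) = c_z\bigl(F_\omega \Ltwo(g)\bigr) - c_z\Bigl(\int \Ltwo(g)\omega\Bigr).
\]
The last term is, by Lemma~\ref{czindependence} together with the definition of $\int \Ltwo(g)\omega|_y$ recalled just above, precisely $\int \Ltwo(g)\omega|_y$. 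So everything reduces to proving $c_z\bigl(F_\omega \Ltwo(g)\bigr) = F_\omega(y)\,c_z\bigl(\Ltwo(g)\bigr)$.

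For that I would first note that, since $\omega$ is holomorphic on all of $C$, its Coleman primitive $F_\omega$ restricts to an honest analytic function (no logarithms, no polar part) on the residue disc of $y$; in the local parameter $z$ we may therefore write $F_\omega = F_\omega(y) + z\,h(z)$ with $h \in \Cp[[z]]$. By $\Cp$-linearity of $c_z$ it then suffices to show $c_z\bigl(z\,h(z)\,\Ltwo(g)\bigr) = 0$. Writing the expansion $\Ltwo(g) = \sum_{i\ge 0} f_i(z)\log^i(z)$, only the $i=0$ summand contributes to the constant term, so $c_z\bigl(z\,h(z)\,\Ltwo(g)\bigr) = c_z\bigl(z\,h(z)\,f_0(z)\bigr)$; by the last assertion of Lemma~\ref{L2const} the series $f_0$ has no negative powers of $z$, hence $z\,h(z)\,f_0(z) \in z\,\Cp[[z]]$ and its constant term vanishes. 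This yields the displayed equality, and combined with the first paragraph it gives the lemma.

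Essentially all of this is bookkeeping with the functional $c_z$; the one point that deserves a line of care is the very first observation, namely that $F_\omega$ is an ordinary convergent power series in the local parameter at \emph{every} closed point $y$ of $C$, including those lying in the finite set cut out to form the wide open $U$. This is exactly where holomorphy of $\omega$ on $C$ (rather than just on $U$) enters: on a residue disc a Coleman primitive of an analytic form is the naive analytic antiderivative, so it has neither a $\log z$ term (the residue being zero) nor a polar part. I expect this to be the only slightly delicate step; the remainder is routine.
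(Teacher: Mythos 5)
Your proof is correct and follows essentially the same route as the paper: apply $c_z$ to the integration-by-parts identity and reduce to the identity $c_z(F_\omega \Ltwo(g)) = F_\omega(y)\, c_z(\Ltwo(g))$, which the paper also deduces from Lemma~\ref{L2const} (your expansion $F_\omega = F_\omega(y) + z\,h(z)$, valid because $\omega$ is holomorphic on $C$, together with the absence of negative powers of $z$ in the $\log$-free part of $\Ltwo(g)$, is exactly the justification the paper leaves implicit). No gaps; you have merely spelled out the one-line argument in full.
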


\begin{proof}
One just applies $c_z$ to the integration by parts formula and
observes that by Lemma~\ref{L2const} we have 
$ c_z(F_\omega \Ltwo(g))= F_\omega(y) c_z(\Ltwo(g)) $.
\end{proof}

\begin{proof}[Proof of Theorem~\textup{\ref{main-thm2}}]
We already saw that the association gives a well-defined map on
$M_2(\O)\otimes \OQ$. It therefore suffices to show that it gives
the same map on $(M_2(\O) \tensor \OQ)^{\dd = 0}  $ as  $\regmapc$
in Theorem~\ref{main-thm3}.
Consider~\eqref{trip-with-const}. By Lemma~\ref{twotermseq} we can
choose our integrals such that for each 
point $y$ the sum over $i$ of each of the last two terms is
identical. The term $\ord_y (f_i) c_z(\int \log(g_i) F_\omega
\dlog(1-g_i))$ is computed in Lemmas~\ref{intlconst} and~\ref{L2const}.
Substituting the results we see that we have the equation
\begin{equation*}
\begin{split}
  \regmapc(\alpha)= \sum_i \Big[\sum_{y\in C}
  &(\ord_y(1-g_i) F_\omega(y) \log \fbar_i(y) \log \gbar_i(y))\\ +
  &2 \int_{(f_i)} \Ltwo(g_i) \omega\\ - &\sum_{y\in C}
  \ord_y (f_i) F_\omega(y) \times 
  \begin{cases}
    0 & g_i(y)=0\\
    2\Ltwo(g_i(y))& g_i(y)\ne 0,\infty\\
    \log^2(\gbar_i(y)) & g_i(y)=\infty
  \end{cases}
  \Big]\,.
\end{split}
\end{equation*}
In the first sum over $y$ only terms with $g_i(y)=\infty$ can be non-zero.
Thus neither sum over $y$ contributes for $g_i(y)=0$, and
the right-hand side becomes
\begin{equation} \label{almostend}
  \sum_i \left[
  2 \int_{(f_i)} \Ltwo(g_i)\omega - 2\sum_{g_i(y)\ne 0,\infty} 
  \ord_y (f_i) F_\omega(y) \Ltwo(g_i(y))
  +\sum_{g_i(y)=\infty} F_\omega(y) \alpha_y(f_i,g_i)\,\right] 
\end{equation}
with
\begin{align*}
  \alpha_y(f,g) &= \ord_y(1-g) \log \fbar(y) \log \gbar(y) - \ord_y (f) \log^2 \gbar(y)
\\
  &= \log \gbar(y) \left(\ord_y(1-g) \log \fbar(y)  - \ord_y (f) \log \gbar(y)\right)
\\
  &= \log \overline{1-g}(y) \left(\ord_y (g) \log \fbar(y) - \ord_y (f) \log \gbar(y)\right) 
\end{align*}
because $g(y)=\infty$ implies $\ord_y (1-g)=\ord_y (g)$
and $\gbar(y)=-\overline{1-g}(y)$.

For $y $ in $  C$, the function $\beta_y(f,g,h):=\log \overline{h}(y)
\left(\ord_y (g) 
\log \fbar(y)  -   \ord_y (f) \log \gbar(y)\right)$ is trilinear in
$f$, $g$ and $h$ and anti-symmetric in $f$ and $g$. 
As $\sum_i (1-g_i) \tensor (g_i \wedge f_i)= 0 $
by~\eqref{Ocond}, we find
\begin{equation} \label{sumbeta}
  \sum_i \beta_y(f_i,g_i,1-g_i)=0
\,.
\end{equation}
If $g_i(y)=0$ then
$\beta_y(f_i,g_i,1-g_i) =0$ while if $g(y)\ne 0,\infty$ then
$\beta_y(f_i,g_i,1-g_i)=-\ord_y (f_i)\log g_i(y) \log (1-g_i(y))$,
where we set the value of $\log(y)\log(1-y) $ at $1$ to be $0$, which
is consistent with taking limits and with what follows.
Thus, summing~\eqref{sumbeta} multiplied by $F_\omega(y)$
over all $y $ in $  C$ we see that 
\begin{equation*}
  \sum_i \sum_{g_i(y)=\infty} F_\omega(y) \alpha_y(f_i,g_i) =
  \sum_i \sum_{g_i(y)\ne 0,\infty}\ord_y (f_i) F_\omega(y) \log g_i(y) \log
  (1-g_i(y))
\,.
\end{equation*}
Substituting this into~\eqref{almostend},
and using that $\Ltwo(z)- \log(z)\log(1-z)/2 = \Lmod 2 (z) $ by definition,
we obtain
\[
 \sum_e \pair{F_\eta, F_\omega}_e = 
  2 \sum_i \int_{(f_i)} \Ltwo(g_i)\omega - 2 \sum_i\sum_{g_i(y)\ne 0,\infty} \ord_y (f_i) F_\omega(y) \Lmod 2 (g_i(y))
\,.
\]
This formula finishes the proof of Theorem~\ref{main-thm2} as
$\Lmod 2 (z) $ vanishes at $0$ and $\infty $.
\end{proof}

\begin{proof}[Proof of Theorem~\textup{\ref{main-thm1}}]
That the assignment is well-defined is part of Lemma~\ref{threecompat}.
In order to see that it vanishes on $ [f]_2 \otimes f $, we note
that we already know this is true for the assignment in Theorem~\ref{main-thm2},
and that the second term in that assignment
is trivial on such terms because $ \Lmod 2 (z) $ vanishes at
$ 0 $ and $ \infty $.

For part~(2), consider~\eqref{MAP}.
That $ \partial_1(\alpha') = 0 $ means that $ \alpha' $
satisfies~\eqref{CCcond}, which is equivalent with $ \alpha' $ being
in $ H^2(\scriptM 3 {\CC'}) $ inside $ H^2(\scriptM 3 {\O'}) $
(recall from Section~\ref{M3CCconstruction} that the two vertical
maps at the top in this diagram are injections if we use $ \O' $
instead of $ \O $ everywhere).
The existence and uniqueness of $ \beta' $ was therefore proven just after~\eqref{FOCcd}.
In fact, $ \beta' $ is the $ \K 4 3 {\CC'} $ component of the image of $ \alpha' $
in $ \K 4 3 {\CC'} \oplus \K 3 2 k \cup \OpQ $,
and the images of $ \alpha' $ and $ \beta' $ in 
$ \K 4 3 {\O'} $ differ by some $ \gamma' $ in the image of $ \K 3 2 k \cup \OpQ $.
But $ \reg(\gamma')\cup\omega = 0$ by the commutativity of the
bottom right square, so that, after extending from $ \O' $ to
$ \O $, we have $ \reg(\beta)\cup \omega = \regmapb(\alpha) $
by Theorem~\ref{main-thm2}.
It therefore suffices to show that the contribution of
each $ \ord_y(f) F_\omega(g(y)) \Lmod2 (g(y)) $
in $ \regmapb(\alpha) $ is trivial.

Note that in Theorem~\ref{main-thm2} this sum has to
be computed after a suitable finite extension $ \tilde K $ of $ K $ that
makes the relevant $ y $ rational, but that further extending
the field to $ \Cp $ as we are using here gives the same result.
In fact, because we start over the number field $ k $, the relevant
$ y $ become rational over some number field $ L \subset \tilde K $ containing
$ k $.  The $ \tM_2(\cdot) $ are compatible 
with field extensions, and clearly the same holds for $ \partial_1 $.
Therefore~\eqref{CCcond} gives us that for each closed point $ y $
of $ C_L' $, $ \partial_{1,y}(\alpha') $ is trivial in $ \tM_2(L) $.
Because $ F_\omega(y) $ is just a constant, comparing with
the definition of $ \partial_{1,y} $ in Section~\ref{scriptM3Cconstruction},
we see that it suffices to show that
 the map 
\begin{align*}
 H^1(\tildescriptM 2 L )  & \to \tilde K
\\
\sum_i [a_i]_2 & \mapsto \sum_i \Lmod 2 (a_i)
\end{align*}
is well-defined.
It is conjectured in \cite[Conjecture~1.14]{Bes-deJ98}
that this map is the syntomic regulator map on  as composition
(with $ \O_L $ the ring of integers in $ L $)
\[
H^1(\tildescriptM 2 L ) \to \K 3 2 L \iso \K 3 2 \O_L \to \hsyn^1(\O_L,2) \iso K
\,,
\]
which would imply what we need.
However, extending the domain of the map, we can show by more
basic means that the map
\[
\begin{aligned}
\tM_2(L) & \to \tilde K
\\
[a]_2 & \mapsto \Lmod 2 (a)
\end{aligned}
\]
is well-defined, which will prove what we want.

Namely for any field $ L $ of characteristic zero,
let $ B_2'(L) $ be the free $ \Q $-vector space on elements
$ \{ b \}_2 $ with $ b $ in $ F $, $ b \neq 0,1 $, modulo the
five term relation
\begin{equation} \label{fivetermrelation}
 \def\gscmb #1 #2 {\left\{ #1 \right\}_{#2}}
   \gscmb b 2 + \gscmb c 2 + \gscmb \frac{1- b }{1-b c } 2
   +\gscmb {1-b c} 2
   +\gscmb \frac{ 1-c }{ 1-b c } 2  = 0
\,.
\end{equation}
It is shown in \cite[Lemma~5.2]{Jeu00} that
there is a map $ B_2'(L) \to \tM_2(L) $, given by sending $ \{ b \}_2 $
to $ [b]_2 $.
In case $ L $ is a number field, this was
already done on page~240 of \cite{Jeu95} (where the relations
were not made explicit and the group was called $ B_2(L) $),
and the map was shown to be an isomorphism in that case.
Finally, in \cite[Corollaries~6.4(ii),(iii) and~6.5b]{Col82} Coleman shows that $ \Lmod 2 $
(which is called $ D $ there)
satisfies
\[
\begin{aligned}
\Lmod 2 (z^{-1}) & = - \Lmod 2 (z)
\cr
\Lmod 2 (1-z) & = - \Lmod 2 (z)
\end{aligned}
\]
as well as (with signs corrected)
\[
  \Lmod 2 (z_1z_2) = \Lmod 2 (z_1) + \Lmod 2 (z_2)
  + \Lmod 2 \left( \frac{z_1(1-z_2)}{z_1-1} \right)
  + \Lmod 2 \left(\frac{z_2(1-z_1)}{z_2-1}  \right)
\,.
\]
Substituting $ z_1 = (bc)^{-1} $, $ z_2=c $ in the last relation
and using the first two, one sees
that $ \Lmod 2 $ satisfies
the relation corresponding to~\eqref{fivetermrelation}.  Therefore
it induces a map
\[
\tM_2(L) \iso B_2'(L) \to K
\]
mapping $ [b]_2 $ to $ \Lmod 2 (b) $.
This finishes the proof of~Theorem~\ref{main-thm1}.
\end{proof}

\begin{proof}[Proof of Theorem~\textup{\ref{main-thm4}} part 2]
We already saw in Lemma~\ref{threecompat} that the formula gives a
well-defined map on $\tM_2(\O)\otimes \OQ$, so it remains as usual to
derive it
from the corresponding formula in the first part. Suppose that$ \sum_i [g_i]_2 \tensor f_i \in (\tM_2(\O)
\tensor \OQ)^{\dd = 0}  $. Looking
at~\eqref{trip-with-const} and replacing the $g_i$'s and $1-g_i$'s we
get the
following formula for the regulator.
\begin{equation} \label{sixterm}
\begin{split}
  \sum_i \sum_{y\in C}
  \Big[
  &\ord_y(1-g_i) F_\omega(y) \log \fbar_i(y) \log \gbar_i(y)
  -\ord_y(g_i) F_\omega(y) \log \fbar_i(y) \log \overline{1-g_i}(y)\\ +
  &\ord_y (f_i) c_z\left(\int \log(1-g_i) F_\omega \dlog(g_i)\right) -
  \ord_y (f_i) c_z\left(\int \log(g_i) F_\omega \dlog(1-g_i)\right)\\ +
  &\ord_y (1-g_i) c_z\left(\int \log(f_i) F_\omega
    \dlog(g_i)\right)-
  \ord_y (g_i) c_z\left(\int \log(f_i) F_\omega \dlog(1-g_i)\right)\Big]
\end{split}
\end{equation}
For a given $i$ and $y$ one observes that the first two terms in the
inner term add up to $0$. Indeed, there can be contributions only if
$g_i(y)$ is either $0$, $1$ or $\infty$ but in the first two cases
either the order or the logarithm will make the two terms vanish. If
$g_i(y)=\infty$ the orders are the same so we get a multiple of
$\log(\overline{g_i/(1-g_i)}(y))=\log(-1)=0$.

For $3$ functions $f$, $g$ and $h$ consider the trilinear expression
\[I(f,g,h):=\ord_y(g) 
c_z\left(\int \log(f) F_\omega \dlog(h)\right)\] and the expression
$\sum_\sigma \sgn(\sigma) I(f,g,h)^\sigma$ where $I(f,g,h)^\sigma$ means permuting
the order of terms according to $\sigma$. This
expression is alternating and since, 
by~\eqref{otild5} we have  $\sum_i (1-g_i) \wedge g_i \wedge f_i = \dd(
\sum_i [g_i]_2 \tensor f_i) = 0 $,
we have $\sum_i \sum_\sigma 
\sgn(\sigma) I(f_i,g_i,1-g_i)^\sigma = 0$. This implies that the expression
in~\eqref{sixterm} equals
\begin{align*}
   \sum_i \sum_{y\in C}
  \Big[
  &\ord_y (g_i) c_z\left(\int \log(1-g_i) F_\omega \dlog(f_i)\right)\\-
  &\ord_y (1-g_i) c_z\left(\int \log(g_i) F_\omega \dlog(f_i)\right)\Big]
\end{align*}
and so this gives the formula in the theorem if one is willing to use
constant terms to evaluate at the singular points, but it is easy to
see that the difference of the constant terms at infinity is as
interpreted in the theorem.
\end{proof}

\begin{remark} \label{compremark}
We would like to explain a bit of the heuristics suggesting that
Theorem~\ref{main-thm4} gives a formula which is the $p$-adic analogue
of the complex analytic formula for the regulator in Section~\ref{sec:classical}.

Experience has taught us that complex surface integrals translate
in the $p$-adic world to a similar formula involving local
indices. For example, the complex analytic formula for the
regulator of the symbol $\{f,g\}$ in $ K_2(F)$,
\begin{equation*}
  \int_C \log |g| \overline{\dlog f} \wedge \omega
\end{equation*}
translates in the $p$-adic world, for $ \omega $ of the second kind,
 into the formula
\begin{equation*}
  \pair{\log f,F_\omega;\log g}_\gl
\,.
\end{equation*}

Note that, using the rules for the triple index, 
this is the same
as the formula $\sum_e \pair{\log f,\int( F_\omega\dlog(g))}_e $
obtained in~\cite[Propositon~5.1]{Bes98b}. This corresponds to the
regulator on an open curve using the same projection on $\Hdr^1(C/K)$
 we have been using in this paper. 
For a sum $ \{f_i,g_i\} $ in the kernel of the tame symbol, we
may, for every pair $ (f,g) = (f_i,g_i) $, replace 
$ \pair{\log f,F_\omega;\log g}_\gl $ with  $\int_{(f)} \log(g) \cdot \omega$,
obtaining the formula of Coleman and de Shalit.
This is similar to
Theorem~\ref{main-thm2} specializing to Theorem~\ref{main-thm1}.

Relying on these considerations, we might expect that, for $ \omega $
of the second kind, the regulator formula for $ H^2(\scriptM 3 {\O} ) $
will be given by mapping $ [g]_2 \otimes f $
\begin{equation*}
  \frac{8}{3} \, \pair{\log(f),F_\omega;\Lmod 2 (g)}_\gl
\,.
\end{equation*}
But this is not well-defined because $\dd \Lmod 2 (g) =
(\log(g)\dlog(1-g) - \log(1-g)\dlog(g))/2$ is
not holomorphic. However, we can make the following interpretaton:
\begin{align*}
  &
  \phantom{\,=\,}  \pair{\log(f),\int F_\omega \dd \Lmod 2 (g) }_\gl
\\
&=
  \frac{1}{2}
   \pair{\log(f), \int  F_\omega (\log(g)\dlog(1-g) -
     \log(1-g)\dlog(g))}_\gl \\ &=
 \frac{1}{2} \pair{\log(f), \log(g); \int  F_\omega \dlog(1-g)}_\gl
 -\frac{1}{2} \pair{\log(f), \log(1-g); \int  F_\omega \dlog(g))}_\gl
\end{align*}
by Lemma~\ref{recepy}(1).
Thus, the resulting heuristic formula differs from the correct formula
in Theorem~\ref{main-thm4} by a factor of $4$. Factors which are powers of 2 appear in comparison of other regulator formulas; see for example the introduction of~\cite{Bes10}.
\end{remark}

\end{document}